\documentclass[11pt]{amsart}
\usepackage{amsmath, amssymb}
\usepackage{amscd}
\usepackage{url}
\usepackage{listings}

\usepackage{amssymb}
\usepackage{amsxtra}

\usepackage{mathabx}
\usepackage{color}
\usepackage[all]{xy}
\usepackage{mathrsfs}
\usepackage{stmaryrd}
\usepackage[utf8]{inputenc}
\usepackage{eucal}
\usepackage{fullpage}
\usepackage{hyperref}
\usepackage{relsize}

\def\Dbar{\leavevmode\lower.6ex\hbox to 0pt{\hskip-.23ex \accent"16\hss}D}
  \def\cftil#1{\ifmmode\setbox7\hbox{$\accent"5E#1$}\else
  \setbox7\hbox{\accent"5E#1}\penalty 10000\relax\fi\raise 1\ht7
  \hbox{\lower1.15ex\hbox to 1\wd7{\hss\accent"7E\hss}}\penalty 10000
  \hskip-1\wd7\penalty 10000\box7}
  \def\cfudot#1{\ifmmode\setbox7\hbox{$\accent"5E#1$}\else
  \setbox7\hbox{\accent"5E#1}\penalty 10000\relax\fi\raise 1\ht7
  \hbox{\raise.1ex\hbox to 1\wd7{\hss.\hss}}\penalty 10000 \hskip-1\wd7\penalty
  10000\box7} \def\cprime{$'$}

\renewcommand{\leq}{\leqslant}
\renewcommand{\geq}{\geqslant}
 
\numberwithin{equation}{section}






\newcommand{\C}{\mathbf{C}}
\newcommand{\N}{\mathbf{N}}

\newcommand{\A}{\mathbf{A}}

\newcommand{\Z}{\mathbf{Z}}
\renewcommand{\P}{\mathbf{P}}
\newcommand{\R}{\mathbf{R}}

\newcommand{\Q}{\mathbf{Q}}

\newcommand{\Tt}{\mathbf{T}}
\newcommand{\G}{\mathbf{G}}

\newcommand{\conv}{\mathrm{conv}}

\newcommand{\mods}[1]{\,(\mathrm{mod}\,{#1})}




\DeclareMathOperator{\Vol}{Vol}

\DeclareMathOperator{\rank}{rank}
\DeclareMathOperator{\res}{Res}

\DeclareMathOperator{\cl}{Cl}

\DeclareMathOperator{\vol}{Vol}
\DeclareMathOperator{\Imag}{Im}
\DeclareMathOperator{\real}{Re}

\DeclareMathOperator{\lcm}{lcm}

\DeclareMathOperator{\Stab}{Stab}

\DeclareMathOperator{\Gal}{Gal}
\DeclareMathOperator{\Ind}{Ind}

\DeclareMathOperator{\Hom}{Hom}

\DeclareMathOperator{\codim}{codim}

\DeclareMathOperator\Fr{Fr}

\DeclareMathOperator\Spec{Spec}


\newcommand{\eps}{\varepsilon}
\renewcommand{\rho}{\varrho}


\DeclareMathOperator{\GL}{GL}

\DeclareMathOperator{\sgn}{sgn}



\DeclareMathOperator\Ann{Ann}
\DeclareMathOperator\coker{coker}

\newcommand{\mx}[4]{\left( \begin{array}{cc} #1 & #2 \\ #3 & #4 \end{array} \right)}
\def\LT{{}^L T}
\newcommand{\fc}{\mathfrak{c}}
\newcommand{\fK}{\mathfrak{K}}


\DeclareMathSymbol{\gena}{\mathord}{letters}{"3C}
\DeclareMathSymbol{\genb}{\mathord}{letters}{"3E}



\newcounter{bnd}

\theoremstyle{plain}
\newtheorem{theorem}{Theorem}[section]
\newtheorem*{theorem*}{Theorem}
\newtheorem{lemma}[theorem]{Lemma}
\newtheorem{corollary}[theorem]{Corollary}
\newtheorem{conjecture}[theorem]{Conjecture}

\newtheorem{proposition}[theorem]{Proposition}

\theoremstyle{remark}

\newtheorem*{rem}{Remark}
\newtheorem*{rems}{Remarks}

\theoremstyle{definition}

\newtheorem{definition}[theorem]{Definition}
\newtheorem{propdef}[theorem]{Proposition/Definition}

\newtheorem*{question}{Question}
\newtheorem{example}[theorem]{Example}



\newcommand{\fp}{\mathfrak{p}}
\newcommand{\fP}{\mathfrak{P}}


\newcommand{\cL}{\mathcal{L}}
\newcommand{\cS}{\mathcal{S}}

\newcommand{\cF}{\mathcal{F}}

\newcommand{\cI}{\mathcal{I}}

\newcommand{\cB}{\mathcal{B}}

\def\cA{\mathcal{A}}
\def\cO{\mathcal{O}}
\def\cT{\mathcal{T}}

\renewcommand{\geq}{\geqslant}
\renewcommand{\leq}{\leqslant}

\newcommand{\authnote}[2][]{\noindent {\if!#1!  \textbf{TODO} \else \textbf{\small  #1} \fi: #2} \vspace{0.1in}}

\begin{document}
 \title{The Weyl law for algebraic tori}
 \author{Ian Petrow}
 \address{ETH Z\"urich - Departement Mathematik \\
HG G 66.4 \\
R\"amistrasse 101 \\
8092 Z\"urich \\
Switzerland }
\email{ian.petrow@math.ethz.ch}
\curraddr{UCL - Department of Mathematics \\
25 Gordon Street \\
London WC1H 0AY \\
United Kingdom}
\email{i.petrow@ucl.ac.uk}
\thanks{This work was supported by Swiss National Science Foundation grant PZ00P2\_168164.}
 
 \begin{abstract}
We give an asymptotic evaluation for the number of automorphic characters of an algebraic torus $T$ with bounded analytic conductor. The analytic conductor is defined via the local Langlands correspondence for tori by choosing a finite dimensional complex algebraic representation of the $L$-group of $T$. Our results therefore fit into a general framework of counting automorphic representations on reductive groups by analytic conductor. 
 \end{abstract} 

 \maketitle

 \tableofcontents

\section{Introduction}\label{intro}

 \subsection{Motivation and statement of results}\label{statementofresults}
A basic question in the analytic theory of automorphic forms is the following:

\begin{question}\label{question}Given a connected reductive algebraic group $G$ over a global field $k$, how many irreducible cuspidal automorphic representations of $G$ are there?\end{question}

To make sense of the Question, one needs to choose a positive real-valued invariant by which to order the representations of $G$. Sarnak, Shin and Templier \cite{SarnakShinTemplier} have proposed using the \emph{analytic conductor. }

On the groups $\GL_m$, the analytic conductor has a standard definition \cite{IS2000GAFA}, but over more general reductive groups it is less well understood. The most canonical (but not necessarily the most practical) definition is through the local Langlands conjectures. Let $r: {}^LG \to \GL_m (\C)$ be a finite dimensional algebraic representation of the complex $L$-group of $G$.  The local Langlands conjectures predict the existence of maps $$ r_{*,v} : \cA_v(G) \to \cA_v(\GL_m)$$ at every place $v$ of $k$ from the local unitary dual $\cA_v$ of $G$ to that of $\GL_m$. One then defines the analytic conductor $\fc(\pi,r)$ of an irreducible automorphic representation $\pi$ with respect to $r$ by \begin{equation}\label{acG}\fc(\pi,r) = \prod_{v} \fc_v(r_{*,v} \pi_v),\end{equation} where the conductors on the right hand side are the ``classical'' local analytic conductors on $\GL_m$.

The universal counting Question seems to be quite difficult at the level of generality in which we have stated it. Only very recently has there been progress in a few special cases. Over an arbitrary number field, the cases $G=\GL_1$, $\GL_2$, as well $\GL_n$ for $n\geq 3$ under additional hypotheses, have been resolved in a paper of Brumley and Mili\'cevi\'c \cite{BrumleyMilicevic}. The case that $G$ is a one-dimensional non-split torus over $\Q$ splitting over an imaginary quadratic extension was treated in work of Brooks and the author \cite{BrooksPetrow}, and Lesesvre has studied the case that $G$ is the units group of a quaternion algebra \cite{LesesvreCounting}.

In this paper, we present an answer to the Question for $G=T$ a torus over a number field $k$ and $r$ an arbitrary complex algebraic representation of its $L$-group. Even though the groups we are dealing with are abelian, our results are not easy, as we work with a very general notion of conductor. Indeed, the difficulties involved are already evident in the intricacy of the statement of the final result. As its reward, working with such a general notion of conductor reveals some of the richness that any general answer to the Question must exhibit.  For example, the power of $X$ in the asymptotic count of automorphic characters (see \eqref{Adef}) need not be an integer, but rather is a positive rational with denominator at most $m$. Further, we find that arbitrary integer powers of $\log X$ are possible in the asymptotic count (see example \ref{eglog}). Another interesting aspect of our results is the resemblance of the automorphic counting question to the Manin conjecture, which we present in section \ref{sec:manin}.

We make some precise definitions in order to give the statement of our result.
Let $T$ be an algebraic torus over a number field $k$. Let $\cA(T)$ denote the group of continuous unitary characters of $T(k) \backslash T(\A)$, where $\A$ is the ad\`ele ring of $k$.  We call elements of $\cA(T)$ \emph{automorphic characters}; they are the basic objects of study in this paper. 
Let $K/k$ be the minimal Galois extension over which $T$ splits, and let $G= \Gal(K/k)$. Let $X^*(T)$ and $X_*(T)$ be the algebraic character and cocharacter lattices of $T$, and $\widehat{T} = \Hom(X_*(T),\C^\times)$ the \emph{complex dual torus}.    
Each of these objects admits a natural action of $G$.  Let ${}^LT = \widehat{T} \rtimes G$ be the \emph{$L$-group} of $T$, and pick $r: {}^LT \to \GL_m (\C)$ an algebraic representation of ${}^LT$. Generally, we will write $n = \dim T$ and $m = \dim r$. Pick $\nu$ a Haar measure on the locally compact group $\cA(T)$. 

The main goal of this paper is to give an asymptotic formula for $\nu(\{\chi \in \cA(T): \fc(\chi,r)\leq X\})$, where $\fc(\chi,r)$ is the \emph{analytic conductor} (defined in section \ref{sec:globalconductor}), as $X$ tends to infinity. The statement of the result requires a few more constructions.
The restriction of $r$ to $\widehat{T}$ breaks up as a direct sum of eigenspaces \begin{equation}\label{rT}r \vert_{\widehat{T}}  = \bigoplus_{\mu \in X^*(\widehat{T})} V_\mu.\end{equation} Let $M$ be the multi-set of \emph{co-weights} $\mu$ of $r$, i.e.\ the underlying set of $M$ is $\{\mu \in X^*(\widehat{T}): V_\mu \neq 0\}$ and the multiplicity of $\mu\in M$ is $\dim V_\mu$. Let $S \subseteq M$ denote a subset of the co-weights with multiplicity and $S^c$ its complement.  For such an $S$, we define the complex group \begin{equation}\label{DS} D(S)= \bigcap_{\mu \in S^c} \ker \mu \subseteq \widehat{T}.\end{equation} The restriction $r \vert_{\widehat{T}}$ is faithful if and only if $D(\varnothing) = \{1\}$, and in that case we let \begin{equation}\label{Adef} A=A(T,r)= \max\Big\{\frac{\dim D(S) + 1}{|S|} : S \subseteq M, D(S)\neq \{1\}\Big\}. \end{equation}
\begin{theorem}\label{MT1}
Suppose that $r \vert_{\widehat{T}}$ is faithful.  Then there exists a non-zero polynomial $P = P_{T,r,\nu}$ and $c=c_{T,r}>0$ such that \begin{equation}\label{eq:MT}\nu(\{\chi \in \cA(T): \fc(\chi,r) \leq X\}) = X^A P(\log X) + O_{T,\nu,r}(X^{A}\exp(-c (\log X)^{3/5} (\log \log X)^{-1/5})).\end{equation} If the Artin conjecture holds for the finitely many Artin $L$-functions specified in Theorem \ref{unrthmwithArtin}, then the error term in \eqref{eq:MT} may be improved to $O_{T,r,\nu}(X^{A-\delta})$ for some $\delta=\delta_{T,r}>0$. If $r \vert_{\widehat{T}}$ is not faithful, then the left hand side of \eqref{eq:MT} is infinite for some finite $X$.\end{theorem}
The dependence of $P$ and the implicit constant on $\nu$ is linear, since Haar measure is unique up to scaling. Here is a simple corollary of Theorem \ref{MT1}.
\begin{corollary}\label{C}
Let $T$ be a torus of dimension $n$, $r$ an $m$-dimensional complex representation of its $L$-group, and $\nu$ a Haar measure on $\cA(T)$. We have $$\nu(\{\chi \in \cA(T):\fc(\chi,r)\leq X\}) \gg_{T,r,\nu} X^{\frac{n+1}{m}}.$$ If $r \vert_{\widehat{T}}$ is faithful, then for all $\eps>0$ we have $$\nu(\{\chi \in \cA(T):\fc(\chi,r)\leq X\}) \ll_{\eps,T,r,\nu} X^{2+\eps}.$$ \end{corollary}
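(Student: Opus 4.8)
The plan is to deduce both inequalities from Theorem \ref{MT1}, the only extra ingredients being two elementary facts about the exponent $A = A(T,r)$ of \eqref{Adef}: that $A \geq (n+1)/m$ always, and that $A \leq 2$ when $r\vert_{\widehat T}$ is faithful. Throughout, write $N(X) = \nu(\{\chi \in \cA(T): \fc(\chi,r)\leq X\})$, a non-negative, non-decreasing function of $X$.

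\emph{Lower bound.} If $r\vert_{\widehat T}$ is not faithful, Theorem \ref{MT1} gives $N(X_0) = \infty$ for some finite $X_0$, hence $N(X) = \infty$ for all $X \geq X_0$ by monotonicity, and $N(X) \gg_{T,r,\nu} X^{(n+1)/m}$ is immediate (the implied constant handles the bounded range $X \leq X_0$, where $N$ is eventually positive). If $r\vert_{\widehat T}$ is faithful, Theorem \ref{MT1} gives $N(X) = X^A P(\log X) + O(X^{A-\delta})$ with $P \neq 0$; since $N(X) \geq 0$ for all $X$, the leading coefficient of $P$ cannot be negative, hence is strictly positive, so $P(\log X) \gg 1$ and $N(X) \gg X^A$ for $X$ large. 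Finally $A \geq (n+1)/m$: in \eqref{Adef} take $S = M$, so that $S^c = \varnothing$ and $D(M) = \widehat T$, which has dimension $n$ and is non-trivial (assuming, as we may, that $\dim T = n \geq 1$); since the $\mu$ in \eqref{rT} are counted with multiplicity we have $|M| = \dim(r\vert_{\widehat T}) = m$, so this choice of $S$ already contributes $(\dim\widehat T + 1)/|M| = (n+1)/m$ to the maximum.

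\emph{Upper bound.} Here $r\vert_{\widehat T}$ is faithful, so Theorem \ref{MT1} gives $N(X) = X^A P(\log X) + O(X^{A-\delta}) \ll_{\eps,T,r,\nu} X^{A+\eps}$ for every $\eps > 0$; it thus suffices to prove $A \leq 2$. Let $S \subseteq M$ be admissible in \eqref{Adef}, i.e. $D(S) \neq \{1\}$. Then $S \neq \varnothing$, since $D(\varnothing) = \bigcap_{\mu \in M}\ker\mu = \{1\}$ is precisely the faithfulness hypothesis; hence $|S| \geq 1$. Passing to the $n$-dimensional $\Q$-vector space $X^*(\widehat T)\otimes\Q$, faithfulness says that $M$ spans it, so the $\Q$-span of $S^c$ has dimension at least $n - |S|$, and therefore $\dim D(S) = n - \dim_\Q \mathrm{span}_\Q(S^c) \leq |S|$. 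Consequently $(\dim D(S) + 1)/|S| \leq (|S| + 1)/|S| = 1 + 1/|S| \leq 2$, and taking the maximum over admissible $S$ gives $A \leq 2$, as required.

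There is no real obstacle here beyond invoking Theorem \ref{MT1}: the content is simply that the exponent it produces is squeezed between $(n+1)/m$ and $2$, and the squeeze is pure linear algebra on the coweight multiset $M$. The only points needing a moment's care are that the leading coefficient of $P$ must be non-negative, hence positive (forced by $N(X) \geq 0$, so that the main term genuinely provides a lower bound), the bookkeeping identity $|M| = m$, and the inequality $\dim D(S) \leq |S|$.
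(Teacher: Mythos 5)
Your proof is correct and follows essentially the same route as the paper's: reduce to bounding $A$ via Theorem \ref{MT1}, then obtain $A \geq (n+1)/m$ from $D(M)=\widehat T$ and $|M|=m$, and $A \leq 2$ from the bound $\dim D(S) \leq |S|$ (which you derive by a $\Q$-linear dimension count rather than the paper's ``one constraint at a time'' phrasing, but it is the same observation). The extra care you take with the positivity of the leading coefficient of $P$ and the non-faithful case of the lower bound are details the paper leaves implicit, but the argument is the same.
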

\begin{proof}
By Theorem \ref{MT1} it suffices to give uniform lower and upper bounds on $A$. For the lower bound, note that $D(M) = \widehat{T}$ which gives $A \geq \frac{n+1}{m}.$ For the upper bound, observe that since $r \vert_{\widehat{T}}$ is faithful for any $S \subseteq M$ we have $\dim D(S) \leq |S|$, since $\dim D(\varnothing)=0$ and $\codim \ker \mu \leq 1$ for any $\mu \in M$. Therefore $A \leq \max \{\frac{|S|+1}{|S|}: S \neq \varnothing \} \leq 2$. \end{proof}
We can give an expression for the degree of the polynomial $P$, but this requires a few more definitions. Since $M$ was formed from the restriction of a representation of ${}^LT$, the group $G$ acts on $M$, and also on the power set $2^{M}=\{S: S \subseteq M\}.$ This action preserves $|S|$ as well as $\dim D(S)$, so $G$ also acts on the set \begin{equation}\label{Sigma}\Sigma = \{S\neq \varnothing: \frac{\dim D(S)+1}{|S|} = A\}.\end{equation}
Let \begin{equation}\label{lambda}\lambda =  \lcm_{S \in \Sigma} | \pi_0 (D(S))|,\end{equation} where $\pi_0(D(S))$ denotes the group of connected components of $D(S)$. The group $(\Z/\lambda \Z)^\times$ acts on $\pi_0 (D(S))$ for each $S \in \Sigma$ by $\ell.y = y^\ell$, $y\in \pi_0 (D(S))$, $\ell \in (\Z/\lambda \Z)^\times$. Let $\zeta_\lambda$ be a primitive $\lambda$th root of unity, and let $\widetilde{K} = K(\zeta_\lambda)$ and $\widetilde{G} = \Gal(\widetilde{K}/k)$. The enlarged group $\widetilde{G}$ acts on the fibered set with base $\Sigma$ given by \begin{equation}\label{Sigmatilde}\widetilde{\Sigma} = \{(S,y): S \in \Sigma, \quad y \in \pi_0(D(S))\}.\end{equation} Indeed, we have inclusions \begin{equation}\label{Gtildeinclusion} \widetilde{G} \hookrightarrow G \times \Gal(k(\zeta_\lambda)/k) \hookrightarrow G \times (\Z/\lambda\Z)^\times\end{equation} given by restricting automorphisms to $K$ and to $\Q(\zeta_\lambda)$. If $g \in \widetilde{G}$ restricts to $(\overline{g},\gamma) \in G \times (\Z/\lambda\Z)^\times,$ then $g$ acts on $\widetilde{\Sigma}$ by $$ g.(S,y) = (\overline{g}S,\overline{g}y^{\gamma}).$$ Finally, let \begin{equation}\label{Sigmatilde0}\widetilde{\Sigma}_0 = \widetilde{\Sigma} \smallsetminus \{(S,1): \dim D(S) = 0\}.\end{equation} Since the deleted set is preserved by the action of $\widetilde{G}$, we also have that $\widetilde{G}$ acts on $\widetilde{\Sigma}_0$. 
\begin{theorem}\label{MT2}
The polynomial $P$ appearing in Theorem \ref{MT1} satisfies $$\deg P = \left| \widetilde{G} \backslash \widetilde{\Sigma}_0 \right|-1.$$ \end{theorem}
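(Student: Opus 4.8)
\medskip
\noindent\emph{Proof proposal.} The plan is to read $\deg P$ off from the analytic behaviour of the \emph{conductor zeta function}
$$Z(s)\;=\;Z_{T,r,\nu}(s)\;=\;\int_{\cA(T)}\fc(\chi,r)^{-s}\,d\nu(\chi),$$
which is the natural engine behind Theorem \ref{MT1}: this integral should converge absolutely for $\Re s>A$, continue meromorphically to a half-plane $\Re s>A-\delta$ with no singularity other than a pole at $s=A$, and grow polynomially on vertical lines, so that a quantitative Tauberian theorem of Delange--Ikehara type produces the main term $X^AP(\log X)$ together with a power-saving error. Under this dictionary $\deg P=\ord_{s=A}Z(s)-1$, so the whole of Theorem \ref{MT2} reduces to the identity $\ord_{s=A}Z(s)=\bigl|\widetilde G\backslash\widetilde\Sigma_0\bigr|$, which is now a purely analytic and combinatorial assertion about $Z$.

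To compute this pole order I would first unwind the conductor: by the explicit local Langlands correspondence for tori, inductivity of the local conductor, and the decomposition \eqref{rT}, one obtains $\fc(\chi,r)\asymp\prod_{\mathcal O\in M/G}\fc_{k_{\mathcal O}}(\eta_{\mathcal O,\chi})$, where $M/G$ denotes the set of Galois orbits of coweights, $k_{\mathcal O}=K^{\Stab_G(\mu)}$ for $\mu\in\mathcal O$, and $\eta_{\mathcal O,\chi}$ is the Hecke character of $k_{\mathcal O}$ obtained by pushing $\chi$ forward along a representative of $\mathcal O$. Then, parametrising the characters $\chi\in\cA(T)$ explicitly through the structure of $T(k)\backslash T(\A)$, I would carry out a Möbius inversion over the poset of the diagonalizable subgroups $D(S)$ of \eqref{DS}. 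The outcome should be an explicit expression for $Z(s)$ as a finite sum of Euler products, each a ratio of Hecke $L$-functions --- in particular Dedekind zeta functions --- of the fields $k_{\mathcal O}$ and, whenever the group $D(S)$ is disconnected, of the fields $k_{\mathcal O}(\zeta_\lambda)$ with $\lambda$ as in \eqref{lambda}; the latter enter because finite-order characters valued in $\pi_0(D(S))$ are governed by Kummer theory over $k(\zeta_\lambda)$. Each summand continues meromorphically, the rightmost pole of the summand indexed by a subset $S$ sitting at $s=\max_{S'\subseteq S}\frac{\dim D(S')+1}{|S'|}$; comparing with \eqref{Adef}, only the summands indexed by an $S$ containing a member of $\Sigma$ (see \eqref{Sigma}) reach the line $\Re s=A$, and there the order of the pole of $Z$ equals the number of Dedekind zeta factors of the fields $k_{\mathcal O}(\zeta_\lambda)$ that simultaneously acquire a pole at $s=A$ in the dominant summand --- there is no cancellation, as the relevant residues are positive.

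The last step, which I expect to be the real work, is the bookkeeping that turns ``number of independent Dedekind zeta factors reaching $s=A$'' into the orbit count $\bigl|\widetilde G\backslash\widetilde\Sigma_0\bigr|$. The inputs are: $\zeta_{k_{\mathcal O}}$ and $\zeta_{k_{\mathcal O'}}$ coincide when $\mathcal O$ and $\mathcal O'$ are $G$-conjugate, so $G$-conjugate data contribute a single pole rather than several; the Kummer contribution of a disconnected $D(S)$ decomposes, by Artin formalism over $k(\zeta_\lambda)$, into Hecke $L$-functions indexed by the fibered set $\widetilde\Sigma$ of \eqref{Sigmatilde}, on which the enlarged Galois group $\widetilde G$ acts through its images in $G$ and in $(\Z/\lambda\Z)^\times$ exactly as prescribed there; and the ``trivial'' data $(S,1)$ with $\dim D(S)=0$ removed in \eqref{Sigmatilde0} produce no pole at all, the corresponding stratum of $\cA(T)$ being a finite set of characters. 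Assembling these identifications, the independent pole-producing factors at $s=A$ are in bijection with the orbits $\widetilde G\backslash\widetilde\Sigma_0$, which yields $\ord_{s=A}Z(s)=\bigl|\widetilde G\backslash\widetilde\Sigma_0\bigr|$ and hence Theorem \ref{MT2}. The main obstacle is exactly this synthesis --- tracking, through the Möbius inversion and the Artin formalism, how the arithmetic of the fields $k_{\mathcal O}(\zeta_\lambda)$ and the combinatorics of the subgroups $D(S)$ conspire to give precisely this orbit count with no hidden cancellation; one also has to confirm that the leading Laurent coefficient of $Z$ at $s=A$ does not vanish, so that $\deg P$ is exactly $\bigl|\widetilde G\backslash\widetilde\Sigma_0\bigr|-1$ rather than smaller, which is a quantitative refinement of the non-vanishing of $P$ already asserted in Theorem \ref{MT1}.
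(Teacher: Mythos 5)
Your blueprint---compute $\ord_{s=A}Z(s)$ for $Z(s)=\int_{\cA(T)}\fc(\chi,r)^{-s}\,d\nu(\chi)$ and deduce $\deg P=\ord_{s=A}Z(s)-1$ by a Tauberian theorem---is indeed the paper's, and the identification of the pole order with an orbit count is broadly the right target. But there is a genuine gap in the middle of your argument: you treat $Z(s)$ as if it factored into a single Euler product that could then be unwound into Hecke $L$-series, whereas the global norm-units $U_N(T)$ obstruct any such factorization. The correct local-to-global decomposition (Proposition \ref{localtoglobal}, via Poisson summation) expresses $Z(s)$ as a \emph{sum} over $x\in U_N(T)$ of products $A(s,x)U(s,x)R(s,x)$ of archimedean, unramified, and ramified local generating series, each local factor twisted by $\chi\mapsto\chi(x)$; only the $x=1$ term behaves as you describe. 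To conclude $\ord_{s=A}Z(s)=\bigl|\widetilde G\backslash\widetilde\Sigma_0\bigr|$ one therefore needs three extra facts your proposal does not address: that each $x$-term has pole order $\le\bigl|\widetilde G\backslash\widetilde\Sigma_0\bigr|$ with Laurent coefficients bounded by those of the $x=1$ term (Theorem \ref{unrthm}\,(\ref{unrthm3})); that the $x=1$ term actually achieves this order with positive leading constant (Theorem \ref{unrthm}\,(\ref{unrthm2})); and that no cancellation can occur when summing over $x$. The last requires proving $A_v(\sigma,x)>0$ for real $\sigma$ near $A$ (Theorem \ref{archthm}\,(\ref{archthm3}), which rests on delicate one-variable Fourier positivity lemmas) and $R(\sigma,x)>0$ (Lemma \ref{RamLemma})---your one-sentence ``no cancellation, as the relevant residues are positive'' is precisely the hard step, and as Example \ref{product} shows, nontrivial units can genuinely contribute to $P$, so this is not a bookkeeping nuisance.

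A smaller discrepancy: the mechanism by which $k(\zeta_\lambda)$ and $\pi_0(D(S))$ enter is not Kummer theory but the Lang--Weil estimate applied to the fibers of $\alpha:T_S\to T$ (Lemma \ref{Acalc}) together with the identification of the leading constant as the number of Frobenius-fixed points of $\pi_0(D(S))$ under the twisted $\widetilde G$-action (Lemma \ref{AcalcClassic}), after which the Euler product is matched, up to absolutely convergent factors, with partial Artin $L$-functions of permutation representations of $\Gamma=\Gal(K'/k)$ acting on the fibered sets $\widetilde\Sigma_{a,b}$ and then with Hecke $L$-functions via Frobenius reciprocity. Note that $K'$ is a priori strictly larger than $\widetilde K=K(\zeta_\lambda)$, because it must split all geometric components of $\alpha^{-1}(x)$ for all $x\in U_N(T)$, and even the existence of a single finite $K'$ that works for all $x$ is a global finiteness lemma (Lemma \ref{agbnd}). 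Your alternative starting point $\fc(\chi,r)\asymp\prod_{\mathcal O}\fc_{k_{\mathcal O}}(\eta_{\mathcal O,\chi})$ via the transfer of Remark (\ref{autrem}) is plausible and would not affect the pole order (a bounded multiplicative discrepancy at finitely many places is harmless), but it is not what the paper does: the paper works throughout with the local abelian conductors of Definition \ref{modifiedAC} and never invokes the global functorial transfer.
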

Theorems \ref{MT1} and \ref{MT2} settle a problem of Sarnak, Shin and Templier \cite[(4)]{SarnakShinTemplier} for the universal family of automorphic characters on a torus in the greatest possible generality. 

\begin{rems}
\begin{enumerate}
\item The Weyl law for tori over number fields studied in this paper has a natural local-to-global structure. To solve the global counting problem we first address the corresponding local problem in sections \ref{nonarch_top} and \ref{archimedean_top} of the paper.  We obtain fairly complete results in the unramified non-archimedean and archimedean cases, but only need a preliminary result (see Theorem \ref{locaramthm}) in the ramified non-archimedean case for our global application. A natural further line of inquiry would be to more comprehensively investigate the local Weyl law for tori that have a ramified non-archimedean splitting field.  Similarly, it would be interesting to investigate the counting problem over positive characteristic global fields. 
\item We would also like to have an interpretation of the leading constant in the asymptotic formula in Theorem \ref{MT1} in terms of the geometry or arithmetic of $T$. While in principle our method yields an expression for the leading constant, it is not so easy to write down in explicit form.  One difficulty is that we cannot exclude the possibility that there are non-identity global units of $T$ that contribute to the leading term of the polynomial $P$ (see example \ref{product}). A second complication is our soft treatment of the local counting problem at ramified primes, as indicated in the previous remark. 
\item The invariant $A$ in Theorem \ref{MT1} and the power of $\log X$ in Theorem \ref{MT2} are sufficiently complicated as to suggest that any general answer to the Question at the beginning of this paper would be quite onerous to state in full generality.
\item A key tool in our proof of Thereom \ref{MT1} for general $r$ is a Brascamp-Lieb inequality due to Barthe \cite{BartheBLinequality}. To the author's knowledge, this theorem from analysis has not been used before in analytic number theory. We use the Brascamp-Lieb inequality in two places: first in the local archimedean counting problem in section \ref{archimedean} and then again in the global counting problem in section \ref{conclusion}. In the first instance, the linear forms correspond to the co-weights $\mu \in M$, and in the second instance the linear forms are rows of the regulator matrix of $T$. 
The use of the Brascamp-Lieb inequality suggests that the counting problem for a general reductive group is difficult indeed, as already in the case of tori one needs to go much beyond an explicit understanding of the local Langlands correspondence. 
\item \label{autrem}Another interpretation of the families of automorphic characters studied in this paper is the following. Let $T$, $K/k$ be as above, $T^\vee=\Hom (X_*(T),\G_m)$ be the \emph{algebraic dual torus}, and $S = \res_{K/k} \G_m$.  Given a faithful irreducible algebraic representation $r$ of $\LT$, one obtains an injective morphism $i: T^\vee \to S$ by restriction of $r$. Such an injective map $i$ gives rise to an $L$-homomorphism $\LT \to {}^LS$, and so Langlands predicts that there exists a transfer of automorphic characters $i_*: \cA(T/k) \to \cA(\GL_1/K)$. Conversely, given $i:T^\vee \to S$, there exists a faithful irreducible algebraic representation $r$ of $\LT$ extending $i$ such that $L(s, \chi,r ) = L(s,i_*\chi)$ for all $\chi \in \cA(T)$, where the left hand side is the Langlands $L$-function and right hand side is the Hecke $L$-function. 
\item The automorphic counting problem outlined at the beginning of this paper may have applications to the Ramanujan conjecture on general reductive groups (see the surveys \cite{SarnakNotesonRamanujan} and \cite{ShahidiRamanujanConj}). Outside the case $G=\GL_n$, the naive Ramanujan conjecture is known to be false, but all automorphic forms for which it fails are expected to arise as functorial transfers from lower rank groups. For analytic applications, one would like to show that the Ramanujan conjecture cannot fail ``too often'' in a quantitative sense in terms of analytic conductor.  
One way to do so would be to estimate the sizes of subfamilies of $\cA(G)$ coming from functorial transfers of automorphic characters of tori, and the so present paper paves the way for putting the above program into action.
\item The shape of the error term in Theorem \ref{MT1} comes from Vinogradov-Korobov-strength zero-free regions for $L$-functions of Hecke characters due to Coleman \cite{Coleman}, but these are not essential to our method. This zero-free region is merely the best currently available result in the literature, and e.g.\ using instead the classical zero-free region for Hecke $L$-functions one obtains an asymptotic formula in Theorem \ref{MT1} with the weaker error term $O_{T,r,\nu}(X^{A}\exp(-c\sqrt{\log X}))$. 
\end{enumerate}
\end{rems}

\subsection{Examples}\label{examples}
\begin{example} Let $T = \GL_1 = \G_m$. Then $\widehat{T} = \C^\times$ and $G$ is trivial. We choose $r={\rm id}= z:\C^\times \to \C^\times$ as representation of the $L$-group. Then $\cA(T)$ is the set of primitive Hecke characters over $k$, and $\fc(\chi,r)$ is the standard notion of analytic conductor of a Hecke character, which we denote by $C(\chi)$ in all of the examples that follow.  The multiset of co-weights is the singleton $M=\{z\}$, and $2^{\{z\}}= \{\varnothing, \{z\}\}.$ We have $D(\varnothing) = \{1\}$ and $D(\{z\}) = \C^\times,$ so we have $A = 2$, and $\deg P = 0$. Therefore there are $\sim c_k X^2$ primitive Hecke characters of analytic conductor bounded by $X$ for some constant $c_k>0$. An inspection of Theorem \ref{unrthmwithArtin} shows that the only Artin $L$-function relevant to Theorem \ref{MT1} when $T=\G_m$ and $r={\rm id}$ is the Dedekind zeta function of $k$, so we obtain a power saving error term in the asymptotic count. For general number fields $k$, already this result seems to be new. A similar result with a modified notion of analytic conductor has been given recently in a preprint of  Brumley, Lesesvre, and Mili\'cevi\'c \cite{BrumleyLesesvreMilicevic}.
\end{example}
\begin{example}
Let $T = \GL_1 = \G_m$ as above, but take as representation the $1001$-dimensional representation $r=z^{\oplus 1001}: \C^\times \to \GL_{1001} (\C)$.  
The set $\cA(T)$ consists of Hecke characters $\chi$ as above, whereas $r$ assigns to $\chi$ the conductor $C(\chi)^{1001}$. The multiset of co-weights is $\{z,\ldots,z\}$, where $z$ is repeated 1001 times, and only the full set has $D(S)\neq \{1\}.$ Thus, $A=2/1001$, and one recovers that there are $\sim c_{k,r} X^{2/1001}$ Hecke characters of $r$-conductor less than $X$. This shows that the power of $X$ in Theorem \ref{MT1} can be arbitrarily small.
\end{example}
\begin{example}
Keep $T = \GL_1 = \G_m$ as above, but take as representation $r= z^2 \oplus z^3: \C^\times \to \GL_2 (\C)$.  This is a $2$-dimensional faithful representation of the $L$-group.  
The set $\cA(T)$ is as in the previous two examples, but now $r$ assigns to $\chi$ the conductor $C(\chi^2)C(\chi^3)$. 
The set of co-weights is $\{z^2,z^3\}$, and the subsets $S$ and groups $D(S)$ are $$S = \varnothing, \quad \{z^2\}, \quad \{z^3\}, \quad \{z^2,z^3\},$$ $$ D(\varnothing) = \{1\}, \quad D(\{z^2\}) = \mu_3,\quad D(\{z^3\}) = \pm 1, \quad D(\{z^2,z^3\}) = \C^\times.$$
Therefore $A=1$, and the maximum is attained on all $S \neq \varnothing$. We have the Galois group $G = 1$, but the enlarged Galois group is $\widetilde{G} \simeq (\Z/6\Z)^\times$. We have $$\widetilde{\Sigma} = \{ (\{z^2\},1),  (\{z^2\},\zeta_3), (\{z^2\},\overline{\zeta_3}),  (\{z^3\},1),  (\{z^3\},-1), (\{z^2, z^3\},1)\},$$ $$ \widetilde{\Sigma}_0 = \{ (\{z^2\},\zeta_3), (\{z^2\},\overline{\zeta_3}),    (\{z^3\},-1), (\{z^2, z^3\},1)\}.$$ The group $\widetilde{G}$ acts on $\widetilde{\Sigma}_0$ by swapping $(\{z^2\},\zeta_3)$ and $(\{z^2\},\overline{\zeta_3})$ and fixing $(\{z^3\},-1)$ and $ (\{z^2, z^3\},1)$. Therefore there exists a constant $c_{23}$ so that $$\nu(\{\chi \in \cA(\GL_1) : C(\chi^2)C(\chi^3)\leq X \}) \sim c_{23} X (\log X)^2.$$
\end{example}
\begin{example}\label{product}
Let $T = \G_m \times \G_m$.  This torus has $L$-group equal to $\C^\times \times \C^\times$.  Take the faithful $2$-dimensional representation $z_1 \oplus z_2$.  Classically, this corresponds to counting pairs of primitive Hecke characters $(\chi_1,\chi_2)$ with the conductor $C(\chi_1)C(\chi_2)$.  There are $\sim c_{12} X^2 \log X$ pairs of Hecke characters of conductor bounded by $X$, for some $c_{12}>0$.

For a general torus $T$, we will later see in Proposition \ref{localtoglobal} that there is a term which potentially contributes to the main term of $\nu(\{\chi \in \cA(T): \fc(\chi,r)\leq X\})$ as $X \to \infty$ for each global unit of $T$. In the example $T = \G_m \times \G_m$ over $\Q$, there are four global units: $(1,1)$, $(1,-1)$, $(-1,1)$ and $(-1,-1)$. It is interesting to note that the contribution of $(1,1)$ is of size $X^2 \log X$, each of $(1,-1)$ and $(-1,1)$ make a contribution of size $X^2$, and $(-1,-1)$ contributes a smaller a power of $X$.  
\end{example}
\begin{example}\label{eglog}
 Let $f$ be an irreducible separable polynomial of degree $m$ over a general field $k$. Let $\alpha$ be a root of $f$ and let $K$ be the splitting field of $f$. Then $k(\alpha)/k$ is a degree $m$ extension with Galois closure $K$. Let $G=\Gal(K/k)$ and
$T= (\res_{k(\alpha)/k} \G_m)/ \G_m,$ where $d: \G_m\to \res_{k(\alpha)/k} \G_m$ is the diagonal embedding.  Write $H=\Hom_k(k(\alpha),k^{\rm{sep}})$ for the set of embeddings of $k(\alpha)$ in its separable closure that fix $k$. We have in particular that $|H|=m$.
 One has an identification \begin{equation}\label{Res_char_lattice}X^*(\res_{k(\alpha)/k} \G_m) \simeq \Z^{H}\end{equation} sending $(a_\sigma)_{\sigma \in H}$ to the morphism $\chi:\res_{k(\alpha)/k} \G_m \to \G_m$ that is given on $k^{\rm sep}$ points by $$\chi: (k(\alpha) \otimes_k k^{\rm sep} )^\times \to (k^{\rm sep})^\times$$ $$c \otimes r \mapsto \left( \prod_{\sigma \in H}\sigma(c)^{a_\sigma}\right)r,$$ see \cite[Lem.\ 12.61]{MilneAGS}. Here, $G$ acts on $\Z^{H}$ by permuting embeddings, i.e.\ by permuting coordinates. Write $\Z_0^H :=\{ (a_\sigma) \in \Z^H: \sum_{\sigma\in H} a_\sigma =0\}$. We have the following commutative diagram with exact rows and vertical maps given by \eqref{Res_char_lattice} 
 $$ \xymatrix{ 0 \ar[r] & X^*(T) \ar[r] & X^*(\res_{k(\alpha)/k} \G_m) \ar[d]^{\simeq} \ar[r]^-{d^*} & X^*(\G_m) \ar[d]^\simeq \ar[r] & 0\\
 0 \ar[r] & \Z_0^H \ar[r] & \Z^H \ar[r]^{\sum_{\sigma}} & \Z \ar[r] & 0.}$$
Thus the isomorphism \eqref{Res_char_lattice} restricts to $X^*(T)\simeq  \Z_0^H$. 
 The evaluation map gives a perfect pairing $X^*(T) \otimes_\Z X_*(T) \to \Z$ between character and cocharacter lattices, so that $\Hom(X_*(T),\Z) \simeq X^*(T)$ and $$\widehat{T} = \Hom(X_*(T),\Z) \otimes_{\Z} \C^\times \simeq X^*(T) \otimes_{\Z} \C^\times\simeq \{(z_\sigma) \in \left( \C^\times \right)^{H}: \prod_{\sigma \in H} z_\sigma = 1\} .$$ See section \ref{background:tori} for more on these standard isomorphisms.

Now let us choose an enumeration $\sigma_i$, $i=1,\ldots, m$ of the embeddings $H$. Define $r:\LT \to \GL_{m}(\C)$ by setting $r((1,\cdots,1)\rtimes \sigma)$ to be the permutation matrix in $\GL_m(\C)$ defined by $\sigma \in G\subseteq S_m$, and  $$r(z \rtimes 1) = \left( \begin{array}{ccc} z_1 & & \\ & \ddots & \\ & & z_m \end{array}\right).$$ The set of co-weights $M$ is $\{z_i\}_{i=1,\ldots,m}$, where $z\mapsto z_i$ represents projection onto the $i$-th coordinate. For any $S \in 2^{\{z_i\}}$ we have $$D(S) = \{ z\in \widehat{T}: z_i =1 \text{ if } i \in S^c \text{ and } \prod_{i \in S} z_i=1\},$$
so that each $D(S)$ is connected and $\dim D(S) = |S|-1$ if $S\neq \varnothing$. Therefore, $A=1$, $\Sigma = \{S \subseteq M: S\neq \varnothing\}, \lambda =1$, and so $\widetilde{G} = G, \widetilde{\Sigma}=\Sigma$, and $\widetilde{\Sigma}_0= \{ S\subseteq M: |S|\geq 2\}$. 

Suppose now that $k$ is a number field and $G\simeq S_m$.  Then $$ \widetilde{G} \backslash \widetilde{\Sigma}_0 = \big\{ \{S:|S|=2\}, \ldots, \{S: |S|=m\}\big\}$$ and so we have $$\nu(\{ \chi \in \cA(T) : c_r(\chi) \leq X\}) \sim c_{r,T} X (\log X)^{m-2}.$$ This is an example of logarithms arising in the asymptotic formula for natural reasons, and answers a question of Sarnak \cite[Question 1]{SarnakFamilyofLfcns}. Also note that if e.g. $[k(\alpha):k]=4$ and $G \not \simeq S_4$ the asymptotic is $\sim c_G X (\log X)^3$ for some constants $c_G$, whereas if $G\simeq S_4$ it is $\sim c_{S_4} X (\log X)^2$. This shows that the power of $\log X$ in the asymptotic formula is sensitive to the arithmetic of the torus.
\end{example}

\subsection{Relation to the Manin conjecture}\label{sec:manin}
The automorphic counting Question introduced at the outset of this paper is reminiscent of the Manin conjecture on the number of rational points of bounded height on a Fano variety. We briefly review the latter to point out a few of its features. For a more developed survey of the Manin conjecture, see e.g.\ \cite{PeyreSurvey} or \cite{TschinkelSurvey}.

Let $V$ be a Fano variety over $k$, and $\cL$ a very ample line bundle. Let $s_0, \ldots, s_m$ be global sections of $\cL$ with no common zeros, and $\phi = \phi_{\cL,s_0, \ldots, s_m}: V \to \P^m$ be the natural morphism associated to these data. Let $H(x)$ be the absolute exponential Weil height on $\P^m(k)$. Then $h_\phi(x)= h(\phi(x))$ is a height function on $V(k)$ relative to $\cL, s_0,\ldots, s_m$. If $s_0',\ldots,s_m'$ is another choice of global sections for the same $\cL$ with $\phi'=\phi_{\cL, s_0',\ldots,s_m'}: V \to \P^m$, then $h_\phi(x) = h_{\phi'}(x) + O(1)$ as $x \in V(\overline{k})$ varies \cite[Thm. 3.1]{SilvermanHeights}. Following Batyrev-Manin \cite{BatyrevManin}, for $U\subseteq V$ a Zariski open let $$N_{U}(\cL,X) = \#\{x \in U(k): h_\phi(x)\leq X\}.$$ Let $N_{\rm eff}^1(V)$ be the closed cone of effective divisors. 
\begin{conjecture}[Batyrev-Manin Conj.~C$'$] 
Let $V$ be a Fano variety with canonical bundle $\omega_V$ not effective. If $U$ is sufficiently small, we have $$N_U(\cL,X) \sim c X^{\alpha(\cL)} (\log X)^{t(\cL)-1}$$ as $X \to \infty$ for some positive constant $c$. Here, $$\alpha(\cL) =  \inf \{\lambda \in \R: \lambda [ \cL] + [\omega_V] \in N_{{\rm eff}}^1\},$$ and $t(\cL)$ is the codimension of the minimal face of $\partial N_{\rm eff}^1$ containing $\alpha(\cL)[ \cL] + [\omega_V]$.
\end{conjecture}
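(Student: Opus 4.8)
In full generality the conjecture is open; the plan below is the strategy that has actually succeeded in the principal known cases (toric varieties, after Batyrev--Tschinkel; equivariant compactifications of $\G_a^n$ and of other algebraic groups, after Chambert-Loir--Tschinkel and others; flag varieties, after Franke--Manin--Tschinkel), and it is the natural thing to attempt in general. The idea is to pass from the counting function to the associated \emph{height zeta function}
$$Z_U(\cL,s) = \sum_{x\in U(k)} h_\phi(x)^{-s},$$
which converges for $\Re s$ large, and to recover $N_U(\cL,X)$ from the analytic behaviour of $Z_U$ near its abscissa of convergence by a Tauberian theorem. Concretely, one wants: meromorphic continuation of $Z_U(\cL,s)$ slightly past the line $\Re s = \alpha(\cL)$; on that line, a unique pole at $s = \alpha(\cL)$; of order exactly $t(\cL)$; with positive leading Laurent coefficient; plus enough growth control in vertical strips. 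A Delange--Ikehara type Tauberian theorem then gives $N_U(\cL,X) \sim c\, X^{\alpha(\cL)}(\log X)^{t(\cL)-1}$ with $c>0$. This mirrors how, in the body of this paper, the polynomial $P$ of Theorem~\ref{MT1} is read off from the pole of the conductor zeta function of $\cA(T)$: the invariant $A$ there plays the role of $\alpha(\cL)$, and $\deg P$ the role of $t(\cL)-1$.

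First I would fix the geometry. Replace $V$ by a smooth projective model on which the complement of a fixed Zariski-open $V_0$ (containing the eventual $U$) is a strict normal crossings divisor $\sum_{i\in I} D_i$; write $[\cL] = \sum_i a_i[D_i]$ and, by adjunction, $[\omega_V] = -\sum_i b_i[D_i]$ with $b_i = 1 + (\text{discrepancy of } D_i)$. When $N^1_{\mathrm{eff}}$ is simplicial, generated by the $[D_i]$ (the toric situation), one then has $\alpha(\cL) = \max_i b_i/a_i$ and $t(\cL) = \#\{i: b_i/a_i = \alpha(\cL)\}$; in general these must be matched against the face structure of $\partial N^1_{\mathrm{eff}}$ using the Batyrev--Manin--Peyre dictionary between boundary combinatorics and the effective cone, which is itself part of the work. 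Next, express $Z_U(\cL,s)$ as an integral over $V(\A)$ (or over the adelic points of a group acting with a dense orbit) of an adelic height function, and factor it as an Euler product of local height integrals. Each local factor is a $\fp$-adic Igusa-type integral whose behaviour in $s$ is that of a product of local Euler factors of Dedekind/Hecke $L$-functions indexed by the boundary components met $\fp$-adically, so the global product behaves near $\Re s = \alpha(\cL)$ like $\prod_{i:\,b_i/a_i=\alpha(\cL)} \zeta_k(a_is - b_i + 1)$ times something holomorphic and non-vanishing in a neighbourhood --- producing the pole of order $t(\cL)$.

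When a (generically) transitive action of an algebraic group $\haut$ is available, the sum over $x \in U(k)$ can genuinely be carried out. In the toric case $\haut = T$ one applies Poisson summation on $T(\A)$: $Z_U$ becomes a sum over automorphic characters of $T$ of local height integrals twisted by the character; the trivial character yields the main pole computed above, and the task is to show the remaining characters contribute something holomorphic past $\Re s = \alpha(\cL)$. In the non-abelian cases Fourier analysis is replaced by the full spectral decomposition, with Eisenstein series carrying the boundary contributions. The remaining steps are then: confirm that the pole location and order produced analytically coincide with the geometric $\alpha(\cL)$ and $t(\cL)$; check positivity of $c$ (a convergent product of local densities against an archimedean factor and a cohomological factor --- Peyre's constant); bound $Z_U$ in vertical strips well enough to apply the Tauberian theorem with a $\log$-power (ideally power-saving) error; and, at the end, shrink $U$ to excise accumulating subvarieties (lines on a cubic surface, etc.) whose point counts would otherwise dominate.

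The main obstacle is that the whole scheme rests on having \emph{either} a group action supplying harmonic analysis \emph{or} a variety of low enough degree for the circle method (Birch's theorem for smooth complete intersections in many variables) --- there is no technique covering a general Fano $V$. Even with a group action, the genuinely hard analytic point is the continuation past $\Re s = \alpha(\cL)$ together with vertical-strip bounds: the non-trivial characters (respectively the continuous spectrum / Eisenstein contributions) must be shown to be honestly holomorphic there, which needs real information about $L$-functions and, at the archimedean places, delicate oscillatory-integral estimates --- the analogue, here, of this paper's appeal to Barthe's Brascamp--Lieb inequality. A second, more structural difficulty is the clause ``$U$ sufficiently small'': which thin subsets must be removed, and whether removing a Zariski-closed set even suffices rather than Peyre's finer thin-set version, is not understood in general, and the naive formulation has known counterexamples. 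So the honest plan is: execute the height-zeta-function argument above in the toric and group-compactification cases, where every step can be made rigorous; the general conjecture remains open.
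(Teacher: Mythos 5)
This statement is not a theorem of the paper at all: it is the Batyrev--Manin Conjecture~C$'$, quoted verbatim in subsection~\ref{sec:manin} solely to set up an analogy with Theorems~\ref{MT1} and~\ref{MT2}, and the paper neither proves it nor claims to. So there is no proof in the paper to compare yours against. You correctly recognize this, and your outline of the height-zeta-function strategy (Euler-product factorization of local height integrals, Poisson summation or spectral decomposition over a group acting with dense orbit, comparison with Hecke $L$-functions near $\Re s=\alpha(\cL)$, and a Tauberian theorem) is an accurate description of how the known cases are handled; it is also, as you note, structurally parallel to what the paper actually does for $\cA(T)$ -- the generating series $Z(s)$, the local-to-global decomposition over the norm-units, the identification of the pole at $s=A$ of order $|\widetilde{G}\backslash\widetilde{\Sigma}_0|$ via Hecke $L$-functions, and Perron's formula. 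But your text is a survey of a strategy, not a proof, and you say so yourself in the final paragraph: the conjecture remains open in general (and the ``$U$ sufficiently small'' formulation is known to require care). For the purposes of this paper nothing more is needed, since the conjecture is invoked only as motivation; just be aware that presenting this as a ``proof proposal'' for the displayed statement would be misleading in any context where a proof is actually expected.
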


The analogy between the automorphic counting question and the Manin conjecture is as follows, and should be viewed as an expression of the deep conjectures of Langlands. The role of the ambient space is played by $\P^m(k) \leftrightarrow \cA(\GL_m)$, into which $V(k) \leftrightarrow \cA(G)$ embeds. The embedding is given by the data $\cL,s_0,\ldots,s_m$ on the Manin side, and (conjecturally) on the automorphic side by $r:{}^LG \to \GL_m(\C)$. Indeed, $\cL,s_0,\ldots,s_m$ determine a morphism $V \to \P^m$ whereas the representation $r$ (conjecturally) determines $r_*: \cA(G) \to \cA(\GL_m)$. The absolute exponential Weil height $H(x)$ for $x \in \P^m(k)$ on the Manin side corresponds to the analytic conductor $\fc(\pi)$, $\pi \in \cA(\GL_m)$ on the automorphic side. The height function $h_\phi(x)$ relative to $\phi$ corresponds to the analytic conductor $\fc( \pi,r)$ relative to $r$ as in \eqref{acG}. 

The invariant $\alpha(\cL)$ appearing in the Manin conjecture and the invariant $A$ appearing in Theorem \ref{MT1} both are expressible in terms of combinatorial geometry problems, see the computations with matroids in section \ref{archimedean} of this paper. 

At least in the special case of tori, Theorems \ref{MT1} and \ref{MT2} suggest that $t(\cL)$ on the Manin side corresponds to the set of orbits $\widetilde{G} \backslash \widetilde{\Sigma}_0$. In both cases, the power of log comes from the possible embeddings of $V$ or $\cA(T)$ in ambient space that are ``extremal'' in the combinatorial geometry problem defining $\alpha(\cL) $ or $ A$. 

The leading constant in Manin's conjecture has been given a conjectural interpretation in terms of ad\`elic volumes by Peyre \cite{Peyre} and Chambert-Loir and Tschinkel \cite{CLTsch}. For a discussion of the significance of the leading constant in the automorphic counting problem, see \cite[\S1.5]{BrumleyMilicevic}.  

While the analogy presented here is striking, it only goes so far. In Manin's conjecture, there is a canonical choice of $\cL$, that is, one takes $\cL = - \omega_V$, the anti-canonical bundle. In the automorphic setting, there is apparently no canonical choice of complex representation $r$ of the $L$-group of $G$. Moreover, in the setting of Manin's conjecture the set of possible height functions corresponds to the ample cone of $V$, whereas in the automorphic setting, the possible height functions correspond to the set of faithful finite-dimensional complex representation of ${}^LG$. The later takes into account both the finite-dimensional representation theory of complex connected reductive groups and of global Galois groups, so seems to afford a more intricate set of height functions. Lastly, we remark that the invariant $t(\cL)$ in Manin's conjecture is an essentially global invariant of $V$. On the other hand, $\widetilde{G} \backslash \widetilde{\Sigma}_0$ has a somewhat more local nature, as we shall see in section \ref{sec:localtoglobal} of this paper.

\subsection{Outline of the proof}
In order to make direct use of the local to global nature of the counting problem we work with the global \emph{conductor zeta function} $Z(s)$ of $T,r$, that is we define $$Z(s) := \int_{\cA(T)} \frac{1}{\fc(\chi,r)^s}\,d\nu(\chi).$$ 
For $c>0$ let $\mathcal{R}=\mathcal{R}(c)=\mathcal{R}(A,c)\subset \C$ be the region \begin{equation}\label{calRdef}\mathcal{R}=\mathcal{R}(c)= \mathcal{R}(A,c)= \Big\{ \sigma+it \in \C : \sigma > A- \frac{c}{(\log (|t|+3))^{2/3}(\log \log (|t|+16))^{1/3}}\Big\}.\end{equation} The main goal of this paper is to prove the following theorem.
\begin{theorem}\label{Zsthm}
Suppose that $r\vert_{\widehat{T}}$ is faithful. The generating series $Z(s)$ converges absolutely for $\real(s)>A$ and extends to a meromorphic function in the open half plane $\real(s)>A-\min(2^{-1}, m^{-2})$. There exists $c=c(T,r)>0$ such that the function $Z(s)$\begin{itemize} \item has a pole at $s=A$ of order $|\widetilde{G}\backslash \widetilde{\Sigma}_0|$ and no other poles in $\mathcal{R}(c)$ (respectively, the half-plane $\real(s)>A-\min(2^{-1}, m^{-2})$ if the  Artin conjecture holds), \item grows slowly in $\mathcal{R}$; i.e.\ there exists $J=J(T,r)>0$ and $0<c'=c'(T,r)\leq c$ such that for any $s=\sigma+it \in \mathcal{R}(c')$ avoiding any small neighborhood $U$ of $A$ we have $$Z(\sigma + it) \ll_{T,r,U} (\log (|t|+3))^J,$$ 
and \item has moderate growth in a vertical strip if the  Artin conjecture holds, i.e.\ there exists $K=K(T,r)>0$ such that for any $s=\sigma+it$ with $\sigma > A-\min(2^{-1},m^{-2})$ avoiding any small neighborhood $U$ of $A$ we have $Z(\sigma+it) \ll_{T,r,\sigma, U} (1+|t|)^K.$ 
\end{itemize}
\end{theorem}
 Here and throughout the paper, when we ask that the Artin conjecture holds, we mean for the finitely many representations appearing in Theorem \ref{unrthmwithArtin}. Theorems \ref{MT1} and \ref{MT2} follow from Theorem \ref{Zsthm} by an appropriate Tauberian argument, see \cite[Ch.\ III \S 11]{InghamPrimeNumbers} or \cite[Thm.\ A.1]{CLTsch}. 

An application of Poisson summation (Lemma \ref{Psum}) decomposes $Z(s)$ as a sum over global units $x \in U(T)$ of generating series $Z(s,x)$ that factor over places $v$ of $k$, i.e.\  \begin{equation}\label{ZZsx} Z(s) = \sum_{x  \in U(T)}Z(s,x) \quad \text{ with } \quad Z(s,x) = \prod_{v\mid \infty} A_{k_v}(s,x) \prod_{v \nmid \infty} N_{k_v}(s,x)\end{equation}
for certain local archimedean and non-archimedean generating series $A_{k_v}= A_{k_v}(s,x)$ and $N_{k_v}=N_{k_v}(s,x)$. See Proposition \ref{localtoglobal} for the precise statement, which is a minor modification of \eqref{ZZsx}. 

The location and order of the rightmost pole of each $Z(s,x)$ only depends on all but finitely many of the $N_{k_v}(s,x)$, in particular, only on those $v<\infty$ which are unramified in the extension $K/k$ splitting $T$. A main idea of this paper is to perform the analysis of $A_{k_v}$ and $N_{k_v}$ on the Galois side of the local Langlands correspondence for tori \cite{LanglandsAbelian}, which is particularly simple when the torus splits over an unramified extension of non-archimedean local fields, see Proposition \ref{PairingProp}.  

An outline of the argument in this paper is then as follows. 
\begin{enumerate}
\item\label{outline1} Show for each place $v$ of $k$ that the local series $N_{k_v}$ and $A_{k_v}$ converge absolutely for $\real(s)> A-m^{-2}$, see sections \ref{ramified} and \ref{archimedean}. 
\item\label{outline2} Compute $N_{k_v}$ for $v$ unramified finely enough to obtain a group-theoretic description of its leading terms as a local Dirichlet series, see section \ref{unramified}.
\item\label{outline3} Compare the product over unramified places $\prod_{v \not \in B} N_{k_v}$ with a finite product of global Artin $L$-functions and apply the Brauer induction theorem or Artin conjecture, see section \ref{unramifiedcounting}.
\item\label{outline4} Show that the leading Laurent series coefficients of $Z(s,x)$ at $s=A$ are positive for all $x \in U(T)$, see Theorem \ref{archthm}\eqref{archthm3}, Theorem \ref{unrthm}\eqref{unrthm2}\eqref{unrthm3}, and Lemma \ref{RamLemma}. 
\item\label{outline5} Show that the sum over $U(T)$ in \eqref{ZZsx} converges absolutely, see section \ref{conclusion}.
\end{enumerate}
As previously remarked, the Brascamp-Lieb inequality (Theorem \ref{BL}) enters the picture in step \eqref{outline1} for archimedean places, and leads to a problem in combinatorial optimization of convex polyhedra. To resolve this problem, we use the theory of matroids, see section \ref{matroidbackground}, especially Theorem \ref{polyinter}. 

Step \eqref{outline2} is the heart of the paper. Here we use the detailed conductor analysis from section \ref{sec:non-arch_background}, group theory, some algebraic geometry, and Lang-Weil bounds. The group theoretic description of the unramified terms thus obtained is crucial in their collection into Artin $L$-functions in step \eqref{outline3}.

The most difficult part of step \eqref{outline4} again turns out to be the archimedean places, see section \ref{positivity}.  It is important for our method that the function $x \mapsto (1+|x|)^{-\sigma}$ on $\R$ (among others) has a non-negative Fourier transform, where the $1+|x|$ here arises from the Iwaniec-Sarnak definition of the archimedean analytic conductor, see Lemmas \ref{positivein1dim}, \ref{positivein1dim2} and \ref{positivein1dim3}. 

The Brascamp-Lieb inequality is used a second time in step \eqref{outline5} of the proof, where it is applied in a global context with respect to the regulator matrix of $T$. 

\subsection{Index of notation}
\begin{center}
 \begin{tabular}{@{} | p{3cm} |  p{9.7cm}  |  p{2.7cm} |} 
\hline
Notation & Definition & Location \\
\hline\hline
$k$ & a general field in \S \ref{background}, a number field in \S \ref{intro}, \ref{sec:localtoglobal} and \ref{final} &  \S \ref{statementofresults}, \ref{background:tori}\\
$\A, F$ & the ring of ad\`eles of a number field $k$, a local field & \S \ref{statementofresults}, \ref{sec:LLC}\\
$T$, $n$ & an algebraic torus over $k$ or $F$ of dimension $n$  & \S \ref{statementofresults}, \S \ref{background:tori}\\
$\cA(T), \nu$ & the Pontryagin dual of $T(k) \backslash T(\A)$, a Haar measure on it & \S \ref{statementofresults}\\
$X^*(T), X_*(T)$ & groups of algebraic characters and coharacters of $T$ & \S\ref{statementofresults}, \S \ref{background:tori}\\
$K$, $L$, $G$ & the splitting field of $T$ over $k$, or over $F$, its Galois group & \S\ref{statementofresults}, \S \ref{background:tori}\\
$\widehat{T}$ & the complex dual torus: $\Hom(X_*(T),\C^\times)$ &\S\ref{statementofresults}, \eqref{Thatdef}\\
$\LT$ & the $L$-group of $T$: $\widehat{T} \rtimes G$ & \S\ref{statementofresults}, Def \ref{LTdef}\\
$r, m$ & an $m$-dimensional complex algebraic representation of $\LT$ & \S\ref{statementofresults}, \S \ref{background:tori}\\
\hline
\end{tabular}

 \begin{tabular}{@{} | p{3cm} |  p{9.7cm}  |  p{2.7cm} |} 
\hline
Notation & Definition & Location \\
\hline\hline
$\fc(\chi,r)$ & the (local or global) analytic conductor with respect to $r$ & Defs \ref{artinc1}, \ref{archdef}, \ref{GlobalAC}\\
$\mu, M$ & a co-weight of $r$, the multi-set of co-weights of $r$ & \eqref{rT}, Def \ref{def_coweights}\\
$S, 2^M$ & a subset of $M$,  the set of all subsets of $M$ & \S \ref{statementofresults}, \S \ref{sec:Lgroupsandreps}\\
$D(S)$ & a complex diagonalizable subgroup of $\widehat{T}$ & \eqref{DS}, \S \ref{sec:Lgroupsandreps}\\
$A$ & the power of $X$ in the main theorem & \eqref{Adef}\\
$\Sigma$ & set of nonzero $S\subseteq M$ which attain $A$ & \eqref{Sigma}\\
$\lambda$ & $\lcm_{S \in \Sigma} |\pi_0(D(S))|$ & \eqref{lambda}\\
$\widetilde{K}$, $\widetilde{G}$ & $K$ adjoin the $\lambda$th roots of unity, $\Gal(\widetilde{K}/k)$ & \S \ref{statementofresults}\\
$\widetilde{\Sigma}$ & a fibered set with base $\Sigma$ and fiber $\pi_0(D(S))$ & \eqref{Sigmatilde}\\
$\widetilde{\Sigma}_0$ & $\widetilde{\Sigma}$ with the subset $\{(S,1): \dim D(S)=0\}$ deleted & \eqref{Sigmatilde0}\\
$C(\chi)$ & the analytic conductor of a Hecke character $\chi$ & \S \ref{examples}\\
$\res_{K/k}$ & restriction of scalars, i.e.\ ``Weil restriction'' &  \S \ref{examples}\\
$\mathcal{R}(c)=\mathcal{R}(A,c)$ & a region of analytic continuation for $Z(s)$, $Y(s)$ or $U(s,x)$ & \eqref{calRdef}\\
$k^{\rm sep}$, $G_k$ & a separable closure of $k$, the absolute Galois group of $k$ & \S \ref{background:tori}\\
$X_1 \times_{S} X_2$ & the fiber product of schemes $X_1,X_2$ over a base $S$ & \S \ref{background:tori}\\
$A^\wedge$ & the Pontryagin dual of a locally compact abelian group $A$ & \S \ref{sec:LLC}\\
$W_F$, $W_{L/F}$ & Weil group and relative Weil group of a local field & \S \ref{sec:LLC}, \eqref{relative_weil_group}\\
$\sigma$ & the canonical map $W_{L/F} \to \Gal(L/F)$ & \eqref{WLFes}\\
$\xi$, $\varphi$ & a cohomology class and a Langlands parameter $\varphi = \xi \rtimes \sigma$ & \S \ref{sec:LLC}\\
$\Phi(T)$ & the set of Langlands parameters of $T$ &  \S \ref{sec:LLC}\\
$H_M$ & a polytope in $\R^m_{\geq 0}$ given by an $m\times n$ matrix $M$ & \eqref{CLLcondition1}, \eqref{CLLcondition2}\\
$q_F$ & the cardinality of the residue field of $F$ & \S \ref{ArtinC}\\
$c(\rho)$, $\tilde{c}(\rho)$ & the Artin and abelian conductors of a representation $\rho$ & Def \ref{propdef}, \ref{modifiedArtin}\\
$c_\cF(\rho)$ & the conductor of a representation $\rho$ wrt a filtration $\cF$ & Def \ref{filtcond}\\
$\tilde{\fc}(\chi,r)$ & ``abelian'' local analytic conductor & Def \ref{modifiedAC}\\
$\cO_F$, $\cO_L$, $\fp$, $\fP$, $f$, $\ell$ & integers, maximal ideals, and residue fields of $F$ and $L$ & \S \ref{norm}\\
$G^v$, $W_{L/F}^v$ & higher ramification groups with upper-numbering & \S \ref{ArtinC}\\
$\mathcal{U}$, $\cO_L^{(v)}$ & the standard filtration on $\cO_L^\times$ & \S \ref{ArtinC}\\
$\psi_{E/F},\phi_{E/F}$ & the Hasse-Herbrand functions, see \cite[Ch.\ IV \S 3]{SerreLF}& \S \ref{ArtinC}\\
$\cT$ & the canonical integral model of a torus $T$ over a local field & \S \ref{norm}\\
$N$ & the norm map, i.e. the product of Galois conjugates & \eqref{eq:norm}\\
$\widehat{H}^n(G,M)$ & Tate cohomology groups of a $G$-module $M$ & \S \ref{norm}\\
$R$ & the restriction to $\cO_L^\times$ map out of $H^1(W_{L/F},\widehat{T})$ & \eqref{eq:R}\\
$H_1(G,M)$ & $1$st group homology group of a $G$-module $M$ & \eqref{H_1def}\\ 
$N_F(s,x)$ & local generating series for a non-archimedian field & \eqref{NFsxdef}\\
$\N$ & the non-negative integers & \S \ref{unramified}\\
$P_{\leq}(c)$ & a finite subgroup of $\Hom_G(\cO_L^\times,\widehat{T})$ & \eqref{Pleqc}\\
$P_{=}(c)$ & a ``sharp'' subset of $P_\leq (c)$ & \eqref{P=c}\\
$\Pi_\leq(c,x)$, $\Pi_=(c,x)$ & character sums over $P_\leq(c)$ and $P_=(c)$ & \eqref{PileqPi=}\\
$D_k(c)$ & a generalization of $D(S)$ & \eqref{Dkc}\\
$p(V)$ & the set of geometric components of a variety $V$ & \S \ref{unramified}\\
$a(S,x)$ & the number of Frobenius-fixed components of $\alpha^{-1}(x)$ & \eqref{aSx}\\
$a(S)$ & $a(S,1)$; the number of Frobenius-fixed points of $\pi_0(D(S))$ & \eqref{aS}\\
$S_{\rm red}$ & the maximal Galois-stable subset of $S$ & \eqref{Sred}\\
$\Tt$, $n_1$, $n_2$, $n_3$ & $T(F) \simeq \Tt= (\R^\times)^{n_1}\times (S^1)^{n_2} \times (\C^\times)^{n_3}$, $F$ arch.\ local & \eqref{TRisom}\\
$\Tt^\wedge$ & $\Tt^\wedge = (i\R^{n_1} \times (\Z/2\Z)^{n_1}) \times \Z^{n_2} \times (i\R^{n_3} \times \Z^{n_3})$ & \eqref{Ttwedgedef}\\
$((w,\epsilon), \alpha, (w',\alpha'))$ & a typical element of $\Tt^\wedge$ & \S \ref{archimedeanC}\\
\hline
\end{tabular}

 \begin{tabular}{@{} | p{3cm} |  p{9.7cm}  |  p{2.7cm} |} 
\hline
Notation & Definition & Location \\
\hline\hline
$(a,c,(b,b'))$ & an element of $X_*(\G_m)^{n_1} \times X_*(S^1)^{n_2} \times X_*(\res_{\C/\R} \G_m)^{n_3}$ & \S \ref{archimedeanC}\\
$M$ & a matrix with entries in $\Z$ encoding the co-weights of $r$ & \S \ref{archimedeanC}\\
$A_i, C, B_i,B_3^\pm$ & sub-block matrices of the co-weight matrix $M$ & \S \ref{archimedeanC}\\
$A_F(s,x)$ & local generating series for an archimedian field & \eqref{AFdef}\\
$B_\infty$, $B_{\infty,1/2}$ & $\inf\{\|x \|_\infty: x \in H_M\}$, a variant involving a factor of $1/2$ & \eqref{B0def}, \eqref{B0halfdef}\\
$(N,\cI)$, $r(S)$  & a matroid, its rank function & Def \ref{def:matroid}, \ref{def:rank}\\
$P_\cI$, $P_\cB$ & the matroid polytope and matroid base polytope & Def \ref{def:polytopes}\\
$\cF_G(f)$ & Fourier transform of a measure $f$ on an abelian group $G$ & \S \ref{positivity} \\
$v$, $w$ & a valuation of $k$ and a unique valuation of $K$ extending it & \S \ref{sec:globalconductor}\\
$T_\infty$ & $ \prod_{v\mid \infty} T(k_v)$ & \S \ref{locglobandconclusion} \\ 
$NT_f$ & $\prod_{v \nmid \infty} NT(\cO_w)$ & \S \ref{locglobandconclusion}\\
$T_{N,\A}$ & $T_\infty \times NT_f$ & \eqref{TNAdef} \\
$U_N(T)$ & $T(k) \cap T_{N,\A}$, called the global norm-units of $T$ & \eqref{UNTdef}\\
$\cl_N(T)$ & $T(k)T_{N,\A} \backslash T(\A)$, called the norm-class group of $T$ & \eqref{beginninges}\\
$V^\wedge$ & $ \{\chi \in T_{N,\A}^\wedge: \chi(x) = 1 \text{ for all } x \in U_N(T)\}$ & \eqref{Vwedge}\\ 
$V_\infty^\wedge$ & $\{\chi_\infty \in T_\infty^\wedge: \chi_\infty(x) = 1 \text{ for all } x \in U_N(T)\}$ & \eqref{Vinftywedge}\\ 
$B$ & set of places of $k$ with $(q_{k_v},\lambda)\neq 1$ or $v$ ramified & \S \ref{subsec:localtoglobal}\\
$T_S$ & an auxiliary torus attached to $S \in 2^M$ & \S \ref{unramified}, \ref{unramifiedcounting}\\
$\alpha^{-1}(x)$ & fibers of a map of tori $\alpha:T_S \to T$, see also Lem.\ \ref{agbnd} & \S \ref{unramified}, \ref{unramifiedcounting}\\
$K'$, $L'$, $\Gamma$ & field of def.\ of components of $\alpha^{-1}(x)$ for all $x$, $\Gal(K'/k)$ & \S \ref{unramifiedcounting}\\
$\fP'$, $D_\fP$, $D_{\fP'}$ & a prime of $K'$ above $\fP$, decomposition groups of $\fP$, $\fP'$ & \S \ref{unramifiedcounting}\\
$\approx$ & equal up to an absolutely convergent Euler product & Def \ref{approxdef}\\
$C$ & a conjugacy class of $\Gamma$ & \eqref{splitovercc}\\
${\Sigma^\fP}$ & subset of $\Sigma$ fixed by $D_\fP$ & \eqref{SigmaP}\\
$a_C(S,x)$ & the number of $C$-fixed components of $\alpha^{-1}(x)$ & \eqref{aCSx}\\
$\Sigma_{a,b}$ & set of $S \in 2^M$ such that $\dim D(S)=a$ and $|S|=b$ & \eqref{Sigmaab}\\
$\widetilde{\Sigma}_{a,b}$ & similar to $\widetilde{\Sigma}_0$, but with respect to $\Sigma_{a,b}$ & \eqref{Sigmaabtilde}\\
$V_\cO= \oplus_i V_i^{\oplus m_{\cO,i}}$ & permutation rep.\ of an orbit $\cO$ of $\Gamma$ acting on $\widetilde{\Sigma}_{a,b}$ & \eqref{Videf}\\
\hline
\end{tabular}
\end{center}

\subsection{Acknowledgements}
This paper was began as joint work with Hunter Brooks, who, soon after it was started, left academic mathematics and went into industry. I learned a great deal of mathematics from Hunter, especially the background for section \ref{sec:non-arch_background} of this paper. I would also like to express my gratitude to Will Sawin. This paper grew out of discussions of Hunter, Will and I in Z\"urich in fall 2016, and would not have been possible without Will's generosity with his ideas. Lastly, thanks are due to Rico Zenklusen, who pointed out to me that the polytope minimization problem in section \ref{archimedean} was best studied in the language of matroids, and introduced me to the polymatroid intersection theorem which plays a key role in that section. I would like to thank Farrell Brumley for interesting discussions on the automorphic counting problem, Emmanuel Kowalski for a careful reading of an earlier draft of this paper, Peter Sarnak for his encouragement, and the anonymous referee whose extraordinarily detailed report greatly improved the paper.  I am grateful for the financial support of the Swiss National Science Foundation, and would like to thank the people of Switzerland for their commitment to funding fundamental research.

\section{Background and notation}\label{background}
\subsection{Tori and groups of multiplicative type over a field}\label{background:tori}
In this subsection we let $k$ denote an arbitrary field. We take an algebraic $k$-group to be as in \cite[Def.\ 1.1]{MilneAGS}. 
\begin{definition}
An algebraic $k$-group $T$ is called a \emph{torus} if there exists a field $K \supseteq k$ such that the base change $T \times_k \Spec K$ of $T$ is isomorphic to a finite product of copies of $\G_m$, i.e.\ $T \times_k \Spec K \simeq \G_{m,K}^n$ for some non-negative integer $n$.
\end{definition}

\begin{definition} An algebraic $k$-group is called \emph{diagonalizable} if it is isomorphic over $k$ to a finite product of copies of $\G_m$ and groups of roots of unity $\mu_r$. More generally, an algebraic $k$-group $U$ is said to be \emph{of multiplicative type} if there exists a field $K \supseteq k$ such that the base change $U \times_k \Spec K$ is isomorphic to a diagonalizable group over $K$. \end{definition} Tori are the smooth connected groups of multiplicative type \cite[\S 12.f]{MilneAGS}. If a field $K \supseteq k$ is such that $U \times_k \Spec K$ is diagonalizable over $K$, then we say that $U$ \emph{splits} over $K$. In fact, any $k$-group of multiplicative type splits over a finite separable extension of $k$ \cite[Cor.\ 12.19]{MilneAGS} and we call the minimal Galois extension of $k$ over which $U$ splits \emph{the splitting field} of $U$. 

Let $k^{\rm sep}$ be a separable closure of $k$ and $G_{k'}= \Gal(k^{\rm sep}/k')$ for any $k \subseteq k' \subseteq k^{\rm sep}$. 
For any group of multiplicative type $U$ over $k$, let $X^*(U) = \Hom(U,\G_m)$ be the \emph{group of algebraic characters} of $U$ and $X_*(U) = \Hom(\G_m,U)$ be \emph{group of algebraic cocharacters} of $U$. They are finitely generated abelian groups admitting continuous actions of $G_k$. These actions on $X^*(U)$ and $X_*(U)$ factor through the action of the finite group $G= \Gal(K/k)$, where $K$ is the splitting field of $U$. A group of multiplicative type $U$ is an affine scheme with coordinate ring $k^{\rm sep}[X^*(U)]^{G_k} = K[X^*(U)]^G$. 
\begin{lemma}\label{equiv_of_cat_multi}
The functor $X^*$ is a contravariant equivalence of categories from the category of algebraic $k$-groups of multiplicative type to the category of finitely generated abelian groups equipped with a continuous action of $G_k$. The functor $X^*$ is exact, i.e.\ it sends short exact sequences to short exact sequences. 
\end{lemma}
\begin{proof} See \cite[Thm.\ 12.23]{MilneAGS}. \end{proof}
The equivalence of categories from Lemma \ref{equiv_of_cat_multi} given by the exact functor $X^*$ restricts to an equivalence of categories from the category of $k$-tori to the category of finitely-generated free $\Z$-modules equipped with a continuous action of $G_k$, see \cite[Rem.\ 12.5]{MilneAGS}.  

For any $k \subseteq k' \subseteq k^{\rm sep}$ and any $k$-group $U$ of multiplicative type we have 
\begin{equation}\label{stdisom1}
U(k') \simeq \Hom_{G_{k'}} (X^*(U), (k^{\rm sep})^\times),
\end{equation}
see \cite[Rem.\ 12.26]{MilneAGS}. The map in \eqref{stdisom1} takes a continuous $G_{k'}$-equivariant homomorphism $\ell: X^*(U) \to (k^{\rm sep})^\times$ and extends it to a $k$-algebra homomorphism $u^*: \cO(U) = (k^{\rm sep})[X^*(U)]^{G_{k'}} \to k'$, which defines the $k'$-point $u: \Spec k' \to U$ of $U$. 

 Let $T$ be a $k$-torus. The \emph{evaluation pairing} \begin{equation}\label{char_cochar_PP}\langle \cdot,\cdot \rangle: X^*(T) \otimes_{\Z} X_*(T) \to \Z\end{equation} given by $\chi \circ \lambda : z \mapsto z^{\langle \chi,\lambda \rangle}$ is a perfect pairing between the character and cocharacter lattices. 

The perfect pairing \eqref{char_cochar_PP} gives us another description for the $k$-rational points of a torus. Indeed, we have $X_*(T)\simeq \Hom(X^*(T),\Z)$ and so $X_*(T) \otimes_\Z K^\times \simeq \Hom(X^*(T),\Z) \otimes_\Z K^\times$. 
  There is an isomorphism  
\begin{equation}\label{stdisom2}
\phi: \Hom(X^*(T),\Z) \otimes_\Z K^\times \simeq \Hom(X^*(T),K^\times)
\end{equation}
given on pure tensors by $\phi(\lambda \otimes z) = (\chi \mapsto z^{\lambda(\chi)})$ for $\chi \in X^*(T)$.  (More generally, $\Hom(P,R) \otimes M \simeq \Hom(P,M)$ for any $R$-module $M$ and finitely-generated projective $R$-module $P$.) Combining these maps with \eqref{stdisom1} we have 
 \begin{equation}\label{rational_points_as_tensor} T(K) \simeq X_*(T) \otimes_\Z K^\times \quad \text{ and } \quad T(k) \simeq (X_*(T) \otimes_\Z K^\times)^G.\end{equation}

\subsection{$L$-groups of tori and representations}\label{sec:Lgroupsandreps}
For any $k$-torus $T$ the group \begin{equation}\label{Thatdef}
\widehat{T} := \Hom(X_*(T),\C^\times)\simeq X^*(T)\otimes_\Z \C^\times
\end{equation} is called the \emph{complex dual torus} of $T$. As a group, $\widehat{T} \simeq (\C^\times)^{n}$ and carries an action of $G=\Gal(K/k)$ through the Galois action on the cocharacter lattice $X_*(T)$. We shall also use the unit complex dual torus $$\widehat{T}_u = \Hom(X_*(T),S^1) \simeq X^*(T)\otimes_\Z S^1 .$$  

The affine $k$-scheme $T^\vee = \Spec K[X_*(T)]^{G}$ is called the \emph{algebraic dual torus} of $T$. There is a natural isomorphism $X_*(T) \simeq X^*(T^\vee)$ sending $\lambda \in X_*(T)$ to the $\chi \in X^*(T^\vee)$ defined by $\chi^*= (X \mapsto \lambda)$ on coordinate rings $\chi^{*} : k^{\rm sep}[X,X^{-1}] \to k^{\rm sep}[X_*(T)]$ (see e.g.\ \cite[\S 12.a, Lem.\ 12.4]{MilneAGS}). 
If $k$ is a subfield of $\C$, then $\widehat{T} = T^\vee(\C)$ and $$X^*(\widehat{T}) := \Hom_{\rm cts}(\widehat{T},\C^\times) = \Hom(T^\vee, \G_m) = X^*(T^\vee),$$ so in this case we obtain a natural identification $X^*(\widehat{T}) \simeq X_*(T)$.

\begin{definition}\label{LTdef}
Let $T$ be a $k$-torus with splitting field $K$. We call the external semi-direct product 
$ \LT= \widehat{T} \rtimes G$ the $L$\emph{-group} of $T$, where $G=\Gal(K/k)$.\
\end{definition} The $L$-group of $T$ is a complex algebraic group. We call the subgroup $\LT_u :=  \widehat{T}_u \rtimes G$ of $\LT$ the \emph{unit $L$-group} of $T$. Caution: more commonly in the literature on the Langlands correspondence the $L$-group is defined using the absolute Galois group $G_k$ in lieu of the finite group $G$. While these two definitions are ultimately equivalent, we work with finite Galois groups mainly because Langlands does in his paper on the correspondence for tori \cite{LanglandsAbelian}, and some computations in group co-/homology become simpler when we work with finite groups. Of course, the cost of working with finite $G$ is having to keep track of the splitting field of $T$.   

Let $r: \LT \to \GL(V)$ be a finite-dimensional complex algebraic representation of $\LT$. The restriction of $r$ to $\widehat{T}$ admits a weight space decomposition 
\begin{equation}\label{rT2}
r \vert_{\widehat{T}} = \bigoplus_{\mu \in X^*(\widehat{T})} V_\mu,
\end{equation}
where $V_\mu$ is the eigenspace of $V$ with character $\mu$. 
\begin{definition}\label{def_coweights}
The multi-set $M= M_r$ with underlying set $\{\mu \in X^*(\widehat{T}): V_\mu \neq 0\}$ and multiplicity of $\mu\in M$ equal to $\dim V_\mu$ is called the set of \emph{co-weights} of $r$. 
\end{definition}
Let $S\subseteq M$ be a subset of co-weights with multiplicity. Recall the definitions of $D(S)$ and $A$ from \eqref{DS} and \eqref{Adef}, which make sense for general base fields. 

The group $G$ acts on the set of co-weights $M$ via its action on $X_*(T)$ (or $\widehat{T})$. In the case that $k$ is an archimedean local field, we will, after choosing coordinates on $T(k)$ and $r$, associate to $M$ an $m \times n$ matrix (where $m=\dim r$ and $n=\dim T$), which we also write $M$. 

In this paper a $k$-variety is a reduced, separated $k$-scheme of finite type. In particular, we do not assume that varieties are irreducible. 

 \begin{lemma}\label{agbnd}
Let $\alpha:T_1\to T_2$ be a map of tori over a field $k$ of characteristic 0. The number of geometric components of the fiber $\alpha^{-1}(x)$ is constant on $\{x \in T_2(k): \alpha^{-1}(x) \text{ is non-empty}\}$. For any finitely-generated subgroup $\Lambda$ of $T_2(k)$, every component of $\alpha^{-1}(x)$ for all $x \in \Lambda$ is defined over a single finite extension of $k$. 
\end{lemma}
\begin{proof}  
For the first assertion, if $\alpha^{-1}(x)$ is empty, there is nothing to show, so suppose otherwise.

Recall \cite[Rem.\ 12.5]{MilneAGS} that the tori $T_1$, $T_2$ as well as the group of multiplicative type $\ker \alpha$ are all reduced $k$-schemes, since $k$ has characteristic $0$. If $k$ is a perfect field and $A$ and $B$ are reduced $k$-algebras, then $A \otimes_k B$ is a reduced $k$-algebra, see \cite[Ch.\ V \S 15 5.\ Thm.\ 3(c) and 2.\ Prop.\ 5]{BourbakiAlgebra}. Since $T_1$ and $T_2$ are affine, it follows that $\alpha^{-1}(x)$ is reduced, hence a closed subvariety of $T_1$. As we have already remarked, the algebraic group $\ker \alpha$ is reduced, thus $y (\ker \alpha)$ is also a closed subvariety of $T_1$ for any fixed geometric point $y \in \alpha^{-1}(x)(K)$ for any finite extension $K/k$. 

For any fixed algebraic closure $\overline{k}/k$ it is easy to check that $\alpha^{-1}(\overline{k}) = y (\ker \alpha) (\overline{k})$, so that by e.g.\ \cite[Cor.\ 1.18]{MilneAGS}, $\alpha^{-1}(x) = y(\ker \alpha)$ as closed subschemes of $T_1$. Then, since $y (\ker \alpha) \simeq_K \ker \alpha$ (see e.g.\ \cite[Prop.\ 5.24]{MilneAGS}) and the formation of the group of connected (equivalently, irreducible) components $\pi_0$ commutes with base change \cite[Prop.\ 2.37(c)]{MilneAGS}, we have that the number of components of $\alpha^{-1}(x)$ is independent of $x$. 

For the second assertion, say $x_1, \ldots, x_r$ are generators of $\Lambda$. Since $T_1$ is abelian, all of the irreducible components of $\alpha^{-1}(x)$ for $x \in \Lambda$ are defined over the finite extension of $k$ obtained by adjoining the coordinates of the $y_i$ corresponding to $x_i$ (if they exist) from the previous paragraph to the field of definition of $(\ker \alpha)^\circ$. 
\end{proof}
 
\subsection{Local Langlands correspondence}\label{sec:LLC}
For a topological abelian group $A$, we henceforth denote by $\Hom(A,\C^\times)$ the group of continuous complex characters of $A$, and by $A^\wedge$ the subgroup of \emph{unitary} characters, that is to say the \emph{Pontryagin dual}. For $M$ a $G$-module, $H^1(G,M)$ denotes the first group cohomology group defined using continuous cocycles. 

In this section we suppose that $T$ is a torus over a local field $F$ with splitting field $L$ and Galois group $G= \Gal(L/F)$. 

Following \cite{BorelAutomorphicLfcns}, we define the local analytic conductors $\fc(\chi,r)$ associated to a character $\chi:T(F) \to \C^\times$ and representation $r$ of $\LT$ by passing through the local Langlands correspondence and taking the conductors from the Galois representation associated to $\chi$ and $r$. To that end, we now review the local Langlands correspondence for tori.  

Recall the Weil group of a local field \cite[\S1.1]{tateNTB}, which is a triple $(W_F, \varphi, \{r_E\})$, and the relative Weil group \begin{equation}\label{relative_weil_group}W_{L/F} := \frac{W_F}{[W_L,W_L]}. \end{equation} The group $W_{L/F}$ has $W_L^\text{ab}$ as a subgroup and so can be thought of as a group extension of $G$ by $L^\times$, i.e. there is a short exact sequence \begin{equation}\label{WLFes}\xymatrix{1 \ar[r] & L^\times \ar[r]^-{r_L}& W_{L/F} \ar[r]^-\sigma & G \ar[r] & 1},\end{equation} see \cite[\S1.2]{tateNTB}. If $L,F$ are non-archimedean local fields, then the map $r_L$ is the Artin reciprocity map of class field theory \cite[(1.4.1)]{tateNTB}.

Following \cite{LanglandsAbelian}, we define a \emph{Langlands parameter} to be a continuous group homomorphism $\varphi: W_{L/F} \to \LT$ for which the diagram $$\xymatrix{W_{L/F} \ar[r]^\varphi \ar[dr]_\sigma & \LT \ar[d] \\ & G}$$ is commutative. Two Langlands parameters are said to be equivalent if they are $\widehat{T}$-conjugate. We write $\Phi(T)$ for the set of equivalence classes of Langlands parameters of $T$ as in \cite[\S 8]{BorelAutomorphicLfcns}. The local Langlands correspondence (LLC) for tori asserts that there is a canonical bijection 
\begin{equation}\label{LLCforTori} \Hom(T(F),\C^\times) \longleftrightarrow \Phi(T).\end{equation} 
Given a Langlands parameter $\varphi$ corresponding to $\chi \in \Hom(T(F),\C^\times)$, the composition $$r \circ \varphi: W_{L/F} \to \GL(V)$$ only depends on the equivalence class of $\varphi$. We have thus associated a complex Galois representation to the character $\chi$ and the $L$-group representation $r$. We define the local $L$ and $\eps$-factors associated to finite dimensional complex Galois representations as in \cite[\S3]{tateNTB}. Later, we will define local analytic conductor $\fc(\chi,r)$ in terms of the $\eps$-factor of $r \circ \varphi$ if $F$ is non-archimedean (see Definition \ref{artinc1}), and in terms of the $L$-factor of $r \circ \varphi$ if $F$ is archimedean (see Definition \ref{archdef}).

To make \eqref{LLCforTori} more explicit, we recall the the cohomological interpretation of the LLC for tori. Given a Langlands parameter $\varphi$, we write $\varphi(z) = \xi(z) \rtimes \sigma(z)$ for $z \in W_{L/F}$, $\xi(z) \in \widehat{T}$, and $\sigma(z) \in G$. One sees that $\varphi$ and $\varphi'$ are equivalent if and only if $\xi$ and $\xi'$ are cohomologous, i.e.\ we have a bijection 
\begin{equation}\label{bij_LLP_to_coh}\Phi(T) \longleftrightarrow H^1(W_{L/F}, \widehat{T}),\end{equation} where $W_{L/F}$ acts on $\widehat{T}$ via the map $\sigma:W_{L/F} \to G$ of \eqref{WLFes}. Langlands proved \cite[Thm.\ 1]{LanglandsAbelian} that there is an isomorphism \begin{equation}\label{langlands} \Hom(T(F), \C^\times) \simeq H^1(W_{L/F},\widehat{T})\end{equation} and moreover  \eqref{langlands} restricts to \begin{equation}\label{langlandsunitary}T(F)^\wedge \simeq H^1(W_{L/F},\widehat{T}_u),\end{equation} where $\widehat{T}_u = X_*(T)^\wedge$. We will use \eqref{langlandsunitary} when $F$ is an archimedean local field. 

Let $dx$ be a Haar measure on $F$, $\psi$ a non-trivial additive character of $F$, and $dx'$ the dual Haar measure relative to $\psi$.  Given a finite dimensional complex representation $(\rho,V)$ of $W_F$, Tate \cite[\S 3]{tateNTB} defines the $\eps$-factor $\eps(V,\psi,dx)= \eps(\rho,\psi,dx)$ attached to these data. When we give the definition of the local analytic conductors in sections \ref{sec:non-arch_background} and \ref{archimedeanC}, we will encounter the factor $(\delta(\psi) dx / dx')^{\dim (V)}$. This factor is explained in \cite[\S 3.4]{tateNTB} and we do not need to elaborate on it for the purposes of this paper.

\subsection{Some tools}\label{tools}

Let $\overline{f}$ denote an algebraic closure of a finite field $f$, and let $V \subseteq \overline{f}^n$ be a variety over $\overline{f}$ of dimension $r$ and degree $d$. The following is \cite[Thm.\ 1]{LangWeil}. \begin{theorem}[Lang-Weil]\label{LWthm} If $V$ is defined over $f$ and irreducible as a variety over $\overline{f}$, then 
$$|V(f)| = |f|^{r} + O_{n, d,r}(|f|^{r-1/2}).$$
\end{theorem} 
In fact, Lang and Weil give a more explicit bound on the implied constant in their Theorem 1, but we do not need this. We also have the following result of Lang and Weil under weaker hypotheses \cite[Lem.\ 1]{LangWeil}. 
\begin{lemma}[Lang-Weil]\label{LWlemma}
If $V$ is defined over $\overline{f}$, then 
$$ |V(f)| \ll_{n,d,r} |f|^{r}.$$
\end{lemma}
We need the following standard variant of the Lang-Weil bound that relaxes the geometric irreducibility, definability over $f$, and reducedness hypotheses.   
\begin{corollary}[Lang-Weil, alternate form]\label{LWcor}
Let $V$ be a separated $\overline{f}$-scheme of finite type. Then one has $$ |V(f)| = (p(V) + O_{n,d,r}(|f|^{-1/2})) |f|^{r}$$
where $p(V)$ is the number of geometrically irreducible components of $V$ of dimension $r=\dim(V)$ that are invariant with respect to the Frobenius endomorphism $x \mapsto x^{|f|}$ associated to $f$.
\end{corollary}
If in fact the dimension of $V$ is zero, it is not hard to see that $ |V(f)| = p(V).$  
\begin{proof}[Proof sketch following the blog post ``The Lang-Weil bound'' of T.\ Tao.]
We may assume without loss of generality that $V$ is an $\overline{f}$-variety by working with the underlying reduced closed subscheme $V_{\rm red}$, which is an $\overline{f}$-variety and satisfies $V_{\rm red}(f) = V(f)$, since $f$ has no nilpotents.  

Decompose the variety $V$ into geometrically irreducible components $\cup_i V_i$. For any $V_i$ of dimension $<r$ we apply Lemma \ref{LWlemma} to subsume these components into the error term. If $V_i$ is geometrically irreducible but not defined over $f$, then it is not fixed by the Frobenius endomorphism $\Fr$. In this case, $V_i \cap \Fr(V_i)$ is a proper closed subvariety of the irreducible $V_i$, so is of strictly lower dimension. Since all the $f$-points of $V_i$ are contained in $(V_i \cap \Fr(V_i))(f)$, we again use Lemma \ref{LWlemma} to subsume these components into the error term. Lastly, each of the components $V_i$ that remain are geometrically irreducible and defined over $f$, to which we apply Theorem \ref{LWthm} to conclude the proof. 
\end{proof}

 Let $M\in M_{m\times n}(\R)$ be an $m \times n$ matrix with real entries, $m\geq n$. Let us write $a_i,$ $i= 1, \ldots, m$ for the rows of $M$. We define a convex polytope $H_M\subseteq \R_{\geq 0}^m$ by the the following inequalities: \begin{equation}\label{CLLcondition1} \sum_{i=1}^{m} x_i = n\end{equation} and \begin{equation}\label{CLLcondition2}\sum_{i \in S} x_i \leq \dim(\mathrm{span}(\{a_i: i \in S\}))\end{equation} for every subset $S \subseteq \{1,\ldots, m\}$. Note that $H_M$ is non-empty if and only if $M$ is full-rank.
 
The following Brascamp-Lieb inequality is due to Barthe \cite{BartheBLinequality} and was re-stated in the form below by \cite[\S4]{CLLinequality}. We use it in a crucial way in section \ref{analysis} and then again in section \ref{conclusion}.
\begin{theorem}[Brascamp-Lieb Inequality]\label{BL} Let $a_1,\ldots, a_m$ be non-zero vectors in $\R^n$ which span $\R^n$, and let $M$ be the $m \times n$ matrix whose rows are $a_i$.  Let $\overline{p}=(p_1^{-1},\ldots,p_m^{-1})\in\R^m_{\geq 0}.$ Let $\overline{f} = (f_i)_{i=1,\ldots, m}$ be an $m$-tuple of non-negative measurable functions $f_i: \R \to \R_{\geq 0}$. Then $$\int_{\R^n} \prod_{i=1}^m f_i(\langle a_i,x\rangle) \,dx \ll_{m,n,M,\overline{p}} \prod_{i=1}^m \|f_i\|_{L^{p_i}(\R)} $$ if and only if $\overline{p} \in H_M \subset \R_{\geq 0}^m$. Here the implied constant depends on $m,n,M,\overline{p},$ but not on $\overline{f}$. \end{theorem}
 We also have the following convenient version of the Brascamp-Lieb inequality on finitely generated abelian groups due to Bennett, Carbery, Christ and Tao \cite[Thm.~2.4]{BCCTintegral}.  
\begin{theorem}[Discrete Brascamp-Lieb Inequality]\label{BCCT10}
Let $G$ and $\{G_i: 1\leq i \leq m\}$ be finitely generated abelian groups. Let $\varphi_i: G \to G_i$ be homomorphisms. Let $p_i \in [1,\infty]$. Then \begin{equation} \rank(H ) \leq \sum_i p_i^{-1} \rank(\varphi_i(H)) \quad \text{for every subgroup } H \text{ of } G \end{equation}if and only if there exists a constant $C<\infty$ such that 
\begin{equation}
\sum_{y \in G} \prod_{i=1}^m (f_i \circ \varphi_i)(y) \leq C \prod_{i} \| f_i \|_{\ell^{p_i}(G_{i})} \quad \text{ for all } f_i : G_i \to [0,\infty).\end{equation}\end{theorem}

 We next recall some analytic results on $L$-functions of Hecke characters in $t$-aspect.  
 \begin{lemma}\label{ZFHforHeckeL}
 For any Hecke character $\chi$ over a number field there exists an effective constant $c(\chi)>0$ such that $L(s, \chi) \neq 0$ for all $s \in \mathcal{R}(1,c(\chi))$. 
 \end{lemma}
 \begin{proof}
 A more general result of Coleman \cite[Thm.\ 2]{Coleman} asserts that the lemma holds apart from a possible exceptional real zero when the archimedean component of $\chi$ is trivial and $\chi^2=1$. However,  Stark's effective lower bounds on $L(1,\chi)$ \cite[Thm.\ 1']{StarkEffectiveBrauerSiegel} bound such a potential exceptional zero away from $s=1$ in terms of the discriminant of the field of definition of $\chi$, so that by adjusting the value of $c(\chi)$ accordingly one obtains the lemma without exceptions. 
 \end{proof}
 \begin{lemma}\label{HeckeLowerBound}
 For any Hecke character $\chi$ over a number field 
 \begin{equation} \frac{1}{L(\sigma+it,\chi)} \ll_{\chi} (\log (|t|+3))^{2/3} (\log \log (|t|+16))^{1/3}\end{equation}
 uniformly for $\sigma+it \in \mathcal{R}(1,c'(\chi))$ with an effective $0<c'(\chi)< c(\chi)$.
 \end{lemma}
 \begin{proof}
 Given \cite[Thm.\ 1 and \S5]{Coleman} the proof essentially follows that of \cite[Thm.\ 3.11]{TOTRZF} with $\phi(t) = c_2(\chi) \log \log (|t|+16)$ where $c_2(\chi)$ is as in \cite[Thm.\ 1]{Coleman} and $\theta(t)= c'(\chi) \left(\frac{\log \log (|t|+16)}{\log (|t|+3)}\right)^{2/3}$ (cf.\ \cite[Lem.\ 11 and Rem.\ 3]{HuangSupNormDihedral}). 
 \end{proof}

\begin{lemma}[Poisson Summation]\label{Psum}
Let $H \leq G$ be locally compact commutative groups such that the quotient $G/H$ is compact. Let $f \in L^1(G)$ and write $\widehat{f}$ for its Fourier transform $$\widehat{f}(\psi) = \int_G f(g) \overline{\psi}(g)\,dg.$$ If \begin{enumerate}
\item\label{Psum1} the restriction of $\widehat{f}$ to $(G/H)^\wedge$ is integrable, 
\item\label{Psum2} for all $x \in G$ the function $y \mapsto f(xy)$ is integrable on $H$, and 
\item\label{Psum3} the map $x \mapsto \int_H f(xy)\,dy$ is continuous on $G$,
\end{enumerate}
then for all $x \in G$ we have $$ \int_H f(xh)\,dh = \frac{1}{\vol(G/H)} \sum_{\substack{\psi \in (G/H)^\wedge }} \widehat{f}(\psi) \psi(x).$$
\end{lemma}
\begin{proof} See \cite[Ch.II \S1 7.\ Cor.]{BourbakiTheoriesSpectrales}, which does not assume that $G/H$ is compact. This latter hypothesis is only used to write the integral over $(G/H)^\wedge$ as a sum above.
\end{proof}
\begin{rem}There is also a more general version of Lemma \ref{Psum} without the hypothesis that $G/H$ be compact or the hypotheses \eqref{Psum2} and \eqref{Psum3}, but in which the conclusion only holds for almost every $x \in G$, see \cite[Ch.\ II \S1 7.\ Prop.\ 15]{BourbakiTheoriesSpectrales}.
\end{rem}

\section{Local non-archimedean theory}\label{nonarch_top}
\subsection{Local Langlands correspondence, local conductors}\label{sec:non-arch_background}
We now restrict our attention to non-archimedean local fields $F$. 
\subsubsection{The Artin conductor}\label{ArtinC}
Let $T$ be an $F$-torus and $(W_F, \varphi, \{r_E\})$ a Weil group for $F$. Let $r: \LT \to \GL(V)$ be a finite-dimensional complex representation of the $L$-group of $T$ as in section \ref{sec:Lgroupsandreps}. Recall the $\eps$-factors attached to finite-dimensional complex Galois representations of $W_F$ from section \ref{sec:LLC}.
\begin{definition}\label{artinc1} If $\varphi \in \Phi(T)$ corresponds to $\chi \in \Hom(T(F), \C^\times)$ under the local Langlands correspondence \eqref{LLCforTori}, then the quantity 
$$\fc (\chi, r) = |\eps(r\circ \varphi, \psi,dx)|^2$$ is called the local analytic conductor of $\chi$ with respect to $r$.\end{definition}  Tate \cite[\S3.4.2]{tateNTB} shows that $\eps(V,\psi,dx)$ is additive, and in particular only depends on the isomorphism class of $V$.  If $(\rho,V)$ is a unitary representation we have (see \cite[\S 3.4.7]{tateNTB}) that \begin{equation}\label{usesunitary} |\eps(V, \psi,dx)|^2 = q_F^{c(\rho)} (\delta(\psi) dx/dx')^{\dim(V)}.\end{equation} In particular, since $\eps(V,\psi,dx)$ only depends on the isomorphism class of $(\rho,V)$, it suffices for \eqref{usesunitary} to hold that $(\rho,V)$ be unitarizable. Here $q_F$ is the cardinality of the residue field of $F$ and $c(\rho)$ is the Artin conductor of the representation $(\rho,V)$.

In light of \eqref{usesunitary} we next review the definition of the Artin conductor $c(\rho)$ of a finite-dimensional complex representation $\rho:W_F \to  \GL(V)$ of the Weil group of a non-archimedean local field.  
The classical Artin conductor is an invariant of a finite dimensional complex representation of a finite Galois group $\Gal(E/F)$.  For more discussion of the classical Artin conductor see \cite[Ch.VI]{SerreLF} or \cite[\S4]{UlmerConductors}.  We give a slightly nonstandard definition of the Artin conductor of a finite-dimensional complex representation of $W_{L/F}$ following \cite{UlmerConductors} (this goes back at least to \cite{DarmonDiamondTaylor}).

Let $\varphi:W_F \hookrightarrow G_F$ be the inclusion given as part of the data of a Weil group (see \cite[\S 1.4.1]{tateNTB}).  For any $v \in [-1,\infty)$, let $W_F^v$ be the inverse image of the (upper-numbering) higher ramification group $G_F^v$ by $\varphi$ (see Serre \cite{SerreLF}  for definitions, especially Ch.~IV, \S3, Remark 1).  Let $L/F$ be a finite extension and $W^v_{L/F}$ be the image of $W_F^v$ by the canonical projection $W_F \twoheadrightarrow W_{L/F}$.  
The groups $W^v_{L/F}$ therefore define a descending filtration of $W_{L/F}$ with index set $[-1,\infty)$. 

\begin{propdef}\label{propdef} For a finite dimensional complex representation $\rho:W_{L/F}\to \GL(V)$, the number $$c(\rho) = \int_{-1}^\infty \codim (V^{\rho(W^v_{L/F})})\,dv$$ is called the \emph{Artin conductor} of $(\rho,V)$. The value of $c(\rho)$ only depends on $\rho\vert_{W^0_{L/F}}$, and extends the notion of Artin conductor for complex representations of finite Galois groups.
\end{propdef}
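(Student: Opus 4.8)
The plan is to establish the three assertions of the Proposition/Definition — finiteness of the integral, dependence on $\rho$ only through $\rho|_{W^0_{L/F}}$, and agreement with the classical Artin conductor — in turn, the common ingredient being standard facts about the upper-numbering ramification filtration. First I would record the structure of the filtration $\{W^v_{L/F}\}$: the image $W^0_{L/F}$ of the inertia group $I_F\subseteq W_F$ is an extension of the finite group $I_{L/F}\subseteq G$ by $\cO_L^\times$ (embedded via the Artin map), hence is profinite; in Serre's normalization $W^v_{L/F}=W^0_{L/F}$ for $-1<v\le 0$ and $W^v_{L/F}\subseteq W^0_{L/F}$ for all $v>-1$; the map $\sigma$ carries $W^v_{L/F}$ onto the ramification subgroup $G^v$ of $G=\Gal(L/F)$; and for $v$ larger than the last break of $L/F$ one has $W^v_{L/F}\subseteq L^\times$, in fact $W^v_{L/F}=U_L^{(w(v))}$ for a higher unit group with $w(v)\to\infty$. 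These follow from Herbrand's theorem together with the compatibility of the ramification filtration with the projection $W_F\twoheadrightarrow W_{L/F}$ and with the Artin map.

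For finiteness, since $\rho$ is continuous and $\GL(V)$ has no small subgroups, the restriction of $\rho$ to the profinite group $W^0_{L/F}$ has open kernel, hence factors through a finite quotient $W^0_{L/F}/N$. Consequently $v\mapsto\rho(W^v_{L/F})$ takes finitely many values with finitely many jumps on $[-1,\infty)$; and for $v$ large, $W^v_{L/F}=U_L^{(w(v))}\subseteq N$ once $w(v)$ is large enough, so $\rho(W^v_{L/F})=1$ and the integrand $\codim V^{\rho(W^v_{L/F})}$ vanishes. Thus $v\mapsto\codim V^{\rho(W^v_{L/F})}$ is a non-increasing, non-negative step function of bounded support, so $c(\rho)$ is a finite (and, by rationality of the ramification breaks, rational) number.

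Dependence only on $\rho|_{W^0_{L/F}}$ is then immediate: the point $v=-1$ is a null set, and for every $v>-1$ we have $W^v_{L/F}\subseteq W^0_{L/F}$, so the integrand at such $v$ is determined by $\rho|_{W^0_{L/F}}$. For the comparison with the classical invariant, suppose $\rho$ is inflated from a representation $\bar\rho$ of a finite Galois quotient $\Gal(E/F)$ of $W_{L/F}$ (e.g.\ $\Gal(L/F)$ itself). By Herbrand's theorem the image of $W^v_{L/F}$ in $\Gal(E/F)$ is exactly $\Gal(E/F)^v$, so $\codim V^{\rho(W^v_{L/F})}=\codim V^{\bar\rho(\Gal(E/F)^v)}$ for all $v$; integrating and invoking Serre's integral formula for the Artin conductor (\cite[Ch.~VI]{SerreLF}; cf.\ \cite{UlmerConductors}, \cite{DarmonDiamondTaylor}) identifies $c(\rho)$ with the classical Artin conductor $c(\bar\rho)$.

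The one place that demands genuine care — and the main obstacle to a clean write-up — is the bookkeeping in the first step: correctly tracking the upper-numbering filtration through the chain $W_F\twoheadrightarrow W_{L/F}\twoheadrightarrow\Gal(E/F)$ and simultaneously matching its intersection with $W_L^{\mathrm{ab}}\cong L^\times$ against the higher unit-group filtration of $\cO_L^\times$, i.e.\ getting the relevant Herbrand functions and the identification $W^v_{L/F}\cap L^\times=U_L^{(\cdot)}$ exactly right. Once that is pinned down, the three assertions are essentially formal, which is presumably why the cited sources treat this as routine.
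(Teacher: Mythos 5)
Your proof is correct and follows essentially the same route as the paper's: the no-small-subgroups argument to reduce to a finite quotient, Herbrand's theorem (Serre, Ch.~IV, Prop.~14) to match the filtrations, and the Ulmer/Serre integral formula to identify the result with the classical Artin conductor. The one place the paper is a bit more explicit is in identifying the finite quotient $W^0_{L/F}/\ker(\rho|_{W^0_{L/F}})$ with $\Gal(E/F)^0$ for a concrete finite extension $L^{\mathrm{ab}}\supseteq E\supseteq F$ (via Tate \cite[\S1.4.5]{tateNTB}), whereas you reduce directly to the inflation case; both suffice for the statement, and your proposal compensates by spelling out the finiteness of the integral, which the paper leaves implicit.
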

\begin{proof} 
Since there are no breaks in the upper-numbering filtration between $-1$ and $0$, and the upper-numbering is left-continuous (see \cite[Ch.4 \S3]{SerreLF}), it follows that the Artin conductor $c(\rho)$ only depends on the restriction of $\rho$ to $W_{L/F}^0$.  

Since $W_{L/F}^0$ is compact and profinite and $\GL(V)$ has no small subgroups, it follows that $H =\ker \rho \vert_{W_{L/F}^0}$ is a finite index open subgroup of $W_{L/F}^0$.  We have that $\rho \vert_{W_{L/F}^0}$ then factors through the finite quotient $W_{L/F}^0/H$. From this, the inverse image of $H$ in $W_F^0$ also has finite index, and contains $[W_L,W_L]$, thus (see Tate \cite[\S 1.4.5]{tateNTB}) we have that $H = W_{L/E}^0$ for some finite extension $L^\text{ab}/E/F$.  By Serre \cite[Ch.~IV, Prop. 14]{SerreLF} we have that $$ \frac{W_{L/F}^0}{H} = \frac{W_{L/F}^0 H}{H} = \left( \frac{W_{L/F}}{W_{L/E}}\right)^0 \simeq \Gal(E/F)^0$$ and indeed, for all $v \in (-1,\infty)$ that $$\frac{W^v_{L/F}}{H \cap W_{L/F}^v} \simeq \frac{W_{L/F}^v H}{H} =  \left( \frac{W_{L/F}}{W_{L/E}}\right)^v \simeq \Gal(E/F)^v.$$ Therefore to $\rho\vert_{W_{L/F}^0}$ there is associated a finite extension $E/F$ with $E\subseteq L^\text{ab}$, and $\rho$ factors through the representation $\rho':\Gal(E/F)\to \GL(V)$ given by composing with the isomorphisms above.  It is shown in \cite[\S 4]{UlmerConductors} for finite dimensional complex representations of finite Galois groups that the standard definition of the Artin conductor matches the one given in the Proposition/Definition with the higher ramification groups $G^v(E/F)$ in place of the Weil group and $\rho'$ in place of $\rho$. \end{proof}

The Artin conductor is a special case of the following more general notion of conductor. 
\begin{definition}\label{filtcond} For $G$ a group endowed with a descending filtration $\cF = (G^v)_{v \in (-1, \infty)}$ and $\rho:G \to \GL(V)$ a finite dimensional complex representation, we call $$c_\cF(\rho) = \int_{-1}^\infty \codim V^{\rho(G^v)}\,dv$$ the \emph{conductor of $(\rho,V)$ with respect to} $\cF$.
\end{definition}
With $G = W_{L/F}$ and $\cF$ given by the upper-numbering filtration, $c_\cF(\rho)$ the Artin conductor of $(\rho,V)$. 

Next, we introduce an ``abelian'' conductor $\tilde{c}(\rho)$.  The Artin conductor of a representation $(\rho,V)$ is controlled by the abelian conductor, and in the case that the representation factors through $W_{L/F}$ for $L/F$ an unramified extension, the abelian conductor is identical to the Artin conductor.

We denote \emph{for any} $v \in (-1,\infty)$ the groups $$\cO_L^{(v)} = \begin{cases}  1+\pi_L^{\lceil v \rceil} \cO_L & \text{ if } v >0 \\ \cO_L^\times & \text{ if }0 \geq v>-1.\end{cases}$$  In particular the function $v \mapsto \cO_L^{(v)}$ is locally constant on $(-1,\infty) - \Z$, and satisfies $\lim_{v \to n^-} \cO_L^{(v)}= \cO_{L}^{(n)}$ for $n \in \Z$, where $\lim_{v \to n^-}$ denotes the one-sided limit from below.

Let $\psi_{E/F}$ and $\phi_{E/F}$ be the Hasse-Herbrand functions for an extension $E/F$ of non-archimedean local fields, see \cite[Ch.\ IV \S 3]{SerreLF}.  Recall the short exact sequence \eqref{WLFes} expressing $W_{L/F}$ as a group extension of $G=\Gal(L/F)$. 
\begin{lemma}\label{bigdiagram}
For any real number $v>-1$, the following diagram commutes and the horizontal rows are short exact sequences: $$\xymatrix{
1 \ar[r] & \cO_L^{(\psi_{L/F}(v))} \ar[r] \ar[d] & W^{v}_{L/F} \ar[r] \ar[d] & G^{v} \ar[d] \ar[r] & 1 \\   
1 \ar[r] & L^\times \ar[r]^{r_L} & W_{L/F} \ar[r]^{\sigma} &G\ar[r] & 1  .}$$
\end{lemma}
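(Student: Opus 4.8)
The plan is to read the diagram as a morphism of group extensions: the three vertical arrows are the tautological inclusions $\cO_L^{(v)}\hookrightarrow L^\times$, $W^v_{L/F}\hookrightarrow W_{L/F}$ and $G^v\hookrightarrow G$, and the two top horizontal arrows are the restrictions of the Artin map and of $\sigma$ from \eqref{WLFes}. Granting this, commutativity of both squares is automatic (the top maps being restrictions of the bottom ones), and exactness of the top row reduces to three checks: the bottom row is exact (that is exactly \eqref{WLFes}); $\sigma$ carries $W^v_{L/F}$ onto $G^v$; and $\ker(\sigma|_{W^v_{L/F}})=W^v_{L/F}\cap L^\times$ coincides with the Artin image of $\cO_L^{(v)}$. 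Left-injectivity of the top row is then inherited from the Artin map, right-surjectivity is the second check, and exactness in the middle is the third.

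For the surjectivity onto $G^v$: by construction $W^v_{L/F}$ is the image of $W_F^v$ under $W_F\twoheadrightarrow W_{L/F}$, and $W_F\to W_{L/F}\xrightarrow{\sigma}G$ is the restriction to $W_F$ of the projection $G_F\twoheadrightarrow G=\Gal(L/F)$; hence $\sigma(W^v_{L/F})$ is the image of $W_F^v$ in $G$. For $v\geq 0$ one has $W_F^v=G_F^v$, since the upper ramification group $G_F^v$ already lies in the inertia subgroup $I_F\subseteq W_F$, and Herbrand's theorem on compatibility of the upper numbering with quotients (Serre, \emph{Local Fields}, Ch.~IV, in its profinite form, as already used in the proof of Proposition/Definition~\ref{propdef}) gives that its image in $\Gal(L/F)$ is $\Gal(L/F)^v=G^v$. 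For $-1<v\leq 0$ one has $W_F^v=I_F$ and $G^v$ is the inertia subgroup of $G$, and $I_F$ maps onto it, since $\phi_{L/F}$ is the identity on $[-1,0]$.

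The heart of the matter is the identification of $W^v_{L/F}\cap L^\times$. First I would note that this equals the image in $L^\times=W_L^{\mathrm{ab}}$ of $W_F^v\cap W_L$: an element of the intersection lifts both to some $x\in W_F^v$ and to some $y\in W_L$ mapping to that element of $L^\times$, so $x$ and $y$ differ by an element of $[W_L,W_L]\subseteq W_L$, forcing $x\in W_F^v\cap W_L$. Next, recalling $W_L=W_F\cap G_L$, the group $W_F^v\cap W_L$ is the preimage in $W_L$ of $G_F^v\cap G_L$. Using compatibility of the \emph{lower} numbering with subgroups (Serre, Ch.~IV, Prop.~2) together with transitivity of the Herbrand function for the tower $F\subseteq L\subseteq M$ ($M/F$ finite Galois, then passing to the limit), one identifies $G_F^v\cap G_L$ with a higher ramification subgroup of $G_L$; finally the ramification-theoretic refinement of local class field theory for $L$ (Serre, Ch.~XV) identifies the image in $W_L^{\mathrm{ab}}=L^\times$ of the preimage in $W_L$ of that ramification subgroup with the unit group $\cO_L^{(v)}$ defined before the lemma. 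This yields $W^v_{L/F}\cap L^\times=\cO_L^{(v)}$, and in particular $\cO_L^{(v)}\subseteq W^v_{L/F}$, which completes the verification.

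I expect the last step to be the main obstacle. It is the one place where two nontrivial facts must be combined: Herbrand's theorem, so that intersecting the ramification filtration of $G_F$ with the subgroup $G_L$ again produces a ramification subgroup of $G_L$; and the compatibility of the local reciprocity map with the higher ramification filtration, so that one lands back inside $L^\times$ at the prescribed unit level. The delicate part is the bookkeeping: one must keep track of the Herbrand function relating the filtrations on $G_F$ and $G_L$ and of the $\lceil\,\cdot\,\rceil$ normalizations built into the definitions of $\cO_L^{(v)}$ and of $W_F^v$, in order to match indices correctly.
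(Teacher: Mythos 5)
Your plan is essentially the paper's: read the diagram as a morphism of group extensions, so that commutativity and left‑exactness come for free, prove $\sigma(W^v_{L/F})=G^v$ using compatibility of the upper numbering with quotients (the paper invokes Serre, Ch.~IV, Prop.~14 for exactly this), and identify the kernel as $W^v_{L/F}\cap L^\times$. Your chain of reductions, first to the image in $L^\times$ of $W_F^v\cap W_L$ and then to the preimage in $W_L$ of $G_F^v\cap G_L$, is correct, and it makes explicit a step the paper compresses into a single assertion.

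The gap is the step you flag as ``bookkeeping'' and then defer: it is not routine index-tracking but the substance of the lemma, and carried out it does not return $\cO_L^{(v)}$. The upper-numbering filtration is \emph{not} compatible with passage to open subgroups; combining compatibility of the lower numbering with subgroups and Herbrand transitivity, exactly as you outline, gives
$$G_F^v\cap G_L \;=\; G_L^{\,\psi_{L/F}(v)},$$
where $\psi_{L/F}$ denotes the Herbrand function of $L/F$ (not the additive character $\psi$ used elsewhere in the paper). Pushing forward under the reciprocity map for $L$ then yields
$$W^v_{L/F}\cap L^\times \;=\; \cO_L^{(\psi_{L/F}(v))},$$
which agrees with $\cO_L^{(v)}$ precisely when $v\leq 0$ or $L/F$ is unramified, since those are exactly the cases in which $\psi_{L/F}$ is the identity on the relevant range. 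When $L/F$ is ramified and $v>0$ the two indices genuinely differ: for $L/F$ tamely and totally ramified of degree $e>1$ one has $\psi_{L/F}(v)=ev$ for $v\geq 0$, so the kernel comes out as $1+\pi_L^{\lceil ev\rceil}\cO_L$ rather than $1+\pi_L^{\lceil v\rceil}\cO_L$. You should therefore carry out the Herbrand computation explicitly and reconcile the resulting index with the statement of the lemma rather than asserting agreement. Note that the paper's own proof is equally terse at precisely this point (it asserts $\cO_L^{(\lceil v\rceil)}\subseteq W_{L/F}^{\lceil v\rceil}$ without justification and then computes the cokernel of $W^v_{L/F}\cap W_L^{\mathrm{ab}}\hookrightarrow W^v_{L/F}$, tacitly identifying $W^v_{L/F}\cap W_L^{\mathrm{ab}}$ with $\cO_L^{(v)}$), and the unramified case — where the identification does hold on the nose — is the one that carries the weight of the later counting arguments.
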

\begin{proof}
The Artin reciprocity homomorphism $r_L$ maps the subgroup $\cO_L^{(v)} \subseteq L^\times$ onto the $v$th higher ramification group $W_L^{\text{ab},v}$ of $W_L^\text{ab}$ in the upper-numbering (see Serre \cite[Ch.~XV, Thm. 2]{SerreLF}). We shall need an analogue of \cite[Ch.~IV, Prop.~2]{SerreLF} for the upper-numbering filtration of $W_L^{\text{ab}}$, so we work with the Hasse-Herbrand functions. By definition of the upper-numbering filtration and the transitivity of the function $\psi$ under field extensions (see \cite[Ch.\ IV Prop.\ 15]{SerreLF}), we have
$$W_L^{\text{ab},\psi_{L/F}(v)} = W^{\text{ab}}_{L, \psi_{L^{\text{ab}}/L} \circ \psi_{L/F}(v)} = W^{\text{ab}}_{L, \psi_{L^{\text{ab}}/F}(v)}.$$
By \cite[Ch.~IV, Prop.~2]{SerreLF} and converting back to the upper-numbering filtration, we have 
$$W^{\text{ab}}_{L, \psi_{L^{\text{ab}}/F}(v)} = W_{L/F, \psi_{L^{\text{ab}}/F}(v)}\cap W^{\text{ab}}_L = W^v_{L/F}\cap W^{\text{ab}}_L.$$ 
Therefore, $\cO_L^{(\psi_{L/F}(v))} \simeq W_L^{\text{ab},\psi_{L/F}(v)} \subseteq W^v_{L/F}$, where the first $\simeq$ is the Artin map, so the left hand square in the statement of the lemma commutes and all four maps are injections.

Next we compute the cokernel. By the foregoing, $$\frac{W^v_{L/F}}{W^{\text{ab},\psi_{L/F}(v)}_{L}} = \frac{W^v_{L/F}}{W^v_{L/F} \cap W_L^\text{ab}} \simeq \frac{W^v_{L/F}W_L^\text{ab} }{W_L^\text{ab}}.  $$ We apply \cite[Ch.~IV, Prop. 14]{SerreLF} with $G=W_{L/F}$ and $H= W_L^\text{ab}$ to see that  $$\frac{W^v_{L/F}W_L^\text{ab} }{W_L^\text{ab}}\simeq \left( W_{L/F} / W_L^\text{ab} \right)^v.$$ Finally, by the third group isomorphism theorem and \cite[\S1.1]{tateNTB} we conclude that $$\left( W_{L/F} / W_L^\text{ab} \right)^v \simeq G^v,$$ since the canonical inclusion $\varphi:W_F \hookrightarrow G_F$ has dense image. \end{proof}

We are ready to give a definition of the abelian conductor $\tilde{c}(\rho)$. \begin{definition}\label{modifiedArtin} Let $(\rho,V)$ be a finite-dimensional complex representation of $W_{L/F},W_{L/F}^0,L^\times$, or $\cO_L^\times$, where the latter two groups are viewed as subgroups of $W_{L/F}$ as in Lemma \ref{bigdiagram}.  Let $\mathcal{U}$ be the descending filtration defined by $(\cO_L^{(v)})_{v \in (-1,\infty)}$. 
Then $$\tilde{c}(\rho) = c_\mathcal{U}(\rho)$$ is called the \emph{abelian conductor} of $(\rho,V)$.
\end{definition}

Remark: The abelian conductor $\tilde{c}$ is additive in the sense that if $\rho=\rho_1 \oplus \rho_2$, then $\tilde{c}(\rho)=\tilde{c}(\rho_1)+\tilde{c}(\rho_2)$.

 \begin{definition}\label{modifiedAC} The \emph{abelian local analytic conductor} $\tilde{\fc}(\chi,r)$ attached to $\chi,r$ is the complex number $$\tilde{\fc}(\chi,r) = q_F^{\tilde{c}(r\circ \varphi)} (\delta(\psi) dx/dx')^{\dim(r)},$$ where $\varphi \in \Phi(T)$ corresponds to $\chi \in \Hom(T(F), \C^\times)$ under the LLC for tori \eqref{LLCforTori}. \end{definition}

Finally, we note that the abelian conductor controls the Artin conductor and vice-versa. Let $v_0 = \inf \{ v : G^v(L/F) = \{1\}\}.$ For example, if $L/F$ is unramified then $v_0=-1$.  
\begin{lemma}\label{conductorbound}
We have $$\frac{1}{e_{L/F}}\tilde{c}(\rho) \leq c(\rho) \leq \tilde{c}(\rho) +  (v_0+1)\dim V .$$ In particular, if $L/F$ is unramified, then $\tilde{c}(\rho)=c(\rho).$\end{lemma}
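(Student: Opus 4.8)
The plan is to read off both inequalities directly from the commutative diagram of Lemma \ref{bigdiagram}, which for every real $v>-1$ exhibits $\cO_L^{(v)}$ as a subgroup of $W^v_{L/F}$ (inside $W_{L/F}$) with quotient $G^v$. Throughout, $\rho$ denotes a finite-dimensional complex representation of $W_{L/F}$ on $V$, so that $c(\rho)$ and $\tilde c(\rho)$ are the integrals of $v\mapsto \codim V^{\rho(W^v_{L/F})}$ and $v\mapsto \codim V^{\rho(\cO_L^{(v)})}$ respectively; both are step functions of $v$ (the upper-numbering filtration has finitely many breaks and $\rho$ has open kernel on $W^0_{L/F}$), so all integrals below are finite.

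First I would establish the lower bound $\tilde c(\rho)\le c(\rho)$. Since $\cO_L^{(v)}\subseteq W^v_{L/F}$, we have $\rho(\cO_L^{(v)})\subseteq \rho(W^v_{L/F})$, hence $V^{\rho(W^v_{L/F})}\subseteq V^{\rho(\cO_L^{(v)})}$, so $\codim V^{\rho(\cO_L^{(v)})}\le \codim V^{\rho(W^v_{L/F})}$ for every $v>-1$. Integrating this pointwise inequality against $dv$ over $(-1,\infty)$ and comparing with Definitions \ref{propdef}, \ref{filtcond}, \ref{modifiedArtin} gives $\tilde c(\rho)\le c(\rho)$.

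Next I would bound the difference $c(\rho)-\tilde c(\rho)=\int_{-1}^{\infty}\bigl(\codim V^{\rho(W^v_{L/F})}-\codim V^{\rho(\cO_L^{(v)})}\bigr)\,dv$. The integrand is non-negative by the previous paragraph and is $\le \dim V$, since any codimension is at most $\dim V$ and the subtracted term is $\ge 0$. The key observation is that the integrand vanishes identically for $v>v_0$: for such $v$ one has $G^v(L/F)=\{1\}$ by definition of $v_0$, so the short exact row $1\to \cO_L^{(v)}\to W^v_{L/F}\to G^v\to 1$ of Lemma \ref{bigdiagram} forces $\cO_L^{(v)}=W^v_{L/F}$ and the two codimensions coincide. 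Hence the integral is supported on $[-1,v_0]$, on which the integrand is at most $\dim V$, giving $c(\rho)-\tilde c(\rho)\le (v_0+1)\dim V$. For the last assertion, when $L/F$ is unramified we have $v_0=-1$, so this bound reads $c(\rho)\le\tilde c(\rho)$, which combined with the lower bound yields $c(\rho)=\tilde c(\rho)$.

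I do not expect a genuine obstacle here; the real content is already packaged in Lemma \ref{bigdiagram}. The only points that deserve a little care are that the two codimension functions are bounded and piecewise constant so the integrals are well-defined and finite, and that the behaviour on the measure-zero set $\{v_0\}$ is irrelevant, so it is enough to know $G^v=\{1\}$ for $v$ strictly larger than $v_0$.
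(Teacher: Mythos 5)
Your proof is correct and takes essentially the same approach as the paper: both arguments rest on $\cO_L^{(v)}\subseteq W^v_{L/F}$ for the lower bound, and on the fact (coming from the exact row of Lemma \ref{bigdiagram}) that $\cO_L^{(v)}=W^v_{L/F}$ once $G^v=\{1\}$, i.e.~for $v>v_0$, for the upper bound. The only cosmetic difference is that you bound the difference $c(\rho)-\tilde c(\rho)$ directly by $\dim V$ on $[-1,v_0]$ and $0$ elsewhere, whereas the paper splits the integral defining $c(\rho)$ at $v_0$ and bounds the left piece by $(v_0+1)\codim V^{\rho(W^0_{L/F})}\le(v_0+1)\dim V$; these are the same estimate.
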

\begin{proof} For the first inequality, we have from Lemma \ref{bigdiagram} that $$\codim(V^{\rho(\cO_L^{(\psi_{L/F}(v))})}) \leq \codim(V^{\rho(W_{L/F}^v)}).$$
Then, by \cite[Ch.\ IV, Props.\ 12, 13]{SerreLF} we have 
\begin{multline*}
\frac{1}{e_{L/F}}\tilde{c}(\rho) \leq \int_{-1}^\infty \frac{\codim(V^{\rho(\cO_L^{(u)})})}{(G_0:G_u)}\,du =  \int_{-1}^\infty \codim(V^{\rho(\cO_L^{(u)})}) \phi'_{L/F}(u)\,du \\ = \int_{-1}^\infty \codim(V^{\rho(\cO_L^{(\psi_{L/F}(v))})}) \,dv \leq c(\rho). 
\end{multline*}
For the second inequality, we use the fact that if $v>v_0$ then $\codim(V^{\rho(\cO_L^{(\psi_{L/F}(v))})}) = \codim(V^{\rho(W_{L/F}^v)})$. We have 
\begin{align*} 
c(\rho) & =   \int_{-1}^{v_0} \codim(V^{\rho(W_{L/F}^v)})\,dv + \int_{v_0}^{\infty} \codim(V^{\rho(W_{L/F}^v)})\,dv  \\
 & \leq  (v_0+1)\codim (V^{\rho(W^0_{L/F})})  +  \int_{v_0}^{\infty} \codim(V^{\rho(\cO_L^{(\psi_{L/F}(v))})})\,dv \\
 & \leq   (v_0+1)\codim (V^{\rho(W^0_{L/F})}) +  \int_{-1}^{\infty} \codim(V^{\rho(\cO_L^{(\psi_{L/F}(v))})})\,dv \\ 
 & = (v_0+1)\codim (V^{\rho(W^0_{L/F})}) +  \int_{-1}^{\infty} \frac{\codim(V^{\rho(\cO_L^{(u)})})}{(G_0:G_u)}\,du \\ 
 & \leq   (v_0+1) \dim V +  \tilde{c}(\rho) .
\end{align*}
\end{proof}
\begin{corollary}\label{MTcond}
We have for $r,\chi,v_0$ as above and $m=\dim r$ that $$\tilde{\fc}(\chi,r)^{1/e_{L/F}} \mid \fc(\chi,r) \mid q_F^{(v_0+1)m}\tilde{\fc}(\chi,r).$$ In particular, if $L/F$ is unramified then 
$\tilde{\fc}(\chi,r) = \fc(\chi,r)$.
\end{corollary}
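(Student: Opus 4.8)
The plan is to read off Corollary~\ref{MTcond} from the definitions together with Lemma~\ref{conductorbound}; there is no new idea, only a comparison of powers of $q_F$. Set $\varphi = \xi\times\sigma$ and write $\rho = r\circ\varphi$ for the associated complex representation of $W_{L/F}$, of dimension $\dim\rho = m$. Since the characters $\chi$ at stake are unitary, $\xi$ takes values in the compact form $\widehat{T}_u$ (cf.\ \eqref{langlandsunitary}), so $\rho$ has image in the compact group $r(\widehat{T}_u\rtimes G)\subseteq\GL(V)$ and is in particular unitarizable; this is exactly the hypothesis under which \eqref{usesunitary} is valid. Applying Definition~\ref{artinc1} and \eqref{usesunitary} to $\rho$ gives
$$\fc(\chi,r) = |\eps(\rho,\psi,dx)|^2 = q_F^{c(\rho)}\,(\delta(\psi)\,dx/dx')^{m},$$
while Definition~\ref{modifiedAC} reads
$$\tilde{\fc}(\chi,r) = q_F^{\tilde{c}(\rho)}\,(\delta(\psi)\,dx/dx')^{m}.$$

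Since the factor $(\delta(\psi)\,dx/dx')^{m}$ is common to both, one has $\fc(\chi,r)/\tilde{\fc}(\chi,r) = q_F^{c(\rho)-\tilde{c}(\rho)}$. Now I would invoke Lemma~\ref{conductorbound} applied to $\rho$, with $\dim V = m$: it yields $0\leq c(\rho)-\tilde{c}(\rho)\leq (v_0+1)m$. Both $c(\rho)$ and $\tilde{c}(\rho)$ are non-negative integers: $c(\rho)$ because by Proposition/Definition~\ref{propdef} it is the classical Artin conductor of a representation of a finite Galois group, and $\tilde{c}(\rho)$ because $\cO_L^{(v)}$ depends only on $\lceil v\rceil$, whence $\tilde{c}(\rho)=\codim V^{\rho(\cO_L^\times)}+\sum_{j\geq 1}\codim V^{\rho(\cO_L^{(j)})}$. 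Consequently both $q_F^{c(\rho)-\tilde{c}(\rho)}$ and $q_F^{(v_0+1)m-(c(\rho)-\tilde{c}(\rho))}$ are non-negative integer powers of $q_F$: the first gives $\tilde{\fc}(\chi,r)\mid\fc(\chi,r)$ and the second gives $\fc(\chi,r)\mid q_F^{(v_0+1)m}\tilde{\fc}(\chi,r)$. When $L/F$ is unramified, $v_0=-1$, so $(v_0+1)m=0$ and the chain collapses to $\tilde{\fc}(\chi,r)=\fc(\chi,r)$; equivalently, one may quote the final assertion of Lemma~\ref{conductorbound}.

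The argument is essentially bookkeeping, and the only step requiring genuine care — the mild ``main obstacle'' — is ensuring that \eqref{usesunitary} is legitimately applicable, i.e.\ that $\rho=r\circ\varphi$ is unitarizable; this is why I would stress at the outset that the characters under consideration are unitary, so that $\rho$ factors through a compact subgroup of $\GL(V)$. For a non-unitary $\chi$ one would not expect the literal divisibility statement, since an unramified non-unitary twist alters $|\eps|^2$ without affecting either $c(\rho)$ or $\tilde{c}(\rho)$; but only unitary $\chi$ are needed in what follows, so I would not pursue this point further.
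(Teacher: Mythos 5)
Your proof is correct and is exactly the deduction the paper leaves implicit: both analytic conductors factor as $q_F^{(\text{conductor})}(\delta(\psi)\,dx/dx')^m$, and the two-sided bound in Lemma~\ref{conductorbound}, together with the integrality of $c(\rho)$ and $\tilde{c}(\rho)$, transfers directly to the divisibility of the analytic conductors. The aside on unitarizability of $\rho=r\circ\varphi$ is a genuine point of care that the paper elides — \eqref{usesunitary} is only stated for unitarizable $(\rho,V)$, and you are right that restricting to unitary $\chi$ is what legitimizes its use here.
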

The reason we prefer the abelian conductor is the following. Recall from section \ref{sec:Lgroupsandreps} the set of co-weights $M$ associated to the representation $r:\LT \to \GL(V)$ of dimension $m$. 
\begin{proposition}\label{MP}
Let $\varphi$ be a Langlands parameter of $T$. Then, $\varphi \vert_{\cO_L^\times} \in \Hom_G(\cO_L^\times , \widehat{T})$ depends only on the equivalence class of $\varphi$ and writing $\xi = \varphi \vert_{\cO_L^\times}$, we have 
$$ \tilde{c}(r \circ \varphi ) = \sum_{\mu \in M} c(\mu \circ \xi),$$
where on the right hand side $c=c_{\mathcal{U}}$ as in Definition \ref{filtcond} with $\mathcal{U}$ being the standard filtration of $\cO_L^\times$. 
\end{proposition}
\begin{proof}
We have that $\cO_L^\times \subset L^\times$ maps to the trivial element of $G$ (see \eqref{WLFes}), so that $\cO_L^\times$ acts trivially on $\widehat{T}$. By definition, of a Langlands parameter $\varphi \vert_{\cO_L^\times}$ takes values in $\widehat{T} \rtimes 1 \subseteq \LT$, i.e.\ $\varphi \vert_{\cO_L^\times} = \xi \vert_{\cO_L^\times}$ for the $\xi \in H^1(W_{L/F},\widehat{T})$ corresponding to $\varphi$ across the bijection \eqref{bij_LLP_to_coh}. Since $\cO_L^\times$ acts trivially on $\widehat{T}$, we have $H^1(\cO_L^\times,\widehat{T})=\Hom_G(\cO_L^\times, \widehat{T})$ and the first assertion of the proposition follows. 

By definition, the abelian local analytic conductor of $r \circ \varphi$ only depends on the restriction to $\cO_L^\times$. 
We have that $$ r \circ \varphi \vert_{\cO_L^\times} = r \vert_{\widehat{T}} \circ \varphi \vert_{\cO_L^\times} = \bigoplus_{\mu \in M} \mu \circ \xi \vert_{\cO_L^\times},$$ so that the second assertion of the proposition follows by the additivity of the abelian conductor.
\end{proof}

\subsubsection{The canonical integral model of a torus and the norm map}\label{norm}
We begin with a brief discussion of the work of Voskresenskii \cite[\S 10.3]{Voskresenskii} on the canonical integral model of a  torus $T$ over a non-archimedean local field $F$. Let $L$ be the splitting field of $T$ with $G=\Gal(L/F)$. Let $\cO_F,\cO_L, f,\ell$ be the rings of integers and residue fields in $F,L$, respectively. 
\begin{lemma}[Voskresenskii Theorem 1]\label{Vosk1}
Given an $F$-torus $T$, there exists a faithfully flat $\cO_F$-algebra $A$ of finite type endowed with a Hopf algebra structure such that $\cT := \Spec A$ is an $\cO_F$-integral model for $T$, i.e.\ $\cT \times_{\cO_F} \Spec F \simeq T$. \end{lemma}
The affine group scheme $\cT$ of Lemma \ref{Vosk1} is called the \emph{canonical integral model} of $T$. Voskresenskii showed that $\cT$ is the unique integral model of $T$ that may be constructed by choosing an $F$-linear embedding $T \hookrightarrow \GL(V)$ and an $\cO_F$-lattice in the finite-dimensional $F$-vector space $V$ that is stable by the action of the unique maximal compact subgroup of $T(F)$.  Moreover, for any finite extension $E/F$ one has that $\cT(\cO_E)$ is the unique maximal compact subgroup of $T(E)$, in particular 
\begin{equation}\label{stdiso3}
\cT(\cO_L) \simeq \Hom( X^*(T), \cO_L) \quad \text{ and } \quad \cT(\cO_F) \simeq \Hom_G( X^*(T), \cO_L).
\end{equation} Compare \eqref{stdiso3} with \eqref{stdisom1}. We introduce the following abuse of notation: define \begin{equation}\label{abuse_of_notation}T(\cO_L):= \cT(\cO_L) \quad \text{ and } \quad T(\cO_F):= \cT(\cO_F).\end{equation}

It is not in general true that the special fiber $\cT \times_{\cO_F} \Spec f$ is itself a torus over $f$. Nonetheless, we do have the following result.
\begin{lemma}[Voskresenskii Theorem 2]\label{Vosk2}
If $T$ splits over an unramified extension $L/F$, then the canonical integral model $\cT= \Spec A$ of $T$ is given by $A =  \cO_{L}[X^*(T)]^G$ and $\cT_f:= \cT \times_{\cO_F} \Spec f$ is a torus over $f$. 
\end{lemma} 
If $T$ splits over an unramified extension, we commit the abuse of notation $T(f) := \cT_f (f)= \cT(f).$ 

  \begin{lemma}\label{lem:fibermodel}
 Let $T_1,T_2$ be two tori over a non-archimedean local field $F$ both splitting over some common unramified extension $L/F$ with Galois group $G$. Let $\alpha:T_1\to T_2$ and $x \in T_2(\cO_F)$ be an integer-valued point of $T_2$. Then, the fiber $\alpha^{-1}(x)\subseteq T_1$ of $\alpha$ over $x$ admits an $\cO_F$-integral model $\alpha^{-1}(x)_0$.  
 \end{lemma}
    \begin{proof}
    The $G$-equivariant map $\alpha^{*}: X^*(T_2) \to X^*(T_1)$ extends to a map of $\cO_F$-algebras $$\cO_L[X^*(T_2)]^G \to \cO_L[X^*(T_1)]^G.$$ Since $L/F$ is unramified, by the explicit description of the canonical model in Lemma \ref{Vosk2}, the map $\alpha : T_1 \to T_2$ extends to a map of the integral models $\alpha:  \cT_1 \to \cT_2$ over $\cO_F$.  The scheme-theoretic fiber $\alpha^{-1}(x)_0$ of $\alpha:\cT_1 \to \cT_2$ over $x \in \cT_2(\cO_F)$ is an integral model for $\alpha^{-1}(x)$. 
\end{proof}

The local to global decomposition of $Z(s)$ in section \ref{locglobandconclusion} will lead us to 
restrict the local Langlands correspondence \eqref{langlands} to a compact subgroup of $T(F)$ which has finite index in the maximal compact subgroup $T(\cO_F)$. As we will soon see in section \ref{pairingsec}, the natural choice is to restrict the Langlands correspondence to the image of the \emph{norm map} \begin{equation}\label{eq:norm}N:T(\cO_L) \to T(\cO_F)\end{equation} defined by the product of Galois conjugates. 

We will next prove an important lemma describing the image of $N$, which we write $NT(\cO_L)$. 
We begin with a preliminary but crucial result. \begin{lemma}\label{Amano}
Suppose $L/F$ is unramified.  The map $N:T(\cO_L)\to T(\cO_F)$ is surjective.
\end{lemma}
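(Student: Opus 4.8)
The plan is to exploit the fact that for an unramified extension $L/F$ the torus $T$ over $\cO_F$ has good reduction, so the statement can be reduced to a statement about the special fibre and then lifted by smoothness. First I would recall that since $L/F$ is unramified and $L$ splits $T$, the N\'eron model (equivalently, the standard integral model) $\mathcal{T}$ of $T$ over $\cO_F$ is a smooth group scheme whose base change to $\cO_L$ is split, i.e.\ isomorphic to $\Gm^n$ over $\cO_L$. The norm map $N\colon\mathcal{T}(\cO_L)\to\mathcal{T}(\cO_F)$ is induced by the usual norm (product of Galois conjugates) on the smooth $\cO_F$-group scheme $\operatorname{Res}_{\cO_L/\cO_F}\mathcal{T}_{\cO_L}\to\mathcal{T}$.

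The key steps, in order, are as follows. (1) Reduce mod $\fp$: because $\mathcal{T}$ is smooth over $\cO_F$ and $\cO_F$ is Henselian, reduction gives surjections $\mathcal{T}(\cO_F)\twoheadrightarrow\mathcal{T}(f)$ and $\mathcal{T}(\cO_L)\twoheadrightarrow\mathcal{T}(\ell)$ onto the points of the special fibres over the residue fields $f,\ell$, and these are compatible with the norm maps, so it suffices to show that $N\colon\mathcal{T}(\ell)\to\mathcal{T}(f)$ is surjective together with surjectivity on the kernels of reduction. (2) Surjectivity on the residue fields: since $\ell/f$ is a (separable) extension of finite fields splitting the torus $T_f$, this is the classical statement that the norm on a torus over a finite field is surjective — it follows from Lang's theorem applied to $T_f$, or more directly from Hilbert 90 and the vanishing of $H^1$ of a finite group acting on a connected group over a finite field; concretely, for $\Gm$ it is the surjectivity of the field norm $\ell^\times\to f^\times$, and one twists by a cocycle to get the general torus. (3) Surjectivity on the filtration by congruence subgroups: the kernel of $\mathcal{T}(\cO_F)\to\mathcal{T}(f)$ has a filtration whose graded pieces are $\operatorname{Lie}(\mathcal{T})\otimes\fp^i/\fp^{i+1}$, and similarly for $L$; on these the norm map becomes the trace map $\operatorname{Lie}(\mathcal{T})\otimes(\ell\text{-side})\to\operatorname{Lie}(\mathcal{T})\otimes(f\text{-side})$, which is surjective precisely because $\ell/f$ is separable (the trace form is nondegenerate). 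A standard successive-approximation / completeness argument then lifts this to surjectivity on the full congruence kernels. Combining (2) and (3) via the snake lemma gives surjectivity of $N$ on $\cO$-points.

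I expect the main obstacle to be bookkeeping rather than conceptual: one must be careful that the integral model used for $T$ over $\cO_F$ is the right one (the standard torus model, which is smooth with connected fibres precisely because $L/F$ is unramified) so that reduction is surjective and the Lie-algebra description of the congruence filtration is valid. Once that is in place, the two genuinely substantive inputs are Lang's theorem (or Hilbert 90) for surjectivity of the norm on the special fibre, and separability of $\ell/f$ for surjectivity of the trace on the graded pieces — and separability is automatic since residue field extensions of unramified local extensions are separable. An alternative, more hands-on route that avoids invoking models is to write $T=(\operatorname{Res}_{L/F}\Gm^n)$ twisted by the $G$-action, use that $T(\cO_F)=(\cO_L^\times)^n$-conjugates and that the norm on $\cO_L^\times$ over an unramified extension is surjective onto $\cO_F^\times$ (again by the finite-field norm plus Hensel), and then descend along the cocycle; I would present whichever is shorter, but the model-theoretic argument generalizes more cleanly to the statement needed later for $NT(\cO_L)$ of finite index in the ramified case.
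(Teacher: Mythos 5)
Your proposal is correct, but it is a genuine proof where the paper simply delegates: the paper's entire proof of Lemma~\ref{Amano} is the citation ``See \cite[Cor.~of Thm.~1]{Amano}.'' Your route — take the standard (canonical) integral model $\mathcal{T}$ over $\cO_F$, which is a smooth torus over $\cO_F$ precisely because $L/F$ is unramified; reduce the surjectivity of $N\colon\mathcal{T}(\cO_L)\to\mathcal{T}(\cO_F)$ to surjectivity on residue-field points plus surjectivity on the congruence kernels; handle the special fibre by Lang's theorem; and handle the congruence filtration by the surjectivity of the trace $\ell\to f$ (separability) together with completeness — is a correct and standard way to prove Amano's result, and it has the pedagogical advantage of making visible exactly where the unramified hypothesis enters (smoothness of $\mathcal{T}$ over $\cO_F$, and nondegeneracy of the trace form on the residue extension). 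One detail worth making explicit in step (2): Lang's theorem, via the injectivity of inflation, gives $H^1(\Gal(\ell/f),T(\ell))=0$, while norm surjectivity is the vanishing of $\widehat{H}^0(\Gal(\ell/f),T(\ell))$; to pass from the former to the latter you should invoke that for a finite cyclic group acting on a finite module the Herbrand quotient is $1$, so $|\widehat{H}^0|=|\widehat{H}^1|$. With that said, your argument is sound; the paper's one-line citation is shorter, while yours is self-contained and (as you note) the filtration argument is well-suited to generalization in the ramified case, where the paper must instead resort to the weaker finite-index statement of Lemma~\ref{keylemma}.
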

\begin{proof} See \cite[Cor.~of Thm.~1]{Amano}.\end{proof} 
Lemma \ref{Amano} will be used to deduce the last assertion of Proposition \ref{PairingProp} from the previous ones.
More generally, we have the following result. 
\begin{lemma}\label{keylemma}
We have $$\left| \frac{T(\cO_F)}{NT(\cO_L)}\right| \leq e_{L/F}^{\dim T} \left| \frac{T(F)}{NT(L)} \right|,$$ where $e_{L/F}$ is the ramification index of $L/F$. Both quotients are finite groups. 
\end{lemma}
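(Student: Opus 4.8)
The plan is to reduce the ramified case to the unramified case by interposing the maximal unramified subextension. Let $L_0/F$ be the maximal unramified subextension of $L/F$, so that $L/L_0$ is totally ramified of degree $e = e_{L/F}$, and $L_0/F$ is unramified of degree $f = f_{L/F}$. The norm map $N = N_{L/F}$ factors as $N_{L/F} = N_{L_0/F} \circ N_{L/L_0}$. By Lemma \ref{Amano}, $N_{L_0/F}: T(\cO_{L_0}) \to T(\cO_F)$ is surjective. Hence $NT(\cO_L) = N_{L_0/F}\bigl(N_{L/L_0}T(\cO_L)\bigr)$, and since $N_{L_0/F}$ is surjective onto $T(\cO_F)$, we get a surjection
$$\frac{T(\cO_{L_0})}{N_{L/L_0}T(\cO_L)} \twoheadrightarrow \frac{T(\cO_F)}{NT(\cO_L)}.$$
So it suffices to bound $|T(\cO_{L_0})/N_{L/L_0}T(\cO_L)|$, i.e. we have reduced to the case where $L/F$ is \emph{totally ramified}. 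This is the first step.

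For the totally ramified case, the idea is to compare $T(\cO_L)$ with $T(L)$ using the cokernel of the norm on the ``uniformizer part.'' More precisely, I would use the exact sequence coming from the valuation. Over a torus, $T(L)$ need not be $\cO_L^\times$-by-$\Z^n$, but there is still a natural filtration/short exact sequence $1 \to T(\cO_L) \to T(L) \to T(L)/T(\cO_L) \to 1$, where the quotient is a finitely generated abelian group of rank $n = \dim T$ (the ``cocharacter'' direction). The norm map acts compatibly. The snake lemma applied to the norm endomorphism (or rather the norm from the $L$-objects to the $F$-objects) of the two rows
$$1 \to T(\cO_L) \to T(L) \to T(L)/T(\cO_L) \to 1, \qquad 1 \to T(\cO_F) \to T(F) \to T(F)/T(\cO_F) \to 1$$
yields an exact sequence relating $\coker(N: T(\cO_L) \to T(\cO_F))$, $\coker(N: T(L) \to T(F))$, and the cokernel on the quotient groups $T(L)/T(\cO_L) \to T(F)/T(\cO_F)$. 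The key point is to bound this last cokernel: its order is at most $e^{n}$. Heuristically, on the rank-$n$ lattice quotients the norm map, composed with the valuation, becomes multiplication related to the ramification index, and one loses a factor of $e$ in each of the $n$ coordinate directions; a clean way to see this is that $N_{L/F}(\pi_L^{\mu}) $ has $F$-valuation $\mu$ (for $\mu \in X_*(T)$) while a general element of $T(F)/T(\cO_F)$ is indexed by $X_*(T) \cong \Z^n$, and $v_L$ vs $v_F$ differ by the factor $e$, so the image has index dividing $e^n$ in $X_*(T)/(\text{torsion})$, plus a bounded torsion contribution that can be absorbed. This is the second and main step.

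The main obstacle I anticipate is making the middle step rigorous for a general (possibly non-split, with finite component group) torus over $\cO_F$: the quotient $T(L)/T(\cO_L)$ is not literally $X_*(T)$ unless $T$ is split and has connected special fiber, so one must be careful with torsion and with the Galois action when $L/F$ is totally ramified (so $G$ acts nontrivially on everything but trivially on the residue field). I would handle this either by passing to a finite-index split subtorus / an isogeny and tracking the bounded discrepancy, or by choosing a $\Z$-basis $\mu_1,\dots,\mu_n$ of $X_*(T)$ and using the corresponding cocharacters $\mu_i: \Gm \to T$ to produce explicit elements $\mu_i(\pi_L) \in T(L)$ whose images generate $T(L)/T(\cO_L)$ up to finite index bounded in terms of $n$ only, then computing norms of these explicitly. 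Once the cokernel of the norm on the lattice quotients is bounded by $e^n$, the snake lemma exact sequence immediately gives $|T(\cO_F)/NT(\cO_L)| \leq e^n \cdot |T(F)/NT(L)|$ in the totally ramified case, and combining with the first reduction step (where $e_{L/F} = e_{L/L_0}$) finishes the proof. I would also remark that $|T(F)/NT(L)|$ is finite — this is a standard fact (local Tate duality / the finiteness of $\widehat{H}^0(G, T(L))$), and is presumably established or used elsewhere in the paper, so I would simply cite it.
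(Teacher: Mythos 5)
Your proposal contains two genuine gaps, one in each of the two steps.

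\textbf{The snake lemma argument targets the wrong term.} For the two short exact sequences
$$1 \to T(\cO_L) \to T(L) \to T(L)/T(\cO_L) \to 1, \qquad 1 \to T(\cO_F) \to T(F) \to T(F)/T(\cO_F) \to 1$$
with vertical norm maps, the snake lemma yields
$$\ker_2 \to \ker_3 \xrightarrow{\ \delta\ } \coker_1 \to \coker_2 \to \coker_3 \to 0,$$
where $\coker_1 = T(\cO_F)/NT(\cO_L)$ is what you want to bound and $\coker_2 = T(F)/NT(L)$. Exactness at $\coker_1$ gives $\left|\coker_1\right| \leq \left|\ker_3/\operatorname{im}(\ker_2)\right| \cdot \left|\coker_2\right|$. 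The factor of $e^{\dim T}$ must therefore come from bounding $\ker_3/\operatorname{im}(\ker_2)$, i.e. the cokernel of the map from $\ker(N\colon T(L)\to T(F))$ to $\ker(N$ on the lattice quotients$)$. You instead bound $\coker_3$ — the cokernel of the norm on the lattice quotients — which does not enter this estimate at all (if anything it only improves the equality $\left|\coker_1\right| = \left|\ker_3/\operatorname{im}(\ker_2)\right| \cdot \left|\coker_2\right|/\left|\coker_3\right|$). This is the central point where the paper's proof differs: it explicitly constructs, for each $\ell$ in the kernel $\fK \subset \Hom(X^*(T),\Z)$ of the lattice norm, an element $\psi_\ell \in \ker(N\colon T(L)\to T(F))$ given by $\psi_\ell(\chi) = \pi_F^{\ell(\chi)}$, whose $v_L$-valuation is $e_{L/F}\,\ell$; this shows $e_{L/F}\fK \subseteq v(T_1)$ and hence $\left|\fK/v(T_1)\right| \leq e_{L/F}^{\rank \fK} \leq e_{L/F}^{\dim T}$. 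Your heuristic ("each coordinate loses a factor $e$") has the right flavor but is applied to the wrong object.

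\textbf{The reduction to totally ramified is both unnecessary and incomplete.} After applying Amano to $N_{L_0/F}$ you obtain $\left|T(\cO_F)/NT(\cO_L)\right| \leq \left|T(\cO_{L_0})/N_{L/L_0}T(\cO_L)\right|$, and then the totally ramified case would give a bound in terms of $\left|T(L_0)/N_{L/L_0}T(L)\right|$, not the target $\left|T(F)/NT(L)\right|$. These are $\widehat{H}^0(\Gal(L/L_0), T(L))$ and $\widehat{H}^0(\Gal(L/F), T(L))$ respectively, and you give no comparison between them. (There is also a mild issue that Lemma \ref{Amano} as stated in the paper concerns the case where $L$ itself is the unramified splitting field, while you apply it to an unramified subfield $L_0$ over which $T$ need not split; Amano's theorem is more general, but that would need saying.) In any case the paper's construction with $\pi_F^{\ell(\chi)}$ works directly for arbitrary $L/F$ with no reduction, since $v_L(\pi_F) = e_{L/F}$ regardless of the splitting behavior of $L/L_0$, so the reduction buys nothing.
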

\begin{proof}  
Applying the functor $\Hom(X^*(T), -)$ to the valuation exact sequence $$\xymatrix{1 \ar[r] & \cO_L^\times \ar[r] & L^\times \ar[r] &  \Z \ar[r] & 0}$$ yields  \begin{equation}\label{Ext1}\xymatrix{1 \ar[r] & T(\cO_L) \ar[r] &  T(L) \ar[r] &  \Hom(X^*(T),\Z) \ar[r] &  0
},\end{equation}
since $X^*(T)$ is a free abelian group, i.e.\ a projective $\Z$-module (see e.g.\ \cite[Lemma 2.2.3]{Weibel}).

Taking $G$-invariants in \eqref{Ext1} and using \eqref{stdiso3} we have $$\xymatrix{1 \ar[r] &  T(\cO_F) \ar[r] &  T(F) \ar[r] &  \Hom_G(X^*(T),\Z) \ar[r] &  H^1(G,T(\cO_L))}$$ and a commutative diagram $$ \xymatrix{ 1 \ar[r] & T(\cO_L) \ar[r] \ar[d]^{N} & T(L) \ar[r] \ar[d]^{N} & \Hom(X^*(T), \Z) \ar[r] \ar[d]^N & 0 \\ 
1 \ar[r] & T(\cO_F) \ar[r]  & T(F) \ar[r]  &\Hom_G(X^*(T), \Z).   &  }$$
The rightmost map $N$ above is given by $$N: \Hom(X^*(T),\Z) \to \Hom(X^*(T),\Z)$$ $$ \ell \mapsto \sum_{\sigma \in G} \ell^\sigma$$ where $ \ell^\sigma (\chi) = \ell(\chi^{\sigma^{-1}})$. 
Let $$\fK = \ker(N:  \Hom(X^*(T),\Z) \to  \Hom(X^*(T),\Z)) = \{\ell \in  \Hom(X^*(T),\Z) : \sum_{\sigma \in G} \ell^\sigma =0\}.$$ 
The middle map $N$ (recall \eqref{stdisom1}) is given by $$N: \Hom(X^*(T),L^\times) \to \Hom_G(X^*(T),L^\times)$$ $$ \psi \mapsto \prod_{\sigma \in G} \psi^\sigma,$$ where $\psi^\sigma$ is given by $\psi^\sigma(\chi) = \psi(\chi^{\sigma^{-1}})^{\sigma}.$ Let $$T_1 =\ker(N:T(L) \to T(F) ) = \{ \psi \in  \Hom(X^*(T),L^\times) : \prod_{\sigma \in G} \psi^\sigma = 1\}.$$
Define the valuation map $$v: T_1 \to \fK$$ $$ \psi \mapsto v(\psi)$$ where $v(\psi)(\chi)=v(\psi(\chi)).$
The snake lemma gives us the exact sequence
\begin{equation}\label{SNes} \xymatrix{T_1 \ar[r]^-v & \fK \ar[r]^-\delta & \frac{T(\cO_F)}{NT(\cO_L)} \ar[r] & \frac{T(F)}{NT(L)}}.\end{equation} 

We claim that $$\left| \frac{\fK}{v(T_1)} \right| \leq e_{L/F}^{\dim T}.$$ Indeed, let $\ell \in \fK$ be arbitrary.  We claim that $e_{L/F} \ell \in v(T_1)$. The first claim follows from this second claim on letting $\ell$ run through a $\Z$-basis for $\fK$, so it suffices to show this.  Now we show the second claim. Choose $\pi_F$ a uniformizer for $F$.  For $\ell \in \fK$ let $$ \psi_\ell \in \Hom(X^*(T),L^\times)$$ be defined by $$\psi_\ell(\chi) =\pi_F^{\ell(\chi)}.$$ Note that $$ \prod_{\sigma \in G} \psi_\ell^\sigma ( \chi) = \prod_{\sigma \in G} \psi_\ell(\chi^{\sigma^{-1}})^\sigma = \prod_{\sigma \in G} \pi_F^{\ell(\chi^{\sigma^{-1}})} = \pi_F^{\sum_\sigma \ell^{\sigma^{-1}}(\chi) }= \pi_F^0 =1,$$ since $\ell \in \fK$.  Therefore $\psi_\ell \in T_1$.  Note also that $$v(\psi_\ell)(\chi) =v(\psi(\chi)) = v(\pi_F^{\ell(\chi)}) = e_{L/F} \cdot \ell(\chi).$$ Thus we have shown that for all $\ell \in \fK$ we have $e_{L/F} \ell \in v(T_1)$, as claimed.

By the exact sequence \eqref{SNes} we have that $$\left| \frac{T(\cO_F)}{NT(\cO_L)} \right| \leq \left| \frac{\fK}{v(T_1)}\right| \cdot \left| \frac{T(F)}{NT(L)} \right| \leq e_{L/F}^{\dim T} \left| \frac{T(F)}{NT(L)} \right|.$$

For the second assertion, recall the definition of the Tate cohomology groups $\widehat{H}^n$ from e.g.~\cite[Ch.VIII]{SerreLF}. We have $$\frac{T(F)}{NT(L)} = \widehat{H}^0(G,T),$$ and by the Nakayama-Tate theorem (see e.g.~\cite[Thm.~6.2]{PlatonovRapinchuk}) $$\widehat{H}^0(G,T) \simeq \widehat{H}^2(G,X^*(T)).$$ Since $X^*(T)$ is a finitely generated abelian group, we have by e.g. \cite[\S 6 Cor.~2]{AtiyahWall} that $$|\widehat{H}^2(G,X^*(T)) | < \infty,$$ and so it follows that $|\frac{T(F)}{NT(L)}|$ is finite.
\end{proof}

\subsubsection{The Langlands pairing}\label{pairingsec}
The goal of this section is to restrict \eqref{langlands} to the compact subgroup $NT(\cO_L)$ of $T(F)$. To do this, we re-formulate the Langlands correspondence \cite{LanglandsAbelian} as a perfect pairing 
 \begin{equation}\label{LanglandsPP}T(F) \otimes H^1(W_{L/F}, \widehat{T}) \to \C^\times,\end{equation} which we call the Langlands pairing. We write $G^0$ for the inertia subgroup of $G$. The following is the main result of this section.
 \begin{proposition}\label{PairingProp} Write $R(H^1)$ for the image of the restriction to $\cO_L^\times$ map \begin{equation}\label{eq:R}R: H^1(W_{L/F},\widehat{T}) \to \Hom_G(\cO_L^\times,\widehat{T}).\end{equation}  The subgroup $R(H^1)$ of $\Hom_G(\cO_L^\times, \widehat{T})$ is of index at most $\leq |H^1(G^0,\widehat{T})|\cdot |H^2(G^0,\widehat{T})|$.  The Langlands pairing restricts to a perfect pairing $$NT(\cO_L) \otimes R(H^1) \to \C^\times.$$ In particular, if $L/F$ is unramified, the Langlands pairing restricts to a perfect pairing \begin{equation}\label{unrampairing}T(\cO_F) \otimes \Hom_G(\cO_L^\times, \widehat{T}) \to \C^\times.\end{equation} \end{proposition}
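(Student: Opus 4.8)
The strategy is to use the short exact sequences of Lemma \ref{bigdiagram} together with the long exact sequences in continuous group cohomology to relate $H^1(W_{L/F},\widehat{T})$, its restriction to $\cO_L^\times$, and the corresponding objects at the inertia level. First I would note that applying $H^*(W_{L/F},-)$ and $H^*(W_{L/F}^0,-)$ and comparing via the inflation--restriction exact sequence splits the problem into an unramified part, handled by Lemma \ref{Amano} (so that $T(\cO_F)=NT(\cO_L)$ and the inertia group acts trivially), and a genuinely ramified part measured by cohomology of the inertia quotient $G^0=\Gal(L/F)^0$. More precisely, the $\cO_L^\times$-row of Lemma \ref{bigdiagram} gives, upon taking $W_{L/F}^0$-cohomology, a five-term exact sequence whose cokernel term is $H^2(G^0,\widehat{T})$ and whose kernel-comparison term is $H^1(G^0,\widehat{T})$; this produces the bound $[\Hom_G(\cO_L^\times,\widehat{T}):R(H^1)] \le |H^1(G^0,\widehat{T})|\cdot|H^2(G^0,\widehat{T})|$ after identifying $\Hom_{W_{L/F}^0}(\cO_L^\times,\widehat{T})$ with $\Hom_G(\cO_L^\times,\widehat{T})$ (the action factors through the finite quotient because $\widehat{T}$ has no small subgroups and $\cO_L^\times$ is profinite).

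Next, for the perfection of the restricted pairing, I would argue that the Langlands pairing \eqref{LanglandsPP} is perfect by \cite{LanglandsAbelian}, and then show that the ``annihilator'' of $NT(\cO_L)$ inside $H^1(W_{L/F},\widehat{T})$ is exactly the kernel of $R$, and conversely the annihilator of $R(H^1)$ inside $T(F)$ is exactly $NT(\cO_L)$. For this I would use the functoriality of the Langlands correspondence under the norm map: restriction of scalars along $L/F$ corresponds on the dual side to the restriction map $H^1(W_{L/F},\widehat{T}) \to H^1(W_L^{\mathrm{ab}},\widehat{T}) = \Hom(\cO_L^\times\times\Z,\widehat{T})$ composed with the natural identifications, so that the adjunction between $N: T(\cO_L)\to T(\cO_F)$ and restriction $R$ is precisely the statement $\langle Nx,\xi\rangle_F = \langle x, R\xi\rangle_L$ for $x\in T(\cO_L)$. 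Combining this adjunction with perfectness of \eqref{LanglandsPP} and with the already-established statement that $R(H^1)$ is finite index in $\Hom_G(\cO_L^\times,\widehat{T})$ (hence its Pontryagin dual is the compact group $NT(\cO_L)$, which is finite index in $T(\cO_F)$ by Lemma \ref{keylemma}) yields a perfect pairing $NT(\cO_L)\otimes R(H^1)\to \C^\times$. Finally, when $L/F$ is unramified, $G^0$ is trivial so the index bound gives $R(H^1)=\Hom_G(\cO_L^\times,\widehat{T})$, and Lemma \ref{Amano} gives $NT(\cO_L)=T(\cO_F)$, recovering \eqref{unrampairing}.

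I expect the main obstacle to be the bookkeeping in the second paragraph: pinning down the precise adjunction $\langle Nx,\xi\rangle_F=\langle x,R\xi\rangle_L$ requires carefully tracking the local Langlands isomorphism \eqref{langlands} through the norm map and the Artin reciprocity maps of both $L$ and $F$, since $W_{L/F}$ is built from $W_F$ rather than $W_L$, and one must verify that the identification $W_L^{\mathrm{ab}}\hookrightarrow W_{L/F}$ is compatible with the two Artin maps. Once the adjunction is in hand, the rest is formal duality for locally compact abelian groups (annihilator of an image is the kernel of the dual map, and biduality), so the cohomological index bound of the first paragraph and this compatibility are the two real ingredients.
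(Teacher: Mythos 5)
Your proposal is correct and follows essentially the same route as the paper: the index bound is obtained by combining the inflation--restriction sequences for $\cO_L^\times \hookrightarrow W^0_{L/F} \twoheadrightarrow G^0$ and for $W^0_{L/F} \hookrightarrow W_{L/F} \twoheadrightarrow \Z$ (the two rows implicit in Lemma \ref{bigdiagram}), and the perfectness follows from showing $\Ann(NT(\cO_L)) = \ker R$ via the compatibility triangle \eqref{magictriangle}, which is exactly the adjunction $\langle Nx,\xi\rangle = \langle x, R\xi\rangle$ you describe. One detail to tighten: $\Hom_{W^0_{L/F}}(\cO_L^\times,\widehat{T})$ equals $\Hom_{G^0}(\cO_L^\times,\widehat{T})$, not yet $\Hom_G(\cO_L^\times,\widehat{T})$, and the factor $|H^1(G^0,\widehat{T})|$ in the index bound enters through $|H^1(\Z, H^1(G^0,\widehat{T}))|$ after taking Frobenius-invariants of the first inflation--restriction sequence --- so the second sequence is not merely ``a comparison'' but is where that factor actually arises.
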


\begin{corollary}\label{ACcor}
The abelian local analytic conductor $\tilde{\fc}(\chi,r)$ only depends on $\chi \vert_{NT(\cO_L)}$.  \end{corollary}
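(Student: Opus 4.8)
The plan is to combine Proposition \ref{MP} with the perfect pairing of Proposition \ref{PairingProp}. By Proposition \ref{MP}, the abelian local analytic conductor $\tilde{\fc}(\chi,r)$ depends on $\chi$ (equivalently on $\xi \in H^1(W_{L/F},\widehat{T})$) only through the quantity $\sum_{\mu\in M} c(\mu\circ \xi\vert_{\cO_L^\times})$, i.e.\ only through the homomorphism $\xi\vert_{\cO_L^\times} \in \Hom_G(\cO_L^\times,\widehat{T})$, which is exactly the image $R(\xi)$ of $\xi$ under the restriction map $R$ of \eqref{eq:R}. So it suffices to show that $R(\xi)$ is determined by the restriction $\chi\vert_{NT(\cO_L)}$.

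The key step is to invoke the perfect pairing $NT(\cO_L)\otimes R(H^1)\to \C^\times$ of Proposition \ref{PairingProp}. Perfectness means in particular that the induced map $R(H^1)\to (NT(\cO_L))^\wedge$ (or $\Hom(NT(\cO_L),\C^\times)$) is injective: if $R(\xi)$ pairs trivially with all of $NT(\cO_L)$ then $R(\xi)$ is trivial. Since the Langlands pairing $\langle\,\cdot\,,\,\cdot\,\rangle$ is compatible with restriction --- the restricted pairing is by construction the one induced by evaluating $\chi$ on $NT(\cO_L)$ against the image of $\xi$ in $\Hom_G(\cO_L^\times,\widehat{T})$ --- we have that $\langle x, \xi\rangle = \chi(x)$ for $x\in NT(\cO_L)$, where $\chi$ is the character corresponding to $\xi$ under \eqref{langlands}. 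Hence if two cocycles $\xi_1,\xi_2$ give rise to characters $\chi_1,\chi_2$ with $\chi_1\vert_{NT(\cO_L)}=\chi_2\vert_{NT(\cO_L)}$, then $R(\xi_1)$ and $R(\xi_2)$ pair identically with every element of $NT(\cO_L)$, so by perfectness $R(\xi_1)=R(\xi_2)$ as elements of $R(H^1)\subseteq \Hom_G(\cO_L^\times,\widehat{T})$.

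Therefore $R(\xi)$, hence each quasicharacter $\mu\circ\xi\vert_{\cO_L^\times}$ and its conductor $c(\mu\circ\xi\vert_{\cO_L^\times})$, depends only on $\chi\vert_{NT(\cO_L)}$; summing over $\mu\in M$ and applying Proposition \ref{MP} shows $\tilde{\fc}(\chi,r) = q_F^{\sum_{\mu\in M} c(\mu\circ\xi\vert_{\cO_L^\times})}(\delta(\psi)dx/dx')^m$ depends only on $\chi\vert_{NT(\cO_L)}$, as claimed. The only point requiring care --- and the main thing to pin down --- is the compatibility of the restricted pairing in Proposition \ref{PairingProp} with the original Langlands pairing, i.e.\ that restricting the character side of \eqref{LanglandsPP} to $NT(\cO_L)$ corresponds on the cohomology side precisely to passing through the map $R$; this is essentially the content of how Proposition \ref{PairingProp} is set up, so the corollary is a short deduction once that proposition is in hand.
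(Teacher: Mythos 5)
Your proof is correct and takes the route the paper intends (the corollary is stated in the paper without an explicit proof). The two ingredients you use are exactly the ones the paper sets up: Proposition~\ref{MP} reduces $\tilde{\fc}(\chi,r)$ to a function of $R(\xi)=\xi\vert_{\cO_L^\times}$, and the compatibility point you flag at the end is precisely what Lemma~\ref{kerreslemma} establishes inside the proof of Proposition~\ref{PairingProp}, namely $\Ann(NT(\cO_L))=\ker R$, which gives immediately that $R(\xi)$ is determined by $\chi\vert_{NT(\cO_L)}$.
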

\begin{lemma}\label{HG0lem} We have that $H^i(G^0,\widehat{T})$ is a finite group for all $i\geq 1$. \end{lemma}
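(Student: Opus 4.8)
The plan is to exploit the fact that $G^0$, the inertia subgroup of the local Galois group $G = \Gal(L/F)$, is a \emph{finite} group, together with the fact that $\widehat{T} = \Hom(X_*(T),\C^\times)$ is a torus over $\C$, hence divisible as an abelian group. First I would recall the standard fact that for a finite group $H$ and any $H$-module $M$, the cohomology groups $H^i(H,M)$ for $i \geq 1$ are annihilated by $|H|$ (this is the usual averaging/transfer argument: the composite of restriction to the trivial subgroup and corestriction back is multiplication by $|H|$, and $H^i$ of the trivial group vanishes for $i \geq 1$). Applying this with $H = G^0$ shows that $H^i(G^0,\widehat{T})$ is killed by $|G^0| =: e$.

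The second ingredient is finite generation. I would write $\widehat{T} \cong (\C^\times)^n$ as a group (non-canonically), or better, present $\widehat{T}$ via the exact sequence of $G^0$-modules coming from $1 \to \mmu_\infty^{\oplus n} \to \widehat{T} \to (\C^\times/\mmu_\infty)^{\oplus n} \to 1$ — but a cleaner route is to use that $\widehat{T}$, as a $G^0$-module, sits in $0 \to \widehat{T}[e] \to \widehat{T} \xrightarrow{e} \widehat{T} \to 0$, where the last map (multiplication by $e$) is surjective because $\widehat{T}$ is divisible, and $\widehat{T}[e]$ is the $e$-torsion, a \emph{finite} group (isomorphic to $(\Z/e\Z)^n$ as a group). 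The long exact cohomology sequence then gives, for each $i \geq 1$, a surjection $H^i(G^0, \widehat{T}[e]) \twoheadrightarrow H^i(G^0,\widehat{T})[e]$ after accounting for the connecting maps; more simply, since multiplication by $e$ on $H^i(G^0,\widehat{T})$ is simultaneously surjective (induced by the surjection $\widehat{T}\xrightarrow{e}\widehat{T}$) and zero (by the first ingredient), we conclude $H^i(G^0,\widehat{T}) = 0$ for $i \geq 1$ — in fact this proves the stronger statement that the groups \emph{vanish}, not merely that they are finite.

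Actually, the cleanest self-contained argument avoids even the torsion subgroup: multiplication by $e = |G^0|$ is an automorphism-inducing endomorphism that is surjective on $\widehat{T}$ (divisibility), hence induces a surjective endomorphism of each $H^i(G^0,\widehat{T})$; but it is also the zero endomorphism on $H^i$ for $i\geq 1$; therefore $H^i(G^0,\widehat{T}) = 0$, which is certainly finite. The main (very mild) obstacle is just to make sure the $G^0$-action is compatible with the divisibility — i.e., that multiplication by $e$ is a morphism of $G^0$-modules $\widehat{T}\to\widehat{T}$, which is immediate since it is given by a group homomorphism commuting with every automorphism. I would also note for the reader that finiteness of $G^0$ is where the locality of $F$ is used: $G = \Gal(L/F)$ is finite, so its inertia subgroup is a fortiori finite. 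One could alternatively cite a standard reference for the vanishing of cohomology of finite groups acting on uniquely divisible-by-$|G|$ modules, but the one-line argument above is shorter than the citation.
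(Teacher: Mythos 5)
Your proposed conclusion that $H^i(G^0,\widehat{T})$ actually \emph{vanishes} for $i\geq 1$ is false, and the reasoning contains a genuine gap. The error is the assertion that, because multiplication by $e=|G^0|$ is a surjection of $G^0$-modules $\widehat{T}\xrightarrow{e}\widehat{T}$, the induced map on $H^i$ is surjective. Group cohomology $H^i(G^0,-)$ is not right exact, so a surjection of modules need not induce a surjection on $H^i$. In fact, by additivity of $H^i(G^0,-)$, the induced map \emph{is} multiplication by $e$ on $H^i(G^0,\widehat{T})$, and for $i\geq 1$ this is the zero map (since $|G^0|$ annihilates higher cohomology of a finite group). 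So it is not surjective unless $H^i$ already vanishes, and the argument is circular. For a concrete counterexample, take $G^0\cong \Z/2\Z$ acting on $\widehat{T}=\C^\times$ by $z\mapsto z^{-1}$ (which occurs for a one-dimensional anisotropic torus splitting over a ramified quadratic extension of $F$): a direct Tate-cohomology computation for cyclic groups gives $H^2(G^0,\C^\times)\cong\Z/2\Z\neq 0$. The paper in fact relies on these groups being potentially nontrivial: the index bound $|H^1(G^0,\widehat{T})|\cdot|H^2(G^0,\widehat{T})|$ in Proposition \ref{PairingProp} would be vacuous if your vanishing held.

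Your first two ingredients are sound and can be salvaged into a correct proof, which would then be a legitimate alternative to the paper's. From the exact sequence $0\to\widehat{T}[e]\to\widehat{T}\xrightarrow{e}\widehat{T}\to 0$ of $G^0$-modules (exact because $\widehat{T}$ is divisible), the long exact sequence reads
$$H^i(G^0,\widehat{T}[e])\longrightarrow H^i(G^0,\widehat{T})\xrightarrow{\ e\ } H^i(G^0,\widehat{T})\longrightarrow\cdots,$$
and exactness at the middle term together with the vanishing of $e$ on $H^i(G^0,\widehat{T})$ for $i\geq 1$ yields a \emph{surjection} $H^i(G^0,\widehat{T}[e])\twoheadrightarrow H^i(G^0,\widehat{T})$. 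Since $\widehat{T}[e]\cong(\Z/e\Z)^{\dim T}$ is finite and $G^0$ is finite, $H^i(G^0,\widehat{T}[e])$ is finite, and hence so is $H^i(G^0,\widehat{T})$ — finite but not in general zero. That route is more elementary than the paper's, which instead applies the Cartan--Eilenberg duality theorem (Theorem \ref{CartanEilenberg}) with $A=\widehat{T}$, $C=\C^\times$ to identify $\widehat{H}^i(G^0,\widehat{T})$ with the $\C^\times$-dual of $H_i(G^0,X_*(T))$ and then cites the finiteness of group homology of a finite group with finitely generated coefficients.
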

 Lemma \ref{HG0lem} follows from a result of Cartan and Eilenberg, which we recall now since it will also be useful for other purposes later. Recall the Tate cohomology groups $\widehat{H}_n$, see e.g.\ \cite[Ch.VIII]{SerreLF}.

\begin{theorem}[Duality Theorem]\label{CartanEilenberg} 
Let $G$ be a finite group, $A$ a $G$-module and $C$ a divisible abelian group. For any $i \in \Z$ there exists a perfect pairing $$\cup : \widehat{H}^i(G,A) \otimes \widehat{H}^{-i-1}(G, \Hom(A,C)) \to C.$$
 \end{theorem}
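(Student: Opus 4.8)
The plan is to reduce the statement to Tate cohomology of the pair $(G,A)$ together with the universal-coefficient philosophy for divisible coefficient groups. First I would recall the standard cup-product pairing in Tate cohomology: for any $G$-modules $A$ and $B$ there is a cup product $\widehat{H}^i(G,A) \otimes \widehat{H}^{j}(G,B) \to \widehat{H}^{i+j}(G,A\otimes_\Z B)$, functorial in both arguments, and compatible with dimension shifting. Taking $B = \Hom(A,C)$ and composing with the $G$-equivariant evaluation map $A \otimes_\Z \Hom(A,C) \to C$ (where $C$ carries the trivial $G$-action) yields a pairing $\widehat{H}^i(G,A)\otimes \widehat{H}^{-i-1}(G,\Hom(A,C)) \to \widehat{H}^{-1}(G,C)$. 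Since $C$ is a trivial $G$-module, $\widehat{H}^{-1}(G,C) = {}_{N}C / I_G C = \ker(C \xrightarrow{|G|} C)$, i.e. the $|G|$-torsion subgroup of $C$; but one should rather land in $C$ itself, so I would instead use the identification $\widehat{H}^{-1}(G,C)\hookrightarrow C$ coming from the norm, or — cleaner — note that for $C$ divisible the relevant target is most naturally described via $\widehat{H}^0$ after a shift. Concretely, following Cartan–Eilenberg (Homological Algebra, Ch.~XII) one packages this as: the pairing is perfect because it identifies $\widehat{H}^{-i-1}(G,\Hom(A,C))$ with $\Hom(\widehat{H}^i(G,A), C)$.

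The key steps, in order: (1) reduce to the case $i=0$ by dimension shifting — replacing $A$ by $A \otimes_\Z \Z[G]^{\otimes |i|}$-type syzygies (tensoring with the standard resolution of $\Z$), using that $\Hom(-,C)$ and $\widehat{H}^\bullet$ both shift degrees compatibly and that these operations are exact on the relevant short exact sequences because $C$ is divisible hence injective as a $\Z$-module; (2) in degree $0$, compute both sides explicitly: $\widehat{H}^0(G,A) = A^G/N_G A$ and $\widehat{H}^{-1}(G,\Hom(A,C)) = \ker(N_G)/I_G\Hom(A,C)$, and check by a direct diagram chase that evaluation induces a perfect pairing between them — this is where divisibility (injectivity) of $C$ is used to guarantee that $\Hom(-,C)$ sends the norm and augmentation exact sequences to exact sequences, so that the duality is exact on the nose; (3) assemble the general case and verify naturality.

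The main obstacle I expect is bookkeeping the degree shifts and signs so that the pairing is genuinely perfect (not merely non-degenerate on one side) in \emph{every} degree $i\in\Z$, including negative $i$ and the boundary cases; the conceptual content is light once one knows that a divisible abelian group is an injective $\Z$-module, which makes $\Hom(-,C)$ exact and lets one treat $\widehat{H}^{-i-1}(G,\Hom(A,C))$ as the Pontryagin-type dual of $\widehat{H}^i(G,A)$, but the $2$-periodic-flavored dimension-shifting argument has to be run carefully. Since this is a classical result, in the write-up I would most likely just cite Cartan–Eilenberg directly and sketch only the degree-$0$ perfect pairing plus the dimension-shift, rather than reproduce the full argument.
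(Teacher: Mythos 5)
The paper proves this result by simply citing Cartan--Eilenberg (Chapter~XII, Theorems~4.1 and 6.4), which is exactly what you conclude you would do, so your plan matches the paper's. Your sketch of the underlying argument --- cup product composed with the $G$-equivariant evaluation $A \otimes_{\Z}\Hom(A,C)\to C$, dimension shifting, and the injectivity of the divisible group $C$ making $\Hom(-,C)$ exact --- is the correct outline of how the Cartan--Eilenberg proof runs; your momentary worry about the pairing landing in $\widehat{H}^{-1}(G,C)\cong C[\,|G|\,]$ rather than in all of $C$ is harmless, since every Tate cohomology group is annihilated by $|G|$, so the inclusion $C[\,|G|\,]\hookrightarrow C$ carries no loss of information and the two formulations of the pairing are equivalent.
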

 \begin{proof} See \cite{CartanEilenberg} chapter XII, Theorems 4.1 and 6.4. \end{proof}
 \begin{proof}[Proof of Lemma \ref{HG0lem}]
 In Theorem \ref{CartanEilenberg} we take $G=G^0$, $A=\widehat{T}$, and $C=\C^\times$ to obtain for $i\geq 1$ that \begin{equation}\label{Hipp} H^i(G^0,\widehat{T}) \otimes H_i(G^0,X_*(T)) \to \C^\times \end{equation} is a perfect pairing.  Now, by \cite[\S 6 Cor.~1]{AtiyahWall} we have that $H^i(G^0,\widehat{T})$ is a group of finite exponent.  Furthermore, $X_*(T)$ is a finitely-generated $G^0$-module, so by \cite[\S 6 Cor.~2]{AtiyahWall} we have that $H_i(G^0,X_*(T))$ is a finite group. The result now follows from the duality theorem.
 \end{proof}
In the unramified case, we also have the following version of the Langlands correspondence over finite fields.  
 \begin{proposition}\label{LanglandsFF}
 If $L/F$ is unramified, then the Langlands pairing restricts to a perfect pairing 
 $$ T(f) \otimes \Hom_G(\ell^\times, \widehat{T}) \to \C^\times.$$
 \end{proposition}
 We give the proof of Proposition \ref{LanglandsFF} at the end of this section after first proving Proposition \ref{PairingProp}.
  
 To prepare for the proof of Proposition \ref{PairingProp}, we review the proof of the Langlands correspondence \eqref{langlands}.   To do so, we recall the following explicit descriptions of group cohomology and homology (the same exposition appeared in the appendix of \cite{BrooksPetrow}).  
 
 For this paragraph, let $G$ be a group and $M$ a left $G$-module. 
 Computing via the inhomogeneous resolution gives the usual description of group cohomology
$$
H^1(G, M) = \frac{\{  \xi: G \to M \,|\, \xi(gh) = \xi(g) + g\xi(h) \}}{ \{  \xi: G \to M  \,|\, \xi(g) = gm - m \text{ for some } m \in M\}}.
$$
If $\oplus_S N$ is a direct sum of copies of an abelian group $N$ indexed by a set $S$, let $\delta_s(n)\in \oplus_S N$ be the element which is $n$ in the $s$th entry and $0$ elsewhere. Computing via the inhomogeneous resolution then gives the following description of group homology
\begin{equation}\label{H_1def}
H_1(G, M) = \frac{\{(m_g)_{g \in G} \,|\, \sum_g (g^{-1}m_g - m_g) = 0\}}{ d(\oplus_{G \times G} M)},
\end{equation}
where $d(\delta_{g, h}(m)) = \delta_h(g^{-1}m) - \delta_{gh}(m) + \delta_g(m)$. If $G$ is abelian and acts trivially on $M$, then we have $H_1(G,M) \simeq G \otimes_\Z M$.

If $G' < G$ is a finite index normal subgroup, there is an action of $G/G'$ on $H_1(G', M)$ by the rule $g * \delta_{g'}(m) = \delta_{gg'g^{-1}}(gm)$.  There also exists a natural map 
$$
\operatorname{Trace}:H_1(G, M) \to H_1(G', M)^{G/G'},
$$
which may be computed as follows: pick coset representatives $g_1, g_2, \ldots, g_n$ for $G/G'$. Then any $g \in G$ determines a permutation $\tau \in S_n$ by the rule $g_i g = g' g_{\tau(i)}$ (where $g' \in G'$), and
$$
\operatorname{Trace} (\delta_g(m)) = \sum_i \delta_{g_i g g_{\tau(i)}^{-1}}(g_i m).
$$
  
  Now we return to our review of the Langlands correspondence \eqref{langlands}. In particular, $G = \Gal(L/F)$ again. Since $L^\times$ is abelian and acts trivially (recall \eqref{WLFes}) on $X_*(T)$, we have from the ``standard isomorphisms'' \eqref{stdisom1} and \eqref{stdisom2} and the above explicit description of group homology that 
  \begin{equation}\label{TFtoH1}
   T(L) \simeq  H_1(L^\times , X_*(T)) \quad \text{ and } \quad T(F) \simeq   H_1(L^\times , X_*(T))^G.
   \end{equation}

 Langlands proves the following mild extension of the Duality Theorem \ref{CartanEilenberg}.  Let $\alpha \in W_{L/F},$ $\chi \in X_*(T)$ such that $\delta_\alpha(\chi)$ is a cycle representing a class in $H_1(W_{L/F}, X_*(T))$. Let $\xi$ be a cocycle representing a class in $H^1(W_{L/F}, \widehat{T})$. Langlands \cite[p.\ 233-234]{LanglandsAbelian} shows that the pairing \begin{equation}\label{langlandscup}\cup:H_1(W_{L/F}, X_*(T)) \otimes H^1(W_{L/F}, \widehat{T}) \to \C^\times\end{equation} defined by $$\cup: \delta_\alpha(\chi) \otimes \xi \mapsto \chi(\xi(\alpha))$$ is a perfect pairing. 
 
The difficult part of Langlands's proof of his correspondence \eqref{langlands} is that the map  \begin{equation}\label{langlandstrace}\operatorname{Trace}: H_1(W_{L/F},X_*(T)) \to H_1(L^\times , X_*(T))^{G}\end{equation} is an isomorphism. Combining \eqref{TFtoH1}, \eqref{langlandstrace} and  \eqref{langlandscup}, we obtain the Langlands pairing \eqref{LanglandsPP}. 

 We now discuss the connection between the Langlands pairing and the norm map. The Artin map (see \eqref{WLFes}) induces a map \begin{equation}\label{rL*}r_{L, *}: H_1(L^\times, X_*(T)) \to H_1(W_{L/F},X_*(T))\end{equation} so that the triangle $$ \xymatrix{H_1(L^\times,X_*(T)) \ar[r]^-N \ar[rd]_{r_{L, *}} & H_1(L^\times , X_*(T))^G \\ 
& H_1(W_{L/F},X_*(T))  \ar[u]_\simeq}$$ commutes. Here $N$ is the norm map defined as a product of Galois conjugates, and the vertical map is the trace map \eqref{langlandstrace}. Composing with the isomorphisms \eqref{TFtoH1}, we have that the norm map $N:T(L) \to T(F)$ factors through the homology group $H_1(W_{L/F},X_*(T))$:  \begin{equation}\label{magictriangle}\xymatrix{ T(L) \ar[r]^-N \ar[rd] & T(F) \\
& H_1(W_{L/F},X_*(T)). \ar[u]_{\simeq}}\end{equation}
 
 \begin{proof}[Proof of Proposition \ref{PairingProp}]
 Let $$\Ann(NT(\cO_L)) \subseteq H^1(W_{L/F}, \widehat{T})$$ be the annihilator of $NT(\cO_L)$ with respect to the Langlands pairing \eqref{LanglandsPP}. To prove the proposition, it suffices to compute $\Ann(NT(\cO_L))$, and show that $$\frac{H^1(W_{L/F}, \widehat{T})}{\Ann(NT(\cO_L))} \simeq R(H^1),$$ is as described in the statement of the proposition. 
 
 \begin{lemma}\label{kerreslemma}
 Let \begin{equation}\label{ResDef} R: H^1(W_{L/F},\widehat{T}) \to \Hom_G(\cO_L^\times, \widehat{T})\end{equation} be the restriction to $\cO_L^\times$ map (recall $\cO_L^\times$ acts trivially on $\widehat{T}$). Then we have $$\Ann(NT(\cO_L))= \ker(R).$$
 \end{lemma}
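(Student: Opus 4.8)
The plan is to unwind the definition of the Langlands pairing \eqref{LanglandsPP} and to exploit functoriality of the cup product \eqref{langlandscup} along the Artin-map inclusion $\iota\colon \cO_L^\times \hookrightarrow W_{L/F}$ of Lemma \ref{bigdiagram}. The key preliminary observation is that, under the identification $T(F)\simeq H_1(W_{L/F},X_*(T))$ built into \eqref{LanglandsPP} (i.e.\ \eqref{TFtoH1} composed with the inverse of the trace isomorphism \eqref{langlandstrace}), the subgroup $NT(\cO_L)$ corresponds precisely to the image of the induced map $\iota_*\colon H_1(\cO_L^\times,X_*(T))\to H_1(W_{L/F},X_*(T))$. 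This is read off from the commutative triangle \eqref{magictriangle}: restricting the factorization of $N$ there to $T(\cO_L)\subseteq T(L)$ and using the standard identifications $T(\cO_L)\simeq \cO_L^\times\otimes X_*(T)\simeq H_1(\cO_L^\times,X_*(T))$, the resulting map $H_1(\cO_L^\times,X_*(T))\to H_1(W_{L/F},X_*(T))$ carries a cycle $\delta_u(\mu)$ to $\delta_{\iota(u)}(\mu)$, hence is exactly $\iota_*$, and its image is sent onto $NT(\cO_L)$ by $H_1(W_{L/F},X_*(T))\xrightarrow{\sim}T(F)$.

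Granting this, I would prove the lemma as a short chain of equivalences. A class $\xi\in H^1(W_{L/F},\widehat{T})$ lies in $\Ann(NT(\cO_L))$ if and only if $\iota_*(h)\cup\xi=1$ for every $h\in H_1(\cO_L^\times,X_*(T))$. By the projection formula for the cup-product pairing under a homomorphism of groups (applicable here because $\cO_L^\times$ lies in the kernel of $\sigma$ and so acts trivially on $X_*(T)$ and on $\widehat{T}$), we have $\iota_*(h)\cup\xi=h\cup\iota^*(\xi)=h\cup R(\xi)$, the right-hand pairing being the analogue of \eqref{langlandscup} for $\cO_L^\times$, namely the cup product with coefficient pairing $X_*(T)\otimes\widehat{T}\to\C^\times$, $\mu\otimes t\mapsto\mu(t)$. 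Thus $\xi\in\Ann(NT(\cO_L))$ if and only if $h\cup R(\xi)=1$ for all $h$. Testing against the cycles $h=\delta_u(\mu)$ with $u\in\cO_L^\times$ and $\mu\in X_*(T)$ — these are genuine $1$-cycles because the action is trivial — and using the cocycle-level formula $\delta_u(\mu)\cup\xi=\mu(\xi(u))$, this gives $\mu(R(\xi)(u))=1$ for all such $u$ and $\mu$. Since $X_*(T)=X^*(\widehat{T})$ separates the points of the diagonalizable group $\widehat{T}$, it follows that $R(\xi)(u)=1$ for every $u\in\cO_L^\times$, i.e.\ $R(\xi)=0$; the converse implication is immediate. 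Hence $\Ann(NT(\cO_L))=\ker R$, as claimed.

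The only genuinely delicate point is the bookkeeping in the first step: one must confirm that the isomorphism $T(F)\simeq H_1(W_{L/F},X_*(T))$ implicit in the Langlands pairing is the very one appearing in \eqref{magictriangle}, so that $NT(\cO_L)$ is carried onto $\operatorname{im}(\iota_*)$, and that the projection formula $\iota_*(h)\cup\xi=h\cup\iota^*(\xi)$ holds with the variance conventions encoded in the cycle-level formula $\delta_\alpha(\mu)\cup\xi=\mu(\xi(\alpha))$ recalled around \eqref{langlandscup}. Both are formal once the chain-level descriptions of group homology and of the pairing are in place: the first because $N$ literally factors through $\iota_*$ by \eqref{magictriangle}, and the second because on representative cycles $\delta_\alpha(\mu)$ both sides evaluate to $\mu(\xi(\iota(\alpha)))$. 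So I do not anticipate any real obstacle, and the separation-of-points step that closes the argument is immediate.
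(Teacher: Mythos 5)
Your proposal is correct and follows essentially the same route as the paper: both identify $t^{-1}(NT(\cO_L))$ inside $H_1(W_{L/F},X_*(T))$ via the commuting triangle \eqref{magictriangle} as the subgroup generated by the cycles $\delta_\alpha(\chi)$ with $\alpha\in\cO_L^\times$ (your $\operatorname{im}(\iota_*)$ is exactly this), then evaluate the cup product on these generators using the formula $\delta_\alpha(\chi)\cup\xi=\chi(\xi(\alpha))$ and conclude by the fact that $X_*(T)$ separates points of $\widehat{T}$. The only cosmetic difference is that you package the reduction to $\cO_L^\times$ as an instance of the projection formula $\iota_*(h)\cup\xi=h\cup\iota^*(\xi)$, whereas the paper carries out the same computation directly on representative cycles.
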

\begin{proof}Let $$t:H_1(W_{L/F},X_*(T)) \to T(F)$$ be the isomorphism obtained by composing the trace map \eqref{langlandstrace} with the isomorphism \eqref{TFtoH1}. The first step is to give an explicit description for the inverse image $t^{-1}(NT(\cO_L))\leq H_1(W_{L/F},X_*(T))$. Later, we use the explicit description for the cup product pairing \eqref{langlandscup} to compute $\Ann(NT(\cO_L))$.
 
 The main trick to compute $t^{-1}(NT(\cO_L))$ is to use the commuting triangle \eqref{magictriangle}, as the trace map is difficult to work with directly. Restricting \eqref{magictriangle} to the maximal compact of $T(L)$ we obtain  $$\xymatrix{ T(\cO_L) \ar@{->>}[r]^-N \ar@{->>}[rd] & NT(\cO_L) \\
& t^{-1}(NT(\cO_L)), 
\ar[u]_\simeq}$$
 where all arrows are surjective. Since we understand the diagonal arrow much better than the vertical one, this yields a description for $t^{-1}(NT(\cO_L))$. In the above explicit description for group homology, it is the subgroup of $H_1(W_{L/F},X_*(T))$ generated by sums of all possible homology classes $\delta_\alpha(\chi)$ as $\alpha$ runs over $\alpha \in \cO_L^\times \subset W_{L/F}$. 
 
 We now use the description $ t^{-1}(NT(\cO_L))= \langle \delta_\alpha(\chi)\rangle_{\alpha \in \cO_L^\times}$ and compute the annihilator $$\Ann(NT(\cO_L))= \Ann(t^{-1}(NT(\cO_L))) \subseteq H^1(W_{L/F},\widehat{T})$$ across \eqref{langlandscup}. 
 
 First we prove $\Ann(NT(\cO_L)) \supseteq \ker(R)$. Let $\xi$ represent a class in $\ker(R)$. Then $\xi$ vanishes on $\cO_L^\times$ by definition, and we have $\xi \cup \delta_\alpha(\chi)= 1$ for all $\delta_\alpha(\chi)$ with $\alpha \in \cO_L^\times$ by the definition \eqref{langlandscup} of $\cup$. It follows from the description $ t^{-1}(NT(\cO_L))= \langle \delta_\alpha(\chi)\rangle_{\alpha \in \cO_L^\times}$ that $\xi \cup x=1$ for all $1$-cycles $x\in t^{-1}(NT(\cO_L))$.
  Therefore $\ker(R) \subseteq \Ann(t^{-1}(NT(\cO_L))).$ 
 
 Now we prove $\Ann(NT(\cO_L)) \subseteq \ker(R)$. Suppose $\xi \in H^1(W_{L/F},\widehat{T})$ does \emph{not} represent any class in $\ker(R)$. Then there exists a $\beta \in \cO_L^\times$ for which $\xi (\beta) \neq 1$. Since $\xi(\beta) \neq 1$ there exists $\chi\in X_*(T)$ not vanishing on $\xi(\beta)\in \widehat{T}$.  Since $\cO_L^\times$ acts trivially on $X_*(T)$, we have that $\chi_\beta$ is a cycle, and thus represents a homology class.  Thus $\delta_\beta(\chi) \in t^{-1}(NT(\cO_L)) $ and $\xi \cup \delta_\beta(\chi) \neq 1$, so $\xi \not \in \Ann(t^{-1}(NT(\cO_L)))$.  Therefore $\ker(R)^c \subseteq \Ann(t^{-1}(NT(\cO_L)))^c$, so we have $\ker(R) = \Ann(t^{-1}(NT(\cO_L))$.  \end{proof}
 By Lemma \ref{kerreslemma} we have shown that $$NT(\cO_L) \otimes \frac{H^1(W_{L/F},\widehat{T})}{\ker(R)} \to \C^\times$$ is a perfect pairing. It now suffices to show that $$ \frac{H^1(W_{L/F},\widehat{T})}{\ker(R)} \simeq R(H^1) \leq \Hom_G(\cO_L^\times, \widehat{T})$$ is of index at most $\leq |H^1(G^0,\widehat{T})|\cdot |H^2(G^0,\widehat{T})|$, as in the statement of Proposition \ref{PairingProp}.
 
 Consider the inertia group $W_{L/F}^0$ acting on $\widehat{T}$, and the exact sequence $$\xymatrix{1 \ar[r] &  \cO_L^\times \ar[r] &  W_{L/F}^0 \ar[r] & G^0 \ar[r] & 1}$$ as in Lemma \ref{bigdiagram}.  We take the inflation-restriction-transgression exact sequence (see e.g.\ \cite[(1.6.7) Prop.]{NSW}) attached to these data 
 \begin{equation}\label{irseq2}\xymatrix{1 \ar[r] & H^1(G^0,\widehat{T}) \ar[r]^-i & H^1(W_{L/F}^0, \widehat{T}) \ar[r]^-r & \Hom_{G^0}(\cO_L^\times,\widehat{T}) \ar[r]^-g & H^2(G^0,\widehat{T}) .}\end{equation}
 These give $$ \xymatrix{1 \ar[r] & H^1(G^0,\widehat{T}) \ar[r]^-i & H^1(W_{L/F}^0, \widehat{T}) \ar[r]^-r & \ker(g) \ar[r] & 1 ,}$$ where $\ker (g)$ is a subgroup of $\Hom_{G^0}(\cO_L^\times,\widehat{T})$ of index at most $|H^2(G^0,\widehat{T})|$. We take Frobenius invariants of this to obtain a sequence $$\xymatrix{1 \ar[r] & H^1(G^0,\widehat{T})^{\Z} \ar[r]^-{i'} & H^1(W_{L/F}^0, \widehat{T})^{\Z} \ar[r]^-{r'} & \ker(g)^{\Z} \ar[r] & H^1(\Z,H^1(G^0,\widehat{T})),}$$ where $1 \in \Z$ acts by arithmetic Frobenius on $\widehat{T}$. Since a cocycle is determined by its value on a generator, we have $$|H^1(\Z,H^1(G^0,\widehat{T}))|\leq |H^1(G^0,\widehat{T})|. $$  Therefore $r'(H^1(W_{L/F}^0, \widehat{T})^{\Z})$ has index at most $|H^1(G^0,\widehat{T})|$ in $ \ker(g)^{\Z}$, and $\ker(g)$ has index at most $|H^2(G^0,\widehat{T})|$ in $\Hom_{G^0}(\cO_L^\times,\widehat{T})$, so $ \ker(g)^{\Z}$ has index at most $|H^2(G^0,\widehat{T})|$ in $\Hom_G(\cO_L^\times,\widehat{T})$. Thus $r'(H^1(W_{L/F}^0, \widehat{T})^{\Z})$ has index at most $|H^1(G^0,\widehat{T})|\cdot |H^2(G^0,\widehat{T})|$ in $\Hom_G(\cO_L^\times,\widehat{T})$. 
 
 Consider again $W_{L/F}$ acting on $\widehat{T}$, and take the exact sequence $$\xymatrix{1 \ar[r] & W_{L/F}^0 \ar[r] & W_{L/F} \ar[r] & \Z \ar[r] & 1}.$$ Taking the inflation-restriction exact sequence associated to these we have $$\xymatrix{1 \ar[r] & H^1(\Z,\widehat{T}^{G^0}) \ar[r]^{i''} & H^1(W_{L/F}, \widehat{T}) \ar[r]^{r''} & H^1(W_{L/F}^0,\widehat{T})^{\Z} \ar[r] & 1. }$$ Here the term $H^2(\Z,\widehat{T}^{G^0})$ vanishes because the cohomological dimension of $\Z$ is one (see \cite[Ch. VIII, \S2]{BrownCohofGroups}). 
 We have that $R(H^1) = (r' \circ r'')(H^1(W_{L/F}, \widehat{T})),$ and by the above remarks we conclude that $R(H^1)$ has index at most $|H^1(G^0,\widehat{T})|\cdot |H^2(G^0,\widehat{T})|$ in $\Hom_G(\cO_L^\times,\widehat{T})$, as was to be shown.
 
 In the case that $L/F$ is an unramified extension, we have $G^0=\{1\}$, so that the first part of Proposition \ref{PairingProp} gives us that $R(H^1)= \Hom_G(\cO_L^\times, \widehat{T})$. We have $NT(\cO_L) = T(\cO_F)$ by Lemma \ref{Amano}, so that the Langlands pairing restricts to the perfect pairing \eqref{unrampairing}.
 \end{proof}
 
 \begin{proof}[Proof of Proposition \ref{LanglandsFF}]
 We again use the description of the local Langlands correspondence in terms of group homology described above. First, recall \cite[Prop.\ 2.3.1]{OnoTori} that we have an exact sequence 
 \begin{equation*}
 \xymatrix{ 1 \ar[r] & \Hom_G(X^*(T), 1+\fP_L) \ar[r] & T(\cO_F) \ar[r] & T(f) \ar[r] & 1.}
 \end{equation*}
 By the standard isomorphisms \eqref{stdiso3}, the left half of this exact sequence can be re-interpreted in terms of group homology. That is, we have the following commutative diagram
 \begin{equation*}
 \xymatrix{ \Hom_G(X^*(T), 1+\fP_L) \ar@{^{(}->}[r] \ar[d]^\simeq & T(\cO_F) \ar[d]^\simeq \\ 
 H_1(1+\fP_L,X_*(T))^G \ar@{^{(}->}[r]  & H_1(\cO_L^\times,X_*(T))^G.}
 \end{equation*}
 Recall in the course of the proof of Lemma \ref{kerreslemma}, we showed that there is a commuting triangle 
 \begin{equation}\label{triangle_diagram_3}
 \xymatrix{H_1(\cO_L^\times, X_*(T)) \ar@{->>}[rd]_{r_{L,*}} \ar[r]^-N & H_1(\cO_L^\times, X_*(T))^G  \\
 & \langle \delta_\alpha(\chi) \rangle_{\alpha \in \cO_L^\times} \ar[u]_{\operatorname{Trace}}^{\simeq},}
 \end{equation}
 where  $\langle \delta_\alpha(\chi) \rangle_{\alpha \in \cO_L^\times}$ is the subgroup of $H_1(W_{L/F},X_*(T))$ generated by sums of all possible homology classes $\delta_\alpha(\chi)$ as $\alpha$ runs over $\cO_L^\times$, $N$ is the norm map, $\operatorname{Trace}$ is the map defined in \eqref{langlandstrace}, and $r_{L,*}$ is induced by the Artin reciprocity map $r_L$ (see \eqref{rL*}). Since $L/F$ is unramified by hypothesis, $N$ is surjective by Lemma \ref{Amano}.  
 
 We determine the inverse image of the subgroup $H_1(1+\fP_L,X_*(T))^G$ by the trace map. Set $$H_1(T):=\langle \delta_\alpha(\chi)\rangle_{\alpha \in 1+\fP_L} \subseteq H_1(W_{L/F},X_*(T))$$ to be the subgroup of $H_1(W_{L/F},X_*(T))$ generated by sums of all possible homology classes $\delta_\alpha(\chi)$ as $\alpha$ runs over $1+\fP_L$. We have that $N^{-1}(H_1(1+\fP_L,X_*(T))^G) = H_1(1+\fP_L,X_*(T))$ since $L/F$ is unramified \cite[Prop.\ 1]{Amano}, so that  $$\operatorname{Trace}: H_1(T) \to H_1(1+\fP_L,X_*(T))^G$$ is an isomorphism by chasing the diagram \eqref{triangle_diagram_3}.  
 
In summary, we have a short exact sequence
 \begin{equation}
 \xymatrix{1 \ar[r] &  H_1(T) \ar[r] & T(\cO_F) \ar[r] & T(f) \ar[r] & 1.}
 \end{equation}
 By exactness of the dual functor, we have the short exact sequence ``on the automorphic side'' of the local Langlands correspondence
 $$\xymatrix{ 1 \ar[r] & T(f)^\wedge \ar[r] & T(\cO_F)^\wedge \ar[r] & \Hom(H_1(T),\C^\times) \ar[r] & 1.}$$

The strategy of the proof is now to write down another exact sequence ``on the Galois side'' of the local Langlands correspondence, and by the five lemma, conclude the local Langlands correspondence over finite fields. 
 We start with the short exact sequence 
 $$\xymatrix{1 \ar[r] & 1+\fP_L \ar[r] & \cO_L^\times \ar[r] & \ell^\times \ar[r] & 1}.$$
Since $\widehat{T}$ is a divisible group, the functor $\Hom(-, \widehat{T})$ is exact \cite[Cor.\ 2.3.2, Lem.\ 2.3.4]{Weibel} and we obtain the short exact sequence
  $$\xymatrix{ 1 \ar[r] & \Hom(\ell^\times, \widehat{T}) \ar[r] & \Hom(\cO_L^\times, \widehat{T}) \ar[r] & \Hom(1+\fP_L,\widehat{T}) \ar[r] & 1.}$$ 
 Taking the long exact sequence in cohomology we get 
  $$\xymatrix{ 1 \ar[r] & \Hom_G(\ell^\times, \widehat{T}) \ar[r] & \Hom_G(\cO_L^\times, \widehat{T}) \ar[r]^-{R} & \Hom_G(1+ \fP_L,\widehat{T}).}$$
 
Our goal now is to show that the image of the map $R$ is isomorphic to $\Hom(H_1(T),\C^\times)$. Recall the annihilator $$\Ann(H_1(T)):= \{ \xi \in \Hom_G(\cO_L^\times, \widehat{T}): x \cup \xi = 1 \text{ for all } x \in H_1(T)\},$$
 where $\cup$ is Langlands's cup product pairing \eqref{langlandscup}. 
 We claim that 
 $$\Ann(H_1(T)) = \ker (R ).$$  
 Indeed, if $\xi$ is in the kernel of $R$, then $\xi$ is trivial on $1+\fP_L$, and then for any cycle of the form $\delta_\alpha(\chi) $ with $\alpha \in 1+\fP_L$ we have $\xi(\alpha)=1$, so of course $\chi(\xi(\alpha))=1$ for all $\chi \in X_*(T)$, i.e.\ $\xi \in \Ann(H_1(T))$.   On the other hand, if $\xi \in \Ann(H_1(T))$, then we have that $x \cup \xi = 1$ for all $x \in H_1(T)$, so in particular, $\chi(\xi(\alpha))=1$ for all $\alpha \in 1+\fP_L$ and $\chi \in X^*(\widehat{T})$. This can only be the case if $\xi(\alpha) =1$ for all $\alpha \in 1+\fP_L$, i.e.\ $\xi \in \ker(R)$. 
 
By the perfect pairing $T(\cO_F) \otimes \Hom_G(\cO_L^\times, \widehat{T}) \to \C^\times$ of Proposition \ref{PairingProp}, we have  $$\Hom(H_1(T),\C^\times) \simeq \frac{\Hom_G(\cO_L^\times, \widehat{T})}{\Ann(H_1(T))} \simeq  R( \Hom_G(\cO_L^\times, \widehat{T})) \subseteq  \Hom_G(1+\fP_L,\widehat{T}).$$ Thus, we have a commutative diagram
$$ \xymatrix{1 \ar[r] & T(f)^\wedge \ar[r] & T(\cO_F)^\wedge \ar[r] \ar[d]^\simeq & \Hom(H_1,\C^\times)\ar[d]^\simeq \ar[r] & 1 \\ 
1 \ar[r] & \Hom_G(\ell^\times, \widehat{T}) \ar[r] & \Hom_G(\cO_L^\times, \widehat{T}) \ar[r] & R( \Hom_G(\cO_L^\times, \widehat{T})) \ar[r] & 1}$$
with short exact rows, 
where the first vertical $\simeq$ is Proposition \ref{PairingProp} and the second $\simeq$ is the one just established. The Proposition now follows from the five lemma of homological algebra. 
 \end{proof}

\subsection{Local conductor zeta function, unramified case}\label{unramified}

In this section $T$ is a torus over a non-archimedean local field $F$ with splitting field $L$ and Galois group $G = \Gal(L/F)$.  Let $\fP$ be the prime ideal of $\cO_L$, $\ell$ the residue field of $L$, and $\mathop{\rm char}(\ell)$ its characteristic. We assume throughout this section that the representation $r \vert_{\widehat{T}}$ is faithful.

Let $\theta$ be a character of $T(F)$ that is trivial on the subgroup $NT(\cO_L)$ and $x \in NT(\cO_L)$. In this section and the next, we consider the (twisted) generating series 
\begin{equation}\label{NFsxdef}
N_F(s,x) := \sum_{\chi \in NT(\cO_L)^\wedge} \frac{\chi(x)}{\fc(\chi \theta,r)^s}.
\end{equation}

Now, and for the rest of section \ref{unramified} we assume that the extension $L/F$ is \emph{unramified}. 
Thus, the results of section \ref{sec:non-arch_background} afford us several immediate reductions. We have $NT(\cO_L)=T(\cO_F)$ (Lemma \ref{Amano}), $\fc(\chi \theta, r)= \tilde{\fc}(\chi \theta, r)$ (Corollary \ref{MTcond}), and $\tilde{\fc}(\chi \theta, r)=\tilde{\fc}(\chi , r)$ (Corollary \ref{ACcor}), so that
$$ N_F(s,x)=\sum_{\chi\in T(\cO_F)^\wedge}\frac{\chi(x)}{\tilde{\fc}(\chi,r)^s }.$$  
Recall the local Langlands isomorphism on integral points  from Proposition \ref{PairingProp} \begin{equation}\label{integralLLCcounting_sec}\Hom_G(\cO_L^\times, \widehat{T}) \simeq T(\cO_F)^\wedge\end{equation} $$\xi \mapsto \chi_\xi.$$ Proposition \ref{MP} then gives us a more hands-on way of working with the abelian conductor $\tilde{\fc}(\chi , r)$ in terms of characters on the Galois side of the integral local Langlands isomorphism. Changing variables by \eqref{integralLLCcounting_sec} we have \begin{equation}\label{ur1}N_F(s,x)= (\delta(\psi) dx/dx')^m \sum_{ \xi \in \Hom_G(\cO_L^\times,\widehat{T})} \chi_\xi(x)q_F^{-s \sum_\mu c(\mu \circ \xi)}.\end{equation}

Recall the set $M$ of co-weights of $r$ from Definition \ref{def_coweights}. Recall the set of non-negative integers $\N$, and let us index the coordinates of $\N^M$ by $\mu \in M$. For each $c=(c_\mu)_{\mu \in M} \in \N^M$, consider the following sets of Langlands parameters (restricted to $\cO_L^\times$): \begin{equation}\label{Pleqc}P_\leq( c) = \{\xi \in \Hom_G(\cO_L^\times,\widehat{T}) : c(\mu \circ \xi)\leq c_\mu,  \text{ for all }\mu \in M\}\end{equation} and \begin{equation}\label{P=c}P_=(c) = \{\xi \in \Hom_G(\cO_L^\times,\widehat{T}) : c(\mu \circ \xi)= c_\mu,  \text{ for all }\mu \in M\}.\end{equation} 
Since $\mu: \widehat{T} \to \C^\times$ is a group homomorphism $\mu \circ (\xi_1.\xi_2) = (\mu \circ \xi_1). (\mu\circ \xi_2)$ and $P_{\leq}(c)$ is an abelian group. Since $r \vert_{\widehat{T}}$ is faithful the abelian group $P_\leq( c)$ is finite and so the subset $P_{=}(c)$ is finite as well. Indeed, if $\xi \in P_{\leq}(c)$ then $\mu \circ \xi(1+\fP^{\max c_\mu}) = 1$ for all $\mu \in M$, so that $$ r \circ \xi( 1+\fP^{\max c_\mu}) = \prod_{\mu \in M}  \mu \circ \xi( 1+\fP^{\max c_\mu}) =1.$$
Since $r \vert_{\widehat{T}}$ is faithful, we must have $\xi(1+\fP^{\max c_\mu})=1$ for all $\xi \in P_{\leq}(c)$. Then, $P_{\leq}(c)$ is finite as $(\cO_L/\fP^{\max c_\mu})^\times$ is finite and $\widehat{T}$ has only finitely many elements of order dividing $| (\cO_L/\fP^{\max c_\mu})^\times|$. 

We consider the character sums over $P_\leq( c)$ and $P_=( c)$ \begin{equation}\label{PileqPi=}\Pi_\leq( c, x) = \sum_{\xi \in P_\leq( c)} \chi_\xi(x) \quad \text{ and } \quad \Pi_=(c,x) = \sum_{\xi \in P_=(c)} \chi_\xi(x).\end{equation} For example, $\Pi_\leq( c,1)= |P_\leq (c)|$ and $\Pi_=(c,1)= |P_=(c)|$. 

Writing $|c|=\sum_\mu c_\mu$, the sums $\Pi_=(c,x)$ are the coefficients of $N_F(s,x)$ as a local Dirichlet series, i.e. \begin{equation}\label{rhotoc} N_F(s,x)=(\delta(\psi) dx/dx')^m \sum_{c \in \N^M} \frac{\Pi_=(c,x)}{q_F^{s|c|}}.\end{equation}

We begin our analysis with the sums $\Pi_\leq( c,x)$ in order to make use of the group structure of $P_\leq( c)$.  The two functions $\Pi_\leq( c,x)$ and $\Pi_=(c,x)$ are related by inclusion-exclusion: \begin{equation}\label{protoincexc} \Pi_=(c,x) = \sum_{b \in \{0,1\}^M} (-1)^{|b|} \Pi_\leq((c-b),x).\end{equation}

Recall that the set of co-weights $M$ admits an action of $G$. We also let $G$ act on $\N^M$ by permuting coordinates and let $D_k(c)$ be the complex diagonalizable group defined by \begin{equation}\label{Dkc}D_k(c) = \bigcap_{\substack{ \mu \in M \\ c_\mu \leq k}}\ker \mu \subseteq \widehat{T},\end{equation} i.e.\ $D_k(c) = D(S)$ for $S=\{\mu \in M : c_\mu >k\}$. If $c$ is $G$-fixed then $D_k(c)$ admits an action of $G$. Note that $D_k(c)$ is monotonic in $c$, i.e.~if $c'\leq c$ coordinate-wise then for any $k\geq 0$ we have $$D_{k}(c')\subseteq D_k(c).$$   
The main result of this section of the paper is the following.
\begin{proposition}\label{MurP}
Suppose $r\vert_{\widehat{T}}$ is faithful, $c\in \N^M$ is $G$-fixed, $L/F$ is unramified, and $(q_F,\lambda)=1$.  If $\chi_\xi(x)=1$ for all $\xi \in P_\leq( c),$ then $$\Pi_\leq( c,x)= \left| \Hom_{G} \left(\ell^\times, D_0(c)\right)\right| \prod_{k=1}^\infty  \mathop{\rm char}(\ell)^{\dim D_k(c)},$$ and if there exists $\xi \in P_\leq( c)$ such that $\chi_\xi(x) \neq 1$ then $\Pi_\leq( c,x) = 0$.  
\end{proposition}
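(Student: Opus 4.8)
The plan is to push everything through the abelian version of the local Langlands correspondence. Since $\xi\mapsto\chi_\xi$ is a group isomorphism $\Hom_G(\cO_L^\times,\widehat{T})\simeq T(\cO_F)^\wedge$ (Proposition~\ref{PairingProp}, using $L/F$ unramified) and the conductor of a product of characters of $\cO_L^\times$ is at most the larger of the two conductors, the set $P_\leq(c)$ is a finite subgroup of $\Hom_G(\cO_L^\times,\widehat{T})$ and $\xi\mapsto\chi_\xi(x)$ is one of its characters. By orthogonality of characters, $\Pi_\leq(c,x)=\sum_{\xi\in P_\leq(c)}\chi_\xi(x)$ equals $|P_\leq(c)|$ when $\chi_\xi(x)=1$ for all $\xi\in P_\leq(c)$ and equals $0$ otherwise. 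This settles the second assertion immediately, and reduces the first to the identity $|P_\leq(c)|=|\Hom_G(\ell^\times,D_0(c))|\prod_{k\geq 1}\mathop{\rm char}(\ell)^{\dim D_k(c)}$.

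Next I would rewrite the conditions cutting out $P_\leq(c)$. The inequality $c(\mu\circ\xi)\leq c_\mu$ says exactly that $\mu\circ\xi$ is trivial on $\cO_L^{(c_\mu)}=1+\fP^{c_\mu}$, i.e.\ $\xi(\cO_L^{(c_\mu)})\subseteq\ker\mu$; bundling over $\mu$ and using $\cO_L^{(k)}\subseteq\cO_L^{(c_\mu)}$ for $k\geq c_\mu$, membership becomes the single requirement $\xi(\cO_L^{(k)})\subseteq D_k(c)$ for every $k\geq 0$ (with $\cO_L^{(0)}=\cO_L^\times$), where $D_k(c)$ of \eqref{Dkc} is $G$-stable because $c$ is $G$-fixed. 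One has $\widehat{T}\supseteq D_0(c)\supseteq D_1(c)\supseteq\cdots$, and $D_N(c)=D(\varnothing)=\{1\}$ once $N\geq\max_\mu c_\mu$ by faithfulness of $r|_{\widehat{T}}$. The canonical Teichm\"uller section splits $\cO_L^\times\simeq\ell^\times\times\cO_L^{(1)}$ $G$-equivariantly; since $\cO_L^{(k)}\subseteq\cO_L^{(1)}$ for $k\geq 1$ the constraints decouple, the only one touching the $\ell^\times$-factor being $\xi(\ell^\times)\subseteq D_0(c)$, so
\[
|P_\leq(c)|=|\Hom_G(\ell^\times,D_0(c))|\cdot|Q|,\qquad Q:=\{\psi\in\Hom_G(\cO_L^{(1)},\widehat{T}):\psi(\cO_L^{(k)})\subseteq D_k(c)\ \text{for all }k\geq 1\}.
\]

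The core of the argument is to show $|Q|=\prod_{k\geq 1}|\Hom_G(\ell^+,D_k(c)[\mathop{\rm char}\ell])|$. Three inputs set this up. First, $\cO_L^{(1)}=1+\fP$ is a pro-$p$ group ($p=\mathop{\rm char}\ell$) and $\widehat{T}$ has no small subgroups, so every $\psi$ has finite image, necessarily a $p$-group, and hence $\psi(\cO_L^{(k)})\subseteq D_k(c)\cap\widehat{T}[p^\infty]$. Second, $D_k(c)=D(\{\mu:c_\mu>k\})$, so $|\pi_0(D_k(c))|$ divides $\lambda$ while $p\nmid\lambda$ — this is exactly where $(q_F,\lambda)=1$ is used — giving $D_k(c)[p^\infty]=D_k(c)^\circ[p^\infty]\cong(\Q_p/\Z_p)^{\dim D_k(c)}$ divisible, with $D_k(c)[p]\cong\F_p^{\dim D_k(c)}$. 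Third, since $L/F$ is unramified we may choose the uniformizer in $F$, so the graded pieces $\cO_L^{(k)}/\cO_L^{(k+1)}$ are $\cong\ell^+$ as $\F_p[G]$-modules, and by faithfulness $\psi$ factors through the finite $p$-group $\cO_L^{(1)}/\cO_L^{(N)}$. I would then run a d\'evissage up this filtration, with $Q_k:=\{\psi:\cO_L^{(k)}\to D_k(c)[p^\infty]\ \text{$G$-equivariant},\ \psi(\cO_L^{(j)})\subseteq D_j(c)\ \forall j\geq k\}$, $Q_1=Q$ and $Q_N=\{1\}$. The restriction $Q_k\to Q_{k+1}$ is surjective because the obstruction to extending a $G$-equivariant character of $\cO_L^{(k+1)}$ over $\cO_L^{(k)}/\cO_L^{(k+1)}\cong\ell^+$ to values in the divisible group $D_k(c)[p^\infty]$ lies in $H^1(G,\Hom(\ell^+,D_k(c)[p^\infty]))$, and this vanishes: by the normal basis theorem $\ell$ is free of rank one over $f[G]$ with $f$ the residue field of $F$, hence $\Hom(\ell^+,D_k(c)[p^\infty])=\Hom_{\F_p}(\ell^+,D_k(c)[p])$ is a free, and so cohomologically trivial, $\F_p[G]$-module. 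The fibres of $Q_k\to Q_{k+1}$ are torsors under $\Hom_G(\ell^+,D_k(c)[p])$, so $|Q_k|=|Q_{k+1}|\cdot|\Hom_G(\ell^+,D_k(c)[p])|$, whence $|Q|=\prod_{k\geq 1}|\Hom_G(\ell^+,D_k(c)[p])|$.

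Finally, the same normal basis consideration ($\ell^+\cong\F_p[G]^{[f:\F_p]}$) evaluates $|\Hom_G(\ell^+,D_k(c)[\mathop{\rm char}\ell])|$ as $\mathop{\rm char}(\ell)^{\dim D_k(c)}$, and multiplying over $k$ and recombining with the $\Hom_G(\ell^\times,D_0(c))$-factor yields the claimed value of $|P_\leq(c)|$. The step I expect to be the genuine obstacle is the cohomology vanishing $H^1(G,\Hom(\ell^+,D_k(c)[p^\infty]))=0$ that makes the d\'evissage telescope into a product: it is automatic when $p\nmid[L:F]$, but in general one needs both the freeness supplied by the normal basis theorem and the divisibility of $D_k(c)[p^\infty]$ guaranteed by $(q_F,\lambda)=1$, and without it the restriction maps need not be surjective and the count would not factor through the graded pieces of the filtration.
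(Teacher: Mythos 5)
Your proof follows essentially the same route as the paper's: orthogonality of characters settles the vanishing case and reduces the other to computing $|P_\leq(c)|$, after which you climb the unit filtration $\cO_L^\times \supseteq \cO_L^{(1)} \supseteq \cO_L^{(2)} \supseteq \cdots$ and count $G$-equivariant homomorphisms on the graded pieces via the normal basis theorem. Where you go beyond the paper: the paper simply asserts that the extensions at each step of the filtration are ``parameterized by $\Hom_G(\ell, D_K(c))$'' without explaining why the restriction maps are surjective, whereas you locate the obstruction in $H^1(G, \Hom(\ell^+, D_k(c)[p^\infty]))$ and kill it via the $\F_p[G]$-freeness of $\ell^+$ (normal basis) together with the divisibility of $D_k(c)[p^\infty]$ (this is where $(q_F,\lambda)=1$ is really used). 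That is a genuine and welcome precision.

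One arithmetic point deserves attention, and it is present in the paper as well. You correctly observe $\ell^+ \cong \F_p[G]^{[f:\F_p]}$ where $f$ is the residue field of $F$; Shapiro's lemma then gives $\Hom_G(\ell^+, D_k(c)[p]) \cong D_k(c)[p]^{[f:\F_p]}$, of cardinality $p^{[f:\F_p]\dim D_k(c)} = q_F^{\dim D_k(c)}$, \emph{not} $\mathop{\rm char}(\ell)^{\dim D_k(c)}$. So your own intermediate computation yields $q_F^{\dim D_k(c)}$ at each level, while the final line of your argument (matching the Proposition's statement) reads $\mathop{\rm char}(\ell)^{\dim D_k(c)}$. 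The paper's proof contains the same slip: it asserts that a $G$-equivariant homomorphism on $\ell$ is determined by its value on a single normal-basis generator $\alpha$, which is valid as an $\F_p[G]$-module statement only when $f = \F_p$. (Sanity check: take $F = L$ unramified with residue field of order $p^2$, $T = \G_m$, $r = z$, $c = (2)$; then $|P_\leq(c)| = (q_F-1)q_F$, but the stated formula gives $(q_F-1)\cdot p$.) The discrepancy is harmless for the rest of the paper, since later uses only invoke the upper bound $\Pi_\leq(c,1) \ll \prod_k q_F^{\dim D_k(c)}$, but carried out consistently your argument gives $q_F$ rather than $\mathop{\rm char}(\ell)$ as the base of the exponential.
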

\begin{proof}
Suppose that $\chi_\xi(x) = 1$ for all $\xi \in P_\leq (c)$. Then $\Pi_\leq (c,x) = |P_\leq (c)|$ and it suffices to count the latter set. Since $r \vert_{\widehat{T}}$ is faithful, we have $D_k(c) = \{1\}$ for sufficiently large $k \in \N$, and so there exists $$k_0 = k_0(c) = \min \{ k \in \N: D_k(c)=\{1\}\}.$$ Thus the product in the statement of the proposition is finite, running up to $k_0-1$. 

We have 
\begin{align*} P_{\leq}(c) & = \bigcap_{\mu \in M} \{ \xi \in \Hom_G(\cO_L^\times,\widehat{T}) : c(\mu \circ \xi) \leq c_\mu \} \\
& = \bigcap_{\mu  \in M } \bigcap_{\substack{k=0 \\ c_\mu \leq k}}^\infty  \{ \xi \in \Hom_G(\cO_L^\times,\widehat{T}) : \mu \circ \xi (1+\fP^k) = 1 \} \\
& = \bigcap_{k=0}^\infty \bigcap_{\substack{\mu  \in M  \\ c_\mu \leq k}}   \{ \xi \in \Hom_G(\cO_L^\times,\widehat{T}) : \xi (1+\fP^k) \subseteq \ker \mu \} \\
& = \bigcap_{k=0}^\infty  \{ \xi \in \Hom_G(\cO_L^\times,\widehat{T}) : \xi(1+\fP^k) \subseteq D_k(c)\}.
\end{align*}
That is to say, a parameter $\xi \in P_\leq (c)$ if and only if $\xi(1+\fP^{k}) \subseteq D_k(c)$ for all $k\in \N $.
In particular, every $\xi \in P_\leq (c)$ is trivial on $1+\fP^{k_0}$. We inductively construct all of the $\xi \in P_\leq (c)$ by extending the trivial homomorphism $1+\fP^{k_0}\to \widehat{T}$ backwards along the standard filtration.

Consider two base cases: $k_0=0$ and $k_0=1$. If $c$ is such that $k_0=0$ then $D_k(c)=\{1\}$ for all $k\in \N$ and $P_\leq( c) = \{1\}$, so the formula in the statement of the proposition holds. If $c$ is such that $k_0=1$ then $\xi(1+\fP)=\{1\}$ for all $\xi \in P_\leq( c)$, and the possible extensions of $\xi$ to $\cO_L^\times$ are parametrized by $$\Hom_G(\cO_L^\times / (1+\fP), D_0(c)) = \Hom_G(\ell^\times, D_0(c)).$$ So the formula in the statement of the proposition holds.

Now suppose as the induction hypothesis that \begin{equation}\label{inductionhyp}|P_\leq (c)| = \left|  \Hom_G(\ell^\times, D_0(c))\right| \prod_{k=1}^{k_0-1} \left| \Hom_G(\ell,D_k(c))\right|\end{equation} for all $c$ such that $k_0 \leq K$. Consider $c$ such that $k_0 = K+1$. Then all $\xi \in P_\leq( c)$ satisfy $\xi(1+\fP^{K+1}) = \{1\}$, and the possible extensions the trivial map $1+\fP^{K+1} \to \widehat{T}$ to elements of $\Hom_{G}(1+\fP^{K},D_{K}(c))$ are parameterized by $$\Hom_G((1+\fP^{K})/(1+\fP^{K+1}),D_{K}(c)) \simeq \Hom_G(\ell,D_{K}(c)),$$ since $L/F$ is unramified. Therefore \eqref{inductionhyp} holds for $c$ such that $k_0= K+1$. By induction, \eqref{inductionhyp} holds for all $c \in \N^M$.

By the normal basis theorem, there exists $\alpha \in \ell$ such that $$\{\alpha, \alpha^{q_F}, \alpha^{q_F^2},\ldots,\alpha^{q_F^{[L:F] -1}}\}$$ is a basis for $\ell$ over the residue field of $F$.  A $G$-equivariant homomorphism in $\Hom_G(\ell,D_k(c))$ is determined by its value on $\alpha$, which is of additive order $\mathop{\rm char}(\ell)$ in $\ell$.  Since $(q_F,\lambda)=1$, the element $\alpha$ cannot map non-trivially into the component group of any $D_k(c)$.  There are $$\mathop{\rm char}(\ell)^{\dim D_k(c)}$$ elements of order dividing $\mathop{\rm char}(\ell)$ in the connected component of the identity of $D_k(c)$.  Hence $$\left| \Hom_G(\ell,D_k(c)) \right| = \mathop{\rm char}(\ell)^{\dim D_k(c)},$$ and we have shown the first part of the Proposition.

If there exists $\xi \in P_\leq( c)$ such that $\chi_\xi(x) \neq 1$, then it immediately follows from orthogonality of characters that $\Pi_\leq( c,x) = 0$, hence the second part of the proposition. 
\end{proof}

Proposition \ref{MurP} is only valid for $G$-fixed $c\in \N^M$ (since otherwise $D_k(c)$ is not a $G$-module, and $G$-equivariant homomorphisms into $D_k(c)$ do not make any sense). However, we can always reduce to the case that $c$ is $G$-fixed by the following lemma.
 \begin{lemma}\label{reducedlemma} If $c\in \N^M$ is \emph{not} $G$-fixed, then $\Pi_=(c,x) = 0$.\end{lemma}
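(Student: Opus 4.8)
The plan is to prove the stronger (and much more transparent) fact that the set $P_=(c)$ defined in \eqref{P=c} is \emph{empty} whenever $c\in\N^M$ is not $G$-fixed. Since $\Pi_=(c,x)=\sum_{\xi\in P_=(c)}\chi_\xi(x)$ by definition \eqref{PileqPi=}, this gives $\Pi_=(c,x)=0$ at once.

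The key point is a conductor-invariance statement: for every $\xi\in\Hom_G(\cO_L^\times,\widehat{T})$, every $g\in G$, and every coweight $\mu\in M$, I claim $c(\mu\circ\xi)=c((g\cdot\mu)\circ\xi)$, where $c$ denotes the conductor of the relevant quasicharacter of $\cO_L^\times$. To see this I would unwind the three $G$-actions in play: writing the action on $X^*(\widehat{T})$ as $(g\cdot\mu)(t)=\mu(g^{-1}t)$ and using the $G$-equivariance $\xi(g^{-1}u)=g^{-1}\xi(u)$ for $u\in\cO_L^\times$, one obtains $(g\cdot\mu)(\xi(u))=\mu(\xi(g^{-1}u))=(\mu\circ\xi)(g^{-1}u)$. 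Thus $(g\cdot\mu)\circ\xi$ is the composition of $\mu\circ\xi$ with the group automorphism $u\mapsto g^{-1}u$ of $\cO_L^\times$. Because $g$ is a field automorphism of $L$ fixing $F$, it preserves $\cO_L$ and its maximal ideal $\fP$, hence preserves each subgroup $1+\fP^k$ of the standard filtration; since the conductor of a character of $\cO_L^\times$ is $\min\{k\ge 0:\chi|_{1+\fP^k}=1\}$, it is invariant under such an automorphism, which proves the claim.

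With the claim in hand the lemma follows by a one-line contrapositive. If $\xi\in P_=(c)$, then for every $\mu\in M$ and every $g\in G$ we get $c_\mu=c(\mu\circ\xi)=c((g\cdot\mu)\circ\xi)=c_{g\cdot\mu}$, so $c$ is constant on the $G$-orbits of coordinates, i.e. $c$ is $G$-fixed. Hence if $c$ is not $G$-fixed then $P_=(c)=\varnothing$ and $\Pi_=(c,x)=0$. I do not expect a genuine obstacle here; the only thing needing care is bookkeeping the directions of the $G$-actions on $\cO_L^\times$, on $\widehat{T}$, and the contragredient action on $X^*(\widehat{T})=X_*(T)\supseteq M$, so that the equivariance of $\xi$ yields the stated cancellation rather than an inverse. (Note also that no hypothesis beyond $L/F$ Galois is used, so the argument is insensitive to ramification, and the multiplicity in the multiset $M$ is harmless since $G$ simply permutes the coordinates of $\N^M$.)
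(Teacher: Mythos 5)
Your proof is correct and follows essentially the same idea as the paper's: use the $G$-equivariance of $\xi$ to rewrite $(g\cdot\mu)\circ\xi$ as $(\mu\circ\xi)$ precomposed with a Galois automorphism of $\cO_L^\times$, then observe that Galois automorphisms preserve the standard filtration $(1+\fP^k)$ and hence conductors. The only (cosmetic) difference is packaging: you establish the clean two-sided invariance $c(\mu\circ\xi)=c((g\cdot\mu)\circ\xi)$ and read off the contrapositive, whereas the paper picks a specific $\sigma$ with $c_{\mu^\sigma}>c_\mu$ and derives the one-sided inequality $c_{\mu^\sigma}\le c_\mu$ for a contradiction; both hinge on exactly the same computation.
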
 
\begin{proof}
Suppose $c$ is not fixed by $G$, so that $|M| \geq 2$. Without loss of generality suppose there exists $\sigma \in G$ such that $\mu^\sigma =\mu'$ but that $c_{\mu'}>c_\mu$.  Suppose for a contradiction that there exists $\xi \in \Hom_G(\cO_L^\times, \widehat{T})$ such that $c(\mu \circ \xi)=c_\mu$ and $c(\mu' \circ \xi)=c_{\mu'}$. If $z \in \cO_L^\times$ then the Galois equivariance of $\xi$ says $$\mu \circ \xi (\sigma z) = \mu' \circ \xi(z).$$ If $z \in 1+ \fP^{c_\mu}$, then we also have $\sigma z \in 1+ \fP^{c_\mu}.$ But then $c(\mu \circ \xi) \leq c_\mu$ implies that $c(\mu \circ \xi \circ \sigma) \leq c_\mu,$ and $c(\mu' \circ \xi) = c(\mu \circ \xi \circ \sigma), $ so $c_{\mu'}= c(\mu' \circ \xi)\leq c_\mu,$ contradiction.
\end{proof}

Before moving on, we include one more auxiliary result, which will be used in section \ref{unramifiedcounting} to show that only those $c \in \N^M$ with all entries either 0 or 1 will matter for the location and order of the rightmost pole of the global generating series $Z(s)$. For more details, see Lemma \ref{UU0}.
\begin{lemma}\label{dimDkgeq1}
If $c = (c_\mu) \in \N^M$ is such that $\max c_\mu \geq 2$ and $\Pi_=(c,x) \neq 0$ then $\dim D_k(c) \geq 1$ for all $k=0, \ldots, \max_\mu c_\mu-1$.
\end{lemma}
\begin{proof}
Let us choose an ordering of the $\mu \in M$, say $\mu_1, \ldots, \mu_m$, and write $c_i = c_{\mu_i}$. We choose the ordering such that $c_1$ is maximal among $c_1,\ldots,c_m$, thus $c_1\geq 2$.  By \eqref{protoincexc} $$\Pi_=(c,x) = \sum_{d_2 \leq c_2} \cdots \sum_{d_m \leq c_m} \mu(2^{d_2})\cdots \mu(2^{d_m}) \left( \Pi_\leq(c-(0,d_2,\ldots,d_m),x) - \Pi_\leq(c-(1,d_2,\ldots,d_m),x)\right).$$ Since $\Pi_=(c,x) \neq 0$ there exists $d_2,\ldots,d_m \in \{0,1\}$ such that \begin{equation}\label{Pineq}\Pi_\leq(c-(0,d_2,\ldots,d_m),x) \neq \Pi_\leq(c-(1,d_2,\ldots,d_m),x).\end{equation} We have by Proposition \ref{MurP} that $$\Pi_\leq(c,x) = \left| \Hom_{D_\fP} \left((\cO_L/\fP)^\times, D_0(c)\right)\right| \prod_{k=1}^\infty  \mathop{\rm char}(\cO/\fp)^{\dim D_k(c)}. $$ 
Thus, since $c_1 \geq 2$ we have  \begin{equation}\label{Pifrac} \frac{\Pi_\leq(c-(0,d_2,\ldots,d_m))}{\Pi_\leq(c-(1,d_2,\ldots,d_m))} = \operatorname{char}(\cO/\fp)^{\dim D_{c_1-1}(c-(0,d_2,\ldots,d_m)) - \dim D_{c_1-1}(c-(1,d_2,\ldots,d_m))} .\end{equation} By \eqref{Pineq} the quantity in \eqref{Pifrac} is $\neq 1$. 
Since $D_k(c)$ is monotonic in $c$, we have $$ \operatorname{char}(\cO/\fp)^{\dim D_{c_1-1}(c-(0,d_2,\ldots,d_m)) - \dim D_{c_1-1}(c-(1,d_2,\ldots,d_m))}>1,$$ from which we conclude \begin{multline*} 1 \leq \dim D_{c_1-1}(c-(0,d_2,\ldots,d_m)) - \dim D_{c_1-1}(c-(1,d_2,\ldots,d_m)) \\ \leq \dim D_{c_1-1}(c-(0,d_2,\ldots,d_m))  \leq \dim D_{c_1-1}(c) \leq \dim D_k(c)\end{multline*}
 for all $1 \leq k \leq c_1-1$.\end{proof}

We spend the rest of the section devoting particular attention to the case that all of the entries of $c$ are 0 or 1. Under this condition on $c$, the groups $D_k(c)=\{1\}$ for all $k\geq 1$ by the faithfulness of $r \vert_{\widehat{T}}$.  Therefore we restrict our attention to the case $k=0$.  
We make a change of variables, and instead consider subsets $S \subseteq M$ as in the introduction.  The change of variables is given by the $G$-equivariant bijection \begin{align} \{0,1\}^m & \simeq  2^{M} \label{ctoS1} \\ c & \leftrightarrow S= \{\mu: c_\mu=1 \},\label{ctoS2}\end{align} with $G$ acting on $2^{M}$ as in the introduction. Define the quantities $\Pi_\leq(S,x)$ and $\Pi_=(S,x)$ via the above bijection $c\leftrightarrow S$ in terms of $\Pi_\leq(c,x)$ and $\Pi_=(c,x)$, and define $D(S)= D_0(c)$ as in the introduction.

 Let $\Fr \in G$ denote the Frobenius element. By Lemma \ref{reducedlemma}, it is no loss of generality to suppose that $\Fr S= S$. Define an $F$-torus $T_S$ by taking its cocharacter lattice to be $\Z^{|S^c|}$ with coordinates indexed by $\mu \in S^c$, and $G$ acting by permuting these. We define a map of $F$-tori $\alpha: T_S \to T$ by the map of cocharacter lattices $$\alpha_*: \Z^{|S^c|} \to X_*(T)$$ \begin{equation}\label{alphadef_local} (0,\ldots, 1,  \ldots, 0) \mapsto \mu,\end{equation} where the $1$ is in the $\mu$-slot and the other coordinates are all 0, see Lemma \ref{equiv_of_cat_multi} and \eqref{char_cochar_PP}. The construction of $T_S$ and $\alpha$ is compatible with a global construction that we will be introduced in section \ref{sec:localtoglobal}.
 
Let $x \in T(\cO_F)$ and take the scheme-theoretic fiber $\alpha^{-1}(x)$ of $\alpha$ above $x$. By Lemma \ref{lem:fibermodel}, the fiber $\alpha^{-1}(x)$ has an integral model over $\cO_F$, which we write $\alpha^{-1}(x)_0$. We may take the base change of $\alpha^{-1}(x)_0$ to the residue field $f$ of $F$ to obtain a separated $f$-scheme of finite type $\alpha^{-1}(x)_f$ (which may fail in general to be reduced). Let us abuse notation by writing $\alpha^{-1}(x)(f)=\alpha^{-1}(x)_f(f)$ and recall the abuses of notation $T(f)$ and $T_S(f)$ from section \ref{norm}. Note by \eqref{stdisom1} that we also have $T(f) \simeq \Hom_G(X^*(T),\ell^\times)$ and $T_S(f) \simeq \Hom_G(X^*(T_S),\ell^\times)$ since $X^*(T) \simeq X^*(\cT_f)$ with the same action by $\Gal(L/F)\simeq \Gal(\ell/f)$ and similarly for $T_S$.

 Let $\Lambda$ be a finitely-generated subgroup of $T(\cO_F)$. (Later in section \ref{unramifiedcounting}, we will take $\Lambda$ to be a finite index subgroup of the global units of a torus over a number field.) Let  $L'/F$ be a Galois extension with $L\subseteq L'$ such that all geometric components of $\alpha^{-1}(x)$ for all $x \in \Lambda$ are defined over $L'$.
 Let $p(\alpha^{-1}(x))$ be the finite set of geometric components. There is a continuous action of $\Gal(L'/F)$ on $p(\alpha^{-1}(x))$.  Let $\Fr' \in \Gal(L'/F)$ denote a Frobenius automorphism, and write \begin{equation}\label{aSx} a(S,x)= \#\{y \in p(\alpha^{-1}(x)): \Fr' y = y\}.\end{equation} 
The number $a(S,x)$ does not depend on the choice of $\Fr'$, since the inertia subgroup of $\Gal (L'/F)$ acts trivially on $\alpha^{-1}(x)$. 

\begin{lemma}\label{Acalc}
Suppose $\Fr S =S$. Then $$\Pi_\leq(S,x) = \begin{cases} \left(a(S,x) + O_{T,r}(q_F^{-1/2})\right) q_F^{\dim D(S)} & \text{ if } \dim D(S)\geq 1\\ a(S,x)  & \text{ if } D(S) \text{ is finite.}\end{cases}$$ 
\end{lemma}

\begin{proof} 
By definition of $T$ and $T_S$, we have exact sequences $$\xymatrix{1 \ar[r] &  D(S) \ar[r] & \widehat{T} \ar[r] & \widehat{T}_S},$$ and $$\xymatrix{1 \ar[r] & \Hom_G(\ell^\times, D(S)) \ar[r] & \Hom_G(\ell^\times, \widehat{T}) \ar[r] & \Hom_G(\ell^\times, \widehat{T}_S)}.$$   
By Pontryagin duality applied to $\alpha:T_{S}(f) \to T(f)$ we have an exact sequence $$\xymatrix{1 \ar[r] &  (T(f) / \alpha(T_S(f)))^\wedge \ar[r] &  T(f)^\wedge \ar[r] &  T_S(f)^\wedge}.$$
The local Langlands correspondence for tori over finite fields Proposition \ref{LanglandsFF} asserts that $$T(f)^\wedge \simeq \Hom_G(\ell^\times, \widehat{T}),$$ and likewise for $T_S$, since they both split over $L/F$, which is unramified.  Therefore we have a commutative diagram $$ \xymatrix{ 1 \ar[r] &\Hom_G(\ell^\times, D(S))\ar[r] & \Hom_G(\ell^\times, \widehat{T}) \ar[r] \ar[d]^\simeq &  \Hom_G(\ell^\times, \widehat{T}_S)\ar[d]^\simeq \\ 1  \ar[r] & (T(f) / \alpha(T_S(f)))^\wedge \ar[r] & T(f)^\wedge \ar[r] &  T_S(f)^\wedge. }$$ By the five lemma of homological algebra, we conclude that $$\Hom_G(\ell^\times, D(S)) \simeq (T(f) / \alpha(T_S(f)))^\wedge.$$ By orthogonality of characters we have 
$$\Pi_\leq(S,x) = \sum_{\xi \in \Hom_G(\ell^\times, D(S))} \chi_\xi(x) = \begin{cases} | \Hom_G(\ell^\times, D(S)) | & \text{ if } x \in \alpha(T_S(f)) \subset T(f) \\ 0 & \text{ if } x \not \in \alpha(T_S(f)).\end{cases} $$
If $x \in \alpha(T_S(f))$, then $$\Pi_\leq(S,x)=  | \Hom_G(\ell^\times, D(S)) |  = \frac{|T(f)|}{| \alpha(T_S(f))|} = \frac{|T(f)|}{| T_S(f)|}  | \ker ( \alpha: T_S(f) \to T(f))|.$$ 
In either case of $x \in \alpha(T_S(f))$ or not, we have that \begin{equation}\label{eq:PiSx_as_pointcount}\Pi_\leq(S,x)= \frac{|T(f)|}{| T_S(f)|} |\alpha^{-1}(x)(f)|.\end{equation}

We use the Lang-Weil theorem in the form of Corollary \ref{LWcor} to count the number of points on the $\alpha^{-1}(x)$ over finite fields. 
The quantities $n$, $r$, and $d$ associated to $\alpha^{-1}(x)_{f}$ as in Theorem \ref{LWthm} are bounded uniformly as $x$ varies over $T(\cO_F)$, in terms of the degree of the equations cutting out $T_S$ and $\alpha$, and $\dim T$ and $\dim r$. Since there are only finitely many possibilities for $S$ for a given $T,r$, the error term in our application of Corollary \ref{LWcor} only depends on $T,r$.

By e.g. \cite[Thm. 1.72, Def. 1.73]{MilneAGS}, 
the map $\alpha$ factors as $\alpha: T_S \to \alpha(T_S) \to T$, with the first map faithfully flat and the second a closed immersion.  For any point $x \in \alpha(T_S)$, we have by e.g. \cite[A.73]{MilneAGS} that $$ \dim \alpha^{-1}(x) + \dim \alpha(T_S)  = \dim T_S,$$ 
and by \cite[Rem. 5.42]{MilneAGS} that $$ \dim D(S) = \dim T - \dim \alpha(T_S).$$ 
Combining these, we have $$ \dim T - \dim T_S + \dim \alpha^{-1}(x) = \dim D(S).$$ Applying the Lang-Weil theorem (Corollary \ref{LWcor}) to \eqref{eq:PiSx_as_pointcount}, we conclude the lemma.
\end{proof}
In the special case that $x = 1$ we state the leading constant in Lemma \ref{Acalc} in a more convenient fashion. Let $\Fr \in G$ and \begin{equation}\label{aS}a(S) = |\{ y \in \pi_0(D(S)): \Fr y^{q_F} = y\}|.\end{equation} 
\begin{lemma}\label{AcalcClassic}
Suppose $\Fr S =S$. Then $$\Pi_\leq(S,1) = \begin{cases} a(S)q_F^{\dim D(S)} \left(1 + O_{T,r}(q_F^{-1})\right)  & \text{ if } \dim D(S)\geq 1\\ a(S)  & \text{ if } D(S) \text{ is finite.}\end{cases}$$ 
\end{lemma}
\begin{proof}
We give an alternate computation of $|\Hom_G(\ell^\times, \widehat{T})|$. 
Let $x$ denote a generator for the cyclic group $\ell^\times$.  Then $\Hom_G(\ell^\times, D(S))$ is in bijection with the set $\{z \in D(S) : \Fr z = z^{q_F}\}$ of possible images of $x$ in $D(S)$.  This set is equal to the kernel $J$ of the $G$-equivariant homomorphism $$D(S) \to D(S)$$ given by $$ z \mapsto \frac{z^{q_F}}{\Fr z}.$$ We have an exact sequence of $G$-modules $$\xymatrix{ X^*(D(S)) \ar[r]^\varphi &  X^*(D(S)) \ar[r] &  X^*(J) \ar[r] &  1}.$$ The map $\varphi$ is given by $\varphi(\chi) = q_F \chi - \chi^{\Fr}$,  where we have written $X^*(D(S))$ in additive notation.  Our goal is to compute the cardinality of $X^*(J)$, which equals the cardinality of $J$ itself.

Write $X=X^*(D(S)) $, $X_t$ for the torsion subgroup, and $X_f = X/X_t$.  The map $\varphi: X \to X$ induces maps $X_t \to X_t$ and $X_f \to X_f$, both of which we also denote $\varphi$. We write $Q=X^*(J)$ for the cokernel of $\varphi:X \to X$, $Q_t$ for the cokernel of $\varphi: X_t \to X_t$, and $Q_f$ for the cokernel of $\varphi: X_f \to X_f$.  In summary, we have a commutative diagram $$\xymatrix{ 1 \ar[r] & X_t \ar[r] \ar[d]^\varphi & X \ar[r] \ar[d]^\varphi & X_f \ar[r] \ar[d]^\varphi & 1 \\
1 \ar[r] & X_t \ar[r] \ar[d] & X \ar[r]^{\pi} \ar[d] & X_f \ar[r] \ar[d]^{q} & 1 \\
 & Q_t \ar[r] & Q \ar[r]^{\pi_Q} & Q_f & }$$ 
 
 The map $\pi_Q$ is surjective since $\pi$ and $q$ are both surjective.  
 
 We show that the top right $\varphi$ is injective.  Indeed, let $\chi\in X_f$ satisfy $\varphi(\chi)=0$.  Since $\varphi$ is $G$-equivariant, we also have $\varphi(\chi^{\Fr^i}) = 0$, for all $ i $.  Since $\varphi(\chi)=0$ we have $q_F\chi = \chi^{\Fr}$, and so $\chi^{\Fr}\equiv 0 \mods {q_F}$.  But similarly, since $\varphi(\chi^{\Fr})=0$ we have that $\chi^{\Fr^{2}} \equiv  0 \mods{q_F^2}$.  Therefore $\chi \equiv 0 \mods{q_F^{|G|}}$.  Repeating this process ad infinitum, we conclude that $\chi=0 \in X_f$, so the top right $\varphi$ is injective. 
 
 Then by the snake lemma we have that $Q_t \hookrightarrow Q$, and so the bottom row of the diagram forms an exact sequence of finitely-generated abelian groups.  We have that $Q$ is finite if both $Q_t$ and $Q_f$ are, and in this case $|Q| = |Q_t||Q_f|$.  
   
Let us begin with $Q_f$. The map $\varphi$ on $X_f$ is given in matrices by $q_F I-A$, where $A$ is some matrix of integers for which $A^{|G|}=I$.  
Putting $q_F I- A$ in Smith normal form $q_FI-A = UDV$ with $U,V \in \GL_{\dim D(S)}(\Z)$, we have $$Q_f \simeq \frac{\Z}{d_1\Z} \times \cdots \times \frac{\Z}{d_{\dim D(S)} \Z},$$ with each $d_i$ is equal to $q_F \pm 1$, and so $|Q_f| = q_F^{\dim D(S)}(1+O(q_F^{-1}))$ if $\dim D(S) \geq 1$ and $|Q_f| = 1$ if $\dim D(S) = 0$.

Now we compute $|Q_t|$. For any endomorphism of a finite abelian group $f :A \to A$, we have that $|\ker f | = |\coker f|$.  Let $J_t$ be the kernel of $\varphi: X_t \to X_t$, which therefore has the same cardinality as $Q_t$.  But the cardinality of $J_t$ is exactly the quantity $a(S)$ defined above the statement of the lemma. 

We have shown that $|Q_t| = a(S)$ and $$|Q_f| = \begin{cases} q_F^{\dim D(S)} (1+O(q_F^{-1})) & \text{ if } \dim D(S) \geq 1\\ 1 & \text{ if } \dim D(S)=0.\end{cases}$$ Since $|Q| = |Q_t||Q_f|$, we conclude the lemma. 
\end{proof}

Finally, we apply the foregoing results on $\Pi_\leq(S,x)$ to derive the final results for $\Pi_=(S,x)$. Let $$\mu(i \in T) = \begin{cases} 1 & \text{ if } i \not \in T \\ -1 & \text{ if } i \in T.\end{cases}$$ The main tool is \eqref{protoincexc}, which we re-state for the sets $S$ as 
\begin{equation}\label{inclusionexclusion} \Pi_=(S,x) = \sum_{T\subseteq S} \mu(1 \in T)\cdots \mu(m \in T) \Pi_\leq(S - T,x).\end{equation} For a set $S$ we denote \begin{equation}\label{Sred}S_{\text{red}} = \{ \mu \in S : \sigma \mu \in S \text{ for all } \sigma \in G\}.\end{equation} The set $S_\text{red}$ is now $G$-fixed, and by \eqref{inclusionexclusion} and Lemma \ref{reducedlemma} we have \begin{equation}\label{Ared} \Pi_{\leq}(S,x) = \Pi_{\leq}(S_{\text{red}},x).\end{equation}
Recall from \eqref{Adef} that for faithful $r \vert_{\widehat{T}}$ we defined $A$ by 
$$A= \max\Big\{\frac{\dim D(S) + 1}{|S|} : S \subseteq M, D(S)\neq \{1\}\Big\}.$$

\begin{lemma}\label{A*calc} For any $\varnothing \neq S \subseteq M$ such that $\Fr S = S,$ and $$\frac{\dim D(S)+1}{|S|} \geq A,$$ we have \begin{equation}\label{eq:A*calc_statement}\Pi_=(S,x) = \begin{cases} \left(a(S,x)+O_{T,r}(q_F^{-1/2})\right)q_F^{\dim D(S)}& \text{ if }  \dim D(S)\geq 1 \\ 
a(S,x)  -1 & \text{ if }  \dim D(S) = 0 \text{ and } D(S)\neq \{1\} \\
0 & \text{ if } D(S)=\{1\}.
 \end{cases}\end{equation} 
 If $x=1$ then instances of $a(S,1)$ in \eqref{eq:A*calc_statement} may be replaced by $a(S)$.\end{lemma}
\begin{proof}
Suppose first that $\dim D(S) = 0$ and $a(S,x) \neq 0$.  Then for any $T \subseteq S$ we also have $ \dim D(T) = 0 $ since $D(T) \subseteq  D(S) $. By \eqref{inclusionexclusion} and \eqref{Ared} we have $$\Pi_{=}(S,x) = \sum_{T \subseteq S} \mu(1 \in T)\cdots\mu(m \in T) \Pi_{\leq}((S-T)_\text{red},x).$$ Since $a(S,x) \neq 0$, we have by Lemma \ref{Acalc} and Proposition \ref{MurP} that $\chi_\xi(x) = 1$ for all $\xi \in P_\leq(S)$. Then for any $S'\subseteq S$ we also have $\chi_\xi(x) =1$ for all $\xi \in P_{\leq}(S')$. 
Using Proposition \ref{MurP} and Lemma \ref{Acalc} again, we have $$\Pi_{=}(S,x)= a(S,x) +  \sum_{\varnothing \neq T \subseteq S} \mu(1 \in T)\cdots\mu(m \in T) a((S-T)_\text{red},x).$$
For any $\varnothing \neq T \subsetneq S$ we have $$\frac{\dim D(T) + 1}{|T|} = \frac{1}{|T|} > \frac{1}{|S|}= \frac{\dim D(S) + 1}{|S|}\geq A,$$ thus $D(T) = \{1\}$ by definition of $A$.   For all $T \neq \varnothing$, we have $(S-T)_{\text{red}} \subsetneq S$.  Thus, if $(S-T)_{\text{red}} \neq \varnothing$, then we have $D((S-T)_\text{red})=\{1\}$. On the other hand, if $(S-T)_{\text{red}} = \varnothing$, then we also have $D((S-T)_\text{red})=\{1\}$ by the faithfulness of $r\vert_{\widehat{T}}$.  Therefore \begin{align*} 
\Pi_{=}(S,x) & =  a(S,x) +  \sum_{\varnothing \neq T \subseteq S} \mu(1 \in T)\cdots\mu(m \in T) \\
& =  a(S,x) -1 +  \sum_{ T \subseteq S} \mu(1 \in T)\cdots\mu(m \in T) \\
& =  a(S,x) -1,\end{align*} since we assumed $S \neq \varnothing$. Note that $a(S,1)=a(S)$ by Lemmas \ref{Acalc} and \ref{AcalcClassic} when $\dim D(S)=0$.

Now suppose that $\dim D(S) \geq 1$ and $a(S,x) \neq 0$.  As above, we have by \eqref{inclusionexclusion}, \eqref{Ared}, Proposition \ref{MurP} and Lemma \ref{Acalc} that \begin{align}
\Pi_=(S,x) & =  \sum_{T \subseteq S} \mu(1 \in T)\cdots\mu(m \in T) \Pi_\leq((S-T)_\text{red},x) \nonumber \\
& =  q_F^{\dim D(S)}(1+ O(q_F^{-1/2}))  \sum_{\substack{T \subseteq S \\ \dim D((S-T)_\text{red}) = \dim D(S)}}  \mu(1 \in T)\cdots \mu(m \in T) a((S-T)_\text{red},x) .\label{eq:A*calc1}\end{align} If $x=1$ we may use Lemma \ref{AcalcClassic} in lieu of Lemma \ref{Acalc} to obtain \eqref{eq:A*calc1} with $a((S-T)_\text{red})$ in lieu of $a((S-T)_\text{red},1)$.

Suppose that $T$ is such that $\dim D( (S-T)_\text{red}) = \dim D(S)$.  If $(S-T)_\text{red}= \varnothing$, then $$1 \leq \dim D(S) = \dim D( (S-T)_\text{red}) = 0,$$ which contradicts the faithfulness of $r \vert_{\widehat{T}}$.  Therefore we may assume that $(S-T)_\text{red}\neq \varnothing$.  If $T \neq \varnothing$ then $$ \frac{\dim D((S-T)_\text{red}) + 1}{|(S-T)_\text{red}|} = \frac{ \dim D(S) + 1}{|(S-T)_\text{red}|} > \frac{\dim D(S) +1 }{|S|} \geq A.$$
Therefore $D((S-T)_\text{red}) =\{1\}$, and this is a contradiction with $\dim D( (S-T)_\text{red}) = \dim D(S)$.  Thus, the only $T\subseteq S$ which satisfies $\dim D( (S-T)_\text{red}) = \dim D(S)$ is $T=\varnothing$, from which we conclude the statement in the lemma.

Now suppose $a(S,x)=0$, in particular $x\neq 1$. If $\dim D(S)\geq 1$ then by \eqref{inclusionexclusion}, Lemma \ref{Acalc} and the triangle inequality, the statement of the lemma holds. 

To finish the proof of lemma, it remains to consider the case that $\dim D(S) = 0$.  If $D(S) = \{1\}$, then we must have $a(S,x)\neq 0$, so  
suppose that $\dim D(S) = 0$ and $D(S) \neq \{1\}$.  Suppose $|S|\geq 2$ and $S$ is maximal such that $\dim D(S) = 0$ and $D(S) \neq \{1\}$. There exists $\mu \not \in S$, since $D(M)= \widehat{T}$.  We claim that $\dim D(S \cup \mu) \geq 1$.  Indeed, by maximality, either $\dim D(S \cup \mu) \geq 1$ or $D(S \cup \mu)=\{1\}$. But the second of these can't happen since $D(S) \neq \{1\}$ already, and $D(\cdot)$ is monotonic. So $\dim D(S \cup \mu) \geq 1$.  But then $$ \frac{\dim D(S \cup \mu) + 1}{|S \cup \mu|} \geq \frac{2}{|S|+1} > \frac{1}{|S|} \geq A,$$ since $|S|\geq 2$. This is a contradiction with the definition of $A$. Hence, $|S|=1$. Then we have \begin{align*} \Pi_=(S,x) & =  \sum_{T \subseteq S} \mu(1 \in T) \cdots \mu(m \in T)\Pi_\leq((S-T)_\text{red},x) \\ & =  \Pi_\leq(S,x) -\Pi_\leq(\varnothing,x) \\ & =  -1.\end{align*} Here, $\Pi_\leq(S,x)=0$ by the assumption that there exists $\xi \in P_\leq(S)$ with $\chi_\xi(x)\neq1$, and $\Pi_\leq(\varnothing,x) = 1$ since only the trivial character appears in the definition of $\Pi_\leq (\varnothing, x)$.
\end{proof}

 \subsection{Local conductor zeta function, ramified case}\label{ramified}
In this section, $T$ is a torus over a non-archimedean local field $F$ and splitting over a finite Galois extension $L$ with ramification index $e=e_{L/F}$ not necessarily equal to $1$. Let $G=\Gal(L/F)$ be the corresponding group. Let $\fp, \fP$ be the maximal ideals of $\cO_F,\cO_L$ and choose a uniformizer $\pi$ of $\cO_F$. Let $NT(\cO_L)$ be the image of the norm map $N: T(\cO_L) \to T(\cO_F)$. Recall the definition of $N_F(s,x)$ from \eqref{NFsxdef}. 

\begin{theorem}\label{locaramthm} Suppose that $r \vert_{\widehat{T}}$ is faithful. 
For any $x \in NT(\cO_L)$ and character $\theta$ of $T(F)$ that is trivial on $NT(\cO_L)$, the series $N_F(s,x)$  
converges absolutely and uniformly on compacta in the region $$ \real(s) > \max \{ \frac{\dim D(S)}{|S|}: D(S)\neq \{1\}\},$$ where $S$ and $D(S)$ are as in section \ref{sec:Lgroupsandreps} and formula \eqref{DS}.
\end{theorem}

\begin{proof}
By Corollary \ref{MTcond}, Corollary \ref{ACcor}, and Proposition \ref{MP} we have \begin{align}\label{R}   \left| N_F(s,x)\right| & \leq \sum_{\chi \in NT(\cO_L)^\wedge} \frac{1}{|\tilde{\fc}(\chi\theta,r)|^{\real(s)/e}} \nonumber \\
& = \sum_{\chi \in NT(\cO_L)^\wedge} \frac{1}{|\tilde{\fc}(\chi,r)|^{\real(s)/e}} \nonumber \\
& \leq  (\delta(\psi) dx/dx')^m \sum_{ \xi \in \Hom_G(\cO_L^\times,\widehat{T})} q_F^{-\real(s)\sum_\mu c(\mu \circ \xi)/e}. \end{align}
The conductor $c$ appearing in the last line of \eqref{R} is in fact $c = c_\mathcal{U}$ as in Definition \ref{filtcond}, where $\mathcal{U}$ is the standard filtration on $\cO_L^\times$ (see Definition \ref{modifiedArtin}). Next, we construct yet another filtration and compare it to $\mathcal{U}$. 

By the normal basis theorem, there exists an element $\alpha \in L$ such that $\{\alpha^g: g \in G\}$ is a basis for $L/F$.  The $\{\alpha^g\}$ all have the same valuation (e.g. \cite[Ch.2 Cor 3]{SerreLF}), so by clearing numerators or denominators, there exists $\beta \in \cO_L^\times$ such that $\{\beta^g :g \in G\}$ is a basis for $L$.  We define an injective map of $\cO_F[G]$-modules $$f: \cO_F[G] \hookrightarrow \cO_L$$ by $f(1)=\beta$.  Its image has finite index in $\cO_L$ since $\{\beta^g\}$ span $L$.  

Let $\nu \geq 1$ be sufficiently large so that the $\fP$-adic exponential function $$\exp: \fP^{e\nu} \to 1 + \fP^{e\nu}$$ is well-defined and an isomorphism.  Then let $g: \cO_F[G] \hookrightarrow \cO_L^\times$ be defined as the composition of the following sequence of injective $\cO_F[G]$-module homomorphisms $$\xymatrix{g: \cO_F[G] \ar@{^{(}->}[r]_-f & \cO_L \ar[r]^-\simeq_-{\times \pi^\nu} & \fP^{e\nu} \ar[r]^-\simeq_-\exp & 1+ \fP^{e\nu} \ar@{^{(}->}[r] & \cO_L^\times}.$$ The homomorphism $g$ has finite cokernel.   
Let $V^n = g(\fp^n \cO_F[G])$ and $\mathcal{V} = (V^n)$ be the corresponding filtration of $\cO_L^\times$. We have for all $n\geq 0$ that \begin{equation}\label{VU}V^n \subseteq \cO_L^{(e \nu + en)}.\end{equation} 
Indeed, if $x \in \fp^n\cO_F[G]$ then we write $x$ as $$x=\sum_{g \in G} a_g g$$ with $a_g \in \fp^n$ for all $g \in G$.  So we have $$f(x) = \sum_{g \in G} a_g \beta^g \in \fp^n = \fP^{en},$$ so that $f(\fp^n\cO_F[G]) \subseteq \fP^{en}$. 

Now let us consider the conductor $c_\mathcal{V}$ of a character $\chi$ of $\cO_L^\times$ with respect to the filtration $\mathcal{V}$ (see Definition \ref{filtcond}) and compare $c_\mathcal{U}$ and $c_\mathcal{V}$. 
If $\chi \vert_{\cO^{(n)}_L} =1$ then $c_\mathcal{U}(\chi) \leq n$, and if $\chi \vert_{\cO^{(n)}_L} \neq 1$ then $c_\mathcal{U}(\chi) \geq n+1$.  Similarly, if $\chi \vert_{V^n} =1$ then $c_\mathcal{V}(\chi) \leq n$, and if $\chi \vert_{V^n} \neq 1$ then $c_\mathcal{V}(\chi)\geq n+1$. Therefore by \eqref{VU} we have $$c_\mathcal{U}(\chi) \geq e c_\mathcal{V}(\chi) + e \nu - e +1.$$

Then we have $$\sum_{ \xi \in \Hom_G(\cO_L^\times,\widehat{T})} q_F^{-{\real(s)} \sum_\mu c_\mathcal{U}(\mu \circ \xi)/e} \leq q_F^{-{\real(s)} m (\nu - 1 +1/e)} \sum_{\xi \in \Hom_G(\cO_L^\times, \widehat{T})} q_F^{-{\real(s)}  \sum_\mu c_\mathcal{V}(\mu \circ \xi)}.$$ 
But now the summand only depends on the restriction of $\xi$ to $V^0$.  We have an exact sequence $$ \xymatrix{ 1 \ar[r] & \Hom_G(\cO_L^\times / V^0, \widehat{T})  \ar[r] & \Hom_G(\cO_L^\times, \widehat{T}) \ar[r] & \Hom_G(V^0,\widehat{T}) \ar[r] & \cdots}$$ 
The kernel is a finite group, since $\cO_L^\times / V^0$ is a finite group and $\widehat{T}$ only has finitely many points of order dividing the cardinality of this group.  
Thus we have $$ \sum_{\xi \in \Hom_G(\cO_L^\times, \widehat{T})} q_F^{-{\real(s)} \sum_\mu  c_\mathcal{V}(\mu \circ \xi)} \leq  \left|  \Hom_G(\cO_L^\times / V^0, \widehat{T})  \right| \sum_{\xi \in \Hom_G(V^0, \widehat{T})} q_F^{-{\real(s)}  \sum_\mu c_\mathcal{V}(\mu \circ \xi)}.$$
But also $$\Hom_G(V^0, \widehat{T}) \simeq \Hom_G(\cO_F[G],\widehat{T}) \simeq \Hom(\cO_F,\widehat{T}).$$ If $\xi \leftrightarrow \tau \in \Hom(\cO_F,\widehat{T})$ across this isomorphism, then $c_\mathcal{V}(\mu \circ \xi ) = c_\mathcal{W}(\mu \circ \tau),$ where the latter is the conductor with respect to the filtration $\mathcal{W} = (\fp^n)$ of the additive group $\cO_F$.
  
Therefore $$\frac{|N_F(s,x) |}{(\delta(\psi) dx/dx')^m} \leq q_F^{-{\real(s)} m (\nu - 1 +1/e)}  \left|  \Hom_G(\cO_L^\times / V^0, \widehat{T})  \right| \sum_{\tau \in \Hom(\cO_F, \widehat{T})} q_F^{-{\real(s)} \sum_\mu c_\mathcal{W}(\mu \circ \tau)}.$$
Then one computes in a similar fashion as section \ref{unramified}. Let (cf.\ \eqref{Pleqc}) $$\Pi_\leq (c) =| \{\tau \in \Hom(\cO_F, \widehat{T}) : c_\mathcal{W}(\mu \circ \tau) \leq c_\mu \text{ for all } \mu \in M\} |. $$ Recall the complex diagonalizable groups $D_k(c)$ from \eqref{Dkc}. If $r \vert_{\widehat{T}}$ is faithful and $c\in \N^M$ is $G$-fixed then \begin{align*} \Pi_\leq (c)  & = \prod_{k=0}^\infty \left| \Hom(\fp^{k-1}/\fp^k,D_k(c))\right| \\
& = \prod_{k=0}^\infty  q_F^{\dim D_k(c)}.
\end{align*} By the faithfulness of $r \vert_{\widehat{T}}$, the product is actually a finite product.

We have therefore that $$\frac{|N_F(s,x)|}{(\delta(\psi) dx/dx')^m} \leq q_F^{-{\real(s)} m (\nu - 1 +1/e)}  \left|  \Hom_G(\cO_L^\times / V^0, \widehat{T})  \right| (1-q_F^{-{\real(s)/e}})^m \sum_{c \in \N^M } \frac{\prod_{k=0}^\infty  q_F^{\dim D_k(c)}}{q_F^{{\real(s)}  |c|}}.$$ 
Therefore, $R_v(s,x)$ converges absolutely and uniformly on compacts for all $$ \real(s) >  \limsup_{c \in \N^M} |c|^{-1} \sum_{k\geq 0} \dim D_k(c).$$
Now, recall that for any positive real $a,a', b,b'$, with say $a/b< a'/b'$ we have $a/b \leq (a+a')/(b+b') \leq a'/b'$. For any $c \in \N^M$ we write
$$|c| = \sum_{\mu \in M} c_\mu = \sum_{k \geq 0} \#\{ \mu: c_\mu >k\}.$$
Then,
$$|c|^{-1} \sum_{k\geq 0 } \dim D_k (c) = \frac{\sum_{k\geq  0 } \dim D_k(c) }{ \sum_{k\geq 0} \#\{ \mu: c_\mu >k\}} \leq \max_{k\geq 0} \frac{ D_k(c)}{ \#\{ \mu: c_\mu >k\}} = \max_{k\geq 0} \frac{D(\{ \mu: c_\mu >k\})}{ \#\{ \mu: c_\mu >k\}}.$$
Thus, $$\limsup_{c \in \N^M} |c|^{-1} \sum_{k\geq 0} \dim D_k(c) \leq   \max \{ \frac{\dim D(S)}{|S|}: D(S)\neq \{1\}\}.$$ 
\end{proof}

\section{Local archimedean theory}\label{archimedean_top}
\subsection{Local Langlands correspondence, local conductors}\label{archimedeanC}
We assume in this section that $F, L$ are \emph{archimedean} local fields, and $T$ is an $F$-torus splitting over $L$. 
Let $\Gamma_\R(s) = \pi^{-s/2}\Gamma(s/2)$ and $\Gamma_\C(s) = 2(2\pi)^{-s}\Gamma(s)$. If $(\rho,V)$ is a complex Galois representation of the group $W_F$, then the $L$ factor of $(\rho,V)$ is of the form $$ L(s,V) = \prod_i \Gamma_{F_i}(s+ \kappa_{\rho,i}),$$ where each $F_i= \R,$ or $\C$,  $\dim V = \sum_i [F_i:F]$, and $\kappa_{\rho,i} \in \C$, see \cite[\S 3.7]{DeligneAntwerpII}. Recall the discussion of the local Langlands correspondence for tori from section \ref{sec:LLC}.   
\begin{definition}\label{archdef}
Suppose $F$ is an archimedean local field. If $\varphi$ is the Langlands parameter associated to $\chi \in \Hom(T(F), \C^\times)$ by \eqref{LLCforTori}, then the quantity 
$$\fc (\chi, r) = \prod_i (|\kappa_{r \circ \varphi,i}|+1)^{[F_i: \R]}(\delta(\psi) dx/dx')^{\dim r}$$ is called the \emph{archimedean local analytic conductor} of $\chi$ with respect to a finite-dimensional complex representation $r$ of $\LT$.
\end{definition}
Note that our definition differs slightly from the standard definition in that the $+1$ above is sometimes replaced by a $+2$ or $+3$.  The reason some authors prefer $+2$ or $+3$ is to ensure that as $\dim V$ varies, there are only a finite number of representations of bounded conductor. Since we will always consider $\dim r$ to be fixed in this paper, we prefer $+1$ as it makes some of the computations in section \ref{archimedean} more elegant. 

The definition of the $L$-factor $L(s,V)$ for archimedean places is given in \cite{tateNTB} sections 3.1.1, 3.1.2 and 3.3.1 in terms of the classification of the finite dimensional irreducible representations of $W_F$ given in loc.\ cit.\ section 2.2.2. Therefore, we must make explicit parameterizations of the possible Langlands parameters $\varphi: W_{L/F} \to \LT$, as well as the possible representations $r: \LT \to \GL(V)$, in order to be able to find their compositions among the classification \cite[\S2.2.2]{tateNTB}. 

We assume until further notice that $F = \R$ and briefly discuss the easier case that $F= \C$ at the end of this section. 

An $F$-torus splits over a quadratic extension $L\simeq \C$,  and so $G = \Gal(L/F)$ is a group of order two, whose elements we write $\{1,\tau\}$. Let us recall the explicit description of the Weil groups for archimedean local fields. We have $$W_F = L^\times  \sqcup L^\times j,$$ and $$W_L = L^\times,$$ 
where we write elements of $W_F$ as words in $z$ and $j$ and we have the rules $jzj^{-1}=\tau z$ and $j^2=-1$.  We also have $$\pi: W_{F/F} = W_F^\text{ab} \simeq F^\times$$ where the isomorphism $\pi$ is given by $$\pi(z) = |z|^2, \quad \text{ and } \quad \pi(j) = -1.$$  Also note that  $$W_{L/F} = W_F.$$ 

Now we choose isomorphisms $L\simeq \C$ and \begin{equation}\label{TRisom}T(F) \simeq \Tt= (\R^\times)^{n_1}\times (S^1)^{n_2} \times (\C^\times)^{n_3}\end{equation} so that $\dim T= n=n_1+n_2+2n_3.$   
By computing with the inflation-restriction exact sequence and using facts about the group cohomology of finite cyclic groups, we have an explicit parameterization of the $L$-equivalence classes of Langlands parameters $\varphi: W_{L/F} \to \LT_u$. They are given by \begin{multline}\label{LPparameterization}\varphi(z)  = \Bigg( |z|^{w_1}, \ldots, |z|^{w_{n_1}}, \left(\frac{|z|}{z}\right)^{\alpha_1},\ldots, \left(\frac{|z|}{z}\right)^{\alpha_{n_2}}, \\ (\left(\frac{|z|}{z}\right)^{\alpha'_1}|z|^{w'_1-\alpha'_1},\left(\frac{|z|}{z}\right)^{-\alpha'_1}|z|^{w'_1-\alpha'_1}), \ldots, (\left(\frac{|z|}{z}\right)^{\alpha'_{n_3}}|z|^{w'_{n_3}-\alpha'_{n_3}},\left(\frac{|z|}{z}\right)^{-\alpha'_{n_3}}|z|^{w'_{n_3}-\alpha'_{n_3}})\Bigg)\rtimes 1\end{multline} and $$\varphi(j) = \left( (-1)^{\epsilon_1}, \ldots,(-1)^{\epsilon_{n_1}},1 \ldots, 1,(1,(-1)^{\alpha_1'}),\ldots,(1,(-1)^{\alpha_{n_3}'})\right) \rtimes \tau.$$  Here, $w_i \in i\R$, $\epsilon_i \in \{0,1\}$, $\alpha_i \in\Z$, $\alpha'_i \in \Z$, and $w'_i \in \C$ such that $w'_i-\alpha'_i\in i\R $. We write \begin{equation}\label{Ttwedgedef}\Tt^\wedge = (i\R^{n_1} \times (\Z/2\Z)^{n_1}) \times \Z^{n_2} \times (i\R^{n_3} \times \Z^{n_3}),\end{equation} so that Langlands parameters may be parametrized by $((w,\epsilon),\alpha, (w',\alpha')) \in \Tt^\wedge$.

We also need an explicit description of the representation $r$. The representation $r$ decomposes into irreducible representations, and we can parameterize all irreducible representations of $\LT$ by the set of orbits $G \backslash X^*(\widehat{T})$ using Mackey theory (see \cite[\S8.2]{SerreLinearReps}). We now study this parameterization explicitly. Corresponding to \eqref{TRisom} we have an isomorphism of $G$-modules \begin{equation}\label{X*Tisom} X^*(\widehat{T}) \simeq X = X_*(\G_m)^{n_1} \times X_*(S^1)^{n_2} \times X_*(\res_{\C/\R} \G_m)^{n_3}\end{equation} $$ \mu_x \leftrightarrow x,$$ where $X_*(\G_m) = \Z$ with $G$ acting trivially, $X_*(S^1) = \Z$ with $\tau \in G$ acting by sending $-1$ to $1$, and $X_*(\res_{\C/\R} \G_m)= \Z^2$ with $\tau \in G$ acting by swapping the two factors.  Each $x \in X$ is contained in a $G$-orbit of size $1$ or $2$. We have the following three types of isomorphism classes of irreducible representations of $\LT$:
\begin{enumerate}
\item[1a)] If $x$ is fixed by $G$, i.e.~is in an orbit of size one, then $\mu_x$ is an irreducible representation of $\LT$.  
\item[1b)] If $x$ is fixed by $G$, i.e.~is in an orbit of size one, then $\mu_x \otimes (\text{sign})$ is an irreducible representation of $\LT$.
\item[2)] If $x$ is \emph{not} fixed by $G$, i.e.~is in an orbit of size two, then $V_x=\Ind_{\widehat{T}}^{\LT} \mu_x$ is an irreducible representation of dimension two of $\LT$.  It only depends on the orbit of $x$.  That is, $V_x \simeq V_{\tau x}$ 
 and this representation is not isomorphic to any other $V_{x'}$, $x'\neq x,\tau x$.
\end{enumerate}
Therefore we get a decomposition $$r \vert_{\LT} = \bigoplus_{i=1}^{m_1} \mu_{x_i} \oplus \bigoplus_{i=1}^{m_2} \left(\mu_{x'_i}\otimes (\text{sign})\right) \oplus \bigoplus_{i=1}^{m_3} V_{x''_i},$$ for some $x_i,x_i'$ which are fixed by $G$ and some $x''_i $ which are not fixed by $G$, and where $m_1+m_2+2m_3=m = \dim r$.

To work out the archimedean $L$-factor for each Langlands parameter $\varphi$ (as in \eqref{LPparameterization}) and each irreducible representation of $\LT$ (as in (1a),(1b),(2), above), we must compute these representations of $W_F$ explicitly enough to be able to recognize them in the classification of irreducible representations given in \cite[\S2.2.2]{tateNTB}.
\begin{enumerate} 
\item[1a)] Suppose $x$ is fixed by $G$.  Then we have \begin{equation}\label{xexplicit}x = (a_1,\ldots,a_{n_1},0,\ldots,0,(b_1,b_1),\ldots,(b_{n_3},b_{n_3})).\end{equation} The representation of $W_{L/F}$ associated to $\mu_x,\varphi$ is $$(\mu_x \circ \varphi)(z) = \prod_{i=1}^{n_1} |z|^{a_iw_i} \prod_{i=1}^{n_3} |z|^{2b_i(w_i'-\alpha'_i)}= (\pi(z))^{\frac{1}{2}\sum_{i=1}^{n_1}a_iw_i + \sum_{i=1}^{n_3} b_i(w_i'-a_i')} $$ $$ (\mu_x \circ \varphi)(j) = (-1)^{\sum_{i=1}^{n_1} a_i \epsilon_i + \sum_{i=1}^{n_3} \alpha_i'b_i} = (\pi (j))^{\sum_{i=1}^{n_1} a_i \epsilon_i+ \sum_{i=1}^{n_3} \alpha_i'b_i}.$$ As a character of $F^\times$ this is $$ \left(\frac{|y|}{y}\right)^{{\sum_{i=1}^{n_1} a_i \epsilon_i+ \sum_{i=1}^{n_3} \alpha_i'b_i }} |y|^{\frac{1}{2}\sum_{i=1}^{n_1}a_iw_i + \sum_{i=1}^{n_3} b_i(w_i'-\alpha_1')}. 
$$ Following \cite[\S3.1.1]{tateNTB}, the $L$-function of this character of $W_{L/F}$ is $$L(s,\mu_x\circ \varphi) = \Gamma_{\R} \left( s + \frac{1}{2}\sum_{i=1}^{n_1}a_iw_i + \sum_{i=1}^{n_3} b_i(w_i'-\alpha_i')+\left(\sum_{i=1}^{n_1} a_i \epsilon_i + \sum_{i=1}^{n_3} \alpha_i'b_i\mods 2\right)\right).$$ 
Here and below, by $(n \mods 2)$ we mean the integer $0$ or $1$ according to the value of $n$ modulo $2$. 
\item[1b)]  Suppose $x $ is fixed by $G$.  Then $x$ is as in \eqref{xexplicit}, and 
we have the characters of $W_{L/F}$ $$(\mu_x \otimes (\text{sign})\circ \varphi)(z) = \prod_{i=1}^{n_1} |z|^{a_iw_i} \prod_{i=1}^{n_3} |z|^{2b_i(w_i'-\alpha'_i)}= (\pi(z))^{\frac{1}{2}\sum_{i=1}^{n_1}a_iw_i + \sum_{i=1}^{n_3} b_i(w_i'-\alpha_1')} $$ $$ (\mu_x \otimes (\text{sign}) \circ \varphi)(j) = -(-1)^{\sum_{i=1}^{n_1} a_i \epsilon_i + \sum_{i=1}^{n_3} \alpha_i'b_i } = (\pi (j))^{1+\sum_{i=1}^{n_1} a_i \epsilon_i + \sum_{i=1}^{n_3} \alpha_i'b_i}.$$ As a character of $F^\times$ this is $$\left(\frac{|y|}{y}\right)^{{1+\sum_{i=1}^{n_1} a_i \epsilon_i + \sum_{i=1}^{n_3} \alpha_i'b_i }} |y|^{\frac{1}{2}\sum_{i=1}^{n_1}a_iw_i + \sum_{i=1}^{n_3} b_i(w_i'-\alpha_1')}. 
$$
Following \cite[\S 3.1.1]{tateNTB}, the $L$-function of this character of $W_{L/F}$ is \begin{multline*} L(s,\mu_x \otimes (\text{sign}) \circ \varphi) \\ = \Gamma_{\R} \left( s + \frac{1}{2}\sum_{i=1}^{n_1}a_iw_i + \sum_{i=1}^{n_3} b_i(w_i'-\alpha_i')+\left(1 + \sum_{i=1}^{n_1} a_i \epsilon_i + \sum_{i=1}^{n_3} \alpha_i'b_i \mods 2\right)\right).\end{multline*} 
\item[2)] Suppose $x$ is \emph{not} fixed by $G$.  Then we have $$x = \left(a_1,\ldots, a_{n_1},c_1,\ldots, c_n,(b_1,b_1'), \ldots,(b_{n_3},b_{n_3}')\right),$$ where at least one of the $c_i \neq 0$ or one of the $b_i \neq b_i'$.   
Then we have $$(V_x \circ \varphi)(z) = \mx{(\mu_x \circ \varphi)(z)}{0}{0}{(\mu_{\tau x} \circ \varphi)(z)} $$ $$ (V_x \circ \varphi)(j) = \mx{(\mu_x \circ \varphi)(j)}{0}{0}{(\mu_{\tau x} \circ \varphi)(j)}\mx{0}{1}{1}{0}.$$ In order to find this representation in the classification of \cite[\S2.2.2]{tateNTB}, we must recognize it as the induction of some character.  
We have 
$$(\mu_x \circ \varphi)(z)  
=   \prod_{i=1}^{n_1} |z|^{a_iw_i} \prod_{i=1}^{n_2} z^{-\alpha_ic_i}|z|^{\alpha_ic_i} \prod_{i=1}^{n_3} z^{-\alpha_i'(b_i-b_i')}|z|^{w_i'(b_i+b_i')- 2b_i'\alpha_i'} $$
and
$$(\mu_{\tau x} \circ \varphi)(z) =  \prod_{i=1}^{n_1} |z|^{a_iw_i} \prod_{i=1}^{n_2} z^{\alpha_ic_i}|z|^{-\alpha_ic_i} \prod_{i=1}^{n_3} z^{\alpha_i'(b_i-b_i')}|z|^{w_i'(b_i+b_i')- 2b_i\alpha_i'} $$ 
The power of $z$ in $(\mu_x \circ \varphi)(z)$ is $$- \sum_{i=1}^{n_2} \alpha_i c_i - \sum_{i=1}^{n_3} \alpha'_i(b_i - b_i')$$ and the power of $z$ in $(\mu_{\tau x} \circ \varphi)(z)$ is $$ \sum_{i=1}^{n_1}\alpha_i c_i + \sum_{i=1}^{n_3} \alpha'_i(b_i - b_i').$$  Exactly one of these two is negative.  Following Tate, the rule for recognizing which character this representation is induced from is: choose $(\mu_x \circ \varphi)(z)$ or $(\mu_{\tau x} \circ \varphi)(z)$ according to which has a negative power of $z$.  Then the representation is induced from that character.  

The power of $|z|$ in $(\mu_x \circ \varphi)(z)$ is $$\sum_{i=1}^{n_1} a_i w_i + \sum_{i=1}^{n_2} \alpha_i c_i + \sum_{i=1}^{n_3} w'_i(b_i+b_i') - 2 \sum_{i=2}^{n_3} b_i' \alpha'_i$$ and the power of $|z|$ in $(\mu_{\tau x} \circ \varphi)(z)$ is $$\sum_{i=1}^{n_1} a_i w_i - \sum_{i=1}^{n_2} \alpha_i c_i + \sum_{i=1}^{n_3} w'_i(b_i+b_i') - 2 \sum_{i=2}^{n_3} b_i \alpha'_i.$$ Note that $$ \sum \alpha_i c_i - 2\sum b_i'\alpha_i' = \left( \sum \alpha_i c_i + \sum \alpha'_i(b_i-b_i') \right)- \sum \alpha'_ib_i - \sum \alpha_i'b_i'$$ and $$ -\sum \alpha_i c_i - 2\sum b_i\alpha_i' = \left(- \sum \alpha_i c_i - \sum \alpha'_i(b_i-b_i') \right)- \sum \alpha'_ib_i - \sum \alpha_i'b_i'.$$ Therefore if the power of $z$ in $(\mu_x \circ \varphi)(z)$ is negative we have that the power of $|z|$ in it is $$\sum_{i=1}^{n_1} a_i w_i  + \sum_{i=1}^{n_3} w'_i(b_i+b_i') + \left| \sum_{i=1}^{n_2} \alpha_i c_i + \sum_{i=1}^{n_3} \alpha'_i(b_i-b_i')\right| - \sum_{i=1}^{n_3} \alpha_i'(b_i+b_i'),$$ and if the power of $z$ in $(\mu_{\tau x} \circ \varphi)(z)$ is negative we get that the power of $|z|$ in it is given by exactly the same formula.  So we find the representation $V_x \circ \varphi$ of $W_{L/F}$ is induced from the character of $W_L$ given by $z^{-a}|z|^b$ where $$a = \left|  \sum_{i=1}^{n_2}\alpha_i c_i + \sum_{i=1}^{n_3} \alpha'_i(b_i - b_i') \right| \text{ and } \sum_{i=1}^{n_1} a_i w_i  + \sum_{i=1}^{n_3} (w'_i-\alpha_i')(b_i+b_i') + \left| \sum_{i=1}^{n_2} \alpha_i c_i + \sum_{i=1}^{n_3} \alpha'_i(b_i-b_i')\right| .$$ 
Then by \cite[\S 3.3.1]{tateNTB} we can conclude that $$L(s,\varphi, V_x) = \Gamma_\C\left( s+ \sum_{i=1}^{n_1} a_i w_i  + \sum_{i=1}^{n_3} (w'_i-\alpha_i')(b_i+b_i') + \left| \sum_{i=1}^{n_2} \alpha_i c_i + \sum_{i=1}^{n_3} \alpha'_i(b_i-b_i')\right|\right).$$
\end{enumerate}

We now collect the above results in a more compact form. 
Given a representation $r$ we determine a $3 \times 3$ block matrix $M=M(r)$ as follows.  Take a decomposition \begin{equation}\label{rdecomp}r \vert_{\LT} = \bigoplus_{i=1}^{m_1} \mu_{x_i} \oplus \bigoplus_{i=1}^{m_2} \left(\mu_{x'_i}\otimes (\text{sign})\right) \oplus \bigoplus_{i=1}^{m_3} V_{x''_i},\end{equation} where each $x_i, x_i' \in X$ are fixed by the action of $G$ and each $x_i'' \in X$ is in a $G$-orbit of cardinality $2$. We may write explicitly \begin{equation}\label{xiexplicit}x_i = (a_{i1},a_{i2},\ldots,a_{in_1},0, \ldots, 0, (b_{i1},b_{i1}), \ldots,(b_{in_3},b_{in_3})),\end{equation} and similarly for $x_i'$. Likewise we may write \begin{equation}\label{xi''explicit} x_i'' = ((a_{i1},a_{i2},\ldots,a_{in_1},c_{i1},\ldots,c_{in_{2}},(b_{i1},b_{i1}'), \ldots,(b_{in_3}, b_{in_3}')), \end{equation} with at least one $c_{ij} \neq 0$ or $b_{ij} \neq b_{ij}'$. Now define the matrix \begin{equation}\label{Mdef}M=M(r)=\left(\begin{array}{ccc} A_1 & 0 & B_1 \\ A_2 & 0 & B_2 \\ A_3 & C & B_3 \end{array}\right)\end{equation} where: $A_1 \in M_{m_1 \times n_1}(\Z)$ is given by $A_1 = ((a_{ij}))$ with $a_{ij}$ as in \eqref{xiexplicit}, $B_1\in M_{m_1 \times n_3}(\Z)$ is given by $B_1 = ((b_{ij}))$ where $b_{ij}$ is also as in \eqref{xiexplicit}. Next, $A_2$ and $B_2$ are defined similarly to $A_1$ and $B_1$ but using the coordinates for $x'_i$ instead of those of $x_i$ as in \eqref{xiexplicit}. Finally, $A_3 \in M_{m_3 \times n_1}(\Z)$ is given by $(a_{ij})$ where $a_{ij}$ are taken from \eqref{xi''explicit}, $C \in M_{m_3 \times n_2}(\Z)$ is given by $C= (c_{ij})$ where $c_{ij}$ are taken from \eqref{xi''explicit}, and $B_3\in M_{m_3 \times n_3}(\Z \times \Z)$ is given by $B_3 = ((b_{ij},b_{ij}'))$, where $(b_{ij},b_{ij}')$ is also taken from \eqref{xi''explicit}. 

We can write the elements $\varphi$ as length $n_1+n_2+n_3$ block column vectors, i.e.~as $$\left( \begin{array}{c} (w,\epsilon) \\ \alpha \\ (w',\alpha') \end{array} \right) \in  (\C^{n_1} \times (\Z/2)^{n_1}) \times \Z^{n_2} \times (  \C^{n_3} \times \Z^{n_3}).$$  We define block-matrix multiplication as follows. Matrices of the form $A$ multiplied on an element $(w,\epsilon) \in (i\R)^{n_1} \times (\Z/2)^{n_1} $ are defined to be $$A_1(w,\epsilon) = \frac{1}{2}A_1w + (A_1\epsilon \mods 2),$$ $$A_2(w,\epsilon) = \frac{1}{2}A_2w + (A_2\epsilon  \mods 2),$$  and $$A_3(w,\epsilon) = A_3w,$$ where the products on the right hand sides are the usual matrix multiplication. 
A matrix of the form $C$ multiplied on an element $(\alpha ) \in \Z^{n_2}$ is defined to by the standard matrix multiplication $C\alpha$.  We define also for elements $(w',\alpha') \in  \C^{n_3} \times \Z^{n_3}$ the multiplication $B_1(w', \alpha') = B_1(w'- \alpha')$ where on the right hand side we have usual matrix multiplication.  We also define the multiplication of $B_2$ in exactly the same way.  So, in summary, \begin{equation}\label{Mvarphi1}(A_1 | 0 | B_1) \left( \begin{array}{c} (w,\epsilon) \\ \alpha \\ (w', \alpha') \end{array} \right)=\frac{1}{2} A_1w +B_1(w'- \alpha') + (A_1\epsilon +B_1 \alpha' \mods 2) \in (i \R \times \{0,1\})^{m_1} \end{equation} and \begin{equation}\label{Mvarphi2}(A_2 | 0 | B_2) \left( \begin{array}{c} (w,\epsilon) \\ \alpha \\ (w', \alpha') \end{array} \right)=\frac{1}{2} A_2w +B_2(w'- \alpha') + (A_2\epsilon +B_2 \alpha' \mods 2)\in (i \R \times \{0,1\})^{m_2} .\end{equation}

Finally, we define the multiplication of $(A_3 | C | B_3)$ on $((w,\epsilon),\alpha,(w',\alpha'))$ as follows.  Let $B_3^+\in M_{m_3 \times n_3}(\Z) $ be the matrix $((b_{ij}+b'_{ij}))$ formed from the entries of $B_3$ and $B_3^-\in M_{m_3 \times n_3}(\Z)$ the matrix with entries $((b_{ij}-b_{ij}'))$.  Then we define \begin{equation}\label{Mvarphi3}(A_3 | C | B_3)\left( \begin{array}{c} (w,\epsilon) \\ \alpha \\ (w', \alpha') \end{array} \right)= A_3w + B_3^+(w'-\alpha') +  C\alpha + B_3^-(\alpha')\in (i\R \times \Z)^{m_3}.\end{equation}  
The above multiplication rules for $M$ define a continuous group homomorphism $$M: \Tt^\wedge \to (i \R \times \{0,1\})^{m_1+m_2} \times (i\R \times \Z)^{m_3}$$
 \begin{equation}\label{Mvarphi} \left( \begin{array}{c} (w,\epsilon) \\ \alpha \\ (w', \alpha') \end{array} \right)\mapsto M\varphi =\left(\begin{array}{ccc} A_1 & 0 & B_1 \\ A_2 & 0 & B_2 \\ A_3 & C & B_3 \end{array}\right) \left( \begin{array}{c} (w,\epsilon) \\ \alpha \\ (w', \alpha') \end{array} \right).\end{equation}  

Let $|\cdot|_{\mods 2}: i\R \times \Z \to \C$ be defined by $$|(it, n)|_{\mods 2} = \begin{cases} it & \text{ if } n \text{ is even, } \\ it +1 & \text{ if } n \text{ is odd}.\end{cases}$$ Let $|\cdot|_{\rm re} : i\R \times \Z \to \C$ be defined by $ |(it , n) |_{\rm re} = it +|n|$. If $M=M(r)$ is as above, and $\varphi$ is a Langlands parameter in the explicit form \eqref{LPparameterization}, let $(M\varphi)_i$ be the $i$th entry of $M\varphi$ as in \eqref{Mvarphi}.  
\begin{proposition}Let $F$ be a real archimedean local field. 
With notation and definitions as above, the archimedean local Langlands $L$-function $L(s, \chi,r)$ associated to a representation $r$ and a unitary character $\chi \in \Hom(T(F) , S^1)$ is given in explicit terms by  
\begin{multline}\label{Larchexplicit} L(s,r \circ \varphi) = \prod_{i=1}^{m_1}\Gamma_\R(s+|(M\varphi)_i|_{\mods 2} ) \prod_{i=m_1+1}^{m_1+m_2} \Gamma_\R(s+|(M\varphi)_i +(0,1)|_{\mods 2})\\ \times \prod_{i=m_1+m_2+1}^{m_1+m_2+m_3} \Gamma_\C(s+|(M\varphi)_i|_{{\rm re}}),\end{multline} where $\varphi$ is the Langlands parameter corresponding to $\chi$ by the local Langlands correspondence for tori \eqref{LLCforTori}.\end{proposition}

Let $x \in T(F)$, which according to our chosen isomorphism \eqref{TRisom} we can express as $$x \mapsto (\ldots,x_j,\ldots,x_j',\ldots,x_j'', \ldots ) \in (\R^\times)^{n_1} \times (S^1)^{n_2} \times (\C^\times)^{n_3},$$ where $x_j \in \R^\times$ for $j=1,\ldots, n_1$, $x_j' \in S^1$ for $j=1,\ldots, n_2$, and $x_j'' \in \C^\times$ for $j=1, \ldots,  n_3$.  Then if $\chi \in T(F)^\wedge$ corresponds to the Langlands parameter $\varphi$ with parametrization \eqref{LPparameterization} across the Langlands correspondence \eqref{langlandsunitary}, we have that $\chi$ is given explicitly by \begin{multline}\label{chiarchexplicit} \chi(x) = (\sgn x_1)^{\epsilon_1} \cdots (\sgn x_{n_1})^{\epsilon_{n_1}} |x_1|^{w_1}\cdots |x_{n_1}|^{w_{n_1}} {x'_{1}}^{\alpha_{1}} \cdots {x'_{n_2}}^{\alpha_{n_2}} \\ \times  \left(\frac{|x''_{1}|}{x''_{1}}\right)^{\alpha_1'} |x_1''|^{w_1'-\alpha_1'}\cdots \left(\frac{|x''_{n_3}|}{x''_{n_3}}\right)^{\alpha_{n_3}'} |x''_{n_3}|^{w_{n_3}'-\alpha_{n_3}'}.\end{multline}

We briefly discuss the case that $F$ is a complex archimedean local field. We have $W_F = F^\times$ and $W_{F/F} = W_F^{\text{ab}} = W_F$. Let us choose isomorphisms $F \simeq \C$ and $T(F) \simeq (\C^\times)^n$. The Langlands parameters $\varphi : W_{F} \to \widehat{T}_u$ are given explicitly by
\begin{equation}\label{LPparameterization_complex}
\varphi(z) = \left( \left( \frac{|z|}{z}\right)^{\alpha_1}|z|^{w_1-\alpha_1}, \ldots, \left( \frac{|z|}{z}\right)^{\alpha_n}|z|^{w_n-\alpha_n} \right)
\end{equation}
for some $w_j \in \C$ and $\alpha_j \in \Z$ with $w_j-\alpha_j \in i \R$. We write $\Tt^\wedge = (i \R \times \Z)^n$, so that Langlands parameters are parametrized by $(w,\alpha) \in \Tt^\wedge$. The irreducible algebraic representations are merely the characters of $\widehat{T}$, i.e.\ $X^*(\widehat{T}) \simeq X=X_*(\G_m)^n= \Z^n$. If $r \vert_{\widehat{T}}$ decomposes as
\begin{equation}\label{rvertThat_complex}
r \vert_{\widehat{T}} = \bigoplus_{i=1}^m \mu_{x_i},
\end{equation}
for $x_i = (b_{i1}, \ldots, b_{in})\in X$, then define the $m \times n$ matrix $M = (b_{ij})$.  Then $M$ is a continuous group homomorphism $M: \Tt^\wedge \to (i \R \times \Z)^m$ given by 
\begin{equation}\label{Mvarphi_complex}
(w,\alpha) \mapsto M\varphi = M(w,\alpha) = \left(\sum_{j=1}^n b_{ij}(w_j-\alpha_j) + \sum_{j=1}^n b_{ij} \alpha_j\right)_{i=1,\ldots m}.
\end{equation}
\begin{proposition}Let $F$ be a complex archimedean local field. 
With notation and definitions as above, the archimedean local Langlands $L$-function $L(s, \chi,r)$ associated to a representation $r$ and a unitary character $\chi \in \Hom(T(F) , S^1)$ is given in explicit terms by  
\begin{equation}\label{Larchexplicit_complex} L(s,r \circ \varphi) = \prod_{i=1}^{m} \Gamma_\C(s+|(M\varphi)_i|_{{\rm re}}),\end{equation} where $\varphi$ is the Langlands parameter corresponding to $\chi$ by the local Langlands correspondence for tori \eqref{LLCforTori}.\end{proposition}

Let $x \in T(F)$ corresponding to $(x_1, \ldots, x_n)\in (\C^\times)^n$ by the chosen isomorphism $T(F) \simeq (\C^\times)^n$. If $\chi \in T(F)^\wedge$ corresponds to a Langlands parameter $\varphi$ with parameter $(w,\alpha)$ across the Langlands correspondence \eqref{langlandsunitary}, then $\chi$ is given by 
\begin{equation}\label{chiarchexplicit_complex}\chi(x) = \varphi(x),\end{equation} where the latter is expressible in explicit terms by \eqref{LPparameterization_complex}.

\subsection{Local conductor zeta function, archimedean case}\label{archimedean}
In this section $F$ denotes an archimedean local field and $T$ and $F$-torus. Choose a Haar measure $\nu$ on the Pontryagin dual $T(F)^\wedge$. Let
\begin{equation}\label{AFdef}
A_F(s,x) = \int_{T(F)^\wedge} \frac{\chi(x)}{\fc(\chi,r)^s}\,d\nu(\chi).
\end{equation}

 We assume that $F\simeq \R$ until further notice and briefly discuss the simpler case that $F \simeq \C$ at the end of the section.
Recall \eqref{TRisom} that we have chosen an isomorphism 
\begin{equation}\label{TRisom2} T(k_v) \simeq \Tt = (\R^\times)^{n_1} \times (S^1)^{n_2} \times (\C^\times)^{n_3}\end{equation}
$$ x \mapsto (\ldots, x_j,\ldots, x_j', \ldots, x_j'',\ldots),$$
with $x_j \in \R^{\times}, x_j' \in S^1$, and $x_j'' \in \C^\times$.
\begin{theorem}\label{archthm}Suppose that $r \vert_{\widehat{T}}$ is faithful. \begin{enumerate}\item\label{archthm1} The integral $A_{F}(s,x)$ converges absolutely and uniformly on compacta in the domain $$\real(s) >\sigma_0=\max\{\frac{\dim D(S)}{|S|} : \dim D(S) \geq 1\}.$$ 
\item\label{archthm2} For $s$ in the above region of absolute convergence, we have $$A_{F}(s,x) \ll_{T,r,s} \prod_{1 \leq j \leq n_1} \frac{1}{1+|\log |x_j|| }\prod_{1 \leq j \leq n_3} \frac{1}{1+2|\log |x_j''|| },$$ with at most polynomial growth in $s$ in vertical strips.  
\item\label{archthm3} For any real $ \sigma_0 <\sigma \leq 2$, we have that $A_{F}(\sigma, x) $ is non-negative real.\end{enumerate}
\end{theorem}
From part \eqref{archthm1} of Theorem \ref{archthm} we extract the following corollary.
\begin{corollary}
Let $F$ be an archimedean local field and $\nu$ a Haar measure on the Pontryagin dual $T(F)^\wedge$ of the archimedean torus $T(F)$. Suppose that $r \vert_{\widehat{T}}$ is faithful. For any $\eps>0$ we have
\begin{equation}
\nu(\{ \chi \in T(F)^\wedge : \fc(\chi,r) \leq X \}) \ll_{T, r, \nu, \eps} X^{\sigma_0 + \eps}.
\end{equation}
\end{corollary}
The proof of Theorem \ref{archthm} will occupy the remainder of section \ref{archimedean} of this paper. In section \ref{analysis} we reduce assertion \eqref{archthm1} to a problem in combinatorial geometry (see Proposition \ref{polytopeconj}). The main input in the proof of assertion \eqref{archthm1} is a Brascamp-Lieb inequality, see Theorems \ref{BL} and \ref{BCCT10}. In section \ref{matroidbackground}, we give some background information on matroids and polymatroids, and in section \ref{pfofpolytopeconj} we solve the combinatorial geometry problem. Assertion \eqref{archthm2} follows immediately. Finally, in section \ref{positivity} we prove assertion \eqref{archthm3} of Theorem \ref{archthm}.

Recall that in section \ref{archimedeanC} we worked out explicitly the local Langlands correspondence for tori over archimedean local fields. Specifically, in \eqref{LPparameterization} we explicitly parameterized (with respect to choices $K_w \simeq \C$ and \eqref{TRisom2}) equivalence classes of Langlands parameters $\varphi: W_{L/F} \to \LT_u$ by 
\begin{equation}\label{Ttwedgedef2}((w,\epsilon),\alpha, (w',\alpha')) \in (i \R^{n_1} \times (\Z/2\Z)^{n_1}) \times \Z^{n_2} \times ( i \R^{n_3} \times \Z^{n_3})= \Tt^\wedge.\end{equation}
Recall the definition \eqref{Mdef} of the matrix $M=M(r)$, which gives by \eqref{Mvarphi} a map $M:\Tt^\wedge \to (i\R\times \{0,1\})^{m_1+m_2} \times (i \R \times \Z)^{m_3}$. By Definition \ref{archdef}, \eqref{Larchexplicit}, and \eqref{chiarchexplicit} we have for some constant $a$ depending only on the choice of $\nu$ that \begin{equation}\label{avsxGeneral} A_{F}(s,x) = a\sum_{\substack{\alpha \in \Z^{n_2} \\   \alpha' \in \Z^{n_3}}} \sum_{\epsilon \in \{0,1\}^{n_1}} \iint_{\substack{w \in i\R^{n_1} \\  w' \in i\R^{n_3} }} \frac{\chi_{w,\epsilon,\alpha, w',\alpha'} (x)}{\prod_{i=1}^{m_1+m_2} (|(M\varphi)_i|+1)^s \prod_{i=m_1+m_2+1}^{m_1+m_2+m_3} (|(M\varphi)_i|+1)^{2s}}\,dw\,dw',\end{equation}
where $\chi_{w,\epsilon,\alpha, w',\alpha'}$ is the unitary character of $\Tt = (\R^\times)^{n_1} \times (S^1)^{n_2} \times (\C^\times)^{n_3}$ that was given explicitly in terms of $w,\epsilon,\alpha, w',\alpha'$ in \eqref{chiarchexplicit}.

\subsubsection{Convergence}\label{analysis}
We apply the triangle inequality to $A_{F}(s,x)$. 
For $i=1, \ldots, m_1+m_2$, we have $(M\varphi)_i \in i\R \times \{0,1\}$ by inspection of \eqref{Mvarphi1}, \eqref{Mvarphi2}.  For such $i$ we apply the inequality $$ \frac{1}{\sqrt{x^2+1}+1} \leq \frac{1}{|x|+1}.$$  
Then we make the change of variables $w_j \mapsto iw_j$ and $w_j'-\alpha_j' \mapsto i w_j',$ so that $\chi \in T(k_v)^\wedge$ unitary implies that $w_j,w_j' \in \R$. 

We introduce some notation to record the result (see \eqref{avsxbound1}) of the aforementioned manipulations of $A_{F}(s,x)$. Let $M_\text{re}$ denote the $(m_1+m_2+m_3) \times (n_1+n_3)$ matrix $$M_\text{re} = \left( \begin{array}{cc} A_1 & B_1 \\ A_2 & B_2 \\ A_3 & B_3^+ \end{array}\right),$$ were $A_1,A_2,A_3,B_1,B_2,B_3^+$ were defined in section \ref{archimedeanC}. Such a matrix acts on $\overline{w} = (w,w') \in \R^{n_1+n_3}$ by the usual multiplication of matrices. Let also $$M_\text{int} = \left( \begin{array}{cc} C & B_3^-\end{array}\right),$$ where $C$ and $B_3^-$ were also defined in section \ref{archimedeanC}. The integral $m_3 \times (n_2+ n_3)$ matrix $M_{\text{int}}$ acts on $\overline{\alpha} = (\alpha, \alpha') \in \Z^{n_2 + n_3}$ by the usual multiplication of matrices. 

The result of our inequalities and changes of variable is \begin{multline}\label{avsxbound1} A_{F}(s,x) \ll \\ \sum_{\overline{\alpha} \in \Z^{n_2+n_3}}  \iint_{\overline{w} \in \R^{n_1+n_3}}  \prod_{i=1}^{m_1+m_2}\frac{1}{\big(|(M_{\text{re}}\overline{w})_i|+1\big)^{\real(s)}}\prod_{i=m_1+m_2+1}^{m_1+m_2+ m_3} \frac{1}{\big(( (M_\text{re}\overline{w})_i^2+(M_{\text{int}}\overline{\alpha})_i^2)^{1/2}+1 \big)^{2\real(s)}}\,d\overline{w} .\end{multline}

Before proceeding with the estimation of \eqref{avsxbound1}, we first describe a result in combinatorial geometry.  Let $M\in M_{m\times n}(\R)$ be an $m \times n$ matrix with real entries, $m\geq n$. \begin{definition}\label{biasdef} For any $\alpha \geq \beta \geq 1$ we say that $M$ is $(\alpha;\beta)$\emph{-biased} if there exist $\alpha$ rows of $M$ such that any basis of $\R^n$ formed from rows of $M$ contains at least $\beta$ of the distinguished $\alpha$ rows.\end{definition}
For example, note that any full-rank $m \times n$ matrix is $(m;n)$-biased. 

We also have the following minor variation on $(\alpha; \beta)$-bias. 
\begin{definition}\label{biasdef_variation}
Let $M$ be an $m \times n$ matrix with real entries along with a partition of its rows into two subsets $R_1 \sqcup R_2$. For any $\alpha_1, \alpha_2\geq 0$ and $\beta$ satisfying $\alpha_1+\alpha_2\geq \beta \geq 1$, we say that $M$ is $(\alpha_1,\alpha_2;\beta)$\emph{-biased} if there exists $\alpha_1$ rows from $R_1$ and $\alpha_2$ rows from $R_2$ such that any basis of $\R^n$ formed from rows of $M$ contains at least $\beta$ of the distinguished $\alpha_1+\alpha_2$ rows. 
\end{definition}

Recall the convex polytope $H_M$ associated to a matrix $M$ from section \ref{tools}. Write $\| \cdot \|_\infty$ for the $L^\infty$-norm on $\R^m$, i.e.~for $x\in\R^m$ we set $$\| x \|_\infty = \max(|x_1|,\ldots,|x_m|).$$ The norm $\| \cdot \|_\infty$ is a convex and piecewise-linear function on $\R^m$. Let \begin{equation}\label{B0def}B_\infty= B_\infty(M) = \inf\{ \| x\|_\infty: x \in H_M\}.\end{equation} \begin{proposition}\label{polytopeconj}
Let $M$ be any full-rank $m\times n$ matrix with real entries.  We have that $$B_\infty= \max\{\frac{\beta}{\alpha}: M \text{ is } (\alpha;\beta)\text{-biased}\}.$$ \end{proposition}
 The proof of Proposition \ref{polytopeconj} will be given in section \ref{pfofpolytopeconj}. 
 
 We also need a version of Proposition \ref{polytopeconj} for $(\alpha_1,\alpha_2;\beta)$-bias, and now spell this out in detail.
 For $x \in \R^m$ and a partition of the coordinates of $\R^m$ into two subsets $R_1 \sqcup R_2$, let 
 $$\| x \|_{\infty,1/2} = \max(\{|x_i|: i \in R_1\}\cup \{\frac{1}{2}|x_j|: j \in R_2\}).$$
 The function $\| \cdot \|_{\infty,1/2}$ is a convex and piecewise-linear function on $\R^m$. Let \begin{equation}\label{B0halfdef}B_{\infty,1/2}= B_{\infty,1/2}(M) = \inf\{ \| x\|_{\infty,1/2}: x \in H_M\}.\end{equation} 
 \begin{proposition}\label{polytopeconj_variation}
Let $M$ be any full-rank $m\times n$ matrix with real entries equipped with a partition of its rows into two subsets $R_1 \sqcup R_2$.  We have that $$B_{\infty,1/2}= \max\{\frac{\beta}{\alpha_1+2\alpha_2}: M \text{ is } (\alpha_1,\alpha_2;\beta)\text{-biased}\}.$$ \end{proposition}

 We now give the proof of assertion \eqref{archthm1} of Theorem \ref{archthm}, assuming Propositions \ref{polytopeconj} and \ref{polytopeconj_variation}.
 We begin to estimate $A_{F}(s,x)$ by applying the Brascamp-Lieb inequality (see section \ref{tools}) to the integral over $\overline{w}$ in \eqref{avsxbound1}.  
 Let $m=m_1+m_2+m_3$, $n=n_1+n_3$ and $M=M_{\text{re}}$. Partition the rows of $M$ into the first $m_1+m_2$ and the last $m_3$ rows, as in Definition \ref{biasdef_variation} and Proposition \ref{polytopeconj_variation}. 
The polytope $H_{M_\text{re}}$ is compact and non-empty, so the infimum in \eqref{B0halfdef} is attained, say by $\overline{B} = (B_1, \ldots, B_{m_1+m_2+m_3}) \in H_{M_\text{re}}$.    
 
We apply the Brascamp-Lieb Inequality Theorem \ref{BL} to the interior integral of \eqref{avsxbound1} with $a_i$ equal to the rows of $M = M_{\text{re}}$, $\overline{p} = \overline{B}$, $$f_1(x) = \cdots = f_{m_1+m_2}(x) = \frac{1}{(|x|+1)^{\real(s)}},$$ and $$f_i(x) = \frac{1}{\big((x^2+ (M_\text{int}\overline{\alpha})_i^2)^{1/2}+1\big)^{2\real(s)}}, \text{ for } i=m_1+m_2+1,\ldots, m_1+m_2+m_3.$$ With these choices, Theorem \ref{BL} shows that 
\begin{multline}\label{avsxbound3} A_{F}(s,x) \ll_{r,T} \\ \prod_{i=1}^{m_1+m_2} \left\| (|x|+1)^{-\real(s)}\right\|_{L^{B_i^{-1}}(\R)} \sum_{\overline{\alpha} \in \Z^{n_2+n_3}} \prod_{i=m_1+m_2+1}^{m_1+m_2+m_3} \left\|\big((x^2+ (M_\text{int}\overline{\alpha})_i^2)^{1/2}+1\big)^{-2\real(s)}\right\|_{L^{B_i^{-1}}(\R)} .\end{multline} 

Next we use the discrete Brascamp-Lieb inequality Theorem \ref{BCCT10} to bound the sum over $\overline{\alpha}$ in \eqref{avsxbound3}.  Since $M_{\text{int}}$ is full-rank, the infimum in \eqref{B0def} is attained, say by $\overline{B}' = (B_{m_1+m_2+1}', \ldots, B_{m_1+m_2+m_3}') \in H_{M_\text{int}}$. 
Let $a_i$, $i=1,\ldots, m_3$ denote the rows of $M_{\text{int}}$. We have that $x\in H_{M_\text{int}}$ if and only if \begin{equation}\label{CLLcondition3} \sum_{i=1}^{m_3} x_i = n_2+n_3\end{equation} and \begin{equation}\label{CLLcondition4}\sum_{i \in S} x_i \leq \rank(\mathrm{span}_\Z(\{a_i: i \in S\}))\end{equation} for every subset $S \subseteq \{1,\ldots, m_3\}$, by tensoring with $\R$. Let $\varphi_i:\Z^{n_2+n_3}\to \Z$ be given by $x \mapsto \langle a_i,x\rangle$. The discussion on \cite[p.~649]{BCCTintegral} shows that \eqref{CLLcondition3} and \eqref{CLLcondition4} imply that \begin{equation} \rank(H ) \leq \sum_{i=1}^{m_3} x_i \rank(\varphi_i(H)) \quad \text{for every subgroup } H \text{ of } \Z^{n_2+n_3}. \end{equation} 

We apply Theorem \ref{BCCT10} with $G=\Z^{n_2+n_3}$, $G_i = \Z$, $\varphi_i$ as above, $p_i = {B_i'}^{-1}$, and $$f_{i}(x) = \left\| \frac{1}{(\sqrt{x^2+\alpha^2}+1)^{2\real(s)}}\right\|_{L^{B_{m_1+m_2+i}^{-1}}(\R)} \text{ for } i=1, \ldots, m_3,$$ to obtain from \eqref{avsxbound3} that 
\begin{equation}\label{avsxbound4} A_{F}(s,x) \ll_{r,T} \prod_{i=1}^{m_1+m_2} \left\| (|x|+1)^{-\real(s)}\right\|_{L^{B_i^{-1}}(\R)}  \prod_{i=m_1+m_2+1}^{m_1+m_2+m_3}\left\| \left\|(\sqrt{x^2+ \alpha^2}+1)^{-2\real(s)}\right\|_{L^{B_i^{-1}}(\R)}\right\|_{\ell^{{B_i'}^{-1}}(\Z)} .\end{equation} 
The right hand side converges as soon as \begin{multline}\label{domainofac1}\real(s)> \max( \max_{i=1,\ldots, m_1+m_2}( B_i) , \max_{i = m_1+m_2+1, \ldots, m_1+m_2+m_3} \frac{1}{2} \max(B_i,B_i')) \\
= \max(B_{\infty,1/2}(M_{\text{re}}), \frac{1}{2} B_\infty(M_{\text{int}})).
\end{multline}

Since $r\vert_{\widehat{T}}$ is faithful, the matrices $M_\text{re}$ and $M_{\text{int}}$ are full-rank and so by Propositions \ref{polytopeconj} and \ref{polytopeconj_variation}, we have that $A_{F}(s,x)$ converges absolutely when 
\begin{equation}\label{domainofac3}\real(s) > \max( \max\{\frac{\beta}{\alpha_1+2\alpha_2}: M_\text{re} \text{ is } (\alpha_1,\alpha_2; \beta)\text{-biased}\}, \frac{1}{2} \max\{\frac{\beta}{\alpha}: M_\text{int} \text{ is } (\alpha;\beta)\text{-biased}\}).\end{equation} 
Let $$M' = \left(\begin{array}{cccc} A_1 & 0 & B_1 & B_1 \\
A_2 & 0 & B_2 & B_2 \\
A_3 & C & B_3^+ + B_3^- &  B_3^+ - B_3^- \\
A_3 & -C & B_3^+ - B_3^- &  B_3^+ + B_3^- \end{array}\right).$$ 
We claim that \begin{equation}\label{domainofac4}\max( \max\{\frac{\beta}{\alpha_1+2\alpha_2}: M_\text{re} \text{ is } (\alpha, \beta)\text{-biased}\}, \frac{1}{2} \max\{\frac{\beta}{\alpha}: M_\text{int} \text{ is } (\alpha,\beta)\text{-biased}\}) \leq B_\infty(M').\end{equation} Indeed, suppose the first maximum on the left hand side is larger.  Then there are $\alpha = \alpha_1+\alpha_2$ distinguished rows of $M_\text{re}$, $\alpha_1$ of which are among the first $m_1+m_2$ rows, and $\alpha_2$ are among the last $m_3$ rows. Choose the corresponding $\alpha_1$ rows of $M'$ among the first $m_1+m_2$ rows, and the corresponding $2\alpha_2$ rows, i.e. $\alpha_2$ pairs of rows of $M'$ from among the last $2m_3$ rows. This set of $\alpha_1+2\alpha_2$ rows of $M'$ shows that $M'$ is $(\alpha_1+2\alpha_2,\beta)$-biased. So by another application of Proposition \ref{polytopeconj} $$ \max\{\frac{\beta}{\alpha_1+2\alpha_2}: M_\text{re} \text{ is } (\alpha, \beta)\text{-biased}\} \leq B_\infty(M').$$ 
Similarly, suppose the second maximum on the left hand side of \eqref{domainofac4} is larger. Then there are $\alpha$ distinguished rows of $M_\text{int}$, and we choose the corresponding $2\alpha$ rows, i.e. $\alpha$ pairs of rows from among the last $2m_3$ rows of $M'$. This distinguished set of $2 \alpha$ rows of $M'$ shows that $M'$ is $(2\alpha,\beta)$-biased. So $$ \frac{1}{2} \max\{\frac{\beta}{\alpha}: M_\text{int} \text{ is } (\alpha,\beta)\text{-biased}\} \leq B_\infty(M'),$$ which finishes the proof of \eqref{domainofac4}. 

Finally, there is a bijection between the rows of $M'$ and the co-weights of $r$ via the isomorphism \eqref{X*Tisom}. Under this isomorphism, sets of rows of $M'$ correspond bijectively to subsets $S \subseteq M$ of co-weights of $r$, and $|S|=\alpha$ and $\dim D(S)=\beta$. This concludes the proof of assertion \eqref{archthm1} of Theorem \ref{archthm}. 

Assertion \eqref{archthm2} of Theorem \ref{archthm} follows immediately. Indeed, returning to equation \eqref{avsxGeneral}, we integrate by parts once in each variable $w_i, w_i'$, and apply part \eqref{archthm1} of the theorem. We artificially insert the factor of $2$ on the complex places as $| \cdot |^2$ is the natural absolute value on them from the point of view of algebraic number theory. This factor of $2$ serves to make the computations for final estimate in section \ref{conclusion} more elegant.

Before moving on to the purely matroid-theoretic sections \ref{matroidbackground} and \ref{pfofpolytopeconj}, we remark that Theorem \ref{archthm}\eqref{archthm1}\eqref{archthm2} also holds in the case that $k_v \simeq \C$ is a complex place upon taking $n_1=n_2=0$ and $m_1=m_2=0$, and $n=n_3 = \dim T$ and $m=m_3=\dim r$. Indeed, by \eqref{Larchexplicit_complex} and \eqref{chiarchexplicit_complex} in place of \eqref{Larchexplicit} and \eqref{chiarchexplicit}, the generating series $A_{F}(s,x)$ is equal to the expression in \eqref{avsxGeneral} with $n_1=n_2=0$ and $m_1=m_2=0$, the matrix $M$ as defined between \eqref{rvertThat_complex} and \eqref{Mvarphi_complex}.

Continuing as above, the integral that we need to bound is given by \eqref{avsxbound1} with $M_{\rm re}= M_{\rm int}=M$, $n_1=n_2=0$, $m_1=m_2=0$.  
Following the same steps as above, we have absolute convergence when $s$ satisfies \eqref{domainofac1} with $m_1=m_2=0$, 
which simply equals $\frac{1}{2}B_\infty(M)$. Since $r\vert_{\widehat{T}}$ is faithful, the matrix $M$ is full-rank, so Proposition \ref{polytopeconj} implies that $A_{F}(s,x)$ converges absolutely whenever $$\real(s) >  \max\{\frac{\beta}{\alpha}: M \text{ is } (\alpha;\beta)\text{-biased}\}).$$ Since there is a bijection between the rows of $M$ and the co-weights of $r$ via the isomorphism $X^*(\widehat{T})\simeq X_*(\G_m)^n = \Z^n$ (see the penultimate paragraph of section \ref{archimedeanC}), we conclude the first assertion of Theorem \ref{archthm}, as before. Note that we did not need to use the ``variations'' in Definition \ref{biasdef_variation} or Proposition \ref{polytopeconj_variation}, nor the paragraph containing the definition of $M'$ in the case that $k_v\simeq \C$.

\subsubsection{Background on matroids and polymatroids}\label{matroidbackground}
The key observation in the proof of Proposition \ref{polytopeconj} is that the definition of $(\alpha,\beta)$-bias makes sense more generally for \emph{matroids}, and $H_M$ is exactly the matroid base polytope associated to the matroid $M$. To this end, we next recall some background on matroids and polymatroids. The following exposition was communicated to the author by R.~Zenklusen. 

\begin{definition}[Matroid]\label{def:matroid}
A \emph{matroid} is a pair $(N,\cI)$ where $N$ is a finite set and $\cI \subset 2^N$ is a family of ``independent'' subsets of $N$ satisfying the following axioms. 
\begin{enumerate}
\item\label{matax1} $\cI \neq \varnothing$
\item\label{matax2} If $I \in \cI$ and $J \subset I$ then $J \in \cI$.
\item\label{matax3} If $I,J \in \cI$ and $|J| > |I|$ then there exists $e \in J \smallsetminus I $ such that $I \cup \{e\} \in \cI$. 
\end{enumerate}
\end{definition}
\begin{example}[Linear Matroid]
If $N$ is a finite set of vectors spanning a vector space, and $\cI$ is the set of linearly independent subsets of $N$, then $(N,\cI)$ is a matroid. One calls such a matroid a \emph{linear matroid}.
\end{example}

If $(N,\cI)$ is a matroid, then the set of \emph{bases} $\cB \subset \cI$ is the set of maximal subsets of $\cI$, ordered by inclusion. If $(N,\cI)$ is a linear matroid, then $\cB$ consists of subsets of vectors which form a basis. 

\begin{definition}\label{def:rank}
The \emph{rank function} of a matroid is the function $r:2^N \to \Z_{\geq 0}$ given by $$ r(S) = \max\{ |I| : I \in \cI,\, I \subset S\}.$$\end{definition} If $(N,\cI)$ is a linear matroid, then $r(S)$ is the dimension of the space spanned by the vectors in $S$.  By the definitions of $\cB$ and $r$ we have $$\cB = \{I \in \cI : r(I) = r(N)\}.$$
\begin{lemma} The rank function of a matroid $(N,\cI)$ satisfies the following properties.
\begin{itemize}
\item $r: 2^N \to \Z$
\item $r$ is submodular: $$r(A) + r(B) \geq r(A \cup B) + r(A \cap B)$$
\item $r$ is monotone: $ r(A) \geq r(B)$ for all $B \subseteq A \subseteq N$
\item $r$ is non-negative: $r(A)\geq 0$ for all $A \subseteq N$
\item $r$ satisfies $r(A \cup \{e\}) \leq r(A) +1$ for all $A \subseteq N$ and $e \in N$.
\end{itemize}
If $r:2^N \to \Z$ is any function enjoying these five properties, then there exists a unique $\cI \subset 2^N$ such that $(N,\cI)$ is a matroid whose rank function is $r$.
\end{lemma} \begin{proof} See \cite[\S39.7]{Schrijver}.  \end{proof}

Let $(N,\cI)$ be a matroid and let $\mathbf{1}_I \in \{0,1\}^N$ be the indicator function of $I$.  If $S$ is a finite set of points in $\R^N$, then we write $\conv(S)$ for the convex hull formed from those points. 
\begin{definition}\label{def:polytopes} The \emph{matroid polytope} of $(N,\cI)$ is $$P_\cI = \conv( \{\mathbf{1}_I : I \in \cI\})\subset \R^N$$ and the \emph{matroid base polytope} is $$P_\cB = \conv( \{\mathbf{1}_B : B\in \cB\})\subset \R^N.$$\end{definition}

We can also express the matroid polytope and matroid base polytope in terms of the rank function as follows.
  Let $x \in \R_{\geq 0}^N$, $e \in N$ and $x_e$ be the $e$-th component of $x$.  For a subset $S \subseteq N$ we set $x(S) = \sum_{e \in S} x_e$.  In terms of $x(S)$, we have (see \cite[Cor.~40.2b]{Schrijver}) $$P_\cI = \{ x \in \R_{\geq 0}^N : x(S) \leq r(S)  \text{ for all } S \subseteq N\}.$$  Then $P_\cB$ is one face of the matroid polytope $P_\cI$ given by a supporting hyperplane, i.e. we have (see \cite[Cor.~40.2d]{Schrijver})
\begin{equation}\label{basispolytope}P_\cB = P_\cI \cap \{x \in \R^N : x(N) = r(N)\}.\end{equation}   

\begin{theorem}[Matroid Intersection]
Let $(N,\cI_1)$ and $(N,\cI_2)$ be two matroids on the same ground set.  Then we have $$\max \{ |I| : I \in \cI_1 \cap \cI_2 \} = \min_{A\subseteq N } \{ r_1(A) + r_2(N \smallsetminus A)\}.$$\end{theorem}  \begin{proof} See \cite[Thm.~41.1]{Schrijver}. \end{proof} 
In fact, we shall need the following polyhedral generalization of matroids.
\begin{definition}[Polymatroid]A \emph{polymatroid} on $N$ is a polytope $$P_f= \{x \in \R^N_{\geq 0} : x(S) \leq f(S)\, \text{ for all } S \subseteq N\}$$ where $f:2^N \to \R_{\geq 0}$ is a submodular and monotone function.\end{definition}  
\begin{theorem}[Polymatroid intersection]\label{polyinter}
Let $f_1,f_2: 2^N \to \R_{\geq 0}$ be two submodular and monotone functions.  We have $$\sup\{ x(N) : x \in P_{f_1} \cap P_{f_2}\} = \min_{A \subseteq N} \{f_1(A) + f_2(N \smallsetminus A)\}.$$
\end{theorem}
\begin{proof}  See \cite[Cor.~46.1b]{Schrijver}.  \end{proof}

The following definition generalizes Definition \ref{biasdef} from linear matroids to matroids. \begin{definition}[$(\alpha;\beta)$-bias for matroids] We say a matroid $(N,\cI)$ is $(\alpha;\beta)$-biased if there exists $S\subseteq N$ with $|S|=\alpha$ such that $$|B \cap S | \geq \beta$$ for all bases $B \subseteq N$.\end{definition}   

Similarly, if $N$ is equipped with a partition $N=R_1\sqcup R_2$, then we say that a matroid $(N,\cI)$ is $(\alpha_1,\alpha_2;\beta)$-biased if there exists $S\subseteq N$ with $|S\cap R_1|=\alpha_1$, $|S \cap R_2| = \alpha_2$ and such that $|B \cap S | \geq \beta$ for all bases $B \subseteq N$.

\begin{lemma} A subset $S \subseteq N$ satisfies $r(N) - r(N \smallsetminus S) \geq \beta$ if and only if for any basis $B \subseteq N$ we have $|B \cap S|\geq \beta$.\end{lemma}
\begin{proof} 
``Only if'': Let $B \subseteq N$ be any basis. By the sub-modularity of the rank function we have $$r(B \cap S) + r(N \smallsetminus S ) \geq r((B \cap S) \cup (N\smallsetminus S)).$$ Since $B \subseteq (B \cap S) \cup (N\smallsetminus S)$, we have $$r((B \cap S) \cup (N\smallsetminus S))= r(N).$$ However, $B \cap S$ is independent, so we have $$|B\cap S| = r(B \cap S) \geq r(N)-r(N \smallsetminus S) \geq \beta.$$

``If'': Suppose that $B\in \cB$ is such that $|B \cap S|$ is minimal as we range over all bases. Equivalently, $B$ is such that $|B \smallsetminus S|$ is maximal. We claim that $B \smallsetminus S$ is maximal by inclusion among independent sets which are disjoint from $S$. From this claim it follows by definition of the rank function that $$|B \smallsetminus S| = r(N\smallsetminus S),$$ and so $r(N)-r(N \smallsetminus S) = |B \cap S|\geq \beta$. 

If the claim were false, then there would exist $e \not \in S \cup B$ such that \begin{equation*}\label{matroidadde}(B \smallsetminus S ) \cup \{e\}= (B \cup \{e\}) \smallsetminus (B \cap S)\end{equation*} is an independent set, by matroid axiom \eqref{matax3}.  Since the set $(B \smallsetminus S ) \cup \{e\}$ is independent and $r(B \cup \{e\})=r(N)$, we can complete $(B \smallsetminus S ) \cup \{e\}$ to a basis $\widetilde{B} \subseteq B \cup \{e\}$. But then we have $$(B \cup \{e\}) \smallsetminus (B \cap S) = \widetilde{B} \smallsetminus S,$$ from which it follows that $$ |\widetilde{B} \smallsetminus S| = |(B \cup \{e\}) \smallsetminus (B \cap S)| = |B \smallsetminus S| + 1.$$ This contradicts the minimality of $|B \cap S|$ among all bases $B \in \cB$. Therefore the claim is true. 
\end{proof}
\begin{corollary}\label{abbiasdef2} A matroid $(N,\cI)$ is $(\alpha;\beta)$-biased if and only if there exists $S\subseteq N$ with $|S|=\alpha$ and $r(N)-r(N \smallsetminus S)\geq \beta$. A matroid $(N, \cI)$ with a partition $N = R_1 \sqcup R_2$ is $(\alpha_1,\alpha_2;\beta)$-biased if and only if there exists $S\subseteq N$ with $|S\cap R_1|=\alpha_1$, $|S\cap R_2|=\alpha_2$ and $r(N)-r(N \smallsetminus S)\geq \beta$. \end{corollary}

\subsubsection{Proof of Propositions \ref{polytopeconj} and \ref{polytopeconj_variation}}\label{pfofpolytopeconj}
We begin with the proof of Proposition \ref{polytopeconj}. 
Considering the level sets of the $L^\infty$ norm, the $B_\infty$ defined in \eqref{B0def} becomes $$B_\infty=\inf\{\lambda \geq 0: P_\cB \cap[0,\lambda]^N \neq \varnothing\}.$$ (Aside: compare this and \eqref{basispolytope} to the discussion of the Manin conjecture in section \ref{sec:manin}.) From the description \eqref{basispolytope} of the matroid basis polytope in terms of the rank function, we have $$B_\infty=\inf\{ \lambda\geq 0: \sup\{ x(N) : x \in P_\cI \cap [0,\lambda]^N\} =r(N)\}.$$  Now we re-interpret $[0,\lambda]^N$ as the polymatroid defined by the function $$f:2^N \to \R_{\geq 0}$$ $$ f(S) = \lambda |S|.$$ Then, by the polymatroid intersection theorem (Theorem \ref{polyinter}), we have $$\sup\{ x(N) : x \in P_\cI \cap [0,\lambda]^N\} = \min_{A \subseteq N} \{ r(A) + f(N \smallsetminus A)\} = \min_{A \subseteq N} \{ r(A) + \lambda| N \smallsetminus A|\}.$$  Since this last min is always $\leq r(N)$, to characterize $B_\infty$ it suffices to find the smallest $\lambda\geq 0$ such that for all $A\subseteq N$\begin{equation}\label{penultimate}r(A) + \lambda |N \smallsetminus A|\geq r(N),\end{equation}  that is to say
$$B_\infty = \inf \{\lambda \geq 0 : r(A) + \lambda |N \smallsetminus A|\geq r(N)\text{ for all } A\subseteq N\}.$$ It changes nothing to swap $A$ with $N \smallsetminus A$, so $$ B_\infty = \inf \{\lambda \geq 0 : r(N\smallsetminus A) + \lambda |A|\geq r(N)\text{ for all } A\subseteq N\}.$$ If $A=\varnothing$ then the inequality is satisfied for all $\lambda$, so suppose not.  We then have by Corollary \ref{abbiasdef2} $$ B_\infty =   \max_{A \subseteq N} \{\frac{r(N)- r(N \smallsetminus A)}{|A|}: A \neq \varnothing \} =  \max_{\alpha, \beta}\{ \frac{\beta}{\alpha}: (N,\cI) \text{ is }(\alpha;\beta)\text{-biased}\} .$$ 
This concludes the proof of Proposition \ref{polytopeconj}. 

The proof of Proposition \ref{polytopeconj_variation} is identical to that of Proposition \ref{polytopeconj} with the following substitutions. The polytope $[0,\lambda]^N$ should be replaced by the polytope
$$ \prod_{e\in N} [0, c_e\lambda], \quad \text{ where } \quad c_e = \begin{cases} 1 & \text{ if } e \in R_1 \\ 2 & \text{ if } e \in R_2.\end{cases}$$
The corresponding function $f: 2^N \to \R_{\geq 0}$ is given by $f(S) = \lambda \sum_{e\in S} c_e$. Instances of $|A|$ or $|N\smallsetminus A|$ should be replaced by $\sum_{e\in A} c_e$ or $\sum_{e\in N \smallsetminus A} c_e$, respectively. 

\subsubsection{Positivity}\label{positivity}
To prove assertion \eqref{archthm3} of Theorem \ref{archthm} we first establish one-variable versions of the result. \begin{lemma}\label{positivein1dim}
For all real $0 < \sigma \leq 2$ the Fourier transforms of the following functions are positive or $+\infty$: $$ f(x)= \frac{1}{(\sqrt{x^2+1}+1)^\sigma}, \quad g(x)= \frac{1}{(|x|+1)^\sigma} - \frac{1}{(\sqrt{x^2+1}+1)^\sigma}.$$ 
\end{lemma}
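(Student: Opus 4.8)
The plan is to prove the Fourier-positivity of $f$ and $g$ by exhibiting each as an explicit superposition (with nonnegative coefficients) of functions whose Fourier transforms are manifestly nonnegative, or as a limit of such. For $f(x) = (\sqrt{x^2+1}+1)^{-\sigma}$, I would first observe the algebraic identity $\sqrt{x^2+1}+1 = \frac{x^2}{\sqrt{x^2+1}-1}$, which lets me rewrite $f(x) = \left(\frac{\sqrt{x^2+1}-1}{x^2}\right)^\sigma = (x^2+1)^{-\sigma}\left(\sqrt{x^2+1}-1\right)^\sigma \cdot (\text{something})$; more usefully, I would try $f(x) = (\sqrt{x^2+1}-1)^\sigma \, |x|^{-2\sigma}$ is wrong dimensionally, so instead the cleaner route: substitute and use that for $0<\sigma\le 2$ the function $u \mapsto (\sqrt{u+1}+1)^{-\sigma}$ is completely monotone in $u = x^2$ (all derivatives alternate in sign). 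A function of $x^2$ that is completely monotone in $x^2$ is, by Bernstein's theorem, a nonnegative mixture $\int_0^\infty e^{-tx^2}\,d\mu(t)$ of Gaussians, each of which has a nonnegative (Gaussian) Fourier transform; hence $\widehat f \ge 0$. So the key step for $f$ is to verify complete monotonicity of $u \mapsto (\sqrt{u+1}+1)^{-\sigma}$ on $u \ge 0$ for $0 < \sigma \le 2$.

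To verify that complete monotonicity, I would use the composition rules for completely monotone and Bernstein functions: $u \mapsto \sqrt{u+1}$ is a Bernstein function (its derivative $\frac12(u+1)^{-1/2}$ is completely monotone), $u\mapsto \sqrt{u+1}+1$ is Bernstein, and $v \mapsto v^{-\sigma}$ is completely monotone for $\sigma > 0$. The composition of a completely monotone function with a (nonnegative) Bernstein function is completely monotone — this is a standard fact (see Schilling–Song–Vondraček, \emph{Bernstein Functions}). That immediately gives complete monotonicity of $u \mapsto (\sqrt{u+1}+1)^{-\sigma}$ for \emph{all} $\sigma > 0$, and in particular for $0 < \sigma \le 2$, so the restriction $\sigma \le 2$ is not even needed for the $f$-part; it will be needed for $g$.

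For $g(x) = (|x|+1)^{-\sigma} - (\sqrt{x^2+1}+1)^{-\sigma}$, the same trick does not apply directly because $(|x|+1)^{-\sigma}$ is a function of $|x|$, not of $x^2$. Here I would instead write, for $0<\sigma\le 2$ and with $\sigma = 2\rho$ where $0<\rho\le 1$ allowed only part of the way — better: note $(|x|+1)^{-\sigma} = ((|x|+1)^2)^{-\sigma/2} = (x^2 + 2|x| + 1)^{-\sigma/2}$ and $(\sqrt{x^2+1}+1)^{-\sigma} = (x^2 + 2\sqrt{x^2+1}+1 - 1 \cdot 0)^{-\sigma/2}$... this is getting delicate, so the honest approach is: both $(|x|+1)^{-\sigma}$ and $(\sqrt{x^2+1}+1)^{-\sigma}$ are positive-definite (I will have shown the latter; the former is classical — $(|x|+1)^{-\sigma}$ is the Fourier transform of a positive function for $0<\sigma\le$ some range, or can be obtained from the Bessel-type representation), and I would compute the difference of their Fourier transforms and show the pointwise inequality $\widehat{(|x|+1)^{-\sigma}}(\xi) \ge \widehat{(\sqrt{x^2+1}+1)^{-\sigma}}(\xi)$ directly, using a subordination/integral representation in which the integrand for the first dominates that for the second term-by-term. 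Concretely, I would use $\lambda^{-\sigma} = \frac{1}{\Gamma(\sigma)}\int_0^\infty t^{\sigma-1} e^{-t\lambda}\,dt$ applied with $\lambda = |x|+1$ and with $\lambda = \sqrt{x^2+1}+1$, reducing everything to the positivity of $\widehat{e^{-t|x|}} - \widehat{e^{-t\sqrt{x^2+1}}}$ (times $e^{-t}$, and integrated against $t^{\sigma-1}\,dt > 0$). Since $\sqrt{x^2+1} \ge |x|$, we have $e^{-t\sqrt{x^2+1}} \le e^{-t|x|}$ pointwise, but that controls the functions, not their transforms; the real content is that $e^{-t|x|}$ (Cauchy/Poisson kernel, Fourier transform $\frac{2t}{t^2+4\pi^2\xi^2}>0$) and $e^{-t\sqrt{x^2+1}}$ (a Bessel-$K$ kernel, also with explicitly nonnegative Fourier transform) have transforms whose \emph{difference} is nonnegative — this I expect follows from the subordination formula $e^{-t\sqrt{x^2+1}} = \int_0^\infty \frac{t}{2\sqrt{\pi}} s^{-3/2} e^{-t^2/4s} e^{-s(x^2+1)}\,ds$ combined with $e^{-t|x|} = \int_0^\infty \frac{t}{2\sqrt{\pi}} s^{-3/2} e^{-t^2/4s} e^{-sx^2}\,ds$, so that the difference's Fourier transform is $\int_0^\infty \frac{t}{2\sqrt\pi} s^{-3/2} e^{-t^2/4s}\big(1 - e^{-s}\big)\widehat{e^{-s x^2}}(\xi)\,ds \ge 0$ since $1 - e^{-s} \ge 0$ and the Gaussian transform is positive. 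This last computation is the main obstacle: making the interchange of integrals rigorous and confirming the subordination identities with the correct constants, after which assertion \eqref{archthm3} of Theorem \ref{archthm} will follow by taking products over coordinates (products of positive-definite functions are positive-definite) and summing the $\ell^1$/$L^1$ pieces, exactly as the factored bounds \eqref{avsxbound3}, \eqref{avsxbound4} are organized.
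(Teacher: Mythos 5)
Your proposal is correct and uses a genuinely different method from the paper. The paper evaluates $\widehat{f}$ and $\widehat{g}$ by explicit contour shifting around the branch cut of $\sqrt{x^2+1}$ at $\pm i$, reducing $\widehat{f}(\xi)$ to $2\int_1^\infty \sin(\sigma\arctan\sqrt{x^2-1})\, x^{-\sigma}e^{-2\pi\xi x}\,dx$ (nonnegative integrand since $\sigma\leq 2$ keeps $\sigma\arctan\sqrt{x^2-1}$ inside $[0,\pi)$), and then for $g$ differentiates the auxiliary $\widehat{g}_\beta$ in $\beta$, shifts contours again, changes variables, and integrates by parts against an explicit antiderivative $h_\sigma$. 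Your route is structural rather than computational: for $f$, the Bernstein-function composition rule (with $g(u)=\sqrt{u+1}+1$ Bernstein and $v\mapsto v^{-\sigma}$ completely monotone) gives complete monotonicity of $u\mapsto(\sqrt{u+1}+1)^{-\sigma}$, so $f$ is a nonnegative mixture of Gaussians by Bernstein's theorem and hence Fourier-positive; for $g$, the Gamma-integral representation $\lambda^{-\sigma}=\Gamma(\sigma)^{-1}\int_0^\infty t^{\sigma-1}e^{-t\lambda}\,dt$ reduces to the positivity of $\widehat{e^{-t|x|}}-\widehat{e^{-t\sqrt{x^2+1}}}$, and the $1/2$-stable subordination identity $e^{-t\sqrt{\lambda}}=\int_0^\infty\frac{t}{2\sqrt{\pi}}s^{-3/2}e^{-t^2/4s}e^{-s\lambda}\,ds$ applied with $\lambda=x^2$ and $\lambda=x^2+1$ makes that difference a nonnegative superposition of Gaussians weighted by $1-e^{-s}\geq 0$. (All interchanges of integration order are justified by Tonelli since the kernels involved are nonnegative, with finiteness at $\xi\neq 0$ and possible blow-up only at $\xi=0$ when $f\notin L^1$, exactly as the "positive or $+\infty$" phrasing anticipates.) One advantage of your argument is that it needs no hypothesis on $\sigma$ at all: both halves of the lemma in fact hold for every $\sigma>0$, whereas the paper's explicit $\sin(\sigma\arctan)$ integrand only has a transparent sign when $\sigma\leq 2$; a small quibble is that your closing remark that the restriction $\sigma\leq 2$ "will be needed for $g$" is not actually borne out by your own argument, which is uniform in $\sigma>0$. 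The one thing you would want to tighten in a final write-up is the precise sense in which $\widehat{f}$ is "positive or $+\infty$" when $\sigma\leq 1$ and $f\notin L^1$: passing to truncated Gaussian mixtures and using weak-* limits of nonnegative measures (or simply invoking monotone convergence against $e^{-2\pi i\xi x}$ by first isolating the even/cosine part) cleans this up.
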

\begin{proof}

When $\xi>0$ we have by contour shifting   \begin{align*} \widehat{f}(\xi) = \int_{-\infty}^\infty \frac{e(-\xi x)}{(\sqrt{x^2+1}+1)^\sigma}\,dx & =  i \int_1^\infty e^{-2 \pi \xi x}\left( \frac{1}{(i \sqrt{x^2-1} +1)^\sigma} -  \frac{1}{(-i \sqrt{x^2-1} +1)^\sigma}\right)\,dx \\
& =   i \int_1^\infty \frac{e^{-2 \pi \xi x}}{x^{2\sigma}}\left((-i \sqrt{x^2-1} +1)^\sigma -  (i \sqrt{x^2-1} +1)^\sigma \right)\,dx \\
& = 2 \int_1^\infty \frac{\sin (\sigma \arctan \sqrt{x^2-1})}{x^\sigma} e^{-2 \pi \xi x}\,dx>0.\end{align*}
The value $\widehat{f}(0)$ is clearly positive if it converges, and if $\xi<0$ we follow the same steps as above, shifting the contour up instead of down.

Now we show $\widehat{g}(\xi)$ is positive or $+\infty$. 
For a real parameter $0 \leq \beta \leq 1$ define \begin{equation}\label{ghatbetaxi}\widehat{g}_\beta(\xi) = \int_{-\infty}^\infty \frac{e(-\xi x)}{(\sqrt{x^2+\beta^2}+1)^\sigma}\,dx,\end{equation} so that $\widehat{g}(\xi) = \widehat{g}_0(\xi) - \widehat{g}_1(\xi).$  
We have \begin{equation}\label{ghateq}\widehat{g}(\xi) =  \int_1^0 \frac{d}{d\beta} \widehat{g}_\beta(\xi)\,d\beta  =  \sigma \int_0^1 \beta \int_{-\infty}^\infty \frac{e(-\xi x) }{\sqrt{x^2+\beta^2} (\sqrt{x^2+\beta^2}+1)^{\sigma+1}}\,dx\,d\beta. \end{equation}
Suppose that $\xi>0$. The interior integral has a branch cut from $-i\beta$ to $-i \infty$. To evaluate the integral, we shift the contour around this branch. We have \begin{align*} & \int_{-\infty}^\infty \frac{e(-\xi x) }{\sqrt{x^2+\beta^2} (\sqrt{x^2+\beta^2}+1)^{\sigma+1}}\,dx\\
= & \int_\beta^\infty e^{-2\pi \xi x} \left( \frac{1}{\sqrt{x^2-\beta^2} (i \sqrt{x^2-\beta^2}+1)^{\sigma+1}} + \frac{1}{ \sqrt{x^2-\beta^2} (-i \sqrt{x^2-\beta^2}+1)^{\sigma+1}} \right)\,dx \\
=& 2 \int_\beta^\infty \frac{ e^{-2\pi \xi x}}{\sqrt{x^2-\beta^2} (x^2-\beta^2+1)^{\frac{\sigma+1}{2}}} \cos ( (\sigma+1) \arctan \left(  \sqrt{x^2-\beta^2}\right)) \, dx \\
= & 2 \int_0^{\pi/2} \frac{e^{-2\pi \xi \sqrt{\tan^2 \theta + \beta^2}}}{\sqrt{\tan^2 \theta + \beta^2}}(\cos \theta)^{\sigma-1} \cos ((\sigma+1) \theta) \,d\theta.\end{align*}
Now we return to \eqref{ghateq}, change order of integration, and change variables $\sqrt{\tan^2 \theta +\beta^2} \to y$ to find
\begin{align*}
\widehat{g}(\xi) 
& = 2 \sigma  \int_0^{\pi/2} \int_{\tan \theta}^{\sec \theta} e^{-2\pi \xi y}\,dy  (\cos \theta)^{\sigma-1} \cos ((\sigma+1) \theta) \,d\theta .
\end{align*}
Let $h_\sigma(\theta)$ be the anti-derivative of $(\cos \theta)^{\sigma-1} \cos((\sigma+1)\theta)$. By integrating by parts we have 
$$\widehat{g}(\xi) = - 2\sigma \int_0^{\pi/2} \frac{1}{\cos^2 \theta} ( (\sin \theta)e^{-2\pi \xi \sec \theta} - e^{-2\pi \xi \tan \theta}) h_\sigma(\theta)\,d\theta.$$
Observe that $(\sin \theta)e^{-2\pi \xi \sec \theta} - e^{-2\pi \xi \tan \theta} \leq 0$, while $h_\sigma(\theta)\geq 0$ for all $0<\sigma \leq 2$ and $0 \leq \theta\leq \pi/2$. Thus $\widehat{g}(\xi)\geq 0$ for $\xi >0$. 
The case $\xi = 0$ is obvious and $\xi<0$ follows by a similar calculation. 
\end{proof}
It follows from Lemma \ref{positivein1dim} that the Fourier transforms of $$\frac{1}{(|x|+1)^\sigma}, \quad \text{ and } \quad \frac{1}{(|x|+1)^\sigma} + \frac{1}{(\sqrt{x^2+1}+1)^\sigma} $$ are also everywhere positive or $+\infty$. 

\begin{lemma}\label{positivein1dim2}For all real $0 < \sigma \leq 2$, $a \in \Z_{\geq 1}$ and $\xi \in \R/\Z$, the Fourier series $$\sum_{\beta \in \Z} \frac{e(\beta \xi)}{(1+a|\beta|)^\sigma}$$ is positive or $+\infty$.\end{lemma}
\begin{proof} Recall the Dirichlet and Fej\'er kernels $$D_u(x) = \sum_{|n| \leq u} e(nx) \quad \text{ and } \quad F_n(x) = \frac{1}{n} \sum_{k=0}^{n-1}D_k(x).$$ By summing by parts twice we have $$\sum_{\beta \in \Z} \frac{e(\beta \xi)}{(1+a|\beta|)^\sigma} = \sum_{n=0}^\infty \left( \frac{1}{(a(n+2)+1)^\sigma} - 2 \frac{1}{(a(n+1)+1)^\sigma} + \frac{1}{(an+1)^\sigma} \right) (n+1) F_{n+1}(\xi).$$ The factor inside the parentheses above is positive by the mean value theorem. The Fej\'er kernel is also positive, and therefore the series is positive wherever it converges. \end{proof}

\begin{lemma}\label{positivein1dim3}
 For all real $0 < \sigma \leq 2$, $a \in \Z_{\geq 1}$ and $\xi \in \C^\times$, the function $$ \sum_{\beta \in \Z} \left( \frac {|\xi|}{\xi}\right)^\beta \int_{\R} \frac{e(-|\xi|x)}{(\sqrt{x^2+ (a\beta)^2} + 1)^{\sigma}}\,dx$$  is positive or $+\infty$.\end{lemma}
 \begin{proof} 
By summation by parts twice and the positivity of the F\'ejer kernel, it suffices to show that the second forward difference in $\beta$ of $$f(\beta,\xi) = \int_{\R} \frac{e(-|\xi|x)}{(\sqrt{x^2+ (a\beta)^2} + 1)^{\sigma}}\,dx$$ is positive. Recall the definition of $\widehat{g}_\beta(\xi)$ from \eqref{ghatbetaxi}, in terms of which we have $$f(\beta+2,\xi) - 2 f(\beta+1,\xi)+f(\beta,\xi) = \int_{\beta}^{\beta+1} \frac{d}{d\gamma} \int_\gamma^{\gamma+1} \frac{d}{d\alpha} \widehat{g}_{a\alpha}(|\xi|)\,d\alpha\,d \gamma.$$ As in the proof of Lemma \ref{positivein1dim}, we have \begin{align*} & f(\beta+2,\xi) - 2 f(\beta+1,\xi)+f(\beta,\xi) \\
& = -\sigma a \int_{\beta}^{\beta+1} \frac{d}{d\gamma} \int_{\gamma}^{\gamma+1}  a \alpha \int_{-\infty}^\infty \frac{e(-|\xi| x)}{\sqrt{x^2 + ( a \alpha)^2} (\sqrt{x^2+( a \alpha)^2}+1)^{\sigma+1}}\,dx \,d\alpha\,d\gamma \\ 
& = -2 \sigma  \int_{\beta}^{\beta+1} \frac{d}{d\gamma}\int_0^{\pi/2} \int_{\sqrt{\tan^2\theta + ( a \gamma)^2}}^{\sqrt{\tan^2 \theta + ( a \gamma + 1)^2}} e^{-2\pi |\xi| y}\,dy  ( \cos \theta)^{\sigma-1} \cos((\sigma+1)\theta)\,d\theta\,d \gamma \\
& = -2 \sigma  a  \int_0^{\pi/2} \int_{\beta}^{\beta+1} \left( \frac{( a \gamma+1) e^{-2\pi |\xi | \sqrt{\tan^2 \theta + ( a \gamma + 1)^2}}}{\sqrt{\tan^2 \theta + ( a \gamma + 1)^2}} - \frac{ a \gamma e^{-2\pi |\xi| \sqrt{\tan^2 \theta + ( a \gamma)^2}}}{\sqrt{\tan^2 \theta + ( a \gamma)^2}} \right)\,d\gamma \\
& \hspace{6cm} \times ( \cos \theta)^{\sigma-1} \cos((\sigma+1)\theta)\,d\theta .\end{align*}
Recall $h_\sigma(\theta)\geq 0$ is defined to be the anti-derivative of $( \cos \theta)^{\sigma-1} \cos((\sigma+1)\theta)$. Let also $$H(\beta,\theta, \xi) =  \int_{\beta}^{\beta+1} \frac{d}{d\theta}\left( \frac{(a\gamma+1) e^{-2\pi |\xi| \sqrt{\tan^2 \theta + (a\gamma + 1)^2}}}{\sqrt{\tan^2 \theta + (a\gamma + 1)^2}} - \frac{a\gamma e^{-2\pi |\xi| \sqrt{\tan^2 \theta + (a\gamma)^2}}}{\sqrt{\tan^2 \theta + (a\gamma)^2}} \right)\,d\gamma .$$
Then, by integrating by parts we have $$ f(\beta+2,\xi) - 2 f(\beta+1,\xi)+f(\beta,\xi) = 2 \sigma  a  \int_0^{\pi/2} H(\beta, \theta, \xi)h_\sigma(\theta)\,d\theta.$$
To prove the lemma, it suffices to show that $H(\beta,\theta, \xi)$ is non-negative for all $\beta \in \N,$ $a \in \Z_{\geq 1}$, $ \xi \in \C^\times $, and $0 \leq \theta \leq \pi/2$. Set $$N(\gamma, \theta, \xi) = \gamma e^{-2\pi |\xi| \sqrt{\tan^2 \theta + \gamma^2}} \tan \theta \sec^2 \theta \left( \frac{2 \pi |\xi | \sqrt{\tan^2 \theta + \gamma^2}  +  1}{ \sqrt{\tan^2 \theta + \gamma^2}^3}\right)$$  
so that
\begin{align}H(\beta,\theta,\xi) & = \int_{\beta}^{\beta+1} (N( a \gamma, \theta,\xi)-N( a \gamma+1,\theta,\xi))\,d\gamma \nonumber \\
& =  a^{-1}  \tan \theta \sec^2 \theta \left( \int_{\sqrt{\tan^2 \theta + ( a \beta)^2}}^{\sqrt{\tan^2 \theta + ( a \beta+1)^2}} - \int_{\sqrt{\tan^2 \theta + ( a (\beta+1))^2}}^{\sqrt{\tan^2 \theta + ( a (\beta+1)+1)^2}} \right) e^{-2 \pi |\xi|y} \left( \frac{2 \pi |\xi| y +1}{y^2}\right) \,dy. \label{3rd_ft_lem_diff_of_ints} \end{align} 
Note that $$\frac{d}{dy} e^{-2 \pi |\xi|y} \left( \frac{2 \pi |\xi| y +1}{y^2}\right) = -\frac{e^{-2\pi |\xi| y}}{y},$$ 
so that the difference of integrals in \eqref{3rd_ft_lem_diff_of_ints} equals by the mean value theorem 
\begin{equation}\label{3rd_ft_lem_lasteq} \left. e^{-2 \pi |\xi| \sqrt{\tan^2 \theta + u^2}} \left( \frac{2 \pi |\xi| \sqrt{\tan^2 \theta + u^2} + 1}{\tan^2 \theta + u^2} \frac{y_1}{\sqrt{ \tan^2 \theta + u^2}}\right)\right|_{y_1}^{y_2}\end{equation}
for some $y_2 \in [a \beta,a\beta+1]$ and $y_1 \in [a(\beta+1), a(\beta+1)+1]$.
Since the function of $u$ in \eqref{3rd_ft_lem_lasteq} is decreasing for all $u>0$ we have that $H(\beta,\theta,\xi)\geq 0$.
\end{proof}

We assume that $k_v$ is real until further notice. Consider the Fourier dual pair $$  \Tt = (\R^\times)^{n_1} \times (S^1)^{n_2} \times (\C^\times)^{n_3} \leftrightarrow  (i\R^{n_1} \times (\Z/2\Z)^{n_1}) \times \Z^{n_2} \times (i\R^{n_3} \times \Z^{n_3}) = \Tt^\wedge $$ and the spaces of tempered distributions $\cS'(\Tt)$ and $\cS'(\Tt^\wedge)$. 
Recall $M$ and $M\varphi$ from section \ref{archimedeanC}. Let $0<\sigma \leq 2$ and $f_i \in \cS'(\Tt^\wedge)$ be given for $i=1, \ldots, m_1+m_2$ by the function $$f_i(w, \epsilon, \alpha, w',\alpha') = \frac{1}{(|(M \varphi)_i|+1)^\sigma},$$ and for $i=m_1+m_2+1,\ldots, m_1+m_2+m_3$ by the function $$f_i(w, \epsilon, \alpha, w',\alpha')=  \frac{1}{(|(M \varphi)_i|+1)^{2\sigma}},$$
where $(M\varphi)_i$ denotes the $i$th entry of $M\varphi$ (see \eqref{Mvarphi}).

Let $\cF_G(f)$ denote the Fourier transform of a bounded measure $f$ on a locally-compact abelian group $G$ following \cite[Ch.\ II \S1 2.\ D\'ef.\ 3]{BourbakiTheoriesSpectrales}.
We have that the Fourier transform $\cF_{\Tt^\wedge}(f_i)$ of each of the functions $f_i \in \cS'(\Tt^\wedge)$ is also a tempered distribution, i.e.\ $\cF_{\Tt^\wedge}(f_i) \in \cS'(\Tt).$  Recall that a tempered distribution $L \in \cS'(\Tt)$ is called positive if for all non-negative-valued $\psi \in \cS(\Tt)$ one has $L(\psi)\geq 0$. We shall next show that $\cF_{\Tt^\wedge}(f_i) \in \cS'(\Tt)$ are positive distributions. 

Let $K\subseteq \Tt^\wedge$ be the kernel of the homomorphism $$(M\varphi)_i: \Tt^\wedge \to \begin{cases} i\R \times \{0,1\} & \text{ if } i=1, \ldots, m_1+m_2 \\ i\R \times \Z & \text{ if } i = m_1+m_2+1 , \ldots, m_1+m_2+m_3.\end{cases}$$    
Then, for any non-negative-valued $\psi \in \cS(\Tt)$ we have $$\cF_{\Tt^\wedge}(f_i)(\psi) := \int_{\Tt^\wedge} f_i(\varphi) \cF_{\Tt}(\psi)(\varphi)\,d\varphi = \int_{\Tt^\wedge / K} f_i(\overline{\varphi})\int_K \cF_{\Tt}(\psi)(\overline{\varphi}k)\,dk \,d\overline{\varphi}.$$ 
Let $\Tt_K:= \{ x \in \Tt: k(x)=1 \text{ for all } k \in K\}$ so that by Poisson summation (see the remark following Lemma \ref{Psum}) we have for almost every $\overline{\varphi} \in \Tt^\wedge/K$ that $$ \int_K \cF_{\Tt}(\psi)(\overline{\varphi}k)\,dk = \int_{\Tt_K} \psi(t) \overline{\overline{\varphi}(t)}\,dt = \cF_{\Tt_K}( \psi)(\overline{\varphi}).$$

Now let us consider the group $\Tt^\wedge/ K$. It is canonically isomorphic to the dual of $\Tt_K$ by Pontryagin duality; also the map $(M\varphi)_i$ identifies $\Tt^\wedge/ K$ with a subgroup of $i\R \times \{0,1\}$ or $i\R \times \Z$. The Fourier transform $\cF_{\Tt^\wedge/K}(f_i)$ may then be considered as a tempered distribution on $\Tt_K$. Since $|f_i(\overline{\varphi})| \leq 1$ for all $\overline{\varphi} \in \Tt^\wedge/K$, we have $|\cF_{\Tt^\wedge/K}(f_i)(g)| \leq \|g\|_{L^\infty(\Tt_K)}$ for all $g \in \cS(\Tt_K)$. By the Hahn-Banach theorem $\cF_{\Tt^\wedge/K}(f_i)$ extends to a linear functional on the continuous functions vanishing at infinity $C_0(\Tt_K)$ satisfying $|\cF_{\Tt^\wedge/K}(f_i)(g)| \leq \|g\|_{L^\infty(\Tt_K)}$ for all $g \in C_0(\Tt_K)$. By the Riesz-Markov-Kakutani theorem (see e.g.\ \cite[6.19 Thm.]{RudinRealandComplex}), the tempered distribution $\cF_{\Tt^\wedge/K}(f_i)$ is given by integration against a regular Borel measure on $\Tt_K$, which we continue to write $\cF_{\Tt^\wedge/K}(f_i)$.  Lemmas \ref{positivein1dim}, \ref{positivein1dim2}, and \ref{positivein1dim3} now show that the measure $\cF_{\Tt^\wedge/K}(f_i)$ is non-negative and has total mass $1$. By the Plancherel theorem (see \cite[Ch.\ II \S1 5.\ Prop.\ 13]{BourbakiTheoriesSpectrales}) we have that $$\cF_{\Tt^\wedge}(f_i)(\psi) = \int_{\Tt^\wedge/ K} f_i(\overline{\varphi}) \cF_{\Tt_K}( \psi)(\overline{\varphi}) \, d\overline{\varphi} = \int_{\Tt_K} \cF_{\Tt^\wedge/K}(f_i)(t) \psi(t)\,dt.$$  Therefore, the tempered distribution $\cF(f_i)$ itself is a positive distribution.

By applying the Hahn-Banach and Riesz-Markov-Kakutani theorems again to $\cF_{\Tt^\wedge}(f_i)$, we obtain a regular non-negative Borel measure $\mu_i$ on $\Tt$ for each $i=1, \ldots m_1+m_2+m_3$ such that $\cF_{\Tt^\wedge}(f_i)(g) = \int_{\Tt} g(x)\,d\mu_i(x)$ for all $g\in \cS(\Tt)$. 
 By assertion \eqref{archthm1} of Theorem \ref{archthm} and the formula \eqref{avsxGeneral} for $A_{F}(s,x)$, the $(m_1+m_2+m_3)$-fold convolution of the measures $\mu_i$ is given by a function and in particular we have $$A_{F}(\sigma, x) = a\left(\bigast_{i=1}^{m_1+m_2+m_3}\mu_i\right)(x)$$ for all $x \in \Tt$, where $a$ is the positive real factor appearing in \eqref{avsxGeneral}. Since the convolution of non-negative measures is non-negative, we have that $A_{F}(\sigma,x)$ takes non-negative values for $\sigma_0<\sigma\leq 2$ and all $x \in T(k_v)$, as was to be shown.

The case that $k_v\simeq \C$ follows in an identical way taking $n_1=n_2=m_1=m_2=0$, $n_3=\dim T$, and $m_3=\dim r$.

\section{Global theory}\label{sec:localtoglobal} 
\subsection{Global analytic conductor}\label{sec:globalconductor}
In this section, we return to the notation of the introduction. That is, we are given a number field $k$ and a $k$-torus $T$ with splitting field $K$ and Galois group $G=\Gal(K/k)$. We choose a finite-dimensional complex algebraic representation $r$ of the $L$-group $\LT$ (see section \ref{sec:Lgroupsandreps} for definitions).
Let $v$ be a place of $k$ and denote by $k_v$ the completion of $k$ at $v$. 
Let $T_v = T \times_k \Spec k_v$ be the base-change of $T$ to $k_v$. For each $v$ there exists a valuation $w$ of $K$ extending $v$ such that $K_w$ is the splitting field of $T_v$. We have that $X^*(T_v)=X^*(T)$ as abelian groups, but where the Galois action on $X^*(T_v)$ is given by restricting the action of $G$ to the embedded copy of $\Gal (K_w/k_v)$. Thus, we obtain an embedding of $L$-groups  $\LT_v \hookrightarrow \LT$ for each place $v$ of $k$. The representation $r$ of $\LT$ then determines representations of each $\LT_v$ by restriction.

An automorphic character $\chi \in \cA(T)$ admits a factorization $$\chi= \bigotimes_v \chi_v$$ in which all but finitely many of the $\chi_v$ are trivial on the maximal compact subgroup of $T(k_v)$.
\begin{definition}\label{GlobalAC}
The analytic conductor $\fc(\chi, r)$ is an invariant of $\cA(T)$ defined by \begin{equation}\label{analc1}\fc(\chi,r) = \prod_v \fc_v(\chi_v,r\vert_{\LT_v}),\end{equation} where $\fc_v(\chi_v,r\vert_{\LT_v})$ are local analytic conductors (see Definitions \ref{artinc1} and \ref{archdef}), all but finitely many of which are equal to $1$.  \end{definition}

\subsection{Algebraic number theory for tori}\label{subsec:localtoglobal}
We next review the standard theorems of algebraic number theory in the context of tori. Let $\cO_w$ and $\cO_v$ be the rings of integers in $K_w$ and $k_v$. Let $S_\infty$ be the set of archimedean places of $k$. Recall the notation
$$T_{\A} :=  \prod_{v \in S_\infty} T(k_v) \times \prod_{v \not \in S_{\infty}} T(\cO_{v})$$ and the global units $U(T)$ of the torus $T$ defined by $$U(T):= T(k) \cap T_{\A}.$$ According to Dirichlet's units theorem \cite{ShyrDsThm}, the abelian group $U(T)$ is finitely generated. We also have a short exact sequence of locally compact Hausdorff abelian groups 
\begin{equation}\label{classnodef} \xymatrix{1 \ar[r] & T(k) T_{\A} \ar[r] & T(\A) \ar[r] & \cl(T) \ar[r] & 1,}\end{equation}
where the cokernel is by definition the (finite) class group of $T$ \cite[Thm.~3.1]{OnoTori}. 
 
Instead of $U(T)$ and $\cl(T)$, we need the following minor variants of these objects. Let $NT(\cO_w)$ be the image of the norm map $N:T(\cO_w) \to T(\cO_v)$.  Let \begin{equation}\label{TNAdef}T_{N,\A} := \prod_{v \in S_\infty} T(k_v) \times \prod_{v \not \in S_{\infty}} NT(\cO_{w}).\end{equation} We call the set  \begin{equation}\label{UNTdef}U_N(T) := T(k) \cap T_{N,\A} \end{equation} the global norm-units of the torus $T$.  Recall that for all unramified non-archimedean places of $k$ we have $NT(\cO_w) = T(\cO_{v})$ by Lemma \ref{Amano}, and for all ramified non-archimediean places we have that $NT(\cO_w)$ is a finite index subgroup of $T(\cO_v)$ by Lemma \ref{keylemma}.  Thus, global norm-units $U_N(T)$ is a finite index subgroup of the global units of the torus $U(T)$ and thus a finitely-generated abelian group itself. We also have the short exact sequence \begin{equation}\label{beginninges} \xymatrix{1 \ar[r] & T(k) T_{N,\A} \ar[r] & T(\A) \ar[r] & \cl_N(T) \ar[r] & 1}\end{equation} with finite cokernel $\cl_N(T)$. We call $\cl_N(T)$ the norm-class group of $T$.

\subsection{Global conductor zeta function}\label{counting}
Recall the (twisted) local generating series $N_F$ and $A_F$ from \eqref{NFsxdef} and \eqref{AFdef}. Let 
\begin{equation}\label{Ysdef}
Y(s):= \frac{1}{\Vol(U_N(T)^\wedge)}\sum_{\theta \in \cl_N(T)^\wedge} \sum_{x \in U_N(T)} \prod_{v \in S_\infty}A_{k_v}(s,x) \prod_{v \not \in S_\infty} N_{k_v}(s,x). 
\end{equation}
In section \ref{locglobandconclusion} we will prove that $Y(s)$ equals the global generating series $Z(s)$ of Theorem \ref{Zsthm} up to a constant depending only on the choices of Haar measures on $\cA(T)$ and $T(k_v)^\wedge$ for $v$ archimedean.  

Recall $A, \widetilde{G}, \widetilde{\Sigma}_0, m=\dim r$ from in section \ref{statementofresults} and $\mathcal{R}=\mathcal{R}(c)$ from \eqref{calRdef}. The goal of this section is the following theorem (cf.\ Theorem \ref{Zsthm}).

\begin{theorem}\label{Ysthm}
Suppose that $r\vert_{\widehat{T}}$ is faithful. The series $Y(s)$ converges absolutely for $\real(s)>A$ and extends to a meromorphic function in the open half plane $\real(s)>A-\min(2^{-1}, m^{-2})$. There exists $c=c(T,r)>0$ such that the function $Y(s)$ \begin{itemize} \item has a pole at $s=A$ of order $|\widetilde{G}\backslash \widetilde{\Sigma}_0|$ and no other poles in $\mathcal{R}(c)$ (respectively, the half-plane $\real(s)>A-\min(2^{-1}, m^{-2})$ if the  Artin conjecture holds), \item grows slowly in $\mathcal{R}$; i.e.\ there exists $J=J(T,r)>0$ and $0<c'=c'(T,r)\leq c$ such that for any $s=\sigma+it \in \mathcal{R}(c')$ avoiding any small neighborhood $U$ of $A$ we have $$Y(\sigma + it) \ll_{T,r,U} (\log (|t|+3))^J,$$ 
and \item  has moderate growth in a vertical strip if the  Artin conjecture holds, i.e.\ there exists $K=K(T,r)>0$ such that for any $s=\sigma+it$ with $\sigma > A-\min(2^{-1},m^{-2})$ avoiding any small neighborhood $U$ of $A$ we have $Z(\sigma+it) \ll_{T,r,\sigma, U} (1+|t|)^K.$ 
\end{itemize}
\end{theorem}
Convention: In Theorem \ref{Ysthm} and throughout the paper, whenever we say ``the  Artin conjecture holds'', we mean that the finitely many (depending on $T,r$) Artin $L$-functions associated to the complex Galois representations in the hypothesis of Theorem \ref{unrthmwithArtin} are entire (up to the possibility of a pole at $s=1$ in the case that the representation is trivial).

Following Tate \cite[\S3.5]{tateNTB} we take $\psi$ to be a non-trivial additive character of $\A/k$ and $dx$ the Haar measure on $\A$ such that $\int_{\A/k} \,dx = 1$.  Let $\psi_v$ be the local component of $\psi$ at a place $v$, $dx = \prod_v dx_v$ be any factorization of $dx$ into a product of local measures such that the ring of integers $\cO_v$ at all but finitely many $v$ gets measure 1, and $\delta(\psi_v)$ be the function defined in \cite[\S3.4.5]{tateNTB}. 

Recall the positive integer $\lambda$ from \eqref{lambda}.  We call the set of finite places $v$ of $k$ for which either \begin{itemize}
\item $v$ is ramified in $K/k$, or 
\item $(\lambda, q_{k_v})\neq 1$
\end{itemize}
 the set of ``bad'' places, and denote that set by $B$. 
Let $$U(s,x) = \prod_{v \not \in B \cup S_\infty}  \left( \delta(\psi_v) dx_v/dx_v'\right)^{sm}N_{k_v}(s,x), \quad R(s,x) =  \sum_{\theta \in \cl_N(T)^\wedge} \prod_{v \in B}\left( \delta(\psi_v) dx_v/dx_v'\right)^{sm} N_{k_v}(s,x),$$ and $$  A(s,x) = \prod_{v \in S_\infty} \left( \delta(\psi_v) dx_v/dx_v'\right)^{sm}A_{k_v}(s,x).$$  
Recalling \eqref{ur1} that $N_{k_v}(s,x)$ does not depend on $\theta_v$ when $v$ is unramified, we have by definition that \begin{equation}\label{Zexpression2} Y(s) = \frac{1}{\Vol(U_N(T)^\wedge)}\prod_{v }  \left( \delta(\psi_v) dx_v/dx_v'\right)^{-sm}  \sum_{x \in U_N(T)} A(s,x)U(s,x)R(s,x).\end{equation}
  By \cite[\S3.5]{tateNTB} the product $$ \prod_{v } \left( \delta(\psi_v) dx_v/dx_v'\right)$$ does not depend on the factorization of the global $dx$, nor on the choice of global additive character $\psi$, but only on $k$. 
To determine the analytic properties of $Y(s)$, it suffices therefore to determine the analytic properties of $A(s,x)$, $U(s,x)$, and $R(s,x)$, and to show that sum over $x \in U_N(T)$ in \eqref{Zexpression2} converges absolutely.

\subsection{Unramified places}\label{unramifiedcounting}
Set \begin{equation*}
U_{\rm max}(s):= \sup_{x \in U_N(T)}|U(s,x)|.
\end{equation*}
\begin{theorem}\label{unrthm}  Suppose that $r\vert_{\widehat{T}}$ is faithful. 
\begin{enumerate}
\item\label{unrthm1} The series $U(s,x)$ converges absolutely and uniformly on compacta in the right half-plane $\real(s)>A$. It admits a meromorphic continuation to $\{s:\real(s)>A-\min(2^{-1}, m^{-2})\}$ and $U_{\rm max}(s)$ is finite whenever $s$ is not equal to a pole of some $U(s,x)$ for any $x \in U_N(T)$.
\item\label{unrthm1prime} There exists an effective constant $0<c \leq \min(2^{-1}, m^{-2})$ depending on $r,T$ but independent of $s,x$ such that the only pole of $U(s,x)$ in the region $\mathcal{R}(c)$ is at $s=A$. 
\item\label{unrthm4prime} There exist effective constants $J>0$ and $0<c'\leq c$ depending on $r,T$ but independent of $s$ such that for all $s \in \mathcal{R}(c')$ avoiding a small neighborhood $N$ surrounding $A$ we have $$U_{\rm max}(s) \ll_{T,r,N}  (\log (|\Imag(s)|+3))^{J}.$$
\item\label{unrthm2} The series $U(s,1)$ has a pole at $s=A$ of order $ | \widetilde{G} \backslash \widetilde{\Sigma}_0 |$ with positive leading constant in its Laurent series expansion.
\item\label{unrthm3} The possible pole of $U(s,x)$ at $s=A$ is of order $\leq | \widetilde{G} \backslash \widetilde{\Sigma}_0 |$, has positive leading coefficient, and Laurent series expansion bounded by that of $U(s,1)$ at $s=A$. 
\end{enumerate}
\end{theorem}
\begin{theorem}\label{unrthmwithArtin}
Suppose that $r \vert_{\widehat{T}}$ is faithful and that the Artin $L$-functions of the irreducible representations $V_i$ of $\Gamma =\Gal(K'/k)$ that arise in \eqref{Videf} are entire up to a possible pole at $s=1$, where $K'$ is given in Definition \ref{K'def}. 
\begin{enumerate}
\item\label{unrthm1primeArtin} The only pole of $U(s,x)$ when $\real(s)>A-\min(2^{-1},m^{-2})$ is at $s=A$. 
\item\label{unrthm4} There exists $K=K(T,r)>0$ such that for all $s$ with $\real(s)>A-\min(2^{-1},m^{-2})$ and avoiding a small neighborhood $N$ surrounding $A$ we have $$U_{\rm max}(s) \ll_{T,r,N,\real(s)} (1+|\Imag(s)|)^{K}.$$ 
\end{enumerate}
\end{theorem}
We devote the rest of this section to proving Theorem \ref{unrthm} and Theorem \ref{unrthmwithArtin}.

First, let us reintroduce some of the structures of section \ref{unramified} in our global context.  
Let $M$ be the multi-set of co-weights of $r$ (recall definition \ref{def_coweights}) and $S$ be a subset of $M$. Let $G_S = \Stab_G S \subseteq G= \Gal(K/k).$ Then $G_S$ acts on $S$, and also on its complement $S^c$. Let $K_S$ be the intermediate field in the extension $K/k$ corresponding to $G_S$ under the Galois correspondence. 

Define the $K_S$-torus $T_S$ by taking its cocharacter lattice to be $\Z^{|S^c|}$, where the coordinates are indexed by $\mu \in S^c$ with $G_S$ acting by permuting these.  Let $\alpha: T_S \to T$ be the map of $K_S$-tori given by the map of cocharacter lattices $$\alpha_*: \Z^{|S^c|} \to X_*(T)$$ \begin{equation}\label{alphadef_global} (0,\ldots, 1,  \ldots, 0) \mapsto \mu,\end{equation} where the $1$ is in the $\mu$th entry and the other coordinates are all 0, see Lemma \ref{equiv_of_cat_multi} and the evaluation pairing \eqref{char_cochar_PP}. Given a $K_S$-point $x$ of $T$, let $\alpha^{-1}(x)$ be the fiber of $\alpha$ over $x$ and let $p(\alpha^{-1}(x))$ denote its set of irreducible components. 
Recall the constant $\lambda$ from \eqref{lambda}. 
\begin{definition}\label{K'def}
Let $K'/k$ be the minimal Galois extension over which all geometric components of $\alpha^{-1}(x)$ for all $S$ and $x \in U_N(T)$ are defined and which contains the $\lambda$th roots of unity.
\end{definition}
Note that Lemma \ref{agbnd} guarantees the existence of $K'$, and that the Galois extension $\widetilde{K}=K(\mu_\lambda)$ is contained in $K'$, where $K$ is the splitting field of $T$ and $\mu_\lambda$ is the group of $\lambda$th roots of unity.
Let $\Gamma =  \Gal(K'/k) .$ The finite group $\Gamma$ acts on the set of irreducible components $p(\alpha^{-1}(x))$ for any $x$. We will show later that if $x=1$, the action of $\Gamma$ on $p(\alpha^{-1}(1))=\pi_0(\ker \alpha)$ factors through $\Gamma \to \widetilde{G}$.

 Now let us consider a place $v$ of $k$, which we assume for the rest of this section satisfies $v \not \in S_\infty \cup B$. To each such place there is associated a prime ideal $\fp$ of $k$, and we have $q_{k_v}=N(\fp)$, the absolute ideal norm of $\fp$.  
The valuation $w$ extending $v$ introduced in section \ref{sec:globalconductor} determines a unique prime $\fP$ of $K$ lying over $\fp$. If $\Fr_\fP \in G$ fixes $S$, then since $\fp$ is unramified we may base change the $K_S$-tori $T_S$ and $T$ to $k_v$, so that the results of section \ref{unramified} apply to $T_{S, k_v}$, $T_{k_v}$, and $\alpha^{-1}(x)_{k_v}$ with the decomposition group $D_\fP\subseteq G$ at $\fp$ playing the role of the local Galois group $\Gal(L/F)$.

For each such $\fP$ we also choose a prime $\fP'$ of $K'$ lying over $\fP$.  
The decomposition group $D_{\fP'} \subseteq \Gamma$ at $\fp$ plays the role of the local Galois group $\Gal(L'/F)$ from section \ref{unramified}. 
All of the geometric components of $\alpha^{-1}(x)_{k_v}$ for all $x \in U_N(T)$ are defined over $\cO_{\fP'}$ and can be base-changed to the finite residue field of the completion $K'_{\fP'}$.

We have the restriction map $\Gamma \to G$ under which $ \Fr_{\fP'} \mapsto \Fr_{\fP}.$ When $x=1\in U_N(T)$ we will also use the map \begin{equation}\label{Gtilde_injection_body} \widetilde{G} \hookrightarrow G \times \Gal(k(\zeta_\lambda)/k)\end{equation} $$\Fr_{\fP'} \mapsto (\Fr_{\fP},N(\fp)),$$  where $N(\fp) \in \Gal(k(\zeta_\lambda)/k)$ denotes the automorphism of $k(\zeta_\lambda)$ sending a primitive $\lambda$th root of unity $\zeta_\lambda$ to $\zeta_\lambda^{N(\fp)}$  (see \eqref{Gtildeinclusion}).  In particular, the value of $N(\fp)$ modulo $\lambda$ is determined by $\Fr_{\fP'}\in \widetilde{G}$.

By \eqref{rhotoc} we have $$U(s,x) 
 =  \prod_{\fp \not \in B} \sum_{c\in \N^M} \frac{\Pi_=(c,x)}{N(\fp)^{s|c|}} .$$ 
\begin{definition}\label{approxdef}
If $F_1(s)$ and $F_2(s)$ are meromorphic functions defined in $\real(s)>\sigma_i$, $i=1,2$ and there exists an analytic function $G(s)$ given by an absolutely and uniformly convergent Euler product in $\real(s)> \sigma_0$ such that $F_1(s) = G(s)F_2(s)$, then we say that $F_1$ equals $F_2$ \emph{up to an absolutely convergent Euler product} in $\real(s) > \sigma_0$ and write $F_1 \approx F_2$ in $\real(s) > \sigma_0$.\end{definition}
\begin{lemma}\label{approxlem}
Suppose $F_1$ and $F_2$ are as in Definition \ref{approxdef}. 
\begin{enumerate}
\item If $F_1 \approx F_2$ in $\real(s)>\sigma_0$, then $F_1$ admits a meromorphic continuation to $\real(s)>\max(\sigma_0, \min(\sigma_1,\sigma_2)).$ 
\item The relation $\approx$ defines an equivalence relation on meromorphic functions on $\real(s)>\max(\sigma_0, \min(\sigma_1,\sigma_2)).$
\item If $F_1 \approx F_2$ in $\real(s)>\sigma_0$, then $F_1$ and $F_2$ have the same poles to the same orders in the domain $\real(s)>\max(\sigma_0, \min(\sigma_1,\sigma_2)).$
\end{enumerate}
\end{lemma}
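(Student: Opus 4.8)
The statement to prove is Lemma \ref{approxlem}, which collects elementary consequences of the relation $\approx$ ("equal up to an absolutely convergent Euler product"). The proof is genuinely routine, so the plan is essentially to unwind the definition in each of the three parts.

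\begin{proof}[Proof plan for Lemma \ref{approxlem}]
The plan is to argue directly from Definition \ref{approxdef}. Throughout, write $G(s)$ for the analytic function given by an absolutely and uniformly convergent Euler product in $\real(s)>\sigma_0$ witnessing $F_1 \approx F_2$, so that $F_1(s) = G(s) F_2(s)$ on the common domain. The key structural fact I would invoke is that an Euler product $G(s) = \prod_\fp G_\fp(s)$ that converges absolutely and uniformly on compacta in $\real(s)>\sigma_0$ defines there a function that is \emph{holomorphic and non-vanishing}: holomorphy is the standard Weierstrass argument (uniform limit of the partial products, each a finite product of holomorphic functions), and non-vanishing follows because $\log G(s) = \sum_\fp \log G_\fp(s)$ converges absolutely, so $G(s) = \exp(\text{something finite}) \neq 0$. (One should note in passing that absolute convergence of the product forces $G_\fp(s) \to 1$, hence each $G_\fp$ is itself non-vanishing for $\fp$ large, and the finitely many remaining factors are nonzero by hypothesis that they are "Euler factors," i.e. of the shape $1 + O(N\fp^{-\real(s)+\epsilon})$; this is implicit in how $\approx$ is used in the paper and I would state it as such.)

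For part (1): on $\real(s) > \max(\sigma_0, \sigma_2)$ the function $G(s) F_2(s)$ is meromorphic (product of a holomorphic function and a meromorphic one), and it agrees with $F_1$ on the overlap with $\real(s) > \sigma_1$, which is a non-empty open set; hence by the identity theorem $G(s) F_2(s)$ furnishes a meromorphic continuation of $F_1$ to $\real(s) > \max(\sigma_0, \min(\sigma_1, \sigma_2))$. Symmetrically, $G(s)^{-1} F_1(s)$ continues $F_2$, using that $G$ is non-vanishing so $G^{-1}$ is holomorphic on $\real(s)>\sigma_0$. For part (2): reflexivity is witnessed by $G \equiv 1$; symmetry is witnessed by passing from $G$ to $G^{-1}$, and here I would remark that $G^{-1} = \prod_\fp G_\fp(s)^{-1}$ is again an absolutely and uniformly convergent Euler product on $\real(s)>\sigma_0$ (since $G_\fp^{-1} - 1 = -(G_\fp-1)/G_\fp$ is $O(N\fp^{-\real(s)+\epsilon})$ uniformly on compacta, the factors being uniformly bounded away from $0$); transitivity is witnessed by multiplying the two witnessing Euler products, whose product is again absolutely and uniformly convergent on the intersection of the two half-planes of convergence. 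For part (3): on $\real(s) > \max(\sigma_0, \min(\sigma_1,\sigma_2))$ we have $F_1 = G F_2$ with $G$ holomorphic and nowhere zero, so at any point $s_0$ in that region the order of vanishing/pole of $F_1$ equals that of $G$ plus that of $F_2$, and $\ord_{s_0} G = 0$; thus $F_1$ and $F_2$ have identical pole sets with identical orders (and, if one wants, proportional leading Laurent coefficients, the factor being $G(s_0) \neq 0$).

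The main (and only mild) obstacle is bookkeeping about half-planes of convergence and making precise the claim that a reciprocal or product of absolutely/uniformly convergent Euler products is again one of the same type — this is where one must be slightly careful that "Euler factor" is interpreted so that $G_\fp(s) = 1 + O(N\fp^{-\real(s)+\epsilon})$ uniformly on compacta and that the finitely many $\fp$ with $G_\fp$ possibly vanishing are excluded or handled by hypothesis. None of this is deep; the lemma is a packaging statement ensuring that in later sections one may freely replace $U(s,x)$, $R(s,x)$, etc., by $\approx$-equivalent (and more tractable) Euler products without changing the polar data that controls the main term.
\end{proof}
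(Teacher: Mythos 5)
The paper gives no proof of this lemma, saying only ``The proofs are easy exercises, so we omit them,'' so there is no argument of the author's to compare against; what you have written is a correct filling-in of that omission, and the key ingredients you use (holomorphy and non-vanishing of an absolutely, uniformly convergent Euler product; the identity theorem to glue $GF_2$ to $F_1$ across the common region; closure of the class of such Euler products under inversion and multiplication; equality of pole orders since $\ord_{s_0} G = 0$) are exactly what the omitted exercise requires.

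One remark worth preserving in any written-out version: the whole lemma hinges on $G$ being \emph{non-vanishing} on $\real(s)>\sigma_0$, and this is not forced by the letter of Definition~\ref{approxdef}, which only says ``analytic function given by an absolutely and uniformly convergent Euler product.'' You correctly flag that this requires interpreting ``absolutely convergent infinite product'' in the standard sense (limit nonzero, equivalently each finite factor nonzero and the tail controlled as $1+O(N\fp^{-\sigma+\eps})$). Without that reading, symmetry in part (2) fails (one cannot invert $G$) and part (3) fails (a zero of $G$ could cancel a pole of $F_2$). You handle this with appropriate care; I would only suggest making the convention explicit rather than parenthetical, since it is doing genuine work. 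Everything else in the proof --- the half-plane bookkeeping in (1), the reflexivity/symmetry/transitivity witnesses in (2), and the Laurent-order computation in (3) --- is correct and is the argument the author surely had in mind.
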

The proofs are easy exercises, so we omit them. Since we do not give an expression for the leading constant in Theorem \ref{MT1}, it suffices to study $U(s,x)$ up to the $\approx$ equivalence.

Recall the change of variables $c \leftrightarrow S$ given by \eqref{ctoS1}, \eqref{ctoS2}, and let $$U_0(s,x)  =  \prod_{\fp \not \in B } \sum_{ S\subseteq M} \frac{\Pi_=(S,x)}{ N(\fp)^{s|S|}}.$$ 
\begin{lemma}\label{UU0} We have $U(s,x) \approx U_0(s,x)$ in $\real(s)>A-(2m)^{-1}$.  \end{lemma}
\begin{proof}
It suffices to show that the series \begin{equation}\label{01prodratio2}\prod_{\fp \not \in B}  \frac{ \sum_{c \in \N^M} \Pi_=(c,x)N(\fp)^{-s|c|}}{\sum_{c \in \{0,1\}^M} \Pi_=(c,x)N(\fp)^{-s|c|}}\end{equation} converges absolutely and uniformly on compacta in $\real(s) >A - \frac{1}{2m}$.

Consider $\fp \not \in B$ with $N(\fp)$ sufficiently large.  The factor of the product \eqref{01prodratio2} at $\fp$ is \begin{equation}\label{01prod3} 
1+  \sum_{\substack{c \in \N^M \\ \max c_\mu \geq 2}} \frac{\Pi_=(c,x)}{N(\fp)^{s|c|}} - \left( \sum_{\substack{c \in \N^M \\ \max c_\mu \geq 2}} \frac{\Pi_=(c,x)}{N(\fp)^{s|c|}}\right) \left(\sum_{\substack{c \in \{0,1\}^M\\ c \neq 0}} \frac{\Pi_=(c,x)}{N(\fp)^{s|c|}}\right) + \cdots\end{equation}

We may assume that any $c\in \N^M$ appearing in one of the sums in \eqref{01prod3} is $D_\fP$-fixed, since otherwise $\Pi_=(c,x)= 0$ by Lemma \ref{reducedlemma}.
 We have the trivial bounds $|\Pi_=(c,x)| \leq \Pi_=(c,1) \leq \Pi_\leq(c,1)$ and by Proposition \ref{MurP} and Lemma \ref{Acalc} the bound $$ \Pi_\leq(c,1) \ll_{T,r} \prod_{k=0}^\infty N(\fp)^{\dim D_k(c)}.$$ Note that $$|c| = \sum_{k=0}^\infty |\{\mu \in M : c_\mu > k\}|,$$ so $$ \frac{\Pi_=(c,x)}{N(\fp)^{s|c|}} \ll \prod_{k=0}^\infty N(\fp)^{\dim D_k(c) -s|\{\mu: c_\mu > k\}|}.$$ Therefore we have e.g.~for the first sum in \eqref{01prod3} that $$\sum_{\substack{c \in \N^M \\ \max c_\mu \geq 2}} \frac{\Pi_=(c,x)}{N(\fp)^{s|c|}} \ll \sum_{\substack{c\, :\, \max c_\mu \geq 2 \\ \Pi_=(c,x) \neq 0}} \prod_{k=0}^\infty N(\fp)^{\dim D_k(c) -s|\{\mu: c_\mu > k\}|}.$$ By Lemma \ref{dimDkgeq1} we have the product here has at least two non-one factors for each $c$ in the outer sum. 
 Now we take the product of \eqref{01prod3} over $\fp \not \in B$, and find that \eqref{01prodratio2} converges absolutely and uniformly on compacta in the region \begin{equation}\label{repititions}\real(s)> \sup_{i\geq 2} \max_{\substack{1 \leq j \leq i \\ S_j \subseteq M}} \{ \frac{ \dim D(S_1) + \cdots + \dim D(S_i) + 1}{|S_1| + \cdots + |S_i|} :  D(S_j) \neq \{1\}\}.\end{equation}
 
\begin{lemma}\label{aa'cc'}
Let $i \geq 2$. For any real numbers $a_1, \ldots, a_i$, and any $1\leq c_1, \ldots, c_i\leq m$ one has $$ \frac{a_1+\cdots a_i+1}{c_1+\cdots + c_i} \leq  \max_{j=1,\ldots,i} \Big\{ \frac{a_i+1}{c_i}\Big\} - (2m)^{-1}.$$
\end{lemma}
\begin{proof} It suffices to show the case that $i=2$. 
Suppose without loss of generality that $\frac{a_1+1}{c_1}\geq \frac{a_2+1}{c_2}$, i.e.\ $c_2a_1-c_1a_2+c_2 \geq c_1$. Then,
$$\frac{a_1+1}{c_1} -  \frac{a_1+a_2+1}{c_1+c_2} = \frac{c_2+a_1c_2-c_1a_2}{c_1(c_1+c_2)} \geq \frac{c_1}{c_1(c_1+c_2)}\geq \frac{1}{2m}.$$
\end{proof}
By Lemma \ref{aa'cc'}, we conclude that \eqref{01prodratio2} converges absolutely and uniformly in the region $\real(s)>A-(2m)^{-1}$.
\end{proof}

We arrange the product $U_0(s,x)$ over (finitely many) conjugacy classes $C \subseteq \Gamma$: 
\begin{equation}\label{splitovercc}U_0(s,x)  = \prod_{C\subseteq \Gamma} \prod_{\substack{\fp \not \in B \\ \Fr_{\fP'} \in C}} \sum_{ S\subseteq M} \Pi_=(S,x) N(\fp)^{-s|S|}.\end{equation}

\begin{lemma}\label{F1F2F3}Let \begin{equation}\label{SigmaP}{\Sigma^\fP} = \{S \subseteq M : S \neq \varnothing, \quad \Fr_\fP S = S, \quad \frac{\dim D(S) + 1}{|S|}\geq A\}.\end{equation} 
We have $$ U_0(s,x) \approx \prod_{C \subseteq \Gamma} \prod_{\substack{\fp \not \in B \\ \Fr_{\fP'} \in C}}  \left( 1+\sum_{S \in \Sigma^{\fP}} \Pi_=(S,x) N(\fp)^{-s|S|} \right)$$ in $\real(s)> A-\min(2^{-1}, m^{-2})$.\end{lemma}

\begin{proof}  By Lemma \ref{reducedlemma} we have if $\Fr_{\fP} S \neq S$ then $\Pi_=(S,x)=0$. Consider the sets $\varnothing \neq S\subseteq M$ which satisfy $\Fr_{\fP} S = S$, and take 
$$ (2^M \smallsetminus \{\varnothing\})^{D_\fP} = {\Sigma^\fP} \sqcup {\Sigma^\fP}^c.$$ 
Let
$$F_1(s) = \prod_{C \subseteq \Gamma}\prod_{\substack{\fp \not \in B \\ \Fr_{\fP'} \in C}}  \left( 1+ \sum_{S \in {\Sigma^\fP}} \Pi_=(S,x) N(\fp)^{-s|S|}\right),$$ and 
   $$F_2(s) = \prod_{C \subseteq \Gamma}\prod_{\substack{\fp \not \in B \\ \Fr_{\fP'} \in C}} \left(1+ \sum_{S \in {\Sigma^\fP}^c}  \Pi_=(S,x) N(\fp)^{-s|S|}\right).$$
 By Lemma \ref{aa'cc'} we have $$\prod_{C \subseteq \Gamma}\prod_{\substack{\fp \not \in B \\ \Fr_{\fP'} \in C}}  \sum_{ S\subseteq M} \Pi_=(S,x) N(\fp)^{-s|S|} \approx F_1(s)F_2(s)$$ in $\real(s)>A-(2m)^{-1}$.
   Note that for any $S\in {\Sigma^\fP}^c$ we have \begin{equation}\label{largerdomain}A-\frac{\dim D(S) + 1}{|S|} \geq m^{-2},\end{equation} since $A$ and $\frac{\dim D(S) + 1}{|S|}$ are two elements of the Farey sequence of order $m$.
   We have by the estimate $|\Pi_=(S,x)| \leq \Pi_\leq(S,1)$, Lemma \ref{Acalc}, and \eqref{largerdomain} that $F_2(s)$ converges absolutely and uniformly in the region $\real(s) \geq A-\min(2^{-1}, m^{-2})$. Thus, $U_0(s,x) \approx F_1(s)$ in $\real(s)>A-\min(2^{-1}, m^{-2})$. 
   \end{proof}

Recall the quantity $a(S,x)$ from \eqref{aSx}. Similarly, let \begin{equation}\label{aCSx}a_C(S,x) = \#\{y \in p(\alpha^{-1}(x)): \Fr_{\fP'} y = y\},\end{equation} which only depends on the conjugacy class $C\subseteq \Gamma$ of $\Fr_{\fP'}$. Likewise, recall $a(S)$ from \eqref{aS} and let \begin{equation}\label{aCS}a_C(S) = \#\{ y \in \pi_0(D(S)): \Fr_{\fP} y^{N(\fp)} = y\},\end{equation} which as well only depends on the conjugacy class $C \subseteq \Gamma$ of $(\Fr_{\fP},N(\fp))$ via \eqref{Gtilde_injection_body}. 
We have by Lemmas \ref{UU0}, \ref{F1F2F3} and \ref{A*calc} \begin{equation}\label{F1}U(s,x)  \approx \prod_{C\subseteq \Gamma}\prod_{\substack{\fp \not \in B \\ \Fr_{\fP'} \in C}}  \left( 1+\sum_{S \in {\Sigma^\fP}} N(\fp)^{\dim D(S) - s|S|} \begin{cases}a_C(S,x) & \text{ if } \dim D(S) \geq 1 \\  a_C(S,x) -1 & \text{ if } \dim D(S) =0 \end{cases} \right) \end{equation} in $\real(s)>A-\min(2^{-1}, m^{-2})$, where we may replace $a_C(S,1)$ by $a_C(S)$ when $x=1$. Note that if $\frac{\dim D(S) +1}{|S|} > A$ then $D(S) = \{1\}$ by the definition of $A$, and so $a_C(S,x)=1$ and the term corresponding to $S$ above vanishes.

 We split up the sum in \eqref{F1} over the possible values of $\dim D(S), |S|$. The parameter space is $$P= \{(a,b): 0 \leq a \leq n, 1 \leq  b \leq m, \frac{a+1}{b}= A\}.$$ Since the case $a=0$ is different in \eqref{F1}, so we also introduce $$ P_0: =\{(a,b): 1 \leq a \leq n, 1 \leq  b \leq m, \frac{a+1}{b}= A\} .$$ If $A=1$, then $P = P_0\cup \{(0,1)\}$, and otherwise $P=P_0$.  Let also \begin{equation}\label{Sigmaab}\Sigma_{a,b} = \{S \in \Sigma : \dim D(S) =a, \quad |S|=b\},\end{equation} so that $$\Sigma = \bigsqcup_{(a,b)\in P} \Sigma_{a,b}.$$ Note that $\Sigma_{0,1/A}$ is non-empty only if $A=1$. With this notation, we have that \begin{equation}\label{usxbeforereptheory} U(s,x)  \approx \prod_{C\subseteq \Gamma}\prod_{\substack{\fp \not \in B \\ \Fr_{\fP'} \in C}}    \sum_{\substack{S \in \Sigma_{0,1} \\ \Fr_{\fP} S=S}} \big(1 + N(\fp)^{-s}\big)^{a_C(S,x) -1}  \prod_{(a,b) \in P_0} \sum_{\substack{S \in \Sigma_{a,b} \\ \Fr_{\fP} S=S}} \big(1 + N(\fp)^{a-bs}\big)^{a_C(S,x)} \end{equation} 
 in $\real(s)> A-\min(2^{-1}, m^{-2})$, with optionally $a_C(S)$ in place of $a(S,1)$ if $x=1$.
 Let \begin{equation}\label{Sigmaabtilde}\widetilde{\Sigma}_{a,b} = \begin{cases} \{(S,y): S \in \Sigma_{a,b}, \quad y \in p(\alpha^{-1}(x))\} & \text{ if } (a,b) \neq (0,1) \\
 \{(S,y): S \in \Sigma_{0,1}, \quad y \in p(\alpha^{-1}(x)), \quad y \neq 1\} & \text{ if } (a,b) = (0,1). \end{cases}\end{equation}

The group $\Gamma$ acts on each $\widetilde{\Sigma}_{a,b}$ through $G$ on $S$ and through its Galois action on $p(\alpha^{-1}(x))$.  If $x=1$, the action factors through the action of $(\overline{g},\gamma) \in G \times (\Z/\lambda \Z)^\times$  on elements $(S,y)$ with $y\in \pi_0(D(S))$ and is given by $(\overline{g},\gamma) .y = \overline{g}y^{\gamma}.$ 

Let us further decompose the action of $\Gamma$ on $\widetilde{\Sigma}_{a,b}$ into orbits $\widetilde{\Sigma}_{a,b} = \bigsqcup \cO.$ Let $V_\cO$ be the permutation representation of $\Gamma$ acting (transitively) on $\cO$. Let $\psi_\cO$ be its character and $C$ a conjugacy class of $\Gamma$. Then $\psi_{\cO}(C)$ is the number of $\Fr_{\fP'}$-fixed points on $\cO$. In these terms, we have $$ U(s,x)  \approx \prod_{C\subseteq \Gamma} \prod_{\substack{\fp \not \in B \\ \Fr_{\fP'} \in C}}   \prod_{(a,b) \in P} \prod_{\cO \subseteq \widetilde{\Sigma}_{a,b}}  \left(1 + N(\fp)^{a-bs}\right)^{\psi_{\cO}(C)}. $$

Now we decompose $V_{\cO}$ into irreducible representations $V_i$ of $\Gamma$ so that \begin{equation}\label{Videf}V_{\cO} = \bigoplus_i V_{i}^{\oplus m_{\cO,i}} \quad \text{ and } \quad \psi_{\cO} = \sum_i m_{\cO,i} \psi_i \end{equation} for some $m_{\cO,i} \in \N$, where $\psi_{i}$ is the character of $V_i$. Then we have $$  U(s,x)  \approx \prod_{C\subseteq \Gamma} \prod_{\substack{\fp \not \in B \\ \Fr_{\fP'} \in C}}  \prod_{(a,b) \in P} \prod_{\cO \subseteq \widetilde{\Sigma}_{a,b}} \prod_i  \left(1 + \psi_i( C)N(\fp)^{a-bs}\right)^{m_{\cO,i}} $$ in $\real(s)>A-\min(2^{-1}, m^{-2})$.

Moving the products over $C$ and $\fp$ to the inside, we have\begin{equation}\label{nearfinal}U(s,x) \approx \prod_{(a,b) \in P}  \prod_{\cO \subseteq \widetilde{\Sigma}_{a,b}} \prod_i L^{(B)}(bs-a, V_i)^{m_{\cO,i}}\end{equation} in $\real(s)>A-\min(2^{-1}, m^{-2})$, where $L^{(B)}(bs-a, V_i)$ denotes the Artin $L$-function attached to $V_i$ with archimedean primes and primes in $B$ omitted. 

 By Lemma \ref{agbnd}, the group $\Gamma$ is finite and does not depend on $x\in U_N(T)$. Therefore there are only finitely many irreducible representations $V_i$ of $\Gamma$, and so the product over $i$ in \eqref{nearfinal} has finitely many factors, bounded uniformly in terms of $x$. By the first part of Lemma \ref{agbnd}, the dimension of $V_{\cO}$ is uniformly bounded in terms of $x$. Therefore, the multiplicities $m_{\cO,i}$ as well as the degree and conductor of the (partial) Artin $L$-functions on the right hand side of \eqref{nearfinal} remain uniformly bounded as $x$ varies over $U_N(T)$ (but depend on $r$ and $T$, of course). 

By the Brauer induction theorem and class field theory, for each $V_i$ there exist finitely many intermediate fields $k \subseteq K_j \subseteq K'$, Hecke characters $\chi_j$ of $K_j$ and multiplicities $m_j \in \Z$, $j=1, \ldots$ such that \begin{equation}\label{BrauerInduction}L(s,V_i) = \prod_j L(s,\chi_j)^{m_j}.\end{equation} Part \eqref{unrthm1} of Theorem \ref{unrthm} now follows from the analytic continuation of Hecke $L$-functions and the uniformity statements of the preceding paragraph. 

Although it is expected that the critical zeros of any $L(s,\chi_j)$ appearing in \eqref{BrauerInduction} with $m_j<0$ cancel with the critical zeros of some $L(s,\chi_{j'})$ in \eqref{BrauerInduction} with $m_{j'}>0$ (the  Artin conjecture), at present we cannot exclude the possibility that those $j$ with $m_j<0$ contribute infinitely many poles to $L(s,V_i)$ inside the critical strip. The hypothesis in Theorem \ref{unrthmwithArtin} asserts that precisely such a possibility does not occur, in which case assertion \eqref{unrthm1primeArtin} of Theorem \ref{unrthmwithArtin} is immediate.

 Applying \eqref{BrauerInduction} to \eqref{nearfinal}, assertion \eqref{unrthm1prime} of Theorem \ref{unrthm} follows from the zero-free region for Hecke $L$-functions in Lemma \ref{ZFHforHeckeL} upon taking \begin{equation}\label{cconstruction}c = \min \{\min_{(a,b) \in P} \frac{1}{b(\log b+1)}\min_{\cO,i} \min_{j: m_j<0} c(\chi_j), 2^{-1}, m^{-2}\},\end{equation} which is in particular independent of $x$. Moreover, applying the lower bounds of Lemma \ref{HeckeLowerBound} to any $L(s,\chi_j)^{m_j}$ with $m_j<0$ and the upper bounds of \cite[Thm.\ 1]{Coleman} to any $L(s,\chi_j)^{m_j}$ with $m_j>0$ in \eqref{BrauerInduction}, we obtain assertion \eqref{unrthm4prime} of Theorem \ref{unrthm} with $c'$ constructed from $c'(\chi_j)$ as in \eqref{cconstruction}.

Since $L(1, \chi) \neq 0$ for any Hecke characters $\chi$ (see Lemma \ref{ZFHforHeckeL}), the number of poles at $s=A$ appearing in \eqref{nearfinal} is equal to the number of trivial representations appearing among the representations $V_i$, counted with multiplicity. By e.g.~Serre \cite[\S 2.3 exercise~2.6]{SerreLinearReps}, the number of trivial characters is equal to the number of orbits $$\sum_{a,b \in P} | \Gamma \backslash \widetilde{\Sigma}_{a,b}|.$$ If $x=1$ then this matches $| \widetilde{G} \backslash \widetilde{\Sigma}_0|$ as defined in the introduction. Thus we have established part \eqref{unrthm2} of Theorem \ref{unrthm}. 

For $s$ with $\real(s)>A$ we have that $|U(s,x)| \leq U(\real(s), 1)$.  Therefore $$ \sum_{a,b \in P} | \Gamma \backslash \widetilde{\Sigma}_{a,b}| \leq | \widetilde{G} \backslash \widetilde{\Sigma}_0|$$ for any $x\in U_N(T)$, and the Laurent series expansion of $U(s,x)$ around $s=A$ is bounded in absolute value by that of $U(s,1)$. By e.g. \eqref{usxbeforereptheory}, the leading constant in the Laurent expansion is positive real. This establishes part \eqref{unrthm3} of Theorem \ref{unrthm}.

Lastly, if the  Artin conjecture holds, the Phragmen-Lindel\"of convexity principle applied to the strip $-1\leq \real(s) \leq 2$, say, asserts that the $L$-functions $L(s,V_i)$ have at most polynomial growth in their critical strips. Therefore, there exists $K= K(r,T)>0$ independent of $x$ so that when $s$ has $\real(s)>A-\min(2^{-1}, m^{-2})$ and $s$ avoids a small neighborhood $N$ around $s=A$, we have \begin{equation}\label{hooray}U(s,x)\ll_{T,r,N,\real(s)} (1+|\Imag(s)|)^{K},\end{equation} 
establishing assertion \eqref{unrthm4} of Theorem \ref{unrthmwithArtin}.

\subsection{Ramified places}\label{finitelymany}
\begin{lemma}\label{RamLemma}
Suppose that $r\vert_{\widehat{T}}$ is faithful. The function $R(s,x)$ converges absolutely and uniformly on compacta in the region $\real(s)>A-m^{-2}$. It takes a positive value at the point $s=A$. \end{lemma}
\begin{proof} Recall that \begin{equation}\label{Rstart} R(s,x)= \sum_{\theta \in \cl_N(T)^\wedge} \prod_{v \in B}\left( \delta(\psi_v) dx_v/dx_v'\right)^{sm} N_{k_v}(s,x).\end{equation}  We saw in section \ref{ramified} for each $v \in B$ that $\left( \delta(\psi_v) dx_v/dx_v'\right)^{sm} N_{k_v}(s,x)$ converges absolutely and uniformly on compacta in $$ \real(s) > \max \{ \frac{\dim D(S)}{|S|}: D(S)\neq \{1\}\} \geq A-m^{-2}, $$ a region which includes the point $s=A$. Since the sum over class group characters in \eqref{Rstart} is finite and $B$ consists of finitely many places, the function $R(s,x)$ converges absolutely absolutely and uniformly on compacta in the region $\real(s)>A-m^{-2}$. The first statement of  \eqref{RamLemma} holds.

We now show the second assertion of Lemma \ref{RamLemma}. The product over $v \in B$ is a finite product. Let us enumerate the places appearing as $v_1,\ldots,v_s$.  Let us denote by $$ c \mid |B|^\infty$$ the set of all positive integers $c$ of the form $q_{k_{v_1}}^{n_1} \cdots q_{k_{v_s}}^{n_s}$, for $n_1,\ldots, n_s \in \N$.  In this paragraph, we write $\mu(d)$ for the M\"obius function defined with respect to numbers $d \mid |B|^\infty$, that is, where $q_{k_{v_i}}$ plays the role of the primes. For $c \mid |B|^\infty$ we let $$ H_c^*= \{(\theta, \chi_{v_1}, \ldots, \chi_{v_s}) \in \cl_N(T)^\wedge \times \prod_{v  \in B} NT(\cO_w)^\wedge : \prod_{i=1}^s q_{k_{v_i}}^{c_r(\chi_{v_i}\theta_{v_i})} = c\},$$ where $c_r$ is the Artin conductor associated to $\chi_{v_i}\theta_{v_i}$ via $r$. We also define $$ H_c= \{(\theta, \chi_{v_1}, \ldots, \chi_{v_s}) \in \cl_N(T)^\wedge \times \prod_{v  \in B } NT(\cO_w)^\wedge : \prod_{i=1}^s q_{k_{v_i}}^{c_r(\chi_{v_i}\theta_{v_i})} \mid c\}.$$ The set $H_c$ is a group.  We have the relations $$H_c = \bigcup_{d\mid c} H_d^*, \quad \text{ and } \quad H_c^* = \bigcup_{d \mid c} \mu(d) H_{c/d},$$ where the unions are over integers $d | |B|^\infty$ which also divide $c$, and $\mu(d)$ is defined as before.  

In terms of these definitions, we have  \begin{align*} R(s,x) = & \sum_{c \mid |B|^{\infty}} \frac{1}{c^s} \sum_{(\theta, \chi_{v_1}, \ldots, \chi_{v_s}) \in H_c^*} \chi_{v_1}(x) \cdots \chi_{v_s}(x) \\
= &  \sum_{c \mid |B|^{\infty}} \frac{1}{c^s} \sum_{d \mid c} \mu(d) \sum_{(\theta, \chi_{v_1}, \ldots, \chi_{v_s}) \in H_{c/d}} \chi_{v_1}(x) \cdots \chi_{v_s}(x) \\
= &  \left( \sum_{c \mid |B|^{\infty}} \frac{1}{c^s}\right)^{-1} \sum_{c \mid |B|^{\infty}} \frac{1}{c^s} \sum_{(\theta, \chi_{v_1}, \ldots, \chi_{v_s}) \in H_{c}} \chi_{v_1}(x) \cdots \chi_{v_s}(x).\end{align*}
The first factor is evidently positive real for all $s>0$, since it is a product over finitely many ``primes''.  By orthogonality of characters, the second is a Dirichlet series with non-negative integer coefficients, hence it takes a positive real value wherever it converges absolutely.  
\end{proof}

\subsection{Global convergence}\label{conclusion}
In this section, we prove Theorem \ref{Ysthm}. 
First we show that the sum in \eqref{Zexpression2} converges absolutely and uniformly on compacta in $\real(s)> A-\min(2^{-1}, m^{-2})$. 

For each $v \in S_\infty$, let $$\sigma_v: T(k)\hookrightarrow \Tt = \begin{cases} (\R^\times)^{n_1} \times (S^1)^{n_2} \times (\C^\times)^{n_3} & \text{ if } v \text{ real} \\ (\C^\times)^{n} & \text{ if } v \text{ complex}\end{cases}$$ be the corresponding embedding. Let $S_{1,\infty}$ and $S_{2,\infty}$ be the set of real and complex archimedean places, respectively. By formula \eqref{Zexpression2}, Theorems \ref{archthm}, \ref{unrthm}, and Lemma \ref{RamLemma} we have for any $s$ not coinciding with any pole of any $U(s,x)$ and with $\real(s)>A-\min(2^{-1}, m^{-2})$ that \begin{multline}\label{secondtolast}  Y(s)  \ll_{r,T, \nu} \sum_{x \in U_N(T)} |U(s,x)||R(s,x)|  \\ \times \prod_{v \in S_{1,\infty}} \prod_{j=1}^{n_1} \frac{1}{1+ | \log |(\sigma_v x)_j| |}\prod_{j=n_1+n_2+1}^{n_1+n_2+n_3} \frac{1}{1+ 2| \log |(\sigma_v x)_j| |}\prod_{v \in S_{2,\infty}} \prod_{j=1}^{n} \frac{1}{1+ 2| \log |(\sigma_v x)_j| |} .\end{multline} 
Let $J$ denote the set of pairs $(v,j)$ appearing on the right hand side of \eqref{secondtolast}, i.e.\ $$J = \bigcup_{v \in S_{1, \infty}} \{ (v,j): j=1,  \ldots, n_1,n_1+n_2+1, \ldots, n_1+n_2+n_3\} \cup \bigcup_{v \in S_{2,\infty}} \{ (v,j): j=1, \ldots, n\}.$$ 
For any $(v,j)\in J$, we set
$$ \xi_j^v = {\rm pr}_{j} \circ \sigma_v : T(k) \to \begin{cases} \R^\times & \text{ if } v \in S_{1, \infty} \text{ and } j=1, \ldots, n_1 \\ 
\C^\times & \text{ if } v \in S_{1, \infty} \text{ and } j=n_1+n_2+1, \ldots, n_1+n_2+n_3 \\ 
\C^\times & \text{ if } v \in S_{2, \infty} \text{ and } j=1, \ldots, n\end{cases}$$  to be the composition of $\sigma_v$ with the projection to the $j$th entry of $\Tt$. 

Since $R(s,x)$ is a finite sum, we have the bound $|R(s,x)|\leq R(\sigma,1)$, where $s=\sigma+it$. 
By positivity, we extend the sum over $U_N(T)$ in \eqref{secondtolast} to $U(T)$. Since roots of unity have absolute value 1, the summand only depends on $U(T)/U(T)_{\rm tors}$, so we reduce to this at the cost of a factor depending only on $T$. Thus for $s$ with $\real(s)> A-\min(2^{-1},m^{-2})$ \begin{equation}\label{last}Y(s)  \ll_{r,T, \nu} U_{\rm max}(s) R(\sigma,1) \sum_{x \in U(T)/U(T)_{\rm tors}}  \prod_{(v,j) \in J} \frac{1}{1+|\log | \xi_j^v (x)|_v|}. \end{equation}

The set $\{\xi_j^v : (v,j)\in J\}$ forms a basis for $\prod_{v \in S_\infty} X^*(T)^{G_{k_v}}$. Following standard notation, we write $r_\infty= |J|$ for the rank of this finite-rank free abelian group, i.e.\ $$r_\infty = \rank \prod_{v \in S_\infty} X^*(T)^{G_{k_v}} = \sum_{v \in S_\infty} \rank X^*(T)^{G_{k_v}}= |S_{1,\infty}|(n_1+n_2) + |S_{2, \infty}|n.$$ Similarly, let $r_k =\rank X^*(T)^{G_k}$. By Dirichlet's units theorem for tori \cite{ShyrDsThm}, we have $\rank U(T) = r_\infty-r_k$. Accordingly, let $\{\epsilon_1, \ldots, \epsilon_{r_\infty-r_k}\}$ be a $\Z$-basis for $U(T)/U(T)_{\rm tors}$. The $r_\infty \times (r_\infty - r_k)$ matrix 
$$\Phi = ( \log | \xi_j^v(\epsilon_i)|_v)_{(v,j)\in J, i=1, \ldots, r_\infty-r_k}$$ 
is called the \emph{regulator matrix} of $T$.

It is a key fact in what follows that \emph{any} choice of $r_\infty-r_k$ rows among the $r_\infty$ rows of the regulator matrix $\Phi$ yields a non-singular square matrix. As observed by M.H.\ Tran in his Thesis \cite[Def.\ 7.2.2]{TranThesis}, this fact follows from the proof of Dirichlet's units theorem for tori \cite{ShyrDsThm}, and goes back to \cite[\S 3.8]{OnoTori} in the case $k=\Q$. The absolute value of the determinant of any such square submatrix of $\Phi$ is by definition the regulator $R_T$ of $T$ as appears in the class number formula for $T$, see \cite[Thm.\ 1.3]{TranPaper}.

Writing $\Phi_{j}^v$ for the $(v,j)$th row of the regulator matrix $\Phi$, the bound \eqref{last} becomes 
\begin{equation}\label{lastlast}
Y(s)  \ll_{r,T, \nu} U_{\rm max}(s)  R(\sigma,1) \sum_{n \in \Z^{r_\infty-r_k}}  \prod_{(v,j) \in J} \frac{1}{1+|\langle \Phi_j^v,n\rangle|}
\end{equation}
with $\langle \cdot, \cdot \rangle$ the standard inner product on $\R^{r_\infty-r_k}$. 

If $T$ is anisotropic, then $U(T)$ is finite so the sum/product in \eqref{lastlast} is trivial, and the first assertion of Theorem \ref{Ysthm} follows immediately. 

Suppose then that $T$ is isotropic. For $x\in \R$ one has $(1+|x|)^{-1} \leq (1+x^2)^{-1/2}$ and the latter is smooth. Since the summand
$$  \prod_{(v,j) \in J} \frac{1}{\sqrt{1+\langle \Phi_j^v,n\rangle^2}}$$
has well-controlled partial derivatives, we may bound the sum in \eqref{lastlast} by an integral to obtain 
\begin{equation}\label{lastlastlast} Y(s)  \ll_{r,T, \nu} U_{\rm max}(s) R(\sigma,1) \int_{x \in \R^{r_\infty-r_k}}  \prod_{(v,j) \in J} \frac{1}{\sqrt{1+\langle \Phi_j^v,x\rangle^2}}.\end{equation}

Now, we are in a position to apply the Brascamp-Lieb inequality to \eqref{lastlastlast}. We apply Theorem \ref{BL} with $m=r_\infty$, $n=r_\infty-r_k$, the matrix $M=\Phi$ the regulator matrix with rows $a_j^v=\Phi_j^v$, and $f_j^v(x)=(1+x^2)^{-1/2}$ for all $(v,j)\in J$. Recall the definition of the polytope $H_\Phi$ from \eqref{CLLcondition1} and \eqref{CLLcondition2}. Since $\Phi$ is full-rank, $H_\Phi$ is compact. Then, there exists a point in $H_\Phi$, say $(d_j^v)_{(v,j)\in J}\in H_\Phi$, for which the infimum 
$$B_\infty(\Phi)= \inf\{\| x\|_\infty : x \in H_\Phi\}$$ 
is attained (recall \eqref{B0def}). Applying Theorem \ref{BL} with $\overline{p}= (d_j^v)_{(v,j)\in J}$ and the other parameters chosen as above, we obtain
\begin{equation}\label{lastlastlastlast}\int_{x \in \R^{r_\infty-r_k}}  \prod_{(v,j) \in J} \frac{1}{\sqrt{1+\langle \Phi_j^v,x\rangle^2}} \ll_{T,r} \prod_{(v,j) \in J} \left(\int_\R \frac{1}{(1+x^2)^{1/2d_j^v}}\right)^{d_{j}^v}.\end{equation}

Now, to show that the sum over $U_N(T)$ in \eqref{Zexpression2} converges absolutely it suffices now to show that $\min\{1/d_j^v: (v,j) \in J\}>1$. Since $\min\{1/d_j^v: (v,j) \in J\} = B_\infty(\Phi)^{-1}$ by definition of $(d_j^v)_{(v,j)\in J}$, it suffices to show that $B_\infty(\Phi)<1$. We have (recall Proposition \ref{polytopeconj} and Definition \ref{biasdef}) that $$B_\infty(\Phi) = \max\{ \frac{\beta}{\alpha} : \Phi \text{ is } (\alpha;\beta)\text{-biased}\}.$$ Suppose that $\alpha$, $\beta$ are such that $\Phi$ is $(\alpha;\beta)$-biased, i.e.\ there is a distinguished set, say $A$, of rows of $\Phi$ with $|A|=\alpha$ such that any basis of $\R^{r_\infty-r_k}$ contains at least $\beta$ of the $\alpha$ distinguished rows. We aim to show that $\beta<\alpha$ for any such configuration. 

 If $\alpha> r_\infty-r_k$, then we would have $\alpha>r_\infty-r_k\geq \beta$. Suppose $\alpha \leq r_\infty-r_k$. We shall construct a basis of $\R^{r_\infty-r_k}$ from the rows of $\Phi$ that uses at most $\max(0, \alpha-r_k)$ of the rows in $A$. Thus, we will have that $\beta\leq \max(0, \alpha-r_k),$ and since $T$ is isotropic we get $\beta<\alpha$. Recall the key fact that any $r_\infty-r_k$ rows of the regulator matrix $\Phi$ form a basis for $\R^{r_\infty-r_k}$. So, to construct the promised basis, choose the $\min(r_\infty-r_k,r_\infty-\alpha)$ rows of $\Phi$ that are not in $A$ along with any $r_\infty-r_k - \min(r_\infty-r_k,r_\infty-\alpha) = \max(0, \alpha-r_k)$ rows from $A$. This proves the first assertion of Theorem \ref{Ysthm} in the isotropic case.

By Theorem \ref{archthm}\eqref{archthm3}, Theorem \ref{unrthm}\eqref{unrthm2}\eqref{unrthm3}, and Lemma \ref{RamLemma}, the leading coefficient in the Laurent series expansion of $U(s,x)R(s,x)A(s,x)$ at $s=A$ is positive for each $x \in U_N(T)$. Therefore, there can be no cancellation in the leading order Laurent coefficients in the sum over $U_N(T)$ of these in formula \eqref{Zexpression2}.  The assertion in the first bullet point of Theorem \ref{Ysthm} follows from this along with Theorems \ref{unrthm}\eqref{unrthm1prime} and \ref{unrthmwithArtin}\eqref{unrthm1primeArtin}.

The second and third bullet points of Theorem \ref{Ysthm} follow from Theorems \ref{unrthm}\eqref{unrthm4prime} and \ref{unrthmwithArtin}\eqref{unrthm4}, and the previously established absolute convergence of the integral in \eqref{lastlastlastlast}.

\section{Final counting}\label{final}
\subsection{Local to global and proof of Theorem \ref{Zsthm}}\label{locglobandconclusion}
\begin{proposition}\label{localtoglobal}
Suppose that $r\vert_{\widehat{T}}$ is faithful. 
There exists $c \in \R_{>0}$ depending only on choices of Haar measures so that $$ Z(s):= \int_{\cA(T)} \frac{1}{\fc(\chi,r)^s}\,d\nu(\chi) = \frac{c}{\Vol(U_N(T)^\wedge)} \sum_{\theta \in \cl_N(T)^\wedge} \sum_{x \in U_N(T)} \prod_{v \in S_\infty}A_{k_v}(s,x) \prod_{v \not \in S_\infty} N_{k_v}(s,x)$$ for all $s$ with $\real(s)$ sufficiently large, where $A_{k_v}(s,x)$ and $N_{k_v}(s,x)$ are local archimedean and non-archimedean generating series defined in   \eqref{AFdef} and \eqref{NFsxdef}, respectively.\end{proposition}
\begin{proof}  
Recall (see section \ref{subsec:localtoglobal}) the short exact sequence of  locally compact Hausdorff topological abelian groups
\begin{equation} \xymatrix{1 \ar[r] &  U_N(T) \big\backslash T_{N,\A} \ar[r] & T(k) \big\backslash T(\A) \ar[r] & \cl_N(T) \ar[r] & 1.}\end{equation}
Let \begin{equation}\label{Vwedge}V^\wedge = \{\chi \in T_{N,\A}^\wedge: \chi(x) = 1 \text{ for all } x \in U_N(T)\}.\end{equation} By Pontryagin duality, we have the dual short exact sequence
\begin{equation}\label{besdual} \xymatrix{1 \ar[r] &  \cl_N(T)^\wedge \ar[r] & \cA(T) \ar[r] & V^\wedge \ar[r] & 1.}\end{equation}
Recall that we have chosen a Haar measure $\nu$ on $\cA(T)$. The finite group $\cl_N(T)^\wedge$ naturally takes the counting measure, so these determine a quotient Haar measure $\overline{\nu}$ on $V^\wedge$.  Let us write $T_\infty = T(k_\infty) = \prod_{v \in S_\infty} T(k_v)$ and $T_\infty^\wedge = T(k_\infty)^\wedge = \prod_{v \in S_\infty} T(k_v)^\wedge$. 

We work a bit more generally than necessary for the time being. Let $c$ be an integrable function on $\cA(T)$ that satisfies the following factorization property: there exist functions $c_\infty$ on $T_\infty^\wedge$ and $c_f$ on $T(\A_{\rm fin})^\wedge$ such that if $\chi= \chi_\infty \otimes \chi_{f}$ with $\chi_\infty \in T(F_\infty)^\wedge$ and $\chi_f \in T(\A_{\rm fin})^\wedge$, then $$c(\chi) = c_f(\chi_f)c_\infty(\chi_\infty).$$
For such a function $c$ we decompose its integral over $\cA(T)$ using the quotient measure, i.e.\ we apply e.g.\ \cite[Ch.\ VII \S 2 7.\ Prop.\ 10]{BourbakiIntegration} with $G = \cA(T)$, $G'=\cl_N(T)^\wedge$, $G''= V^\wedge$, $\pi$ the restriction map $\pi: \cA(T) \to V^\wedge$, $\alpha = \nu$, $\alpha'$ equal to counting measure, and $\alpha''= \overline{\nu}$ to obtain
\begin{equation}
\int_{\cA(T)} c(\chi) \,d\nu(\chi) = \int_{V^\wedge} \sum_{\theta \in \cl_N(T)^\wedge} c(\theta \chi) \,d\overline{\nu}(\chi).
\end{equation}
Let $$\overline{c}(\chi) = \sum_{\theta \in \cl_N(T)^\wedge} c(\theta \chi),$$ which only depends on $\pi(\chi) \in V^\wedge$. Since $\theta$ is trivial on $T_\infty^\wedge$ (see \eqref{beginninges}), we have  for $\chi = \chi_\infty \otimes \chi_f$  by the factorization property of $c$ that 
\begin{equation}
 \overline{c}( \chi) = c_\infty(\chi_\infty) \sum_{\theta \in \cl_N(T)^\wedge} c_f(\theta \chi_f)=: c_\infty(\chi_\infty) \overline{c}_f(\chi_f).
 \end{equation}
Now let $NT_f^\wedge = \prod_{v \nmid \infty} NT(\cO_w)^\wedge$. It is a discrete group, so we give it the counting measure. We decompose the integral of $\overline{c}$ over $V^\wedge$ as an iterated integral
\begin{equation}
\int_{\cA(T)} c(\chi) \,d\nu(\chi) = \int_{V^\wedge} \overline{c}(\chi)\,d\overline{\nu}(\chi) = \sum_{\chi_f \in NT_f^\wedge}  \overline{c}_f(\chi_f) \int_{ \substack{\chi_\infty \in T_\infty^\wedge \\ \chi_f\chi_\infty(x) = 1 \,\forall \, x \in U_N(T)}} c_\infty(\chi_\infty)\,d\tilde{\nu}(\chi_\infty).
\end{equation}
where $\tilde{\nu}$ is the quotient Haar measure of $\overline{\nu}$ by the counting measure on $NT_f^\wedge$. Let 
\begin{equation}\label{Vinftywedge} 
V_\infty^\wedge = \{\chi_\infty \in T_\infty^\wedge: \chi_\infty(x) = 1 \text{ for all } x \in U_N(T)\}
\end{equation}
so that 
\begin{equation}\label{chi_fV_infty^wedge}\chi_f^{-1}V_\infty^\wedge = \{\chi_\infty \in T_\infty^\wedge : \chi_f\chi_\infty(x) = 1 \text{ for all } x \in U_N(T),\end{equation}
where on the left hand side of \eqref{chi_fV_infty^wedge} $\chi_f^{-1}$ means any element of $T_\infty^\wedge$ that takes the same values as $\chi_f^{-1}$ on all $x \in U_N(T)$. Then, by a change of variables (using the invariance of the Haar measure) we have
\begin{equation}\label{readyforPsum}
\int_{\cA(T)} c(\chi) \,d\nu(\chi) =\sum_{\chi_f \in NT_f^\wedge}  \overline{c}_f(\chi_f)  \int_{V_\infty^\wedge} c_\infty(\chi_f^{-1}\chi_\infty)\,d\tilde{\nu}(\chi_\infty).
\end{equation}
We want to apply Poisson summation (Lemma \ref{Psum}) to the interior integral on the right hand side of \eqref{readyforPsum}. We have the following two dual short exact sequences of locally compact Hausdorff topological abelian groups:
$$\xymatrix{ 1 \ar[r] & U_N(T) \ar[r] & T_\infty \ar[r] & V_\infty^{\wedge\wedge} \ar[r] & 1}$$ and by Pontryagin duality $$\xymatrix{ 1 \ar[r] & V_\infty^\wedge \ar[r] & T_\infty^\wedge \ar[r] & U_N(T)^{\wedge} \ar[r] & 1}.$$
Recall (section \ref{subsec:localtoglobal}) that $U_N(T)$ is discrete so that $U_N(T)^{\wedge}$ is compact. It is also convenient to note that the interior integral on the right hand side of \eqref{readyforPsum} only depends the image of $\chi_f^{-1} \in T_\infty^\wedge$ in $U_N(T)^\wedge$. 
 
 Now we invoke Lemma \ref{Psum} with $G = T_\infty^\wedge$, $ H = V_\infty^\wedge$, $f=\fc_\infty$ where $$\fc_\infty(\chi_\infty) := \prod_{v \in S_\infty} \frac{1}{\fc_v(\chi_v,r \vert{\LT_v})^s} \quad \text{ for } \quad \chi_\infty = \bigotimes_{v \in S_\infty}\chi_v,$$
 and $x = \chi_f^{-1} \in U_N(T)^\wedge$.  Note that $\fc_\infty \in L^1(T_\infty^\wedge)$ by Theorem \ref{archthm}\eqref{archthm1}.
 
We check the hypotheses \eqref{Psum1},\eqref{Psum2} and \eqref{Psum3} of Lemma \ref{Psum}. 
 In section \ref{conclusion} we only used the trivial bounds $U(s,x)\ll (\log (|t|+3))^J$ and $|R(s,x)|\leq R(\sigma,1)$, so that the proof there in fact shows that $$\sum_{x \in U_N(T)}\left| \int_{T_\infty^\wedge} \fc_\infty(\chi_\infty) \overline{\chi_\infty(x)}\,d\chi_\infty\right| < \infty$$
 for all $s\in \C$ with $\real(s)$ sufficiently large and any choice of Haar measure, so that 
 hypothesis \eqref{Psum1} of Lemma \ref{Psum} is satisfied. 
Next, recall that $T(k_v)^\wedge \simeq \Tt^\wedge$ where $\Tt^\wedge$ is given explicitly in \eqref{Ttwedgedef}. Then, indexing several copies of $\Tt^\wedge$ by $v \in S_\infty$,  $T_\infty^\wedge$ is isomorphic to $\prod_{v\mid \infty} \Tt^\wedge_v$ and $V_\infty^\wedge$ is a subgroup of this with compact quotient. For $\chi' \in T_\infty^\wedge$ \begin{equation}\label{verifyPsum2} \int_{V_\infty^\wedge} |\fc_\infty(\chi_\infty \chi')|\,d\tilde{\nu}(\chi_\infty)\end{equation} is an integral of the form $\prod_{v \in S_\infty}A_{k_v}(\sigma,1)$ with $\sigma \in \R_{>0}$ sufficiently large and $A_{k_v}(\sigma,1)$ given as in \eqref{avsxGeneral}, but with the integration restricted to the aforementioned compact quotient coset isomorphic to $\chi'V_\infty^\wedge$. Applying the inequality $(1+|x|)^{-1} \leq (1+x^2)^{-1/2}$ if necessary, one sees that the integrand has well-controlled derivatives, and thus \eqref{verifyPsum2} converges for any $\chi'\in T_\infty^\wedge$ by integral comparison with the integral over $T_\infty^\wedge$ and Theorem \ref{archthm}\eqref{archthm1}, so hypothesis \eqref{Psum2} of Lemma \ref{Psum} is satisfied. 
Lastly, hypothesis \eqref{Psum3} of Lemma \ref{Psum} follows directly from hypothesis \eqref{Psum2} by the dominated convergence theorem since $\fc_\infty$ is itself a continuous function on $T_\infty^\wedge$. 

The result of Lemma \ref{Psum} is that 
\begin{equation}\label{eq:Psumresult}
 \int_{V_\infty^\wedge} \fc_\infty(\chi_f^{-1}\chi_\infty)\,d\tilde{\nu}(\chi_\infty) = \frac{c}{\vol(U_N(T)^\wedge)} \sum_{x \in U_N(T)} \prod_{v \in S_\infty} A_{k_v}(s,x) \overline{\chi_f(x)},
\end{equation}
for some constant $c$ depending only on the choices of Haar measures involved. Finally, the function $\fc(\chi,r)^{-s}$ on $\cA(T)$ given in Definition \ref{GlobalAC} clearly satisfies the factorization property and is integrable for $\real(s)$ sufficiently large by the convergence of \eqref{eq:Psumresult}, Theorem \ref{unrthm}\eqref{unrthm1},  Lemma \ref{RamLemma}, and \eqref{readyforPsum}. Combining \eqref{eq:Psumresult} and formula \eqref{readyforPsum}, rearranging and pulling the finite sum over $\cl_N(T)$ to the outside, we obtain the formula in Proposition \ref{localtoglobal}.
\end{proof}

Finally, Proposition \ref{localtoglobal} and Theorem \ref{Ysthm} together imply Theorem \ref{Zsthm}, which in term implies the first assertion of Theorem \ref{MT1} following \cite[Ch.\ III \S 11]{InghamPrimeNumbers} with instances of $-\frac{\zeta'}{\zeta}(s)$ replaced by $Z(s)$ and \cite[Thm.\ 20]{InghamPrimeNumbers} replaced by the second bullet point of Theorem \ref{Zsthm}. The second assertion of Theorem \ref{MT1} on the power-saving error term follows from Theorem \ref{Zsthm} by \cite[Thm.\ A.1]{CLTsch}.

\subsection{Proof of last assertion of Theorem \ref{MT1}}
\begin{theorem}\label{MT2ndhalf}
If $r \vert_{\widehat{T}}$ is not faithful, then $\nu(\{\chi \in \cA(T): c(\chi,r)\leq X\}) = \infty$ for some finite $X$.\end{theorem}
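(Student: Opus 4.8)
The plan is to exploit the fact that non‑faithfulness of $r\vert_{\widehat T}$ means the analytic conductor ``factors through a smaller torus''. By the discussion around \eqref{Adef}, non‑faithfulness is equivalent to $D:=D(\varnothing)=\bigcap_{\mu\in M}\ker\mu=\ker(r\vert_{\widehat T})\neq\{1\}$. This $D$ is a $G$‑stable diagonalizable subgroup of $\widehat T$, so $D\times\{1\}$ is normal in ${}^LT=\widehat T\rtimes G$. Let $\langle M\rangle\subseteq X_*(T)$ be the subgroup generated by the coweights of $r$, let $T_2$ be the $k$‑torus with $X_*(T_2)=\langle M\rangle$, and let $\beta\colon T_2\to T$ be the morphism induced by the inclusion $\langle M\rangle\hookrightarrow X_*(T)$. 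Since $\C^\times$ is divisible, the dual map $\widehat\beta\colon\widehat T\to\widehat{T_2}$ is a $G$‑equivariant surjection with kernel exactly $D$ (its kernel is $\Hom$ of $\operatorname{coker}(\langle M\rangle\hookrightarrow X_*(T))=X_*(T)/\langle M\rangle$ into $\C^\times$, i.e.\ $D$). Hence ${}^LT/(D\times\{1\})\cong{}^LT_2$, and because $r\vert_{\widehat T}$ kills $D$, $r$ factors as $r=\tilde r\circ p$, where $p=\widehat\beta\rtimes\mathrm{id}\colon{}^LT\to{}^LT_2$ and $\tilde r$ is an $m$‑dimensional representation of ${}^LT_2$ (one checks that $\tilde r\vert_{\widehat{T_2}}$ is in fact faithful).

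The key identity we will prove is $\fc(\chi,r)=\tilde F(\beta^\ast\chi)$, where $\beta^\ast\colon\cA(T)\to\cA(T_2)$ sends $\chi$ to $\chi\circ\beta$ and $\tilde F(\cdot):=\fc(\cdot,\tilde r)$. At each place $v$ the Langlands parameter $\varphi_\chi=\xi_\chi\times\sigma$ of $\chi_v$ satisfies $p\circ\varphi_\chi=(\widehat\beta\circ\xi_\chi)\times\sigma$, and by functoriality of the local Langlands correspondence \eqref{langlands} for tori with respect to $\beta$, the cocycle $\widehat\beta\circ\xi_\chi$ represents the parameter of $\chi_v\circ\beta_v$; thus $r\circ\varphi_{\chi,v}=\tilde r\circ\varphi_{(\beta^\ast\chi)_v}$ as representations of the Weil group, and comparing $\eps$‑factors (resp.\ $L$‑factors at archimedean $v$) gives $\fc_v(\chi_v,r)=\fc_v((\beta^\ast\chi)_v,\tilde r)$ for all $v$. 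Taking products over $v$ yields the identity, hence
\[
\{\chi\in\cA(T)\colon\fc(\chi,r)\le X\}=(\beta^\ast)^{-1}\bigl(\{\chi_2\in\cA(T_2)\colon\tilde F(\chi_2)\le X\}\bigr).
\]
Using Proposition~\ref{MP} and Corollary~\ref{MTcond} at finite places and the explicit archimedean formulas, one sees that $\tilde F$ is bounded (indeed equal to $1$ at every finite place outside the bad set) on an open neighbourhood $W_0$ of the trivial character of $\cA(T_2)$; we fix $X_0$ with $W_0\subseteq\{\tilde F\le X_0\}$.

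Next we analyse $\beta^\ast$, which is Pontryagin dual to $\beta_\A\colon T_2(k)\backslash T_2(\A)\to T(k)\backslash T(\A)$. Because $\beta$ is finite, $\ker\beta_\A$ is compact: it is an extension of the profinite group $\prod_v\ker\beta(k_v)$ by a finite Tate--Shafarevich‑type group. Consequently $\operatorname{im}(\beta^\ast)=(\ker\beta_\A)^\perp$ is an \emph{open} subgroup of $\cA(T_2)$, while $\ker(\beta^\ast)=\mathcal D:=\bigl(\operatorname{coker}(\beta_\A)\bigr)^\wedge$. Since $\operatorname{im}(\beta^\ast)\cap W_0$ is a nonempty open subset of $\cA(T_2)$, Weil's integration formula applied to the open continuous surjection $\cA(T)\twoheadrightarrow\operatorname{im}(\beta^\ast)$ with kernel $\mathcal D$ gives
\[
\nu\bigl(\{\fc(\cdot,r)\le X_0\}\bigr)\ \ge\ \nu\bigl((\beta^\ast)^{-1}(W_0)\bigr)\ =\ \nu_{\mathcal D}(\mathcal D)\cdot\nu_{\operatorname{im}\beta^\ast}\bigl(\operatorname{im}(\beta^\ast)\cap W_0\bigr),
\]
and the second factor is strictly positive. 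The theorem is therefore reduced to showing that $\nu_{\mathcal D}(\mathcal D)=+\infty$, equivalently that $\operatorname{coker}(\beta_\A)$ is infinite (then $\mathcal D$ is either non‑compact, with infinite Haar mass, or discrete and infinite, with infinite counting mass).

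Finally we prove $\operatorname{coker}(\beta_\A)$ is infinite whenever $D\neq\{1\}$, splitting according to $D=D^\circ\times\pi_0(D)$. If $D^\circ\neq\{1\}$, then $T_3:=\operatorname{coker}(\beta)$ is a nontrivial $k$‑torus (of dimension $\dim D^\circ$); the surjection $T(k)\backslash T(\A)\twoheadrightarrow T_3(k)\backslash T_3(\A)$ has open image, annihilates $\beta(T_2(\A))$, and $T_3(k)\backslash T_3(\A)$ is infinite because $\dim T_3\ge1$, so $\operatorname{coker}(\beta_\A)$ is infinite. If $\pi_0(D)=\ker\beta\neq\{1\}$, then $\beta$ is an isogeny with nontrivial finite kernel $F_0$, and $\mathcal D$ is the group of automorphic characters of $T$ killed by $\beta$; for $T=\G_m$, $\beta=[e]$ these are exactly the Hecke characters of order dividing $e$, of which there are infinitely many by global class field theory, and for a general isogeny the same follows from Poitou--Tate duality applied to $F_0$ (the restricted direct sum of the local groups $H^1(k_v,F_0)$ is infinite by Chebotarev, while the image of $H^1(k,F_0)$ has infinite‑index closure). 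Since $D\neq\{1\}$ forces at least one of these cases, we are done. The main obstacles are the factorization $r=\tilde r\circ p$ with the compatibility $\fc(\chi,r)=\tilde F(\beta^\ast\chi)$ — which rests on functoriality of the local Langlands correspondence for tori and of $\eps$‑ and $L$‑factors — and, in the case of finite $D$, the class‑field‑theoretic fact that a nontrivial isogeny of tori has infinite cokernel on idèle class groups; the measure‑theoretic bookkeeping (compactness of $\ker\beta_\A$, Weil's formula) is routine once these are in place.
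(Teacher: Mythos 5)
Your argument takes a genuinely different and more structural route than the paper's. The paper argues concretely: from $D=\ker(r|_{\widehat T})\neq\{1\}$ it produces, at infinitely many split primes $v$, at least two $\xi\in\Hom_G(\cO_w^\times,D)$ each with trivial Artin conductor, assembles these into an infinite discrete set $V_{\mathrm{fin},0}$ of finite characters of conductor one, and completes each to an automorphic character with uniformly bounded archimedean conductor via a polytope argument (Lemma \ref{Keps}). You instead factor $r=\tilde r\circ p$ through the quotient $L$-group ${}^L T_2$ of the torus $T_2$ with $X_*(T_2)=\langle M\rangle$, use functoriality of the Langlands correspondence and of $\eps$/$L$-factors to get $\fc(\chi,r)=\fc(\beta^*\chi,\tilde r)$, and then reduce to showing that the closed subgroup $\mathcal D=\ker\beta^*\subseteq\cA(T)$ has infinite Haar mass. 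This is attractive conceptually (it localises exactly why non-faithfulness is fatal), and the factorisation and the pullback identity are sound; compactness of $\ker\beta_{\A}$ is also fine once one notes that $\ker\beta$ is a finite group scheme and that $\beta(T_2(\A))\cap T(k)/\beta(T_2(k))$ sits in $\Sha^1(k,\ker\beta)$, which is finite.

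There is, however, a real gap in the measure-theoretic reduction. You write that $\nu_{\mathcal D}(\mathcal D)=+\infty$ is \emph{equivalent} to $\operatorname{coker}(\beta_\A)$ being infinite, and then argue only for infinitude of the cokernel. This equivalence is false as stated: $\nu_{\mathcal D}(\mathcal D)=+\infty$ holds if and only if $\mathcal D$ is non-compact, i.e.\ if and only if $\operatorname{coker}(\beta_\A)=\mathcal D^\wedge$ is \emph{non-discrete}. An infinite LCA group can perfectly well be discrete, in which case its dual $\mathcal D$ is compact (and, if non-discrete, of finite Haar mass). Your parenthetical dichotomy --- non-compact, or discrete and infinite --- is redundant (a discrete infinite group is already non-compact) and omits the problematic possibility that $\mathcal D$ is compact, infinite and non-discrete, which is exactly the case that would make the Weil-integration step give a finite answer. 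To close the gap you must prove non-discreteness of $\operatorname{coker}(\beta_\A)$, i.e.\ that $T(k)\cdot\beta(T_2(\A))$ is not open in $T(\A)$. In the case $\dim D\ge 1$ this is immediate because $\operatorname{coker}(\beta_\A)$ maps with open image into the non-discrete $T_3(k)\backslash T_3(\A)$, $T_3=\operatorname{coker}\beta$; in the isogeny case one needs an extra local observation, e.g.\ that $\beta(T_2(k_v))\not\supseteq T(\cO_v)$ for infinitely many $v$ (via Chebotarev applied to $\ker\beta$), so that $\beta(T_2(\A))$ cannot contain any basic open subgroup of $T(\A)$. With that fix the argument goes through, but the fix is essential; ``infinite'' is not the correct condition to reduce to.
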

\begin{proof}
We use the notation introduced in the course of the proof of Proposition \ref{localtoglobal}. Recall the exact sequence \eqref{besdual}, and in particular the cokernel $$V^\wedge = \{ (\chi_\infty, \chi_f): \chi_\infty(x) \chi_f(x) =1 \text{ for all } x \in U_N(T)\},$$ where $\chi_\infty \in T_\infty^\wedge$ and $\chi_f \in NT_f^{\wedge}$. For prove Theorem \ref{MT2ndhalf} it suffices to construct a subset of $V^\wedge$ of infinite Haar measure on which the analytic conductor remains bounded. 

By hypothesis we have that $\{1\} \subsetneq \ker r \vert_{\widehat{T}}$. Let $S_0$ denote the set of unramified places of $k$ which split completely in $K/k$, and for which $q_{K_w} \equiv 1 \pmod {|\pi_0(\ker r\vert_{\widehat{T}})|}.$ By the Chebotarev density theorem, $|S_0|=\infty$. For any $v \in S_0$ we have by Lemma \ref{Amano} and Proposition \ref{PairingProp} that $$NT(\cO_w)^\wedge = T(\cO_v)^\wedge \simeq \Hom_G(\cO_w^\times, \widehat{T}).$$ Write $\ell$ for the residue field of $K_w$. By construction of $S_0$, we have $|\Hom_G(\ell^\times, \ker r\vert_{\widehat{T}})|\geq 2$ for any $v \in S_0$, and therefore $|\Hom_G(\cO_w^\times, \ker r\vert_{\widehat{T}})|\geq 2$ as well. All $\xi \in \Hom_G(\cO_w^\times, \ker r\vert_{\widehat{T}}) \subseteq \Hom_G(\cO_w^\times, \widehat{T})$ have $r \circ \varphi$ of trivial Artin conductor, where  $\varphi \in \Phi(T)$ is such that $\xi = \varphi \vert_{\cO_w^\times}$ as in \eqref{bij_LLP_to_coh}. For any $v \in S_0$ let $T(\cO_v)^\wedge_0$ be the subset of $T(\cO_v)^\wedge$ corresponding to $\Hom_G(\cO_w^\times, \ker r\vert_{\widehat{T}})$ across the restricted Langlands perfect pairing \eqref{unrampairing}. Let $NT_{f,0}^\wedge  \subseteq NT_f^\wedge$ be given by $$ NT_{f,0}^\wedge= \prod_{v \in S_0} T(\cO_v)^\wedge_0 \times \prod_{v \not \in S_0 \cup S_\infty} \{1\}.$$ Then $NT_{f,0}^\wedge$ is an infinite set such that every $\chi_f = (\chi_v)_{v \not \in S_\infty} \in NT_{f,0}^\wedge$ satisfies $$ \prod_{v\not \in S_\infty} \fc_v(\chi_v,r) = 1.$$ 

To construct a subset of $V^\wedge$ of infinite measure on which $\fc(\chi,r)$ is bounded, it suffices to extend each $\chi_f \in NT_{f,0}^\wedge$ to $V(T)$. Recall the notation $$\chi_f^{-1}V_\infty^\wedge = \{\chi_\infty \in T_\infty^\wedge : \chi_\infty(x) \chi_f(x)= 1 \text{ for all } x \in U_N(T)\}.$$

\begin{lemma}\label{Keps} There exist constants $K,\eps>0$ and a subset $X(\chi_f) \subseteq  \chi_f^{-1}V_\infty^\wedge$ for each $\chi_f \in NT_f^\wedge$ such that $\nu(X(\chi_f))>\eps$ and $$\sup\{ \prod_{v \in S_\infty} \fc_v(\chi_v,r): \chi_\infty \in X(\chi_f)\}\leq K$$ for all $\chi_f \in NT_f^\wedge$.  
\end{lemma}
\begin{proof}
We give an explicit description of the sets $\chi_f^{-1}V_\infty^\wedge$ in terms of the corresponding Langlands parameters across \eqref{langlandsunitary}. 
Let $x_1,\ldots, x_s$ be generators for $U_N(T)$. Recall $\Tt$ from \eqref{TRisom}. For $v \in S_\infty$ let $\sigma_v : U_N(T) \hookrightarrow T(k_v) \simeq \Tt$ be the corresponding embedding. As in section \ref{archimedeanC}, we write $$\sigma_v x_i = (\ldots, x_{vij},\ldots,x_{vij}',\ldots, x_{vij}'',\ldots),$$ where $x_{vij} \in \R^\times$, $x_{vij}' \in S^1$, and $x_{vij}'' \in \C^\times$. 

Let us index several copies of $\Tt^\wedge$ (see \eqref{Ttwedgedef}) by $v \in S_\infty$, so that $T_\infty^\wedge \simeq \prod_{v\mid \infty} \Tt_v^\wedge$. Consider the image of $\chi_f^{-1} V_\infty^\wedge$ across the map  \begin{equation}\label{chifVinfty} \prod_{v \in S_\infty} T(k_v)^\wedge \to \prod_{v \in S_\infty} \Tt^\wedge_{v}.\end{equation}
We write elements of $\prod_{v  \mid \infty} \Tt^\wedge_{v}$ as $((w_v,\epsilon_v),\alpha_v,(w'_v,\alpha'_v))_{v \in S_\infty}$. Then the image of $\chi_f^{-1}V_\infty^\wedge$ across \eqref{chifVinfty} is an affine hyperplane in $\prod_{v  \mid \infty} \Tt_{v}^\wedge$ cut out by \begin{equation}\label{hyperplane} \prod_{v \in S_\infty} \prod_{j=1}^{n_1} (\sgn x_{vij})^{\epsilon_{vj}} |x_{vij}|^{w_{vj}} \prod_{j=1}^{n_2} {x_{vij}'}^{\alpha_{vj}} \prod_{j=1}^{n_3} \left( \frac{ | x''_{vij}|}{x''_{vij}}\right)^{ \alpha'_{vj}} |x''_{vij}|^{w'_{vj}-\alpha'_{vj}} = \chi_f(x_i)^{-1},\end{equation} for all generators $x_i$, $i=1,\ldots, s$ of $U_N(T)$. Since $\chi_f(x_i) \in S^1$ for all $\chi_f$ and $x_i$, the affine hyperplane in $\prod_{v \mid \infty} \Tt_{v}^\wedge$ described by \eqref{hyperplane} intersects a fixed (independent of $\chi_f$) compact set around the origin, say $U_0$, in a set of positive measure bounded below independently of $\chi_f$. 

For each $v \in S_\infty$, the set $\{\chi \in T(k_v)^\wedge: \fc_v(\chi,r)\leq X\}$ is in bijection under the local Langlands correspondence with \begin{equation}\label{boundedcondset} H_v = \{\varphi : \prod_{i=1}^{m_1+m_2}(|(M\varphi)_i|+1)\prod_{i=m_1+m_2+1}^{m_1+m_2+m_3} (|(M\varphi)_i|+1)^2 \leq X\}\end{equation} by the results of section \ref{archimedeanC}. Let $$L_v(X)=\{ \varphi: \sum_{i=1}^{m_1+m_2}(|(M\varphi)_i|+1)\sum_{i=m_1+m_2+1}^{m_1+m_2+m_3} (|(M\varphi)_i|+1)^2 \leq (m_1+m_2+m_3)X^{1/(m_1+m_2+m_3)}\}.$$ By the am-gm inequality, we have $L_v(X) \subseteq H_v(X)$. If $X$ is sufficiently large, then $L_v(X)$ contains any fixed compact set in $\Tt^\wedge$, and in particular $U_0 \subseteq \prod_{v \in S_\infty}L_v(X).$ Taking $K= X^{|S_\infty|}$, the lemma is proved. \end{proof}

The fibered set $NT_{f,0}^\wedge \times X(\chi_f) \subseteq V^\wedge$ is our candidate for a set of infinite measure and bounded analytic conductor. 
By Lemma \ref{Keps} and additivity of measure we have $$\nu(NT_{f,0}^\wedge \times X(\chi_f)) = \nu \left(\bigcup_{\chi_f} \{(\chi_\infty,\chi_f) : \chi_\infty\in X(\chi_f)\}\right) = \sum_{\chi_f} \nu\left(  \{(\chi_\infty,\chi_f) : \chi_\infty\in X(\chi_f)\}\right) \geq \infty,$$ yet $\fc(\chi,r)$ is uniformly bounded for any $\chi = (\chi_\infty,\chi_f) \in  NT_{f,0}^\wedge \times X(\chi_f)$. 
\end{proof}

\def\cprime{$'$}

 \end{document}